\newcommand{\nc}{\newcommand}
\nc{\A}{{\mathbb{A}}}
\nc{\CC}{{\mathbb{C}}}
\nc{\HH}{\mathbf{H}_*}
\nc{\LL}{{\mathbb{L}}}
\nc{\RR}{{\mathbb{R}}}
\renewcommand{\P}{{\mathbb{P}}}
\nc{\OO}{{\mathbb{O}}}
\renewcommand{\SS}{{\mathbb{S}}}
\nc{\QQ}{{\mathbb{Q}}}
\nc{\ZZ}{{\mathbb{Z}}}
\nc{\cA}{{\mathcal{A}}}
\nc{\cB}{{\mathcal{B}}}
\nc{\cC}{{\mathcal{C}}}
\nc{\cD}{{\mathcal{D}}}
\nc{\cE}{{\mathcal{E}}}
\nc{\cF}{{\mathcal{F}}}
\nc{\cG}{{\mathcal{G}}}
\nc{\cH}{{\mathcal{H}}}
\nc{\cI}{{\mathcal{I}}}
\nc{\cJ}{{\mathcal{J}}}
\nc{\cK}{{\mathcal{K}}}
\nc{\cL}{{\mathcal{L}}}
\nc{\cM}{{\mathcal{M}}}
\nc{\cN}{{\mathcal{N}}}
\nc{\cO}{{\mathcal{O}}}
\nc{\cP}{{\mathcal{P}}}
\nc{\cQ}{{\mathcal{Q}}}
\nc{\cR}{{\mathcal{R}}}
\nc{\cS}{{\mathcal{S}}}
\nc{\cT}{{\mathcal{T}}}
\nc{\cU}{{\mathcal{U}}}
\nc{\cV}{{\mathcal{V}}}
\nc{\cW}{{\mathcal{W}}}
\nc{\cX}{{\mathcal{X}}}
\nc{\cY}{{\mathcal{Y}}}
\nc{\cZ}{{\mathcal{Z}}}
\nc{\rc}{{\mathrm{c}}}
\nc{\rd}{{\mathrm{d}}}
\nc{\rA}{{\mathrm{A}}}
\nc{\rC}{{\mathrm{C}}}
\nc{\rD}{{\mathrm{D}}}
\nc{\rE}{{\mathrm{E}}}
\nc{\rF}{{\mathrm{F}}}
\nc{\rH}{{\mathrm{H}}}
\nc{\rK}{{\mathrm{K}}}
\nc{\rL}{{\mathrm{L}}}
\nc{\rM}{{\mathrm{M}}}
\nc{\rQ}{{\mathrm{Q}}}
\nc{\rU}{{\mathrm{U}}}
\nc{\rW}{{\mathbf{W}}}
\nc{\rWnu}{{\mathbf{W\!}_{\mathrm{nu}}}}
\nc{\bA}{{\mathbf{A}}}
\nc{\bB}{{\mathbf{B}}}
\nc{\bC}{{\mathbf{C}}}
\nc{\bD}{{\mathbf{D}}}
\nc{\bE}{{\mathbf{E}}}
\nc{\bF}{{\mathbf{F}}}
\nc{\bG}{{\mathbf{G}}}
\nc{\bH}{{\mathbf{H}}}
\nc{\bI}{{\mathbf{I}}}
\nc{\bJ}{{\mathbf{J}}}
\nc{\bK}{{\mathbf{K}}}
\nc{\bL}{{\mathbf{L}}}
\nc{\bM}{{\mathbf{M}}}
\nc{\bN}{{\mathbf{N}}}
\nc{\bO}{{\mathbf{O}}}
\nc{\bP}{{\mathbf{P}}}
\nc{\bQ}{{\mathbf{Q}}}
\nc{\bR}{{\mathbf{R}}}
\nc{\bS}{{\mathbf{S}}}
\nc{\bT}{{\mathbf{T}}}
\nc{\bU}{{\mathbf{U}}}
\nc{\bV}{{\mathbf{V}}}
\nc{\bW}{{\mathbf{W}}}
\nc{\bX}{{\mathbf{X}}}
\nc{\bY}{{\mathbf{Y}}}
\nc{\bZ}{{\mathbf{Z}}}
\nc{\ba}{{\mathbf{a}}}
\nc{\bb}{{\mathbf{b}}}
\nc{\bc}{{\mathbf{c}}}
\nc{\bd}{{\mathbf{d}}}
\nc{\be}{{\mathbf{e}}}
\nc{\bg}{{\mathbf{g}}}
\nc{\bh}{{\mathbf{h}}}
\nc{\bi}{{\mathbf{i}}}
\nc{\bj}{{\mathbf{j}}}
\nc{\bk}{{\mathbf{k}}}
\nc{\bl}{{\mathbf{l}}}
\nc{\bn}{{\mathbf{n}}}
\nc{\bo}{{\mathbf{o}}}
\nc{\bp}{{\mathbf{p}}}
\nc{\bq}{{\mathbf{q}}}
\nc{\br}{{\mathbf{r}}}
\nc{\bs}{{\mathbf{s}}}
\nc{\bu}{{\mathbf{u}}}
\nc{\bv}{{\mathbf{v}}}
\nc{\bw}{{\mathbf{w}}}
\nc{\bx}{{\mathbf{x}}}
\nc{\by}{{\mathbf{y}}}
\nc{\bz}{{\mathbf{z}}}
\nc{\fA}{{\mathfrak{A}}}
\nc{\fB}{{\mathfrak{B}}}
\nc{\fC}{{\mathfrak{C}}}
\nc{\fD}{{\mathfrak{D}}}
\nc{\fE}{{\mathfrak{E}}}
\nc{\fF}{{\mathfrak{F}}}
\nc{\fG}{{\mathfrak{G}}}
\nc{\fH}{{\mathfrak{H}}}
\nc{\fI}{{\mathfrak{I}}}
\nc{\fJ}{{\mathfrak{J}}}
\nc{\fK}{{\mathfrak{K}}}
\nc{\fL}{{\mathfrak{L}}}
\nc{\fM}{{\mathfrak{M}}}
\nc{\fN}{{\mathfrak{N}}}
\nc{\fO}{{\mathfrak{O}}}
\nc{\fP}{{\mathfrak{P}}}
\nc{\fQ}{{\mathfrak{Q}}}
\nc{\fR}{{\mathfrak{R}}}
\nc{\fS}{{\mathfrak{S}}}
\nc{\fT}{{\mathfrak{T}}}
\nc{\fU}{{\mathfrak{U}}}
\nc{\fV}{{\mathfrak{V}}}
\nc{\fW}{{\mathfrak{W}}}
\nc{\fX}{{\mathfrak{X}}}
\nc{\fY}{{\mathfrak{Y}}}
\nc{\fZ}{{\mathfrak{Z}}}
\nc{\fa}{{\mathfrak{a}}}
\nc{\fb}{{\mathfrak{b}}}
\nc{\fc}{{\mathfrak{c}}}
\nc{\fd}{{\mathfrak{d}}}
\nc{\fe}{{\mathfrak{e}}}
\nc{\ff}{{\mathfrak{f}}}
\nc{\fg}{{\mathfrak{g}}}
\nc{\fh}{{\mathfrak{h}}}
\nc{\fj}{{\mathfrak{j}}}
\nc{\fk}{{\mathfrak{k}}}
\nc{\fl}{{\mathfrak{l}}}
\nc{\fm}{{\mathfrak{m}}}
\nc{\fn}{{\mathfrak{n}}}
\nc{\fo}{{\mathfrak{o}}}
\nc{\fp}{{\mathfrak{p}}}
\nc{\fq}{{\mathfrak{q}}}
\nc{\fr}{{\mathfrak{r}}}
\nc{\fs}{{\mathfrak{s}}}
\nc{\ft}{{\mathfrak{t}}}
\nc{\fu}{{\mathfrak{u}}}
\nc{\fv}{{\mathfrak{v}}}
\nc{\fw}{{\mathfrak{w}}}
\nc{\fx}{{\mathfrak{x}}}
\nc{\fy}{{\mathfrak{y}}}
\nc{\fz}{{\mathfrak{z}}}
\nc{\sA}{{\mathsf{A}}}
\nc{\sB}{{\mathsf{B}}}
\nc{\sC}{{\mathsf{C}}}
\nc{\sD}{{\mathsf{D}}}
\nc{\sE}{{\mathsf{E}}}
\nc{\sF}{{\mathsf{F}}}
\nc{\sG}{{\mathsf{G}}}
\nc{\sH}{{\mathsf{H}}}
\nc{\sI}{{\mathsf{I}}}
\nc{\sJ}{{\mathsf{J}}}
\nc{\sK}{{\mathsf{K}}}
\nc{\sL}{{\mathsf{L}}}
\nc{\sM}{{\mathsf{M}}}
\nc{\sN}{{\mathsf{N}}}
\nc{\sO}{{\mathsf{O}}}
\nc{\sP}{{\mathsf{P}}}
\nc{\sQ}{{\mathsf{Q}}}
\nc{\sR}{{\mathsf{R}}}
\nc{\sS}{{\mathsf{S}}}
\nc{\sT}{{\mathsf{T}}}
\nc{\sU}{{\mathsf{U}}}
\nc{\sV}{{\mathsf{V}}}
\nc{\sW}{{\mathsf{W}}}
\nc{\sX}{{\mathsf{X}}}
\nc{\sY}{{\mathsf{Y}}}
\nc{\sZ}{{\mathsf{Z}}}
\nc{\sa}{{\mathsf{a}}}
\nc{\sd}{{\mathsf{d}}}
\nc{\se}{{\mathsf{e}}}
\nc{\sg}{{\mathsf{g}}}
\nc{\sh}{{\mathsf{h}}}
\nc{\si}{{\mathsf{i}}}
\nc{\sj}{{\mathsf{j}}}
\nc{\sk}{{\mathsf{k}}}
\nc{\sm}{{\mathsf{m}}}
\nc{\sn}{{\mathsf{n}}}
\nc{\so}{{\mathsf{o}}}
\nc{\sq}{{\mathsf{q}}}
\nc{\sr}{{\mathsf{r}}}
\nc{\st}{{\mathsf{t}}}
\nc{\su}{{\mathsf{u}}}
\nc{\sv}{{\mathsf{v}}}
\nc{\sw}{{\mathsf{w}}}
\nc{\sx}{{\mathsf{x}}}
\nc{\sy}{{\mathsf{y}}}
\nc{\sz}{{\mathsf{z}}}
\nc{\oA}{{\overline{A}}}
\nc{\oB}{{\overline{B}}}
\nc{\oC}{{\overline{C}}}
\nc{\oD}{{\overline{D}}}
\nc{\oE}{{\overline{E}}}
\nc{\oF}{{\overline{F}}}
\nc{\oG}{{\overline{G}}}
\nc{\oH}{{\overline{H}}}
\nc{\oI}{{\overline{I}}}
\nc{\oJ}{{\overline{J}}}
\nc{\oK}{{\overline{K}}}
\nc{\oL}{{\overline{L}}}
\nc{\oM}{{\overline{M}}}
\nc{\oN}{{\overline{N}}}
\nc{\oO}{{\overline{O}}}
\nc{\oP}{{\overline{P}}}
\nc{\oQ}{{\overline{Q}}}
\nc{\oR}{{\overline{R}}}
\nc{\oS}{{\overline{S}}}
\nc{\oT}{{\overline{T}}}
\nc{\oU}{{\overline{U}}}
\nc{\oV}{{\overline{V}}}
\nc{\oW}{{\overline{W}}}
\nc{\oX}{{\overline{X}}}
\nc{\oY}{{\overline{Y}}}
\nc{\oZ}{{\overline{Z}}}
\nc{\oa}{{\overline{a}}}
\nc{\ob}{{\overline{b}}}
\nc{\oc}{{\overline{c}}}
\nc{\od}{{\overline{d}}}
\nc{\of}{{\overline{f}}}
\nc{\og}{{\overline{g}}}
\nc{\oh}{{\overline{h}}}
\nc{\oi}{{\overline{i}}}
\nc{\oj}{{\overline{j}}}
\nc{\ok}{{\overline{k}}}
\nc{\ol}{{\overline{l}}}
\nc{\om}{{\overline{m}}}
\nc{\on}{{\overline{n}}}
\nc{\oo}{{\overline{o}}}
\nc{\op}{{\overline{p}}}
\nc{\oq}{{\overline{q}}}
\nc{\os}{{\overline{s}}}
\nc{\ot}{{\overline{t}}}
\nc{\ou}{{\overline{u}}}
\nc{\ov}{{\overline{v}}}
\nc{\ow}{{\overline{w}}}
\nc{\ox}{{\overline{x}}}
\nc{\oy}{{\overline{y}}}
\nc{\oz}{{\overline{z}}}
\nc{\tA}{{\tilde{A}}}
\nc{\tB}{{\tilde{B}}}
\nc{\tC}{{\tilde{C}}}
\nc{\tD}{{\tilde{D}}}
\nc{\tE}{{\tilde{E}}}
\nc{\tF}{{\tilde{F}}}
\nc{\tG}{{\tilde{G}}}
\nc{\tH}{{\tilde{H}}}
\nc{\tI}{{\tilde{I}}}
\nc{\tJ}{{\tilde{J}}}
\nc{\tK}{{\tilde{K}}}
\nc{\tL}{{\tilde{L}}}
\nc{\tM}{{\tilde{M}}}
\nc{\tN}{{\tilde{N}}}
\nc{\tO}{{\tilde{O}}}
\nc{\tP}{{\tilde{P}}}
\nc{\tQ}{{\tilde{Q}}}
\nc{\tR}{{\tilde{R}}}
\nc{\tS}{{\tilde{S}}}
\nc{\tT}{{\tilde{T}}}
\nc{\tU}{{\tilde{U}}}
\nc{\tV}{{\tilde{V}}}
\nc{\tW}{{\tilde{W}}}
\nc{\tX}{{\tilde{X}}}
\nc{\tY}{{\tilde{Y}}}
\nc{\tZ}{{\tilde{Z}}}
\nc{\ta}{{\tilde{a}}}
\nc{\tb}{{\tilde{b}}}
\nc{\tc}{{\tilde{c}}}
\nc{\td}{{\tilde{d}}}
\nc{\te}{{\tilde{e}}}
\nc{\tf}{{\tilde{f}}}
\nc{\tg}{{\tilde{g}}}
\nc{\ti}{{\tilde{i}}}
\nc{\tj}{{\tilde{j}}}
\nc{\tk}{{\tilde{k}}}
\nc{\tl}{{\tilde{l}}}
\nc{\tm}{{\tilde{m}}}
\nc{\tn}{{\tilde{n}}}
\nc{\tq}{{\tilde{q}}}
\nc{\tr}{{\tilde{r}}}
\nc{\ts}{{\tilde{s}}}
\nc{\tu}{{\tilde{u}}}
\nc{\tv}{{\tilde{v}}}
\nc{\tw}{{\tilde{w}}}
\nc{\tx}{{\tilde{x}}}
\nc{\ty}{{\tilde{y}}}
\nc{\tz}{{\tilde{z}}}
\nc{\hA}{{\hat{A}}}
\nc{\hB}{{\hat{B}}}
\nc{\hC}{{\hat{C}}}
\nc{\hD}{{\hat{D}}}
\nc{\hE}{{\hat{E}}}
\nc{\hF}{{\hat{F}}}
\nc{\hG}{{\hat{G}}}
\nc{\hH}{{\hat{H}}}
\nc{\hI}{{\hat{I}}}
\nc{\hJ}{{\hat{J}}}
\nc{\hK}{{\hat{K}}}
\nc{\hL}{{\hat{L}}}
\nc{\hM}{{\hat{M}}}
\nc{\hN}{{\hat{N}}}
\nc{\hO}{{\hat{O}}}
\nc{\hP}{{\hat{P}}}
\nc{\hQ}{{\hat{Q}}}
\nc{\hR}{{\hat{R}}}
\nc{\hS}{{\hat{S}}}
\nc{\hT}{{\hat{T}}}
\nc{\hU}{{\hat{U}}}
\nc{\hV}{{\hat{V}}}
\nc{\hW}{{\hat{W}}}
\nc{\hX}{{\hat{X}}}
\nc{\hY}{{\hat{Y}}}
\nc{\hZ}{{\hat{Z}}}
\nc{\ha}{{\hat{a}}}
\nc{\hb}{{\hat{b}}}
\nc{\hc}{{\hat{c}}}
\nc{\hd}{{\hat{d}}}
\nc{\he}{{\hat{e}}}
\nc{\hf}{{\hat{f}}}
\nc{\hg}{{\hat{g}}}
\nc{\hh}{{\hat{h}}}
\nc{\hi}{{\hat{i}}}
\nc{\hj}{{\hat{j}}}
\nc{\hk}{{\hat{k}}}
\nc{\hn}{{\hat{n}}}
\nc{\ho}{{\hat{o}}}
\nc{\hp}{{\hat{p}}}
\nc{\hq}{{\hat{q}}}
\nc{\hr}{{\hat{r}}}
\nc{\hs}{{\hat{s}}}
\nc{\hu}{{\hat{u}}}
\nc{\hv}{{\hat{v}}}
\nc{\hx}{{\hat{x}}}
\nc{\hy}{{\hat{y}}}
\nc{\hz}{{\hat{z}}}
\nc{\eps}{\varepsilon}
\nc{\teps}{\tilde\varepsilon}
\nc{\beps}{{\boldsymbol{\varepsilon}}}
\nc{\tcE}{\tilde\cE}
\nc{\hcE}{\hat\cE}
\nc{\lam}{\lambda}
\nc{\lan}{\big\langle}
\nc{\ran}{\big\rangle}
\DeclareMathOperator{\HE}{HE}
\nc{\kk}{{\mathsf{k}}}
\nc{\opp}{{\mathrm{opp}}}
\nc{\Db}{\mathbf{D}}
\def\bw#1#2{\textstyle{\bigwedge\hskip-0.9mm^{#1}}\hskip0.2mm{#2}}
\DeclareMathOperator{\Hom}{\mathrm{Hom}}
\DeclareMathOperator{\Ext}{\mathrm{Ext}}
\DeclareMathOperator{\cExt}{\mathcal{E}\!\mathit{xt}}
\DeclareMathOperator{\End}{\mathrm{End}}
\DeclareMathOperator{\cEnd}{\mathcal{E}\mathit{nd}}
\DeclareMathOperator{\cRHom}{\mathrm{R}\mathcal{H}\mathit{om}}
\DeclareMathOperator{\Spec}{\mathrm{Spec}}
\DeclareMathOperator{\Bl}{\mathrm{Bl}}
\DeclareMathOperator{\Pic}{\mathrm{Pic}}
\DeclareMathOperator{\Br}{\mathrm{Br}}
\DeclareMathOperator{\Cl}{\mathrm{Cliff}}
\DeclareMathOperator{\Sym}{\mathrm{Sym}}
\DeclareMathOperator{\Ker}{\mathrm{Ker}}
\DeclareMathOperator{\Coker}{\mathrm{Coker}}
\DeclareMathOperator{\Ima}{\mathrm{Im}}
\DeclareMathOperator{\Cone}{\mathrm{Cone}}
\DeclareMathOperator{\Tr}{\mathrm{Tr}}
\DeclareMathOperator{\diag}{\mathrm{diag}}
\DeclareMathOperator{\Grr}{\mathrm{K}_0(\mathrm{Var/\kk})}
\DeclareMathOperator{\LGr}{\mathrm{LGr}}
\DeclareMathOperator{\id}{\mathrm{id}}
\DeclareMathOperator{\rk}{\mathrm{rk}}
\DeclareMathOperator{\rw}{\mathrm{w}}
\DeclareMathOperator{\hw}{\mathrm{hw}}
\DeclareMathOperator{\codim}{\mathrm{codim}}
\DeclareMathOperator{\supp}{\mathrm{supp}}
\DeclareMathOperator{\gr}{\mathrm{gr}}
\DeclareMathOperator{\hl}{\ell}
\newcommand{\bt}{{\mathrm{bot}}}
\newcommand{\tp}{{\mathrm{top}}}
\newcommand{\la}{{\mathrm{L}}}
\newcommand{\ua}{{\mathrm{U}}}
\newcommand{\mn}{{\mathrm{min}}}
\newcommand{\un}{{\mathrm{uni}}}
\newcommand{\Sha}{\mathbf{Sh}}
\newcommand{\moplus}{\mathop{\textstyle\bigoplus}\limits}
\numberwithin{equation}{section}
\theoremstyle{plain}
\newtheorem{theorem}{Theorem}[section]
\newtheorem{question}[theorem]{Question}
\newtheorem{lemma}[theorem]{Lemma}
\newtheorem{proposition}[theorem]{Proposition}
\newtheorem{corollary}[theorem]{Corollary}
\theoremstyle{definition}
\newtheorem{definition}[theorem]{Definition}
\newtheorem{example}[theorem]{Example}
\theoremstyle{remark}
\newtheorem{remark}[theorem]{Remark}
\title{Quadric bundles and hyperbolic equivalence}
\author{Alexander Kuznetsov}
\address{{\sloppy
\parbox{0.9\textwidth}{
Algebraic Geometry Section, Steklov Mathematical Institute of Russian Academy of Sciences,\\
8 Gubkin str., Moscow 119991 Russia}
\bigskip}}
\email{akuznet@mi-ras.ru}
\date{}
\thanks{This work is supported by the Russian Science Foundation under grant 19-11-00164,
\url{https://rscf.ru/project/19-11-00164/}.}
\begin{document}

\begin{abstract}
We introduce the notion of hyperbolic equivalence for quadric bundles and quadratic forms on vector bundles 
and show that hyperbolic equivalent quadric bundles share many important properties:
they have the same Brauer data;
moreover, if they have the same dimension over the base, they are birational over the base 
and have equal classes in the Grothendieck ring of varieties.

Furthermore, when the base is a projective space 
we show that two quadratic forms are hyperbolic equivalent if and only if 
their cokernel sheaves are isomorphic up to twist, their fibers over a fixed point of the base are Witt equivalent, 
and, in some cases, certain quadratic forms on intermediate cohomology groups of the underlying vector bundles are Witt equivalent.
For this we show that any quadratic form over~$\P^n$ is hyperbolic equivalent to a quadratic form 
whose underlying vector bundle has many cohomology vanishings; 
this class of bundles, called VLC bundles in the paper, is interesting by itself.
\end{abstract}

\maketitle

\section{Introduction}

Let $Q \to X$ be a \emph{quadric bundle}, that is a proper morphism 
which can be presented as a composition~\mbox{$Q \hookrightarrow \P_X(\cE) \to X$},
where $\P_X(\cE) \to X$ is the projectivization of a vector bundle~$\cE$ and $Q \hookrightarrow \P_X(\cE)$ 
is a divisorial embedding of relative degree~2 over~$X$.
A quadric bundle is determined by a \emph{quadratic form}~\mbox{$q \colon \Sym^2\cE \to \cL^\vee$} 
with values in a line bundle~$\cL^\vee$,
or, equivalently, by a \emph{self-dual morphism}
\begin{equation}
\label{eq:morphism-general}
q \colon \cE \otimes \cL \to \cE^\vee.
\end{equation}
Conversely, the quadratic form~$q$ is determined by~$Q$ up to rescaling and a twist transformation
\begin{equation*}
\cE \mapsto \cE \otimes \cM,
\qquad 
\cL \mapsto \cL \otimes \cM^{-2}
\end{equation*}
where~$\cM$ is a line bundle on~$X$.

Furthermore, with a quadric bundle one associates the coherent sheaf
\begin{equation}
\label{def:cc-q}
\cC(Q) = \cC(q) := \Coker \Big( q \colon \cE \otimes \cL \to \cE^\vee \Big)
\end{equation}
on~$X$, which we call its \emph{cokernel sheaf}
and which is determined by~$Q$ up to a line bundle twist.
We will usually assume that~$X$ is integral and the general fiber of~$Q \to X$ is non-degenerate,
or equivalently, that~$q$ is an isomorphism at the general point of~$X$, so that $\Ker(q) = 0$ and~$\cC(q)$ is a torsion sheaf on~$X$.
Then the sheaf~$\cC(q)$
is endowed with a ``shifted'' self-dual isomorphism 
\begin{equation}
\label{eq:cokernel-form}
\bar{q} \colon \cC(q) \xrightarrow{\ \simeq\ } \cExt^1(\cC(q), \cL) = \cC(q)^\vee \otimes \cL[1],
\end{equation}
where~$\cC(q)^\vee$ is the \emph{derived} dual of~$\cC(q)$ and~$[1]$ is the shift in the derived category
(see~\S\ref{subsec:sym-sheaves} for a discussion of sheaves enjoying this property).

The main question addressed in this paper is: what properties of quadric bundles are determined by their cokernel sheaves
(we restate this question below in a more precise form as Question~\ref{main-question})?
A prioiri it is hard to expect that the cokernel sheaf determines a lot; 
for instance because it is supported only on the discriminant divisor of~$Q/X$.
However, the main result of this paper, is that in the case where~$X$ is a projective space 
and some mild numerical conditions discussed below are satisfied,
the cokernel sheaf determines the quadric bundle up to a natural equivalence relation, which we call \emph{hyperbolic equivalence},
and which itself preserves the most important geometric properties of quadric bundles.

Hyperbolic equivalence is generated by operations of \emph{hyperbolic reduction} and \emph{hyperbolic extension}.
The simplest instance of a hyperbolic reduction (over the trivial base) is the operation 
that takes a quadric~\mbox{$Q \subset \P^r$} and a smooth point $p \in Q$ and associates to it 
the fundamental locus of the linear projection $\Bl_{p}(Q) \to \P^{r-1}$, 
which is a quadric $Q_- \subset \P^{r-2} \subset \P^{r-1}$ of dimension by~2 less than~$Q$.
From the above geometric perspective it is clear that the hyperbolic reduction procedure is invertible:
the inverse operation, which we call a hyperbolic extension, 
takes a quadric $Q \subset \P^{r}$ and a hyperplane embedding~\mbox{$\P^{r} \hookrightarrow \P^{r+1}$} 
and associates to it the quadric $Q_+ \subset \P^{r+2}$ obtained by blowing up $Q \subset \P^{r+1}$ 
and then contracting the strict transform of $\P^{r} \subset \P^{r+1}$.

The operations of hyperbolic reduction and extension can be defined in relative setting, 
i.e., for quadric bundles $Q \subset \P_X(\cE) \to X$ over any base~$X$,
and, moreover, can be lifted to operations on quadratic forms.
For the reduction a smooth point is replaced by a section~\mbox{$X \to Q$} that does not pass through singular points of fibers,
or more generally, by a \emph{regular isotropic subbundle}~$\cF\ \subset \cE$,
and for the extension a hyperplane embedding is replaced 
by an embedding $\cE \hookrightarrow \cE'$ of vector bundles of arbitrary corank.
We define these operations for quadratic forms and quadric bundles in~\S\ref{subsec:hr-he} and~\S\ref{subsec:he} 
and say that quadratic forms~$(\cE,q)$ and~$(\cE',q')$ or quadric bundles~$Q$ and~$Q'$ over~$X$ 
are \emph{hyperbolic equivalent} if they can be connected by a chain of hyperbolic reductions and extensions.

While the construction of hyperbolic reduction is quite straightforward in the general case,
this is far from true for hyperbolic extension.
In fact, when we start with an extension~\mbox{$0 \to \cE \to \cE' \to \cG \to 0$} of vector bundles,
where the bundle~$\cG$ has rank \emph{greater than~$1$},
this operation does not have a simple geometric description (as in the rank~1 case);
moreover, the set~$\HE(\cE,q,\eps)$ of all hyperbolic extensions of~$(\cE,q)$ 
with respect to an extension class~$\eps \in \Ext^1(\cG,\cE)$
is empty unless a certain obstuction class~$q(\eps,\eps) \in \Ext^2(\bw2\cG,\cL^\vee)$ vanishes,
and when the obstruction is zero, $\HE(\cE,q,\eps)$ is a principal homogeneous space 
under the natural action of the group~$\Ext^1(\bw2\cG,\cL^\vee)$.
This can be seen even in the simplest case where the extension is split, i.e., $\cE' = \cE \oplus \cG$ ---
in this case the obstruction vanishes and the corresponding hyperbolic extensions 
have the form~$\cE_+ = \cE \oplus \cG_+$, where~$\cG_+$ is an arbitrary extension of~$\cG$ by~$\cL^\vee \otimes \cG^\vee$
with the class in the subspace~$\Ext^1(\bw2\cG,\cL^\vee) \subset \Ext^1(\cG, \cL^\vee \otimes \cG^\vee)$.
For a discussion of a slightly more complicated situation see Remark~\ref{rem:q-eps-0}.
In general the situation is similar but even more complicated.
The construction of hyperbolic extension explained in~\S\ref{subsec:he} (see Theorem~\ref{thm:he})
is the first main result of this paper.

As we mentioned above hyperbolic equivalence does not change the basic invariants of a quadratic form.
In~\S\ref{subsection:he} we prove the following 
(for the definition of the Clifford algebra~$\Cl_0(\cE,q)$ we refer to~\cite{K08}).

\begin{proposition}
\label{prop:he-invariants}
Let~$(\cE,q)$ and~$(\cE',q')$ be hyperbolic equivalent generically non-degenerate quadratic forms over~$X$
and let~$Q \to X$ and~$Q' \to X$ be the corresponding hyperbolic equivalent quadric bundles,
where~$X$ is a scheme over a field~$\kk$ of characteristic not equal to~$2$.
Then 
\begin{enumerate}\setcounter{enumi}{-1}
\item 
\label{item:dim-mod-2}
One has $\dim(Q/X) \equiv \dim(Q'/X) \bmod 2$.
\item 
\label{item:he-cokernel}
The cokernel sheaves~$\cC(Q) = \cC(q)$ and~$\cC(Q') = \cC(q')$ are isomorphic up to twist by a line bundle on~$X$
and their isomorphism is compatible with the shifted quadratic forms~\eqref{eq:cokernel-form}.
\item 
\label{item:he-discriminant}
The discriminant divisors $\operatorname{Disc}_{Q/X} \subset X$ and $\operatorname{Disc}_{Q'/X} \subset X$ of~$Q$ and $Q'$ coincide.
\item 
\label{item:he-clifford}
The even parts of Clifford algebras $\Cl_0(\cE, q)$ and $\Cl_0(\cE', q')$ on~$X$ are Morita equivalent.
\item 
\label{item:l-equivalence}
If~$\dim(Q/X) = \dim(Q'/X)$ then~$[Q] = [Q']$ in the Grothendieck ring of varieties~$\Grr$.
\item 
\label{item:he-witt}
If the base scheme~$X$ is integral the classes of general fibers~$q_{\rK(X)}$ and~$q'_{\rK(X)}$ 
in the Witt group of quadratic forms over the field of rational functions $\rK(X)$ on~$X$ are equal.
If, moreover, $\dim(Q/X) = \dim(Q'/X)$ then $Q_{\rK(X)} \cong Q'_{\rK(X)}$ and~$Q$ is birational to~$Q'$ over~$X$.
\end{enumerate}
\end{proposition}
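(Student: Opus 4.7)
The overall strategy is that hyperbolic equivalence is, by definition, the equivalence relation generated by hyperbolic reductions and hyperbolic extensions, and by Theorem~\ref{thm:he} these two operations are mutually inverse in the appropriate sense. It therefore suffices to verify each statement (0)--(5) under a single hyperbolic reduction by a regular isotropic subbundle $\cF \subset \cE$, producing the reduced form $(\cE_-, q_-)$ on $\cE_- := \cF^\perp/\cF$. All the invariants listed are moreover insensitive to twisting $\cE$ by a line bundle, so any twist ambiguity incurred in the process is harmless.

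Items (0), (1), and (2) should follow from direct homological computation. Since $\rk(\cE_-) = \rk(\cE) - 2\rk(\cF)$, statement (0) is immediate. For (1), I would run a snake-lemma argument on the commutative diagram with rows the short exact sequences $0 \to \cF \otimes \cL \to \cF^\perp \otimes \cL \to \cE_- \otimes \cL \to 0$ and $0 \to \cE_-^\vee \to (\cF^\perp)^\vee \to \cF^\vee \to 0$, with vertical arrows induced by $q$; by isotropy the leftmost vertical arrow vanishes, the middle arrow is the restriction of $q$ to $\cF^\perp \otimes \cL$, and the rightmost is the induced form $q_-$. Comparing cokernels identifies $\cC(q_-)$ with $\cC(q)$ up to a line-bundle twist, and the self-duality of the diagram propagates to give compatibility with the shifted self-dual isomorphism~\eqref{eq:cokernel-form}. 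Statement (2) is then a consequence, since $\operatorname{Disc}_{Q/X}$ is the scheme-theoretic support of $\cC(q)$.

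Items (3) and (5) reduce to classical facts. For (3), the Morita equivalence $\Cl_0(\cE,q) \sim \Cl_0(\cE_-, q_-)$ under hyperbolic reduction is standard: the intertwining bimodule is constructed from the exterior algebra $\bigwedge^\bullet \cF$ (see \cite{K08}). For (5), restricting to the generic point $\rK(X)$ reduces the question to a nondegenerate quadratic form over a field, where hyperbolic reduction by a rank-$r$ isotropic subspace removes exactly $r$ hyperbolic planes and hence acts trivially on the Witt group. When additionally $\dim(Q/X) = \dim(Q'/X)$, the generic fibers $q_{\rK(X)}$ and $q'_{\rK(X)}$ are nondegenerate, have the same dimension, and have the same Witt class, and hence are isomorphic over $\rK(X)$; any such isomorphism yields a birational equivalence $Q \dashrightarrow Q'$ over $X$.

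The main obstacle is statement (4), because birationality from (5) only gives equality in $\Grr$ up to lower-dimensional corrections. The plan is to derive an explicit identity in $\Grr$ comparing $[Q]$ with $[Q_-]$ for a single hyperbolic reduction. Geometrically, $\P_X(\cF) \subset Q$, and linear projection from this locus realizes $Q$ birationally, after blowing up $\P_X(\cF)$, as a projective bundle over $Q_-$ away from the discriminant locus; the extra contributions along the discriminant depend only on the cokernel sheaf $\cC(Q)$ preserved by (1)--(2). Applying such an identity along a chain of hyperbolic reductions and extensions connecting $Q$ to $Q'$ at equal total relative dimension, the auxiliary projective-bundle factors cancel and equality $[Q] = [Q']$ follows. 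The delicate step will be to formulate the discriminant contributions intrinsically in terms of the cokernel data so that they manifestly agree on the two sides of the chain.
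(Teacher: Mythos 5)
Items~\eqref{item:dim-mod-2}--\eqref{item:he-discriminant} and~\eqref{item:he-witt} in your proposal follow the same route as the paper: reduce to a single hyperbolic reduction, use the symmetric diagram comparing $q$ and $q_-$ to identify $\cC(q_-)$ with $\cC(q)$ compatibly (the paper phrases this via the bicomplex~\eqref{diagram:cc-qminus} from~\cite{KS18}, which amounts to the same snake computation), and pass to the generic fiber for the Witt class. For~\eqref{item:he-clifford} the idea is also close, though the global Morita bimodule the paper actually constructs is not $\bigwedge^\bullet\cF$ itself but rather the right ideal $\cP = \Ima\big(\bigwedge^{k}\cF \otimes \Cl_{-k}(\cE,q) \to \Cl_0(\cE,q)\big)$ with $k = \rk\cF$; the full exterior algebra only appears in the local verification that $\cP$ gives a Morita equivalence.

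The genuine gap is in your treatment of~\eqref{item:l-equivalence}. Two problems. First, your guiding picture --- that the corrections in a $\Grr$-identity between $[Q]$ and $[Q_-]$ ``depend only on the cokernel sheaf $\cC(Q)$'' --- is not what the blow-up/projection computation yields. The identity from~\cite[Cor.~2.7]{KS18} for a reduction by a rank-$d$ isotropic subbundle is
\[
[Q] = [Q_-]\,\LL^{d} + [X]\,[\P^{d-1}]\,\big(1  + \LL^{\dim(Q/X) - d + 1}\big),
\]
so the correction is a universal expression in $[X]$ and $\LL$, not a discriminant term; there is no ``delicate step'' of reinterpreting it in terms of $\cC(Q)$, and looking for one would lead you astray. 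Second and more seriously, even granted this identity you cannot simply compose it along an arbitrary chain of reductions and extensions. Each step gives a one-sided relation $[Q_i] = [Q_{i\pm 1}]\LL^{d_i} + c_i$, and chasing these through a chain that goes both up and down in dimension forces you to multiply by powers of $\LL$ and then try to cancel them; since $\LL$ is a zero divisor in $\Grr$, this cancellation is not automatic. The paper avoids all of this by a structurally different argument: it shows $Q_U \cong Q'_U$ over a Zariski cover $\{U_i\}$ of $X$, using that over an affine base with $\cL$ trivial every extension class vanishes, so any chain of hyperbolic equivalences can be replaced by exhibiting both $Q$ and $Q'$ as reductions of a single $\hat Q$, and then proving by induction on the rank of the isotropic subbundle that two such reductions are locally isomorphic; the global identity $[Q]=[Q']$ then follows by inclusion--exclusion over the cover. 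This local-to-global argument gives a strictly stronger conclusion (local isomorphism) and sidesteps the $\LL$-cancellation issue entirely. If you want to rescue your approach, you would at minimum need to first prove that the chain can be replaced by a ``single common extension'' chain, which is exactly the reduction the paper carries out locally --- at which point you might as well use the paper's argument.
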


In the rest of the paper we explore if the converse of Proposition~\ref{prop:he-invariants}\eqref{item:he-cokernel} is true.
More precisely, we discuss the following

\begin{question}
\label{main-question}
Does the cokernel sheaf endowed with its shifted quadratic form~\eqref{eq:cokernel-form} 
determine the hyperbolic equivalence class of quadratic forms?
\end{question}

At this point it makes sense to explain the relation of hyperbolic equivalence to Witt groups.
Recall that the Witt group~$\rW(K)$ of a field~$K$ is defined as 
the quotient of the monoid of isomorphism classes of \emph{non-degenerate} quadratic forms~$(V,q)$, 
where~$V$ is a $K$-vector space and~$q \in \Sym^2V^\vee$ is a non-degenerate quadratic form,
by the class of the hyperbolic plane~$\left(K^{\oplus 2}, \left(\begin{smallmatrix} 0 & 1 \\ 1 & 0 \end{smallmatrix}\right)\right)$.
Similarly, the Witt group~$\rW(X)$ of a scheme~$X$ is defined~\cite{Knebusch} as 
the quotient of the monoid of isomorphism classes of \emph{unimodular}, i.e., everywhere non-degenerate quadratic forms~$(\cE,q)$,
where~$\cE$ is a vector bundle on~$X$ and~\mbox{$q \in \Hom(\cO_X,\Sym^2\cE^\vee)$} is everywhere non-degenerate,
by the classes of \emph{metabolic forms}~$\left(\cF \oplus \cF^\vee, \left(\begin{smallmatrix} 0 & 1 \\ 1 & q' \end{smallmatrix}\right)\right)$.
As it is explained in the survey~\cite{Balmer}, modifying the standard duality operation on the category of vector bundles on~$X$
one can define the Witt group~$\rW(X,\cL)$ 
that classifies classes of line bundle valued non-degenerate quadratic forms $q \colon \Sym^2\cE \to \cL^\vee$.
Moreover, a trick described in~\cite{BFF} allows one to define the Witt group~$\rWnu(X,\cL)$ of \emph{non-unimodular quadratic forms}
(i.e., forms that are allowed to be degenerate)
as the usual Witt group of the category of morphisms of vector bundle.
Thus, quadratic forms~\eqref{eq:morphism-general} define elements of~$\rWnu(X,\cL)$.

It is well known that hyperbolic reduction (as defined above) does not change the class of a quadratic form~$(\cE,q)$
in the Witt group~$\rWnu(X,\cL)$ (see, e.g., \cite[\S1.1.5]{Balmer}, where it is called \emph{sublagrangian reduction}).
On the other hand, Witt equivalence may change the cokernel sheaf of a quadratic form,
e.g., for any morphism $\varphi \colon \cE_1 \to \cE_2$ of vector bundles the class 
of the quadratic form 
$\left(\cE_1 \oplus \cE_2^\vee, \left(\begin{smallmatrix} 0 & \varphi \\ \varphi^\vee & 0 \end{smallmatrix}\right)\right)$
in the Witt group~$\rWnu(X,\cO_X)$ is zero, 
but the corresponding cokernel sheaf $\cC \cong \Coker(\varphi) \oplus \Coker(\varphi^\vee)$ is non-trivial unless~$\varphi$ is an isomorphism.
Therefore, Question~\ref{main-question} does not reduce to a question about Witt groups.

To answer Question~\ref{main-question} (in the case~$X = \P^n$) we define the following 
two basic hyperbolic equivalence invariants of quadratic forms
that take values in the non-unimodular Witt group~$\rWnu(\kk)$ of the base field~$\kk$.
Here and everywhere below we assume that the characteristic of~$\kk$ is not equal to~2.

To define the first invariant, assume $X$ is a $\kk$-scheme with a $\kk$-point $x \in X(\kk)$.
We fix a trivialization of~$\cL_x$ and define
\begin{equation}
\label{eq:wx}
\rw_x(\cE,q) := [(\cE_x,q_x)] \in \rWnu(\kk)
\end{equation}
to be the class of the quadratic form~$q_x$ obtained 
as the composition~$\Sym^2\cE_x \xrightarrow{\ q\ } \cL_x^\vee \cong \kk$,
where the second arrow is given by the trivialization of~$\cL_x$
(we could also define~$\rw_x(\cE,q)$ to be the class of the quotient of~$(\cE_x,q_x)$ by the kernel; 
then it would take values in~$\rW(\kk)$).
The class~$\rw_x(\cE,q)$ depends on the choice of trivialization, but this is not a problem for our purposes.
If the scheme~$X$ has no~$\kk$-points, we could take~$x$ to be a~$\kk'$-point for any field extension~$\kk'/\kk$ 
and define~$\rw_x(\cE,q) \in \rWnu(\kk')$ in the same way. 

For the second invariant, assume $X$ is smooth, connected and proper $\kk$-scheme, $n = \dim(X)$ is even, 
and~$\cL \otimes \omega_X \cong \cM^2$ for a line bundle~$\cM$ on~$X$, where~$\omega_X$ is the canonical line bundle of~$X$.
Then we define the bilinear form
\begin{equation}
\label{def:q-cl-prime}
H^{n/2}(X, \cE \otimes \cM) \otimes H^{n/2}(X, \cE \otimes \cM) 
\xrightarrow{\ q\ }
H^{n}(X, \cL^\vee \otimes \cM \otimes \cM) \cong H^n(X,\omega_X) = \kk
\end{equation}
on the cohomology group $H^{n/2}(X, \cE \otimes \cM)$ which we denote~$H^{n/2}(q)$ or~$H^{n/2}(Q)$.
This form, of course, depends on the choice of the line bundle~$\cM$ (if $\Pic(X)$ has 2-torsion, there may be several choices),
but we suppress this in the notation.
The bilinear form~$H^{n/2}(q)$ is symmetric if $n/2$ is even (and skew-symmetric otherwise) and possibly degenerate.
Anyway, if~$n$ is divisible by~4, we denote its class in the non-unimodular Witt group by
\begin{equation}
\label{eq:hw}
\hw(\cE,q) := [H^{n/2}(X, \cE \otimes \cM), H^{n/2}(q)] \in \rWnu(\kk)
\end{equation} 
(again, we could define~$\hw(\cE,q)$ to be the class of the quotient of~$H^{n/2}(q)$ by its kernel; 
then it would take values in~$\rW(\kk)$).
As before, the class~$\hw(\cE,q)$ depends on the choice of isomorphism~$\cL \otimes \omega_X \cong \cM^2$, 
but this is still not a problem.

Note that when~$\kk$ is alegbraically closed, $\rW(\kk) \cong \ZZ/2$
and so, if the corresponding forms are non-degenerate, the invariants $\rw_x(\cE,q)$ and~$\hw(\cE,q)$ take values in~$\ZZ/2$,
and do not depend on extra choices.
In this case $\rw_x(\cE,q)$ is just the parity of the rank of~$\cE$ and~$\hw(\cE,q)$ is the parity of the rank of~$H^{n/2}(q)$.

The second main result of this paper is the affirmative answer to Question~\ref{main-question} in the case~$X = \P^n$.
Recall that $\Pic(\P^n) = \ZZ$, hence any line bundle~$\cL$ has the form~$\cL = \cO(-m)$ for some~$m \in \ZZ$.
We need to define the following two ``standard'' types of unimodular quadratic forms with values in~$\cO(m)$:
\begin{align}
\label{eq:trivial-linear}
(\cE,q) &\cong 
\moplus_{\hphantom{+n}i \equiv m \bmod 2\hphantom{+1}} W^i \otimes \cO((m + i)/2) ,
&&\text{or} 
\\
\label{eq:trivial-omega}
(\cE,q) &\cong 
\moplus_{i \equiv m + n + 1 \bmod 2} W^i \otimes \Omega^{n/2}((m + n + 1 + i)/2),
&&\text{if $n$ is even,} 
\end{align}
where~$q$ is the sum of tensor products of the natural pairings 
\begin{align*}
\cO((m - i)/2) \otimes \cO((m + i)/2) &\xrightarrow{\ \hphantom{\wedge}\ } \cO(m),\\
\Omega^{n/2}((m + n + 1 - i)/2) \otimes \Omega^{n/2}((m + n + 1 + i)/2) &\xrightarrow{\ \wedge\ } \Omega^n(m + n + 1 ) \cong \cO(m)
\end{align*}
(the second is given by wedge product, hence it is symmetric if~$n/2$ is even and skew-symmetric if~$n/2$ is odd) 
and of non-degenerate bilinear forms~$q_{W^i} \colon W^{-i} \otimes W^{i} \to \kk$
which for~$i = 0$ are symmetric in the case~\eqref{eq:trivial-linear} and~\eqref{eq:trivial-omega} with~$n/2$ even
and skew-symmetric in the case~\eqref{eq:trivial-omega} with~$n/2$ odd.

Recall that the cokernel sheaf~$\cC(q)$ of a quadratic form~$(\cE,q)$ is endowed 
with the shifted self-duality isomorphism~$\bar{q}$, see~\eqref{eq:cokernel-form}.
In conditions~\eqref{item:wx} and~\eqref{item:hw} of the theorem 
we use the same trivialization of~$\cO(-m)_x$ and the same isomorphism~$\cO(-m) \otimes \omega_{\P^n} \cong \cM^2$
for~$(\cE_1,q_1)$ and~$(\cE_2,q_2)$.

\begin{theorem}
\label{thm:he-intro}
Let~$\kk$ be a field of characteristic not equal to~$2$ and let $X = \P^n$ be a projective space over~$\kk$.
Let $\cE_1(-m) \xrightarrow{\ q_1\ } \cE_1^\vee$ and $\cE_2(-m) \xrightarrow{\ q_2\ } \cE_2^\vee$ 
be generically non-degenerate self-dual morphisms over~$\P^n$.
Assume there is an isomorphism of sheaves~$\cC(q_1) \cong \cC(q_2)$ compatible with the quadratic forms~$\bar{q}_1$ and~$\bar{q}_2$.
Then~$(\cE_1,q_1)$ is hyperbolic equivalent to the direct sum of~$(\cE_2,q_2)$ 
and one of the standard quadratic forms~\eqref{eq:trivial-linear} or~\eqref{eq:trivial-omega}, 
where~$W^i = 0$ for~$i \ne 0$ and~$q_{W^0}$ is anisotropic.

If, moreover, the following conditions hold true:
\begin{enumerate}
\item 
\label{item:wx}
if $m$ is even then $\rw_x(\cE_1,q_1) = \rw_x(\cE_2,q_2) \in \rWnu(\kk)$ for some $\kk$-point~$x \in \P^n$;
\item 
\label{item:hw}
if $m$ is odd and $n$ is divisible by~$4$ then $\hw(\cE_1,q_1) = \hw(\cE_2,q_2) \in \rWnu(\kk)$;
\end{enumerate}
then~$(\cE_1,q_1)$ is hyperbolic equivalent to~$(\cE_2,q_2)$.
\end{theorem}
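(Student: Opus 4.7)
My plan is to first reduce to VLC representatives, then classify them structurally, eliminate the resulting standard summands by hyperbolic reduction, and finally use the invariants $\rw_x$ or $\hw$ to kill an anisotropic residue. The first step applies the VLC reduction theorem announced in the introduction, replacing both $(\cE_1,q_1)$ and $(\cE_2,q_2)$ by hyperbolic equivalent forms whose underlying bundles are VLC; by Proposition~\ref{prop:he-invariants}\eqref{item:he-cokernel} the cokernel sheaves and their shifted self-dualities~$\bar q$ are preserved (up to a harmless line bundle twist), so the hypotheses carry over and we may assume both~$\cE_i$ are VLC.

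The technical heart of the proof, and where I expect the main obstacle, is a structural classification of VLC self-dual morphisms on~$\P^n$ in terms of their cokernels. Using the Beilinson-type vanishing defining VLC bundles together with the self-duality $q\colon\cE(-m)\to\cE^\vee$, one expects~$\cE$ to decompose as a direct sum of standard pieces of the form $\cO(a)$ and twists of~$\Omega^{n/2}$, with~$q$ pairing these summands via the dualities $\cO((m-i)/2)\leftrightarrow\cO((m+i)/2)$ and $\Omega^{n/2}((m+n+1-i)/2)\leftrightarrow\Omega^{n/2}((m+n+1+i)/2)$. The off-diagonal ($i\ne 0$) pieces and the ``diagonal'' $i=0$ piece $W^0\otimes\cO(m/2)$ (or its $\Omega^{n/2}$-analogue) are unimodular and so invisible to~$\cC(q)$. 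The key claim is that $(\cC(q),\bar q)$ rigidly determines the remaining part of the decomposition, so that two VLC representatives with compatibly isomorphic cokernel data differ by a direct summand which is a standard form of type~\eqref{eq:trivial-linear} (when~$m$ is even) or~\eqref{eq:trivial-omega} (when~$m$ is odd and~$n$ is even).

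Such a standard summand is then eliminated using hyperbolic equivalence: for each $i>0$ the subbundle $W^{-i}\otimes\cO((m-i)/2)$ (or its $\Omega^{n/2}$-analogue) is a regular isotropic subbundle, and hyperbolic reduction along it simultaneously annihilates both~$W^{\pm i}$. Iterating kills all $i\ne 0$ pieces, and further reductions along isotropic $\kk$-subspaces of $(W^0,q_{W^0})$ replace it by its anisotropic kernel. This establishes the first assertion of the theorem.

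For the refined second assertion I would compute the residual invariants on the leftover $W^0$ piece. When~$m$ is even, the fiber of $W^0\otimes\cO(m/2)$ at a $\kk$-point $x\in\P^n$ recovers $(W^0,q_{W^0})$, so $\rw_x(\cE_1,q_1)-\rw_x(\cE_2,q_2)=[(W^0,q_{W^0})]$ in $\rWnu(\kk)$. When~$m$ is odd and $4\mid n$, the isomorphism $\cO(-m)\otimes\omega_{\P^n}\cong\cM^{2}$ is solved by $\cM=\cO(-(m+n+1)/2)$, giving $H^{n/2}(\P^n,\Omega^{n/2}((m+n+1)/2)\otimes\cM)=H^{n/2}(\P^n,\Omega^{n/2})=\kk$, and a parallel computation yields $\hw(\cE_1,q_1)-\hw(\cE_2,q_2)=[(W^0,q_{W^0})]$. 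The hypothesis thus forces $[(W^0,q_{W^0})]=0$ in $\rWnu(\kk)$; since $q_{W^0}$ is anisotropic, it must be zero, and $(\cE_1,q_1)$ is hyperbolic equivalent to~$(\cE_2,q_2)$.
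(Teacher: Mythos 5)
Your first step (reduce both forms to VLC representatives by hyperbolic equivalence, carrying the cokernel data along via Proposition~\ref{prop:he-invariants}) matches the paper, and your final elimination of the standard summand and the $\rw_x$/$\hw$ computations at the end also match the paper's proof essentially verbatim. The gap is precisely where you flag the "technical heart," and the gap is real.

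First, your proposed structural classification is false. You assert that a VLC self-dual morphism $q\colon\cE(-m)\to\cE^\vee$ should force $\cE$ to be a direct sum of line bundles $\cO(a)$ and twists of $\Omega^{n/2}$, with $q$ pairing them off as in~\eqref{eq:trivial-linear} and~\eqref{eq:trivial-omega}. Those standard forms are by construction \emph{unimodular} (they have $\cC(q)=0$), so if $\cE$ decomposed into such pieces then $\cC(q)$ would vanish. For any symmetric sheaf $\cC$ with nontrivial support (e.g.\ the cokernel on a nontrivial hypersurface), the VLC bundle in a self-dual resolution is a genuinely indecomposable, non-split bundle — it is controlled only by a vanishing condition on intermediate cohomology (Definition~\ref{def:lacm-uacm}), not by a direct-sum decomposition. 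So there is no "remaining part of the decomposition" in the sense you describe; the whole of $\cE_{\mn}$ is the remaining part, and it is not of standard shape.

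Second, the actual mechanism by which $(\cC(q),\bar q)$ determines the minimal VLC part is the uniqueness of linearly minimal VHC resolutions (Theorem~\ref{prop:hacm-uniqueness}) applied together with the self-duality; this is Proposition~\ref{prop:qb-isomorphism}. This requires the complex-of-split-bundles technology of~\S\ref{sec:hacm} (hypercohomology spectral sequences for length-$(2k+1)$ complexes, degeneration analysis, and the nilpotence statement used to upgrade the VHC-resolution isomorphism to an isomorphism of quadratic forms via a unipotent self-adjoint argument). None of this appears in your sketch, and the false decomposition claim does not substitute for it.

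Third, for $n=2k+1$ there is a subtlety you do not address: a VHC resolution of a symmetric sheaf $\cC$ is \emph{not} unique — it depends on the choice of a graded Lagrangian $\SS$-submodule $\rA^k\subset\HH^k(\cC)$ for the pairing~\eqref{eq:hh-k-cf-pairing} (Lemma~\ref{lemma:hhkce-lagrangian}, Theorem~\ref{prop:hacm-uniqueness}). The Modification Theorem (Theorem~\ref{thm:he-hacm}) is stated precisely so that one can prescribe $\HH^k(\cE_\mn^\vee)=\rA^k$, and the proof of Theorem~\ref{thm:he-intro} chooses the \emph{same} $\rA^k$ (derived from $(\cE_2,q_2)$) when applying the Modification Theorem to both $(\cE_1,q_1)$ and $(\cE_2,q_2)$. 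Without this consistency, Proposition~\ref{prop:qb-isomorphism} does not apply and you cannot conclude $(\cE_{1,\mn},q_{1,\mn})\cong(\cE_{2,\mn},q_{2,\mn})$. This choice needs to be made explicit to close the argument in the odd case.
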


If~$\kk$ is algebraically closed and~$x$ is chosen away from the support of~$\cC(q_i)$, condition~\eqref{item:wx} in the theorem
just amounts to~$\cE_1$ and~$\cE_2$ having ranks of the same parity.
Similarly, condition~\eqref{item:hw} amounts to the forms~$\hw(\cE_i,q_i)$ having ranks of the same parity.

Note also that adding a standard summand of type~\eqref{eq:trivial-linear} with~$W^i = 0$ for~$i \ne 0$ and~$\dim(W^0) = 1$
corresponds geometrically to replacing a quadric bundle~\mbox{$Q \subset \P_{\P^n}(\cE) \to \P^n$}
by the quadric bundle $\tilde{Q} \to \P^n$, where $\tilde{Q} \to \P_{\P^n}(\cE)$ is the double covering branched along~$Q$
(note that this operation changes the parity of the rank of~$\cE$).
The geometric meaning of adding a trivial summand of type~\eqref{eq:trivial-omega} is not so obvious.

\begin{remark}
The condition of compatibility of an isomorphism~$\cC(q_1) \cong \cC(q_2)$ with the shifted quadratic forms~$\bar{q}_1$ and~$\bar{q}_2$ 
may seem subtle, but in many applications it is easy to verify.
For instance, if the sheaves~$\cC(q_i)$ are \emph{simple}, i.e., $\End(\cC(q_i)) \cong \kk$,
then a non-degenerate shifted quadratic form on~$\cC(q_i)$ is unique up to scalar,
so if~$\kk$ is quadratically closed then any isomorphism of~$\cC(q_i)$ after appropriate rescaling
is compatible with the shifted quadratic forms.
\end{remark}

To prove Theorem~\ref{thm:he-intro} we develop in~\S\ref{sec:hacm} the theory of what we call \emph{VHC morphisms} 
(here VHC stands for \emph{vanishing of half cohomology}).
These are morphisms of vector bundles $\cE_\rL \to \cE_\rU$ on~$\P^n$ such that
\begin{align*}
H^p(\P^n,\cE_\rL(t)) &= 0 &&\text{for $1 \le p \le \lfloor n/2 \rfloor$} && \text{and all $t \in \ZZ$, and}\\
H^p(\P^n,\cE_\rU(t)) &= 0 &&\text{for $\lceil n/2 \rceil \le p \le n - 1$} && \text{and all $t \in \ZZ$}
\end{align*}
(we say then that~$\cE_\la$ is VLC as its \emph{lower} intermediate cohomology vanishes, 
and~$\cE_\ua$ is VUC as its \emph{upper} intermediate cohomology vanishes).
The main results of this section are Theorem~\ref{prop:hacm-uniqueness},
in which we prove the uniqueness (under appropriate assumptions) of VHC resolutions,
and Corollary~\ref{cor:hacm-resolution}, proving the existence of VHC resolutions for any sheaf of projective dimension~1.

In~\S\ref{sec:hacm-pn} we apply this technique to the case of resolutions of \emph{symmetric sheaves} (see Definition~\ref{def:sym-sheaf}).
Any cokernel sheaf~$\cC(q)$ is symmetric, and conversely, if $X = \P^n$ then under a mild technical assumption any symmetric sheaf
is isomorphic to $\cC(q)$ for some self-dual morphism $q \colon \cE(-m) \to \cE^\vee$ 
(see~\cite{CC97} or Theorem~\ref{thm:hacm-existence} and Remark~\ref{remark:epw} in~\S\ref{sec:hacm-pn}).

Our main technical result here is the Modification Theorem (Theorem~\ref{thm:he-hacm})
in which we show that any self-dual morphism over~$\P^n$ is hyperbolic equivalent 
to the sum of a self-dual VHC morphism and a standard unimodular self-dual morphism 
of type~\eqref{eq:trivial-linear} or~\eqref{eq:trivial-omega}.
This implies Theorem~\ref{thm:he-intro}, see~\S\ref{subsec:proofs} for the proof.

Combining Theorem~\ref{thm:he-intro} with Proposition~\ref{prop:he-invariants} we obtain the following corollary,
which for simplicity we state over an algebraically closed ground field.

\begin{corollary}
\label{corollary:invariants}
Let~$\kk$ be an algebraically closed field of characteristic not equal to~$2$.
Let~$Q \to \P^n$ and~$Q' \to \P^n$ be generically smooth quadric bundles such that 
there is an isomorphism of the cokernel sheaves~$\cC(Q) \cong \cC(Q')$ compatible with their shifted quadratic forms.
If $n$ is divisible by~$4$ and~$m$ is odd assume also that~\mbox{$\rk(H^{n/2}(Q)) \equiv \rk(H^{n/2}(Q')) \bmod 2$},
where the quadratic forms~$H^{n/2}(Q)$ and~$H^{n/2}(Q')$ are defined by~\eqref{def:q-cl-prime}.
Then
\begin{enumerate}
\item 
\label{item:cor-brauer-even}
If $\dim(Q/\P^n)$ and $\dim(Q'/\P^n)$ are even then the corresponding discriminant double covers \mbox{$S \to \P^n$} and~\mbox{$S' \to \P^n$} 
are isomorphic over~$\P^n$,
and the Brauer classes \mbox{$\upbeta_S \in \Br(S_{\le 1})$} and~\mbox{$\upbeta'_{S} \in \Br(S'_{\le 1})$} 
on the corank~$\le 1$ loci inside~$S$ and~$S'$ are equal.
\item 
\label{item:cor-brauer-odd}
If $\dim(Q/\P^n)$ and $\dim(Q'/\P^n)$ are odd then the corresponding discriminant root stacks \mbox{$S \to \P^n$} and~\mbox{$S' \to \P^n$} 
are isomorphic over~$\P^n$,
and the Brauer classes \mbox{$\upbeta_S \in \Br(S_{\le 1})$} and~\mbox{$\upbeta'_{S} \in \Br(S'_{\le 1})$} 
on the corank~$\le 1$ loci inside~$S$ and~$S'$ are equal.
\item 
\label{item:cor-l-equivalence}
If~$\dim(Q/\P^n) = \dim(Q'/\P^n)$ then~$[Q] = [Q']$ in the Grothendieck ring of varieties~$\Grr$.
\item 
\label{item:cor-birational-equivalence}
If~$\dim(Q/\P^n) = \dim(Q'/\P^n)$ then there is a birational isomorphism $Q \sim Q'$ over~$\P^n$.
\end{enumerate}
\end{corollary}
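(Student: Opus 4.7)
The plan is to establish hyperbolic equivalence $(\cE_1,q_1) \sim (\cE_2,q_2)$ by invoking Theorem~\ref{thm:he-intro}, and then extract each of the four conclusions from Proposition~\ref{prop:he-invariants}. The numerical conditions of Theorem~\ref{thm:he-intro} simplify considerably over an algebraically closed field: $\rW(\kk) = \ZZ/2$, and evaluating at a $\kk$-point $x$ away from the discriminant loci of $Q$ and $Q'$ (possible since these are proper closed subsets of $\P^n$), the invariant $\rw_x(\cE_i,q_i)$ is just $\rk\cE_i \bmod 2$ while $\hw(\cE_i,q_i)$ is $\rk H^{n/2}(\cE_i \otimes \cM) \bmod 2$.

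First I would verify the hypotheses of Theorem~\ref{thm:he-intro}. The compatible isomorphism of cokernel sheaves is part of the hypothesis of the corollary. Under the dimension parity assumption of items~\eqref{item:cor-brauer-even} or~\eqref{item:cor-brauer-odd}, we have $\rk\cE_1 \equiv \rk\cE_2 \pmod{2}$ (using $\dim(Q_i/\P^n) = \rk\cE_i - 2$); under the assumption $\dim(Q/\P^n) = \dim(Q'/\P^n)$ of items~\eqref{item:cor-l-equivalence} and~\eqref{item:cor-birational-equivalence} we even have the stronger $\rk\cE_1 = \rk\cE_2$. Either way condition~\eqref{item:wx} of Theorem~\ref{thm:he-intro} holds when $m$ is even, and condition~\eqref{item:hw} (relevant only when $m$ is odd and $4 \mid n$) is exactly the supplementary hypothesis of the corollary. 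Hence Theorem~\ref{thm:he-intro} applies and $(\cE_1,q_1)$ is hyperbolic equivalent to $(\cE_2,q_2)$; correspondingly, the quadric bundles $Q$ and $Q'$ are hyperbolic equivalent.

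Items~\eqref{item:cor-l-equivalence} and~\eqref{item:cor-birational-equivalence} now follow immediately from Proposition~\ref{prop:he-invariants}\eqref{item:l-equivalence} and~\eqref{item:he-witt}. For items~\eqref{item:cor-brauer-even} and~\eqref{item:cor-brauer-odd}, Proposition~\ref{prop:he-invariants}\eqref{item:he-discriminant} gives equality of the discriminant divisors in $\P^n$, while Proposition~\ref{prop:he-invariants}\eqref{item:he-clifford} yields Morita equivalence of the even Clifford algebras $\Cl_0(\cE_i,q_i)$. The discriminant double cover (for $\dim(Q/\P^n)$ even) or root stack (for $\dim(Q/\P^n)$ odd) $S \to \P^n$ is recovered as the relative spectrum over the discriminant of the center of~$\Cl_0$, and the Brauer class $\upbeta_S$ on the corank-$\le 1$ locus is the class of $\Cl_0$ regarded as an Azumaya algebra over this center. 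Since both the center and this Azumaya Brauer class are Morita invariants, this yields the isomorphism $S \cong S'$ over $\P^n$ and the equality $\upbeta_S = \upbeta'_{S'}$. The main step requiring care is precisely this last geometric translation: producing a clean intrinsic description of $S$ and $\upbeta_S$ in terms of $\Cl_0$, uniformly in the parity of $\dim(Q/\P^n)$, so that a purely algebraic Morita equivalence transfers to the geometric Brauer data claimed in the corollary.
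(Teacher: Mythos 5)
Your proof is correct and follows essentially the same route as the paper's: reduce the hypotheses of Theorem~\ref{thm:he-intro} to parity conditions via $\rW(\kk) \cong \ZZ/2$ for $\kk$ algebraically closed (choosing $x$ off the discriminant), conclude hyperbolic equivalence, and then read off each item from the corresponding part of Proposition~\ref{prop:he-invariants}, with items~\eqref{item:cor-brauer-even}--\eqref{item:cor-brauer-odd} coming from the Morita equivalence of even Clifford algebras. The only extra content in your write-up, flagging that the discriminant cover/root stack and the Brauer class must be recovered Morita-invariantly from $\Cl_0$, is precisely the content the paper supplies in the paragraphs immediately preceding its proof (citing \cite[\S3.5--3.6]{K08}), so this is a reasonable place to be more explicit but not a gap or a divergence from the paper's argument.
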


To finish the Introduction it should be said that this paper was inspired by the recent paper~\cite{BKK},
where similar questions were discussed.
In particular, assertions~\eqref{item:cor-brauer-even}
and~\eqref{item:cor-birational-equivalence} of Corollary~\ref{corollary:invariants} in case~$n = 2$ have been proved there.
We refer to~\cite{BKK} for various geometric applications of these results.

On the other hand, we want to stress that the approach of the present paper is completely different: the results of~\cite{BKK} are based 
on an explicit computation of the Brauer class of a quadric bundle using the technique developed in~\cite{IOOV}.
It is unclear whether these methods can be effectively generalized to higher dimensions.

It also makes sense to mention that the technique of hyperbolic extensions and VHC resolutions 
developed in this paper can be used for other questions
related to quadric bundles over arbitrary schemes and vector bundles on projective spaces.

\smallskip 

{\bf Convention:}
Throughout the paper we work over an arbitrary field~$\kk$ of characteristic not equal to~2.

\smallskip 

{\bf Acknowledgements:} 
This paper owns its very existence to~\cite{BKK}, so I am very grateful to its authors for inspiration and useful discussions.
I would also like to thank Alexey Ananyevskiy for a suggestion that allowed me to improve significantly 
the results of Proposition~\ref{prop:he-invariants}\eqref{item:l-equivalence} 
and Corollary~\ref{corollary:invariants}\eqref{item:cor-l-equivalence}
and the anonymous referee for many useful comments about the first version of the paper.

\section{Quadric bundles and hyperbolic equivalence}

Recall from the Introduction the definition of a quadric bundle, 
of its associated quadratic form and self-dual morphism~\eqref{eq:morphism-general} 
(which we assume to be generically non-degenerate),
of the cokernel sheaf~\eqref{def:cc-q} and of its shifted self-duality~\eqref{eq:cokernel-form}.
Conversely, we denote by 
\begin{equation*}
Q(\cE,q) \subset \P_X(\cE)
\end{equation*}
the quadric bundle associated with a quadratic form~$(\cE,q)$ or a morphism~\eqref{eq:morphism-general}.

\subsection{Hyperbolic reduction}
\label{subsec:hr-he}

We start with the notion of hyperbolic reduction, which is well known, see~\cite{ABB,KS18}.
For the reader's convenience we remind the definition in a slightly different form.

Let~\eqref{eq:morphism-general} be a self-dual morphism of vector bundles on a scheme~$X$.
We will say that a vector subbundle~\mbox{$\phi \colon \cF \hookrightarrow \cE$} is {\sf regular isotropic}, if the composition
\begin{equation*}
\cE \xrightarrow{\ q\ } \cE^\vee \otimes \cL^\vee \xrightarrow{\ \phi^\vee\ } \cF^\vee \otimes \cL^\vee
\end{equation*}
is surjective and vanishes on the subbundle~$\cF \subset \cE$, 
i.e., $\cF$ is contained in the subbundle
\begin{equation}
\label{def:cf-perp}
\cF^\perp := \Ker ( \cE \twoheadrightarrow \cF^\vee \otimes \cL^\vee ) \subset \cE.
\end{equation}
If $\cF$ is regular isotropic, the restriction of~$q$ to~$\cF^\perp$ contains~$\cF$ in the kernel, 
hence induces a quadratic form on~$\cF^\perp/\cF$.
We summarize these observations in the following

\begin{lemma}
\label{lemma:h-reduction}
Let~\eqref{eq:morphism-general} be a self-dual morphism of vector bundles on a scheme~$X$.
Let~$\phi \colon \cF \hookrightarrow \cE$ be a regular isotropic subbundle. 
Denote 
\begin{equation*}
\cE_- := \cF^\perp / \cF.
\end{equation*}
The restriction of~$q$ to~$\cF^\perp$ induces
a self-dual morphism $q_- \colon \cE_- \otimes \cL \to \cE_-^\vee$ 
such that there is an isomorphism~$\cC(q_-) \cong \cC(q)$ of the cokernel bundles 
compatible with their shifted self-dualities~$\bar{q}$ and~$\bar{q}_-$.
\end{lemma}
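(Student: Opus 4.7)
The plan is to exploit the three-step filtration $0 \subset \cF \subset \cF^\perp \subset \cE$ and the dual three-step filtration on $\cE^\vee$. Dualizing the defining short exact sequence $0 \to \cF^\perp \to \cE \to \cF^\vee \otimes \cL^\vee \to 0$ produces an inclusion $\cF \otimes \cL \hookrightarrow \cE^\vee$, while dualizing $\cF \hookrightarrow \cE$ exhibits $(\cE/\cF)^\vee = \Ker(\phi^\vee)$ as a subbundle of $\cE^\vee$. These nest as $\cF \otimes \cL \subset (\cE/\cF)^\vee \subset \cE^\vee$ with successive quotients $\cE_-^\vee$ and $\cF^\vee$. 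The key compatibilities of $q$ with this pair of filtrations are: (a) $q(\cF^\perp \otimes \cL) \subset (\cE/\cF)^\vee$, which is immediate from the very definition of $\cF^\perp$ (the composition $\phi^\vee \circ q$ vanishes on $\cF^\perp \otimes \cL$); and (b) the restriction $q|_{\cF \otimes \cL} \colon \cF \otimes \cL \to \cE^\vee$ coincides with the tautological inclusion $\cF \otimes \cL \hookrightarrow \cE^\vee$ just described, which follows by dualizing the defining surjection of $\cF^\perp$ and applying self-duality of $q$.

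With (a) and (b) in hand, $q$ descends to a morphism on the middle associated graded piece, giving $q_- \colon \cE_- \otimes \cL \to \cE_-^\vee$, and its self-duality is inherited from that of $q$ by functoriality of sub and quotient. For the cokernel comparison I would apply the snake lemma twice. First, to the diagram with rows $0 \to \cF^\perp \otimes \cL \to \cE \otimes \cL \to \cF^\vee \to 0$ and $0 \to (\cE/\cF)^\vee \to \cE^\vee \to \cF^\vee \to 0$ and vertical arrows $q|_{\cF^\perp}$, $q$, and $\id_{\cF^\vee}$; since the right vertical is an isomorphism, this gives $\Coker(q|_{\cF^\perp}) \cong \Coker(q) = \cC(q)$. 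Second, to the diagram with rows $0 \to \cF \otimes \cL \to \cF^\perp \otimes \cL \to \cE_- \otimes \cL \to 0$ and $0 \to \cF \otimes \cL \to (\cE/\cF)^\vee \to \cE_-^\vee \to 0$ and vertical arrows $\id$, $q|_{\cF^\perp}$, and $q_-$; the left vertical is an isomorphism by (b), yielding $\Coker(q_-) \cong \Coker(q|_{\cF^\perp})$. Combining the two produces the desired isomorphism $\cC(q_-) \cong \cC(q)$.

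The main obstacle will be verifying that this isomorphism intertwines the shifted self-dualities $\bar{q}$ and $\bar{q}_-$ of~\eqref{eq:cokernel-form}. The cleanest approach, I expect, is to interpret everything in the derived category: the two-term complexes $[\cE \otimes \cL \xrightarrow{q} \cE^\vee]$ and $[\cE_- \otimes \cL \xrightarrow{q_-} \cE_-^\vee]$ (placed in degrees $-1$ and $0$) both compute the corresponding cokernel sheaves, and the self-duality statements are manifest at the complex level as quasi-isomorphisms between each complex and its $\cL$-twisted shifted dual. The snake lemma comparison above then lifts to a quasi-isomorphism between the two two-term complexes, and the fact that the identity map on $\cF \otimes \cL$ from (b) is identified with its own dual under the canonical pairing ensures that this lift is compatible with the self-dual structures. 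Applying $\cExt^1(-,\cL)$ and passing to cohomology then yields the required compatibility of $\bar{q}$ with $\bar{q}_-$.
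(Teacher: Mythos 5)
Your argument is essentially the paper's, reorganized: the two snake-lemma diagrams you write down are precisely the two halves of the $2\times3$ bicomplex~\eqref{diagram:cc-qminus}, the first comparing its middle column with the right one and the second with the left one. Observations (a), (b), and the cokernel identification are all correct.

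The one place where the bicomplex packaging genuinely earns its keep is the compatibility with the shifted self-dualities, and this is exactly the point your sketch glosses over. Your roof in the derived category has middle vertex $[\cF^\perp\otimes\cL \to (\cE/\cF)^\vee]$, but this complex is \emph{not} self-dual: its $\cL$-twisted shifted dual is the different complex $[(\cE/\cF)\otimes\cL\to(\cF^\perp)^\vee]$. Consequently, compatibility of the resulting derived isomorphism with $\bar q$ and $\bar q_-$ does not follow merely from the self-duality of the identity on $\cF\otimes\cL$; one must further exhibit a quasi-isomorphism from the middle vertex to its own twisted dual (the natural one induced by $\cF^\perp\to\cE/\cF$ and the dual map $(\cE/\cF)^\vee\to(\cF^\perp)^\vee$, the relevant square commuting because $q^\vee=q$ restricted along $\cF^\perp\hookrightarrow\cE$) and verify that it intertwines your roof with the twisted dual roof. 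The paper's bicomplex~\eqref{diagram:cc-qminus} is manifestly isomorphic to its own $\cL$-twisted dual, and that isomorphism visibly interchanges the column filtration (computing $\bar q$) with the row filtration (computing $\bar q_-$), so no separate step is needed. To complete your version, either carry out the extra verification just indicated, or simply reassemble your two snake diagrams into the self-dual bicomplex.
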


\begin{proof}
The result follows from the argument of~\cite[Lemma~2.4]{KS18}. 
Indeed, it is explained in \emph{loc.\ cit.}\/ that the cokernel sheaf~$\cC(q_-)$ is isomorphic to the cohomology of the bicomplex
(cf.~\cite[(2)]{KS18})
\begin{equation}
\label{diagram:cc-qminus}
\vcenter{\xymatrix@C=5em{
\cF \otimes \cL \ar[r]^\phi \ar[d]_\id & 
\cE \otimes \cL \ar[r]^{\phi^\vee \circ q} \ar[d]^q &
\cF^\vee \ar[d]^\id
\\
\cF \otimes \cL \ar[r]^{q \circ \phi} & 
\cE^\vee \ar[r]^{\phi^\vee} &
\cF^\vee.
}}
\end{equation}
Its left and right columns are acyclic, while the middle one coincides with~\eqref{eq:morphism-general}, 
hence~\mbox{$\cC(q_-) \cong \cC(q)$}. 
Furthermore, using the self-duality of~$q$, we see that the dual of~\eqref{diagram:cc-qminus} twisted by~$\cL$ is isomorphic to~\eqref{diagram:cc-qminus},
and moreover, this isomorphism is compatible with the isomorphism of the dual of~\eqref{eq:morphism-general} 
twisted by~$\cL$ with~\eqref{eq:morphism-general}.
This means that the isomorphism of the cokernel sheaves~\mbox{$\cC(q_-) \cong \cC(q)$} is compatible with their shifted self-dualities.
\end{proof}

The operation 
\begin{equation*}
(\cE,q) \mapsto (\cE_-,q_-)
\qquad\text{or}\qquad 
Q(\cE,q) \mapsto Q(\cE_-,q_-)
\end{equation*}
defined in Lemma~\ref{lemma:h-reduction} is called {\sf hyperbolic reduction} 
of a quadratic form (resp.\ of a quadric bundle) with respect to the subbundle~$\cF$.
As explained in~\cite[Proposition~2.5]{KS18}, this operation can be interpreted geometrically 
in terms of the linear projection of $Q \subset \P_X(\cE)$
from the linear subbundle~\mbox{$\P_X(\cF) \subset Q \subset \P_X(\cE)$}.

The next simple lemma motivates the terminology.

\begin{lemma}
\label{lemma:witt}
Assume $X$ is integral and~$\rK(X)$ is the field of rational functions on~$X$.
If $Q/X$ is a generically non-degenerate quadric bundle and $Q_-/X$ is its hyperbolic reduction, 
then the quadratic forms~$q_{\rK(X)}$ and~$({q_-})_{\rK(X)}$ 
corresponding to their general fibers are equal in the Witt group~$\rW(\rK(X))$ of~$\rK(X)$.
\end{lemma}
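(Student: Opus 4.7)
The plan is to reduce the statement to a classical fact about Witt groups of fields by restricting everything to the generic point of $X$.

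First, I would pass to the generic point $\eta = \Spec \rK(X)$. Since $q$ is generically non-degenerate, the restriction $q_\eta \colon \cE_\eta \otimes \cL_\eta \to \cE_\eta^\vee$ is an isomorphism, so $(\cE_\eta, q_\eta)$ is a non-degenerate quadratic space over $\rK(X)$ (after trivializing $\cL_\eta$, which does not affect the Witt class). The inclusion $\phi \colon \cF \hookrightarrow \cE$ restricts to an inclusion $\cF_\eta \hookrightarrow \cE_\eta$, and the regularity condition $\phi^\vee \circ q \circ \phi = 0$ shows that $\cF_\eta$ is a totally isotropic subspace of $\cE_\eta$ with respect to $q_\eta$.

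Next, I would verify that the generic fiber of the hyperbolic reduction coincides with the reduction at the generic fiber. Since formation of the subbundle $\cF^\perp$ in~\eqref{def:cf-perp} is compatible with flat pullback, one has $(\cF^\perp)_\eta = (\cF_\eta)^\perp$, the orthogonal complement computed using $q_\eta$; hence $(\cE_-)_\eta = (\cF_\eta)^\perp/\cF_\eta$, and $(q_-)_\eta$ is exactly the quadratic form induced on this quotient by $q_\eta$.

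The conclusion then follows from the classical fact in the theory of quadratic forms over fields (characteristic $\ne 2$): if $(V, q)$ is a non-degenerate quadratic space over a field $K$ and $W \subset V$ is a totally isotropic subspace, then there is an orthogonal decomposition
\begin{equation*}
(V, q) \cong (W^\perp/W, \bar q) \perp H^{\dim W},
\end{equation*}
where $H$ denotes the hyperbolic plane. This is proved by choosing a subspace $W' \subset V$ complementary to $W^\perp$; the pairing $q$ identifies $W'$ with $W^\vee$, so that $W \oplus W'$ is hyperbolic of rank $2\dim W$, and the orthogonal complement of $W \oplus W'$ in $V$ maps isomorphically onto $W^\perp/W$ compatibly with the induced forms. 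Applying this with $V = \cE_\eta$, $W = \cF_\eta$ gives $[q_\eta] = [(q_-)_\eta]$ in $\rW(\rK(X))$, which is exactly the desired equality. There is no substantive obstacle here; the only point requiring care is the compatibility of $\cF^\perp$ with restriction to the generic point, which is immediate from flatness.
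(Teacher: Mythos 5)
Your proof is correct and follows essentially the same route as the paper: reduce to the generic point via compatibility of hyperbolic reduction with base change, then invoke the classical decomposition $(V,q)\cong (W^\perp/W,\bar q)\perp H^{\dim W}$ for a totally isotropic $W$ in a non-degenerate quadratic space. The paper simply states this last fact as ``easy to see,'' whereas you spell out the choice of complement and the identification $W'\cong W^\vee$; the underlying argument is identical.
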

\begin{proof}
Hyperbolic reduction commutes with base change, so the question reduces to the case where the base is the spectrum of~$\rK(X)$,
i.e., to the case of hyperbolic reduction of a quadric $Q_{\rK(X)} \subset \P(E_{\rK(X)})$ 
with respect to a linear subspace $F_{\rK(X)} \subset E_{\rK(X)}$.
In this case~$q_-$ is the induced quadratic form on~$F_{\rK(X)}^\perp/F_{\rK(X)}$ 
(the orthogonal is taken with respect to the quadratic form~$q$).
It is easy to see that the quadratic form~$q$ is isomorphic to the orthogonal sum $q_- \perp q_0$ of~$q_-$ 
with the hyperbolic form $q_0 = \left(\begin{smallmatrix} 0 & 1_{\dim(F)} \\ 1_{\dim(F)} & 0 \end{smallmatrix}\right)$,
hence~\mbox{$q = q_-$} in the Witt group~$\rW(\rK(X))$.
\end{proof}

The following obvious lemma shows that hyperbolic reduction is transitive.

\begin{lemma}
\label{lemma:hr-transitive}
Let~$(\cE_-,q_-)$ be the hyperbolic reduction of~$(\cE,q)$ 
with respect to a regular isotropic subbundle~\mbox{$\cF \hookrightarrow \cE$}
and let~$(\cE_{--},q_{--})$ be the hyperbolic reduction of~$(\cE_-,q_-)$ 
with respect to a regular isotropic subbundle~\mbox{$\cF_- \hookrightarrow \cE_-$}.
Then~$(\cE_{--},q_{--})$ is a hyperbolic reduction of~$(\cE,q)$.
\end{lemma}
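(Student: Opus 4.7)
The plan is to exhibit a single regular isotropic subbundle of~$\cE$ that realizes $(\cE_{--},q_{--})$ as the hyperbolic reduction of~$(\cE,q)$. Let $\pi\colon \cF^\perp \twoheadrightarrow \cE_-$ denote the canonical quotient (well-defined because~$\cF$ is regular isotropic, so~$\cF^\perp$ is a vector subbundle of~$\cE$), and set
\[
\cF' := \pi^{-1}(\cF_-) \;\subset\; \cF^\perp \;\subset\; \cE.
\]
The short exact sequence $0 \to \cF \to \cF' \to \cF_- \to 0$ exhibits~$\cF'$ as an extension of vector bundles, and since $\cE/\cF'$ is a successive extension of~$\cE/\cF^\perp$ by~$\cE_-/\cF_-$ it is locally free, so~$\cF'$ is a vector subbundle of~$\cE$.

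Next I would verify that~$\cF'$ is regular isotropic in~$(\cE,q)$. Isotropy is immediate: for local sections $s,t$ of~$\cF'\subset \cF^\perp$ the pairing $\langle q(s),t\rangle$ descends to~$\langle q_-(\bar s), \bar t\rangle$ with $\bar s,\bar t\in\cF_-$, and this vanishes by isotropy of~$\cF_-$. For the regularity condition, dualize $0\to\cF\to\cF'\to\cF_-\to 0$ and twist by~$\cL^\vee$ to obtain
\[
0 \to \cF_-^\vee\otimes\cL^\vee \to (\cF')^\vee\otimes\cL^\vee \to \cF^\vee\otimes\cL^\vee \to 0.
\]
Projecting further, the composition $\cE\xrightarrow{q}\cE^\vee\otimes\cL^\vee \to (\cF')^\vee\otimes\cL^\vee \to \cF^\vee\otimes\cL^\vee$ is the regularity map of~$\cF$, which is surjective; meanwhile, the restriction of $\cE\to(\cF')^\vee\otimes\cL^\vee$ to~$\cF^\perp$ is killed by projection to~$\cF^\vee\otimes\cL^\vee$, hence lands in the subbundle~$\cF_-^\vee\otimes\cL^\vee$, and, since~$\cF$ maps to zero (by isotropy of~$\cF$), factors through $\cE_-\to\cF_-^\vee\otimes\cL^\vee$, the regularity map of~$\cF_-$, which is also surjective. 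A short diagram chase (or the snake lemma applied to the above three-term sequence) now gives surjectivity of~$\cE \to (\cF')^\vee\otimes\cL^\vee$.

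Finally I would identify the hyperbolic reduction with respect to~$\cF'$. Since $(\cF')^\vee \twoheadrightarrow \cF^\vee$, the subbundle $(\cF')^\perp=\Ker(\cE\to(\cF')^\vee\otimes\cL^\vee)$ is contained in~$\cF^\perp$, and inside~$\cF^\perp$ it coincides with the kernel of the composition $\cF^\perp\xrightarrow{\pi}\cE_- \to \cF_-^\vee\otimes\cL^\vee$, namely with $\pi^{-1}(\cF_-^\perp)$. Consequently
\[
(\cF')^\perp/\cF' \;=\; \pi^{-1}(\cF_-^\perp)\,/\,\pi^{-1}(\cF_-) \;=\; \cF_-^\perp/\cF_- \;=\; \cE_{--},
\]
and the self-dual morphism induced on this quotient is, by its two-step construction via~$q_-$ on~$\cE_-$, precisely~$q_{--}$. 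The only step that is not purely formal is the surjectivity of $\cE\to(\cF')^\vee\otimes\cL^\vee$, but this is handled by the two-step diagram chase above, so no serious obstacle is expected.
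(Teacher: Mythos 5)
Your proof is correct and takes exactly the same route as the paper: the paper defines $\tilde\cF := \pi^{-1}(\cF_-) \subset \cF^\perp \subset \cE$ (your $\cF'$), notes the exact sequence $0 \to \cF \to \tilde\cF \to \cF_- \to 0$, and simply asserts that $\tilde\cF$ is regular isotropic with hyperbolic reduction $(\cE_{--},q_{--})$. You have filled in the verification that the paper leaves implicit, and those details (isotropy, the two-step surjectivity chase, the identification $(\cF')^\perp/\cF' = \cF_-^\perp/\cF_-$) are all correct.
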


\begin{proof}
Let~$\tilde\cF \subset \cF^\perp$ be the preimage of~$\cF_- \subset \cE_-$ 
under the map~$\cF^\perp \twoheadrightarrow \cF^\perp/\cF = \cE_-$,
so that there is an exact sequence~$0 \to \cF \to \tilde\cF \to \cF_- \to 0$ and an embedding~$\tilde\cF \hookrightarrow \cE$.
Then~$\tilde\cF$ is regular isotropic and the hyperbolic reduction of~$(\cE,q)$ 
with respect to~$\tilde\cF$ is isomorphic to~$(\cE_{--},q_{--})$.
\end{proof}

In the next subsection we will describe a construction inverse to hyperbolic reduction,
and in the rest of this subsection we introduce the input data for that construction.

Assume $\cF \subset \cE$ is a regular isotropic subbundle with respect to a quadratic form~$q$ 
and let~$(\cE_-,q_-)$ be the hyperbolic reduction of~$(\cE,q)$ with respect to~$\cF$.
Consider the length~3 filtration
\begin{equation}
\label{eq:ce-filtration}
0 \hookrightarrow \cF \hookrightarrow \cF^\perp \hookrightarrow \cE.
\end{equation}
Its associated graded is $\gr^\bullet(\cE) = \cF \oplus \cE_- \oplus (\cF^\vee \otimes \cL^\vee)$.
In particular, we have two exact sequences
\begin{align}
\label{eq:cf-cfperp-cem}
0 \to \cF \to \cF^\perp \to \cE_- \to 0,\\
\label{eq:cem-cecf-cfvee}
0 \to \cE_- \to \cE/\cF \to \cF^\vee \otimes \cL^\vee \to 0.
\end{align}
The next lemma describes a relation between their extension classes.

\begin{lemma}
\label{lemma:eps-q-eps}
Let $\eps \in \Ext^1(\cF^\vee \otimes \cL^\vee, \cE_-)$ be the extension class of~\eqref{eq:cem-cecf-cfvee}.
Then the extension class of~\eqref{eq:cf-cfperp-cem} is equal to~$q_-(\eps)$, 
the Yoneda product of~$\eps$ with the map $q_- \colon \cE_- \to \cE_-^\vee \otimes \cL^\vee$, so that
\begin{equation*}
q_-(\eps) \in \Ext^1(\cF^\vee \otimes \cL^\vee, \cE_-^\vee \otimes \cL^\vee) \cong \Ext^1(\cE_-,\cF).
\end{equation*}
Moreover, the Yoneda product $q_-(\eps,\eps) := q_-(\eps) \circ \eps \in \Ext^2(\cF^\vee \otimes \cL^\vee, \cF)$ vanishes.
\end{lemma}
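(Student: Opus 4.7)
For part~(1), the plan is to exhibit a morphism of short exact sequences linking~\eqref{eq:cem-cecf-cfvee} with the $\cL^\vee$-twisted dual of~\eqref{eq:cf-cfperp-cem}. The key observation is that regular isotropy of~$\cF$ gives $q(\cF,\cF^\perp) = 0$, so the composition $\cE \xrightarrow{\ q\ } \cE^\vee \otimes \cL^\vee \twoheadrightarrow (\cF^\perp)^\vee \otimes \cL^\vee$ descends to a morphism $\cE/\cF \to (\cF^\perp)^\vee \otimes \cL^\vee$. Using self-duality of~$q$, this morphism restricts to~$q_-$ on the subsheaf $\cE_- \subset \cE/\cF$ and projects to the identity on the quotient~$\cF^\vee \otimes \cL^\vee$, giving a commutative diagram
\[
\begin{CD}
0 @>>> \cE_- @>>> \cE/\cF @>>> \cF^\vee \otimes \cL^\vee @>>> 0 \\
@. @VV{q_-}V @VVV @VV{\id}V @. \\
0 @>>> \cE_-^\vee \otimes \cL^\vee @>>> (\cF^\perp)^\vee \otimes \cL^\vee @>>> \cF^\vee \otimes \cL^\vee @>>> 0.
\end{CD}
\]
The bottom row has extension class~$\delta$ under the canonical duality isomorphism $\Ext^1(\cF^\vee \otimes \cL^\vee, \cE_-^\vee \otimes \cL^\vee) \cong \Ext^1(\cE_-, \cF)$, while the top row has class~$\eps$. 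Naturality of the connecting homomorphism then reads $(q_-)_*\eps = \id^*\delta = \delta$, which is the first assertion.

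For part~(2), the idea is to work directly with the three-step filtration~\eqref{eq:ce-filtration}. Taking the pushout of $0 \to \cF^\perp \to \cE \to \cF^\vee \otimes \cL^\vee \to 0$ (with boundary~$\eta$) along the quotient $\pi \colon \cF^\perp \twoheadrightarrow \cE_-$ recovers~\eqref{eq:cem-cecf-cfvee}, so in derived-category language $\eps = \pi[1] \circ \eta$ as a morphism $\cF^\vee \otimes \cL^\vee \to \cE_-[1]$. On the other hand, the exact triangle $\cF \to \cF^\perp \xrightarrow{\pi} \cE_- \xrightarrow{\delta} \cF[1]$ forces $\delta \circ \pi = 0$. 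Combining with part~(1),
\[
q_-(\eps,\eps) \;=\; \delta[1] \circ \eps \;=\; \delta[1] \circ \pi[1] \circ \eta \;=\; (\delta \circ \pi)[1] \circ \eta \;=\; 0,
\]
as desired.

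The main obstacle will be the compatibility check for the middle vertical morphism in part~(1): namely, that $\cE/\cF \to (\cF^\perp)^\vee \otimes \cL^\vee$ restricts to~$q_-$ on~$\cE_-$ (which is where self-duality of~$q$ enters essentially, as it identifies this restriction with the induced form on $\cF^\perp/\cF$) and that it projects to the identity on~$\cF^\vee \otimes \cL^\vee$ (which is immediate from the defining property of~$\cF^\perp$). Once the diagram is in place, both claims follow formally from naturality of extensions and the triangle axioms.
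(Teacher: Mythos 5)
Your proof is correct and takes essentially the same route as the paper's. For part~(1), your diagram linking~\eqref{eq:cem-cecf-cfvee} with the $\cL^\vee$-twisted dual of~\eqref{eq:cf-cfperp-cem} is the same diagram as in the paper, up to the canonical identification $(\cE^\vee \otimes \cL^\vee)/\cF \cong (\cF^\perp)^\vee \otimes \cL^\vee$, and the pushout/naturality argument matches. For part~(2), the paper simply cites the standard fact that the Yoneda product of the two extension classes attached to a three-step filtration vanishes; you instead supply the short direct derivation of that fact, factoring $\eps = \pi[1]\circ\eta$ via the pushout description of~\eqref{eq:cem-cecf-cfvee} and using $\delta\circ\pi = 0$ from the triangle $\cF \to \cF^\perp \xrightarrow{\pi} \cE_- \xrightarrow{\delta} \cF[1]$. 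This is a reasonable unpacking of the paper's one-line appeal, and everything checks out.
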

\begin{proof}
Tensoring diagram~\eqref{diagram:cc-qminus} by~$\cL^\vee$ and taking quotients by~$\cF$ 
we obtain a morphism of exact sequences
\begin{equation*}
\xymatrix{
0 \ar[r] &
\cE_- \ar[r] \ar[d]_{q_-} &
\cE/\cF \ar[r] \ar[d]_q &
\cF^\vee \otimes \cL^\vee \ar[r] \ar@{=}[d] &
0
\\
0 \ar[r] &
\cE_-^\vee \otimes \cL^\vee \ar[r] &
(\cE^\vee \otimes \cL^\vee)/\cF \ar[r] &
\cF^\vee \otimes \cL^\vee \ar[r] &
0
}
\end{equation*}
This is a pushout diagram and the extension class of the top row is~$\eps$, hence the extension class of the bottom row is~$q_-(\eps)$.
It remains to note that the bottom row is the twisted dual of~\eqref{eq:cf-cfperp-cem}.

Since the sequences~\eqref{eq:cf-cfperp-cem} and~\eqref{eq:cem-cecf-cfvee} come from a length~3 filtration of~$\cE$,
the Yoneda product of their extension classes vanishes.
\end{proof}

We axiomatize the property of the class~$\eps$ observed in Lemma~\ref{lemma:eps-q-eps} as follows 
(recall that for $s \in \ZZ$ we denote by~$[s]$ the shift by~$s$ in the derived category).

\begin{definition}
\label{def:q-eps-eps}
Let~\eqref{eq:morphism-general} be a self-dual morphism,
let~$\cG$ be a vector bundle on~$X$,
and let $\eps \in \Ext^1(\cG,\cE)$ be an extension class.
We define the classes $q(\eps) \in \Ext^1(\cE, \cG^\vee \otimes \cL^\vee)$ and
$q(\eps,\eps) \in \Ext^2(\cG,\cG^\vee \otimes \cL^\vee)$ as the Yoneda products 
\begin{equation*}
q(\eps) \colon 
\cE \xrightarrow{\ q\ } 
\cE^\vee \otimes \cL^\vee \xrightarrow{\ \eps\ } 
\cG^\vee \otimes \cL^\vee[1]
\qquad\text{and}\qquad 
q(\eps,\eps) \colon 
\cG \xrightarrow{\ \eps\ } 
\cE[1] \xrightarrow{\ q(\eps)\ } 
\cG^\vee \otimes \cL^\vee[2].
\end{equation*}
We say that~$\eps$ is {\sf $q$-isotropic} if $q(\eps,\eps) = 0$.
\end{definition}

Using this terminology we can reformulate Lemma~\ref{lemma:eps-q-eps} by saying that the class of~\eqref{eq:cem-cecf-cfvee} is $q_-$-isotropic.

\begin{remark}
It is easy to see 
that $q(\eps,\eps) \in \Ext^2(\bw2\cG,\cL^\vee) \subset \Ext^2(\cG \otimes \cG,\cL^\vee) = \Ext^2(\cG,\cG^\vee \otimes \cL^\vee)$.
Indeed, the morphism $q(\eps,\eps) = \eps \circ q \circ \eps$
is symmetric because $q$ is, hence it defines a morphism $\Sym^2(\cG[-1]) \to \cL^\vee$, 
and it remains to note that $\Sym^2(\cG[-1]) \cong \bw2\cG[-2]$.
\end{remark}

\subsection{Hyperbolic extension}
\label{subsec:he}

The following definition is central for this section.

\begin{definition}
\label{def:he}
Given a self-dual morphism~\eqref{eq:morphism-general} and a $q$-isotropic extension class $\eps \in \Ext^1(\cG,\cE)$ 
we say that $(\cE_+,q_+)$ is a {\sf hyperbolic extension of $(\cE,q)$ with respect to~$\eps$}
if there is a regular isotropic embedding~$\cL^\vee \otimes \cG^\vee \hookrightarrow \cE_+$ 
such that the hyperbolic reduction of~$(\cE_+,q_+)$ with respect to~$\cL^\vee \otimes \cG^\vee$ 
is isomorphic to~$(\cE,q)$ and the induced extension $0 \to \cE \to \cE_+/(\cL^\vee \otimes \cG^\vee) \to \cG \to 0$ has class~$\eps$.
\end{definition}

We denote by~$\HE(\cE,q,\eps)$ the set of isomorphism classes of all hyperbolic extensions of~$(\cE,q)$ 
with respect to a $q$-isotropic extension class~$\eps$.
The main goal of this section is to show that~$\HE(\cE,q,\eps)$ is non-empty;
we will moreover see that this set may be quite big.

We start, however, with a simpler case, where the set~$\HE(\cE,q,\eps)$ consists of a single element.

\begin{proposition}
\label{prop:he-rank-1}
Let~\eqref{eq:morphism-general} be a self-dual morphism of vector bundles.
If~$\cG$ is a line bundle then for any extension class~\mbox{$\eps \in \Ext^1(\cG,\cE)$} 
there exists a unique \textup(up to isomorphism\textup) hyperbolic extension of~$(\cE,q)$ with respect to~$\eps$.
\end{proposition}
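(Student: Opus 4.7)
Since $\cG$ is a line bundle we have $\bw2\cG = 0$, so the remark following Definition~\ref{def:q-eps-eps} shows that $q(\eps,\eps) \in \Ext^2(\bw2\cG,\cL^\vee) = 0$ automatically; thus every $\eps \in \Ext^1(\cG,\cE)$ is $q$-isotropic in the rank-one case, and the plan is simply to build a hyperbolic extension and check that it is essentially unique.

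For existence, I would reproduce the filtration $\cF \subset \cF^\perp \subset \cE_+$ (with $\cF = \cL^\vee \otimes \cG^\vee$) that any hyperbolic extension must carry. First, define $\cF^\perp$ as the extension of $\cE$ by $\cL^\vee \otimes \cG^\vee$ with class $q(\eps) \in \Ext^1(\cE, \cL^\vee \otimes \cG^\vee)$, as dictated by Lemma~\ref{lemma:eps-q-eps}. Next, seek $\cE_+$ as an extension $0 \to \cF^\perp \to \cE_+ \to \cG \to 0$ whose image in $\Ext^1(\cG,\cE)$ under $\cF^\perp \twoheadrightarrow \cE$ equals $\eps$. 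The obstruction, appearing via the connecting homomorphism in $\Ext^2(\cG,\cL^\vee \otimes \cG^\vee)$, is the Yoneda product $q(\eps) \circ \eps = q(\eps,\eps) = 0$, so a lift exists. Finally, equip $\cE_+$ with $q_+ \colon \cE_+ \otimes \cL \to \cE_+^\vee$ by specifying it on the associated graded as $q$ on $\cE$ and the tautological evaluation pairing between $\cL^\vee \otimes \cG^\vee$ and $\cG$; the off-diagonal matrix entries of $q_+$ are then forced by the extension classes $q(\eps)$ and the chosen lift. Self-duality of the resulting $q_+$ amounts to matching the filtration on $\cE_+$ with the dual filtration on $\cE_+^\vee \otimes \cL^\vee$, whose subquotients agree by construction.

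For uniqueness, any hyperbolic extension yields a two-stage presentation of the above form once a regular isotropic embedding of $\cL^\vee \otimes \cG^\vee$ is chosen, so two hyperbolic extensions correspond to two lifts $\tilde\eps,\tilde\eps' \in \Ext^1(\cG, \cF^\perp)$ of $\eps$. Their difference lies in
\[
\Ext^1(\cG, \cL^\vee \otimes \cG^\vee) = \Ext^1(\Sym^2\cG, \cL^\vee) \oplus \Ext^1(\bw2\cG, \cL^\vee),
\]
where the alternating summand vanishes since $\bw2\cG = 0$. The symmetric part can be absorbed by a unipotent automorphism of $\cE_+$ that preserves the filtration and commutes with $q_+$ (concretely, a modification of the chosen splitting of the top quotient $\cG$, transported through $q_+$ to the bottom piece~$\cF$). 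Hence any two hyperbolic extensions are isomorphic as quadratic forms, proving uniqueness.

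The main obstacle is not the abstract existence of a lift but the verification that $q_+$ genuinely is self-dual, and, correspondingly, that symmetric modifications of the lift yield isomorphic $(\cE_+, q_+)$. Both are diagram chases once the filtration is in place, but they must be carried out with care since the conclusion concerns isomorphism of \emph{quadratic forms}, not merely of underlying bundles; the same pattern — with the alternating component possibly nonzero and the torsor under $\Ext^1(\bw2\cG, \cL^\vee)$ potentially large — is presumably what drives the general construction promised in Theorem~\ref{thm:he}.
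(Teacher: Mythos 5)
Your route is genuinely different from the paper's. You organize the construction around the three-step filtration $\cF \subset \cF^\perp \subset \cE_+$ and its extension classes, whereas the paper constructs $\cE_+$ explicitly as a pushout of $\Sym^2\cE' \otimes \cG^\vee$ along $q$ and defines $q_+$ as the cokernel of $\Sym^2(\phi) \colon \Sym^4\cE' \otimes (\cG^\vee)^{\otimes 2} \to \Sym^2\cE_+$, comparing two length-five filtrations on $\Sym^4\cE' \otimes (\cG^\vee)^{\otimes 2}$ and $\Sym^2\cE_+$. The paper's construction makes the symmetry of $q_+$ manifest by design and yields uniqueness by reconstructing the diagram from any given $(\cE_+, q_+)$; your approach would, if completed, give a more structural and less computational proof.

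However, as written your proposal leaves exactly the two hard points unaddressed, and I think they are real gaps rather than routine diagram chases. First, the claim that the ``off-diagonal matrix entries of $q_+$ are then forced by the extension classes'' is not accurate: locally the off-diagonal entries $b \colon \cE \otimes \cG \to \cL^\vee$ and $c \colon \Sym^2\cG \to \cL^\vee$ of $q_+$ are genuinely free, and showing that (i) a filtration-compatible, self-dual $q_+$ with the prescribed graded pieces exists globally on $\cE_+$ and (ii) all such choices are conjugate under filtration-preserving automorphisms of $\cE_+$ requires an argument, not just ``matching filtrations.'' (For (ii) one has to explicitly exhibit the unipotent automorphisms; over an affine base a matrix computation works, but globally one must check that no cohomological obstruction intervenes.) Second, and more seriously, your uniqueness step presumes that a hyperbolic extension is determined by the filtered bundle alone — ``two hyperbolic extensions correspond to two lifts $\tilde\eps, \tilde\eps'$'' — which silently subsumes the assertion that $q_+$ is determined up to isomorphism by $(\cE_+,\cF)$. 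That is precisely what needs proof. The subsequent ``absorbing the symmetric part by a unipotent automorphism'' is the right intuition — it is indeed what makes the ambient torsor in Theorem~\ref{thm:he} be under $\Ext^1(\bw2\cG,\cL^\vee)$ rather than the full $\Ext^1(\cG,\cL^\vee\otimes\cG^\vee)$ — but it is asserted rather than carried out, and it is in fact the crux of the proposition. So the sketch is a reasonable plan but, as it stands, the proposition has not been proved.
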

\begin{proof}
We start by proving the existence of a hyperbolic extension.
The construction described below is an algebraic version of the geometric construction sketched in the Introduction.

Let 
\begin{equation}
\label{eq:general-extension}
0 \to \cE \to \cE' \to \cG \to 0
\end{equation}
be an extension of class~$\eps$ and consider its symmetric square 
$0 \to \Sym^2\cE \to \Sym^2\cE' \to \cE' \otimes \cG \to 0$,
its tensor product with~$\cG^\vee$, and its pushout along the map $\Sym^2\cE \otimes \cG^\vee \xrightarrow{\ q\ } \cL^\vee \otimes \cG^\vee$:
\begin{equation}
\label{eq:cep-pushout}
\vcenter{\xymatrix@R=3ex{
0 \ar[r] & 
\Sym^2\cE \otimes \cG^\vee \ar[r] \ar[d]_q & 
\Sym^2\cE' \otimes \cG^\vee \ar[r] \ar[d]_\phi & 
\cE' \ar[r] \ar@{=}[d] & 
0
\\
0 \ar[r] & 
\cL^\vee \otimes \cG^\vee \ar[r] & 
\cE_+ \ar[r] & 
\cE' \ar[r] & 
0,
}}
\end{equation}
defining a vector bundle~$\cE_+$ and a morphism~$\phi$.
We will show that~$\cE_+$ comes with a natural quadratic form~$q_+$ 
such that the embedding $\cL^\vee \otimes \cG^\vee \hookrightarrow \cE_+$ in the bottom row of~\eqref{eq:cep-pushout} is regular isotropic
and the corresponding hyperbolic reduction is isomorphic to~$(\cE,q)$.
For this we consider a component of the symmetric square of~$\phi$:
\begin{equation}
\label{eq:sym4-2}
\Sym^2(\phi) \colon \Sym^4\cE' \otimes (\cG^\vee)^{\otimes 2} \to \Sym^2\cE_+.
\end{equation}
We will show that its cokernel is canonically isomorphic to $\cL^\vee$, 
and we will take the cokernel morphism~$\Sym^2\cE_+ \to \cL^\vee$ as the definition of the quadratic form~$q_+$.

Indeed, considering~\eqref{eq:general-extension} as a length~2 filtration on~$\cE'$
and taking its fourth symmetric power we obtain a length~5 filtration on~$\Sym^4\cE' \otimes (\cG^\vee)^{\otimes 2}$ with factors
\begin{equation}
\label{eq:filt-e}
\Sym^4\cE \otimes (\cG^\vee)^{\otimes 2},\
\Sym^3\cE \otimes \cG^\vee,\
\Sym^2\cE,\
\cE \otimes \cG,\
\cG^{\otimes 2}.
\end{equation}
Similarly, the combination of the bottom row of~\eqref{eq:cep-pushout} with~\eqref{eq:general-extension} 
provides $\cE_+$ with a length~3 filtration which induces a length~5 filtration on~$\Sym^2\cE_+$ with factors
\begin{equation}
\label{eq:filt-e+-2}
(\cL^\vee)^{\otimes 2} \otimes (\cG^\vee)^{\otimes 2},\ 
\cE \otimes \cL^\vee \otimes \cG^\vee,\ 
\cL^\vee \oplus \Sym^2\cE,\ 
\cE \otimes \cG,\ 
\cG^{\otimes 2}.
\end{equation}
It is easy to check that the morphism~\eqref{eq:sym4-2} is compatible with the filtrations, induces isomorphisms of the last two factors,
epimorphisms on the first two factors, and the morphism
\begin{equation*}
\Sym^2\cE \xrightarrow{\ (q, \id)\ } \cL^\vee \oplus \Sym^2\cE
\end{equation*}
on the middle factors.
Therefore, the cokernel of~\eqref{eq:sym4-2} is canonically isomorphic to $\Coker(q,\id) \cong \cL^\vee$.
This induces a canonical morphism $q_+ \colon \Sym^2\cE_+ \to \cL^\vee$ 
which vanishes on the first two factors of~\eqref{eq:filt-e+-2} and restricts to the morphism~$(-\id,q)$ on the middle factor.

Since the morphism $q_+$ vanishes on the first 
factor~$(\cL^\vee)^{\otimes 2} \otimes (\cG^\vee)^{\otimes 2} \cong (\cG^\vee \otimes \cL^\vee)^{\otimes 2}$ of~\eqref{eq:filt-e+-2},
the subbundle $\cG^\vee \otimes \cL^\vee \hookrightarrow \cE_+$ is $q_+$-isotropic.
Similarly, since the morphism $q_+$ vanishes on the second factor of~\eqref{eq:filt-e+-2} 
and nowhere vanishes on the summand~$\cL^\vee \cong (\cG^\vee \otimes \cL^\vee) \otimes \cG$ of the third factor,
the subbundle~$\cG^\vee \otimes \cL^\vee \hookrightarrow \cE_+$ is regular isotropic, 
the underlying vector bundle of the hyperbolic reduction of $(\cE_+,q_+)$ is isomorphic to~$\cE$,
and the induced extension of~$\cG$ by~$\cE$ coincides with~\eqref{eq:general-extension}.
Finally, since the restriction of~$q_+$ to the summand~$\Sym^2\cE$ of the middle factor of~\eqref{eq:filt-e+-2} equals~$q$, 
the induced quadratic form on~$\cE$ is equal to~$q$.
Thus, $(\cE_+,q_+)$ is a hyperbolic extension of~$(\cE,q)$ with respect to~$\eps$.

Now we prove that the constructed hyperbolic extension is unique.
For this it is enough to show that for any hyperbolic extension~$(\cE_+,q_+)$ of~$(\cE,q)$ with respect to~$\eps$ 
there is a diagram~\eqref{eq:cep-pushout}
such that $q_+$ is the cokernel of $\Sym^2(\phi)$.

First, consider the morphism
\begin{equation*}
\phi_+ \colon \Sym^2\cE_+ \otimes \cG^\vee \to \cE_+,
\qquad 
e_1e_2 \otimes f \mapsto q_+(e_1,f) e_2 + q_+(e_2,f) e_1 - q_+(e_1,e_2) f,
\end{equation*}
where $e_i$ are sections of~$\cE_+$ and $f$ is a section of~$\cG^\vee$ that we consider as a subbundle in $\cE_+ \otimes \cL$.
The symmetric square of the exact sequence $0 \to \cL^\vee \otimes \cG^\vee \to \cE_+ \to \cE' \to 0$ tensored with~$\cG^\vee$ takes the form
\begin{equation*}
0 \to \cE_+ \otimes \cL^\vee \otimes \cG^\vee \otimes \cG^\vee \to \Sym^2\cE_+ \otimes \cG^\vee \to \Sym^2\cE' \otimes \cG^\vee \to 0,
\end{equation*}
where the first map takes $e \otimes f_1 \otimes f_2$ to $ef_1 \otimes f_2$.
The composition of this map with~$\phi_+$ acts as
\begin{equation*}
e \otimes f_1 \otimes f_2 \mapsto 
\phi_+(ef_1 \otimes f_2) = 
q_+(e,f_2)f_1 + q_+(f_1,f_2)e - q_+(e,f_1)f_2.
\end{equation*}
The second summand is zero because $\cL^\vee \otimes \cG^\vee  \subset \cE_+$ is isotropic 
and the first summand cancels with the last because the rank of~$\cG$ is~1, hence~$f_1$ and~$f_2$ are proportional.
Therefore, the map $\phi_+$ factors through a map $\phi \colon \Sym^2\cE' \otimes \cG^\vee \to \cE_+$.
Moreover, it is easy to see that this map fits into the diagram~\eqref{eq:cep-pushout}.
Finally, it is straightforward (but tedious) to check that the composition
\begin{equation*}
\Sym^4\cE' \otimes (\cG^\vee)^{\otimes 2} \xrightarrow{\ \Sym^2(\phi)\ }
\Sym^2\cE_+ \xrightarrow{\ q_+\ } \cL^\vee 
\end{equation*}
vanishes, and since~$q_+$ is a hyperbolic extension of~$q$, 
it vanishes on the first two factors of~\eqref{eq:filt-e+-2}
and induces the morphism~$\cL^\vee \oplus \Sym^2\cE \to \cL^\vee$ of the third factor which is equal to~$q$ on~$\Sym^2\cE$,
hence equal to~$(-\id,q)$ on this third factor, and thus coincides with the canonical cokernel of $\Sym^2(\phi)$.
\end{proof}

Note that the general case (where the rank of~$\cG$ is greater than~1) does not immediately reduce to a rank~1 case,
because a general vector bundle does not admit a filtration by line bundles.
Besides, even if such a filtration exists, it is hard to trace what happens with the obstructions
and to see how the nontrivial space of extensions shows up.
So, in the proof of the theorem below we use the projective bundle trick.

\begin{theorem}
\label{thm:he}
For any self-dual morphism~\eqref{eq:morphism-general} and a~$q$-isotropic extension class~\mbox{$\eps \in \Ext^1(\cG,\cE)$} 
the set~$\HE(\cE,q,\eps)$ of hyperbolic extensions of~$(\cE,q)$ with respect to~$\eps$ is non-empty and is 
a principal homogeneous variety under an action of the group~$\Ext^1(\bw2\cG,\cL^\vee)$.
\end{theorem}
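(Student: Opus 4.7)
My plan is to use the projective bundle trick announced just before the theorem, reducing everything to the rank-one Proposition~\ref{prop:he-rank-1}. Set $p \colon Y := \P_X(\cG^\vee) \to X$ with tautological inclusion \mbox{$j \colon \cO_Y(-1) \hookrightarrow p^*\cG$}, and form the rank-one extension class
$$
\tilde\eps := p^*\eps \circ j \in \Ext^1_Y(\cO_Y(-1), p^*\cE).
$$
Because $\cO_Y(-1)$ is a line bundle, $\tilde\eps$ is automatically $p^*q$-isotropic, and Proposition~\ref{prop:he-rank-1} produces a hyperbolic extension $(\tilde\cE_+, \tilde q_+)$ of $(p^*\cE, p^*q)$ on~$Y$, unique up to isomorphism, with regular isotropic line subbundle $p^*\cL^\vee \otimes \cO_Y(1) \hookrightarrow \tilde\cE_+$ whose three-step filtration has graded pieces $p^*\cL^\vee(1)$, $p^*\cE$, $\cO_Y(-1)$.

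Next I would descend $(\tilde\cE_+, \tilde q_+)$ to a hyperbolic extension $(\cE_+, q_+)$ on~$X$. The direct pushforward $p_*\tilde\cE_+$ reads off the two lower graded pieces (using $p_*\cO_Y(1) = \cG^\vee$, $p_*\cO_Y = \cO_X$, $p_*\cO_Y(-1) = 0$ and vanishing higher direct images on these pieces) and produces a vector bundle on~$X$ extending $\cE$ by $\cL^\vee \otimes \cG^\vee$; by Lemma~\ref{lemma:eps-q-eps} this extension must have class~$q(\eps)$ and will play the role of the orthogonal $(\cL^\vee \otimes \cG^\vee)^\perp \subset \cE_+$. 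To recover the missing top graded piece~$\cG$, I would apply the analogous pushforward to the twisted dual $\tilde\cE_+^\vee \otimes p^*\cL$, whose isomorphism with $\tilde\cE_+$ furnished by $\tilde q_+$ accesses the top filtration piece dually. Gluing the two pushforwards through the symmetric pairing yields the candidate $(\cE_+, q_+)$ as a vector bundle on~$X$ with three-step filtration of graded pieces $\cL^\vee \otimes \cG^\vee$, $\cE$, $\cG$, equipped with the required symmetric form.

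Finally, I would verify the hyperbolic extension property and analyze the ambiguity. The obstruction to assembling a length-three filtered vector bundle whose adjacent short extensions are~$\eps$ and~$q(\eps)$ is the Yoneda product $q(\eps) \circ \eps = q(\eps, \eps)$; the symmetry of~$\tilde q_+$ forces the relevant component of this obstruction to lie in $\Ext^2(\bw2\cG, \cL^\vee)$, and this vanishes by hypothesis. The ambiguity in the descent parametrizes precisely the outer extension class of the three-step filtration modulo the two fixed inner short extensions, and by the same symmetry it is valued in $\Ext^1(\bw2\cG, \cL^\vee)$; this yields both the free action and its transitivity on $\HE(\cE, q, \eps)$.

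The hardest step is the descent: verifying that the combined pushforward really is a vector bundle on~$X$ with the claimed filtration (rather than merely an object of the derived category), that $q_+$ descends to a well-defined self-dual morphism, and that both the obstruction and the ambiguity end up in the antisymmetric summand $\bw2\cG$ rather than in the full $\cG \otimes \cG$. The last point is ultimately a consequence of the symmetry of $\tilde q_+$, but tracking it through the projective bundle construction, especially when $\rk\cG > 2$ so that Yoneda/Massey products on $Y$ become nontrivial, requires careful bookkeeping.
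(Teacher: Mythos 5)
Your opening move matches the paper exactly: pull back to the projectivization of $\cG$, restrict $\pi^*\eps$ along the tautological line subbundle $\cO(-1) \hookrightarrow \pi^*\cG$ to get a rank-one extension class, and apply Proposition~\ref{prop:he-rank-1} to obtain the rank-one hyperbolic extension $(\tcE_+, \tilde q_+)$ over $\P_X(\cG)$. The gap is in the descent, and it is a real one, not just ``careful bookkeeping.''

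Concretely: $\tcE_+$ has rank $\rk(\cE) + 2$, with graded pieces $\pi^*\cL^\vee \otimes \cO(1)$, $\pi^*\cE$, $\cO(-1)$. Your direct pushforward $\pi_*\tcE_+$ kills the top piece (since $\pi_*\cO(-1) = 0$) and produces only a rank-$(\rk\cE + \rk\cG)$ bundle on $X$ --- the paper's $\cE''$, the orthogonal of the isotropic subbundle --- not the full $\cE_+$, which has rank $\rk\cE + 2\rk\cG$. Your proposed fix, pushing forward ``the twisted dual $\tcE_+^\vee \otimes \pi^*\cL$, whose isomorphism with $\tcE_+$ furnished by $\tilde q_+$,'' does not go through: $\tilde q_+$ is \emph{not} an isomorphism, only generically one (its cokernel is $\pi^*\cC(q)$, which is nonzero as soon as $q$ is not unimodular), and in any case the dual pushforward again loses a graded piece and the proposed ``gluing through the symmetric pairing'' is not a construction. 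Even if the two pushforwards could somehow be assembled, nothing in this procedure produces the outer extension class of the three-step filtration or makes visible the point where the obstruction $q(\eps,\eps) \in \Ext^2(\bw2\cG,\cL^\vee)$ enters or where the $\Ext^1(\bw2\cG,\cL^\vee)$-torsor structure comes from; your appeal to ``the symmetry of $\tilde q_+$'' is the right heuristic but has no supporting mechanism.

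The paper's actual proof inserts an essential intermediate step that your proposal skips. Before descending, the bundle $\tcE_+$ is enlarged to a rank-$(\rk\cE + 1 + \rk\cG)$ bundle $\hcE_+$ by adjoining a copy of $\pi^*\cL^\vee \otimes \Omega_\pi(1)$ in the kernel of the quadratic form: the $q$-isotropy of $\eps$ translates into $\gamma \circ \teps' = 0$, which is exactly the condition allowing this lift, and the set of lifts is a torsor under $\Ext^1(\cO(-1), \pi^*\cL^\vee\otimes\Omega_\pi(1)) \cong \Ext^1(\bw2\cG,\cL^\vee)$ --- this is where both the obstruction and the torsor structure of the theorem actually appear, computed explicitly rather than inferred from symmetry. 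The point of the $\Omega_\pi(1)$-enlargement is that, via the Euler sequence, it completes the $\cO(1)$ piece to a pullback $\pi^*\cG^\vee$; combined with the identification $\Ext^1(\cO(-1), \pi^*\cE'') \cong \Ext^1(\cG,\cE'')$ this lets one build $\cE_+$ on $X$ from a pullback diagram, and then the quadratic form descends because $\P_{\P_X(\cG)}(\hcE_+) \to \P_X(\cE_+)$ is identified with a blowup (of $\P_X(\cE'')$), so $\rho^*$ is fully faithful on sections of line bundles. None of that apparatus is a bookkeeping detail one could supply later; it is the proof. You should also address uniqueness (the ``every hyperbolic extension arises this way'' direction), which your sketch does not touch.
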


The action of the group~$\Ext^1(\bw2\cG,\cL^\vee)$ on the set~$\HE(\cE,q,\eps)$ will be constructed in course of the proof.

\begin{proof}
Consider the projectivization~$\pi \colon \P_X(\cG) \to X$ and the tautological line subbundle $\cO(-1) \hookrightarrow \pi^*\cG$.
Note that the quotient bundle~$\pi^*\cG/\cO(-1)$ can be identified with~$\cT_\pi(-1)$, 
where~$\cT_\pi$ is the relative tangent bundle for the morphism~$\pi$.
We denote by~$\gamma \in \Ext^1(\cT_\pi(-1),\cO(-1))$ the extension class of the tautological sequence
\begin{equation}
\label{eq:pi-tautological}
0 \to \cO(-1) \to \pi^*\cG \to \cT_\pi(-1) \to 0.
\end{equation}
Pulling back the class~$\pi^*\eps \in \Ext^1(\pi^*\cG, \pi^*\cE)$ along the embedding $\cO(-1) \hookrightarrow \pi^*\cG$ 
we obtain the extension
\begin{equation}
\label{eq:tce-prime}
0 \to \pi^*\cE \to \tilde\cE' \to \cO(-1) \to 0
\end{equation}
on~$\P_X(\cG)$; we denote its extension class by $\teps \in \Ext^1(\cO(-1), \pi^*\cE)$.

By Proposition~\ref{prop:he-rank-1} there is a unique hyperbolic extension of $(\pi^*\cE,\pi^*q)$ with respect to $\teps$
which is given by an extension of vector bundles
\begin{equation*}
0 \to \pi^*\cL^\vee \otimes \cO(1) \to \tilde\cE_+ \to \tilde\cE' \to 0
\end{equation*}
and a quadratic form $\tilde q_+ \colon \pi^*\cL \to \Sym^2\tilde\cE_+^\vee$.
We denote the extension class of the above sequence by
\begin{equation*}
\teps' \in \Ext^1(\tcE', \pi^*\cL^\vee \otimes \cO(1)).
\end{equation*}
Note that by Lemma~\ref{lemma:eps-q-eps} the restriction of~$\teps'$ to~$\pi^*\cE \subset \tcE'$ is~$\pi^*q(\teps)$;
in particular, $\tilde\cE_+$ has a length~3 filtration with
\begin{equation*}
\gr^\bullet(\tilde\cE_+) = (\pi^*\cL^\vee \otimes \cO(1)) \oplus \pi^*\cE \oplus \cO(-1)
\end{equation*}
and the extension classes linking its factors are~$(\pi^*q)(\teps)$ and~$\teps$, respectively.

It would be natural at this point to consider a hyperbolic extension of $(\tilde\cE_+,\tilde{q}_+)$ by $\cT_\pi(-1)$
(note that the rank of~$\cT_\pi(-1)$ is less than~$\cG$) and then show that the result descends to a self-dual morphism on~$X$.
However, it turns out to be more convenient to use a simpler construction 
by ``adding'' the (twisted) dual bundle $\pi^*\cL^\vee \otimes \Omega_\pi(1)$ to the kernel space of~$\tilde{q}$ 
and then applying another version of descent.

Consider the product of extension classes
(recall that~$\gamma$ is the extension class of~\eqref{eq:pi-tautological}):
\begin{equation*}
\tcE' \xrightarrow{\ \teps'\ } \pi^*\cL^\vee \otimes \cO(1)[1] \xrightarrow{\ \gamma\ } \pi^*\cL^\vee \otimes \Omega_\pi(1)[2]
\end{equation*}
(where $\Omega_\pi = \cT_\pi^\vee$ is the relative sheaf of K\"ahler differentials). 
We claim that $\gamma \circ \teps' = 0$.
Indeed, using~\eqref{eq:tce-prime} and taking into account 
isomorphisms~$\bR\pi_*(\Omega_\pi(1)) = 0$, $\bR\pi_*(\Omega_\pi(2)) \cong \bw2\cG^\vee$, 
we obtain 
\begin{equation*}
\Ext^p(\tcE',\pi^*\cL^\vee \otimes \Omega_\pi(1)) \cong
\Ext^p(\cO(-1),\pi^*\cL^\vee \otimes \Omega_\pi(1)) \cong
\Ext^p(\bw2\cG,\cL^\vee),
\end{equation*}
for all~$p \in \ZZ$,
and note that under this isomorphism the product~$\gamma \circ \teps' \in \Ext^2(\tcE',\pi^*\cL^\vee \otimes \Omega_\pi(1))$ 
coincides with the obstruction class~$q(\eps,\eps) \in \Ext^2(\bw2\cG,\cL^\vee)$, 
and hence vanishes as~$\eps$ is assumed to be~$q$-isotropic.

Consider the tensor product of the dual sequence of~\eqref{eq:pi-tautological} with~$\pi^*\cL^\vee$:
\begin{equation*}
0 \to \pi^*\cL^\vee \otimes \Omega_\pi(1) \to \pi^*\cL^\vee \otimes \pi^*\cG^\vee \to \pi^*\cL^\vee \otimes \cO(1) \to 0,
\end{equation*}
its extension class is also~$\gamma$.
The vanishing of the product $\gamma \circ \teps'$ implies that the class $\teps'$ lifts 
to a class in $\Ext^1(\tcE', \pi^*\cL^\vee \otimes \pi^*\cG^\vee)$, or, equivalently,
that the class $\gamma \in \Ext^1(\pi^*\cL^\vee \otimes \cO(1), \pi^*\cL^\vee \otimes \Omega_\pi(1))$
lifts to a class in $\Ext^1(\tcE_+, \pi^*\cL^\vee \otimes \Omega_\pi(1))$.
Moreover, $\Hom(\pi^*\cL^\vee \otimes \cO(1), \pi^*\cL^\vee \otimes \Omega_\pi(1)) = 0$,
hence we have an exact sequence
\begin{equation*}
0 \to
\Ext^1(\tcE', \pi^*\cL^\vee \otimes \Omega_\pi(1)) \to
\Ext^1(\tcE_+, \pi^*\cL^\vee \otimes \Omega_\pi(1)) \to
\Ext^1(\pi^*\cL^\vee \otimes \cO(1), \pi^*\cL^\vee \otimes \Omega_\pi(1)) 
\end{equation*}
which shows that such a lift of~$\gamma$ is unique up to the natural free action of the group
\begin{equation*}
\Ext^1(\tcE', \pi^*\cL^\vee \otimes \Omega_\pi(1)) \cong 
\Ext^1(\cO(-1), \pi^*\cL^\vee \otimes \Omega_\pi(1)) \cong 
\Ext^1(\bw2\cG,\cL^\vee).
\end{equation*}
In other words, the set of such lifts is a principal homogeneous space under an action of~$\Ext^1(\bw2\cG,\cL^\vee)$.

The lifted classes define a vector bundle $\hcE_+$ that fits into two exact sequences
\begin{equation}
\label{eq:hce-plus}
0 \to \pi^*\cL^\vee \otimes \Omega_\pi(1) \to \hcE_+ \to \tcE_+ \to 0
\quad\text{and}\quad
0 \to \pi^*\cL^\vee \otimes \pi^*\cG^\vee \to \hcE_+ \to \tcE' \to 0.
\end{equation}
We consider the quadratic form on $\hcE_+$ defined by the following composition
\begin{equation*}
\hat{q}_+ \colon \pi^*\cL \xrightarrow{\ \tilde{q}_+\ } \Sym^2\tcE_+^\vee \hookrightarrow \Sym^2\hcE_+^\vee,
\end{equation*}
where the latter embedding is induced by the surjection $\hcE_+ \twoheadrightarrow \tcE_+$ from~\eqref{eq:hce-plus}.
Note that by construction~$\hcE_+$ has a length~4 filtration with
\begin{equation*}
\gr^\bullet(\hcE_+) = (\pi^*\cL^\vee \otimes \Omega_\pi(1)) \oplus (\pi^*\cL^\vee \otimes \cO(1)) \oplus \pi^*\cE \oplus \cO(-1)
\end{equation*}
and the extension classes linking its adjacent factors are~$\gamma$, $(\pi^*q)(\teps)$, and~$\teps$, respectively.
Furthermore, the subbundle~$\pi^*\cL^\vee \otimes \Omega_\pi(1) \subset \hcE_+$ is contained in the kernel of the quadratic form~$\hat{q}_+$.
Now we explain how to descend the quadratic form~$(\hcE_+,\hat{q}_+)$ over~$\P_X(\cG)$ 
to a quadratic form~$(\cE_+,q_+)$ over~$X$.

Consider the subbundle~$\Ker(\hcE_+ \to \cO(-1)) \subset \hcE_+$ generated by the first three factors of the filtration.
Since the first two factors a linked by the class~$\gamma$ of the twisted dual of~\eqref{eq:pi-tautological}, 
this bundle is an extension of $\pi^*\cE$ by $\pi^*(\cL^\vee \otimes \cG^\vee)$.
Since the functor $\pi^*$ is fully faithful on the derived category of coherent sheaves, 
its extension class is a pullback, 
hence there exists a vector bundle~$\cE''$ on~$X$ and exact sequences
\begin{align}
\label{eq:tce-pp}
0 \to \pi^*\cL^\vee \otimes \pi^*\cG^\vee \to \pi^*\cE'' \to \pi^*\cE \to 0,\\
\label{eq:hce-om1}
0 \to \pi^*\cE'' \to \hcE_+ \to \cO(-1) \to 0.
\end{align}
Since $\Ext^1(\cO(-1),\pi^*\cE'') \cong \Ext^1(\cG,\cE'')$, there is an extension $0 \to \cE'' \to \cE_+ \to \cG \to 0$ on~$X$ 
and a pullback diagram
\begin{equation}
\label{eq:ce-plus}
\vcenter{\xymatrix{
0 \ar[r] &
\pi^*\cE'' \ar[r] \ar@{=}[d] & 
\hcE_+ \ar[r] \ar[d] & 
\cO(-1) \ar[r] \ar[d] & 
0
\\
0 \ar[r] &
\pi^*\cE'' \ar[r] & 
\pi^*\cE_+ \ar[r] & 
\pi^*\cG \ar[r] & 
0
}}
\end{equation}
where the right vertical arrow is the tautological embedding.
The embedding of bundles $\hcE_+ \hookrightarrow \pi^*\cE_+$ in the middle column 
is identical on the subbundle $\pi^*\cE''$
hence the induced morphism
\begin{equation*}
\rho \colon \P_{\P_X(\cG)}(\hcE_+) \to \P_X(\cE_+)
\end{equation*}
is the blowup with center $\P_X(\cE'') \subset \P_X(\cE_+)$, 
i.e., we have~$\P_{\P_X(\cG)}(\hcE_+) \cong \Bl_{\P_X(\cE'')}(\P_X(\cE_+))$.
Note that~$\P_X(\cE'') \subset \P_X(\cE_+)$ is a locally complete intersection, hence
\begin{equation*}
\bR\rho_*\cO_{\P_{\P_X(\cG)}(\hcE_+)} \cong 
\bR\rho_*\cO_{\Bl_{\P_X(\cE'')}(\P_X(\cE_+))} \cong 
\cO_{\P_X(\cE_+)}
\end{equation*}
and therefore the derived pullback functor~$\rho^*$ is fully faithful.

Let $\pi_+ \colon \P_X(\cE_+) \to X$ and $\hat\pi_+ \colon \P_{\P_X(\cG)}(\hcE_+) \to X$ be the projections, 
so that $\hat\pi_+ = \pi_+ \circ \rho$, and we have a commutative diagram
\begin{equation*}
\xymatrix{
& \P_{\P_X(\cG)}(\hcE_+) \ar[dl] \ar[dd]_{\hat\pi_+} \ar[dr]^\rho 
\\
\P_X(\cG) \ar[dr]_\pi &&
\P_X(\cE_+) \ar[dl]^{\pi_+}
\\
& X.
}
\end{equation*}
Let furthermore~$H_+$ and~$\hat{H}_+$ be the relative hyperplane classes of~$\P_X(\cE_+)$ and~$\P_{\P_X(\cG)}(\hcE_+)$, respectively, 
so that~\mbox{$\rho^*\cO(H_+) \cong \cO(\hat{H}_+)$}.
Note that the quadratic form $\hat{q}_+$ can be represented by a section 
of the line bundle~$\hat\pi_+^*\cL^\vee \otimes \cO(2\hat{H}_+)$ on~$\P_{\P_X(\cG)}(\hcE_+)$.
Using full faithfulness of~$\rho^*$ we compute 
\begin{equation*}
\Hom(\hat\pi_+^*\cL, \cO(2\hat{H}_+)) =
\Hom(\rho^*\pi_+^*\cL, \rho^*\cO(2H_+)) \cong
\Hom(\pi_+^*\cL, \cO(2H_+)). 
\end{equation*}
Thus, $\hat{q}_+$ is (in a unique way) the pullback of a section~$q_+$ of the line bundle $\pi_+^*\cL^\vee \otimes \cO(2H_+)$ on~$\P_X(\cE_+)$, 
i.e., $\hat{q}_+ = \rho^*(q_+)$. 
Furthermore, $q_+$ induces a morphism
\begin{equation*}
q_+ \colon \cL \to \Sym^2\cE_+^\vee
\end{equation*}
on~$X$.
It remains to show that $(\cE,q)$ is a hyperbolic reduction of $(\cE_+,q_+)$.

First, note that a combination of~\eqref{eq:tce-pp} and the second row of~\eqref{eq:ce-plus} shows that $\cE_+$ has a filtration
\begin{equation*}
0 \hookrightarrow \cL^\vee \otimes \cG^\vee \hookrightarrow \cE'' \hookrightarrow \cE_+
\end{equation*}
with factors $\cL^\vee \otimes \cG^\vee$, $\cE$, and $\cG$, respectively.
In particular, there is an exact sequence
\begin{equation*}
0 \to \cE \to \cE_+/(\cL^\vee \otimes \cG^\vee) \to \cG \to 0.
\end{equation*}
and the diagram~\eqref{eq:ce-plus} implies that the sequence~\eqref{eq:tce-prime} is its pullback.
Using the natural isomorphism~$\Ext^1(\cO(-1),\pi^*\cE) \cong \Ext^1(\cG,\cE)$ and the definition of~\eqref{eq:tce-prime}
we conclude that the extension class of the above sequence is~$\eps$.
So, we only need to show that the subbundle $\cL^\vee \otimes \cG^\vee \hookrightarrow \cE_+$ is regular isotropic
and that the induced quadratic form on~$\cE$ coincides with~$q$.

The first follows immediately from the fact that $\pi^*\cL^\vee \otimes \Omega_\pi(1) \subset \hcE_+$
is contained in the kernel of the quadratic form~$\hat{q}_+$ (as it was mentioned above)
and that the subbundle $\pi^*\cL^\vee \otimes \cO(1) \subset \tcE_+$ is isotropic for the quadratic form~$\tilde{q}_+$
(because $(\tcE_+,\tilde{q}_+)$ is a hyperbolic extension).
Moreover, by the same reason the induced quadratic form on $\pi^*\cE$ coincides with $\pi^*q$.

To finish the proof of the theorem we must check that any hyperbolic extension of~$(\cE,q)$ comes from the above construction.
So, assume that~$(\cE_+,q_+)$ is a hyperbolic extension of~$(\cE,q)$ with respect to~$\eps$.
Define the bundle~$\hcE_+$ from the diagram~\eqref{eq:ce-plus}, consider the blowup morphism~$\rho$ as above,
and the pullback~\mbox{$\hat{q}_+ = \rho^*(q_+)$} of the quadratic form~$q_+$.
It defines a quadratic form on~$\hcE_+$ over~$\P_X(\cG)$.
It is easy to see that~$\pi^*\cL^\vee \otimes \Omega_\pi(1)$ is contained in the kernel of~$\hat{q}_+$
and that the quotient~$(\tcE_+,\tilde{q}_+)$ (where~$\tcE_+$ is defined by the first sequence in~\eqref{eq:hce-plus})
is a hyperbolic extension of~$\pi^*\cE$ with respect to~\eqref{eq:tce-prime}.
Therefore, by the uniqueness result in Proposition~\ref{prop:he-rank-1} 
this quadratic form coincides with the one constructed in the proof
and the rest of the construction shows that~$(\cE_+,q_+)$ coincides with one of the hyperbolic extensions of the theorem.
\end{proof}

The non-triviality of the construction of hyperbolic extension is demonstrated by the following.

\begin{remark}
\label{rem:q-eps-0}
If $q(\eps) \in \Ext^1(\cG,\cL^\vee \otimes \cE^\vee)$ is zero then $\cE_+$ 
is an extension of $\cG$ by $(\cL^\vee \otimes \cG^\vee) \oplus \cE$.
The component of its extension class in $\Ext^1(\cG,\cE)$ equals~$\eps$,
and the component in $\Ext^1(\cG, \cL^\vee \otimes \cG^\vee)$ is in general non-trivial;
one can identify it with the Massey product $\mu(\eps,q,\eps)$.
\end{remark}

The operation of hyperbolic extension is transitive in the following sense.

\begin{lemma}
Let $(\cE_+,q_+)$ be a hyperbolic extension of $(\cE,q)$ with respect to a $q$-isotropic extension class~$\eps \in \Ext^1(\cG,\cE)$
and let $(\cE_{++},q_{++})$ be a hyperbolic extension of $(\cE_+,q_+)$ 
with respect to a $q_+$-isotropic extension class~$\eps_+ \in \Ext^1(\cG_+,\cE_+)$.
Then $(\cE_{++},q_{++})$ is a hyperbolic extension of $(\cE,q)$.
\end{lemma}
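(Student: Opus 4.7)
The plan is to reduce transitivity of hyperbolic extension to the already-established transitivity of hyperbolic reduction (Lemma~\ref{lemma:hr-transitive}), exploiting the fact that Definition~\ref{def:he} characterizes hyperbolic extensions precisely in terms of an inverse hyperbolic reduction, so nothing new has to be constructed.

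First I would unpack the hypotheses using Definition~\ref{def:he}. The assumption that $(\cE_+, q_+)$ is a hyperbolic extension of $(\cE,q)$ with respect to $\eps$ gives a regular isotropic subbundle $\cF := \cL^\vee \otimes \cG^\vee \hookrightarrow \cE_+$ whose hyperbolic reduction is $(\cE,q)$. Similarly, there is a regular isotropic subbundle $\cF_+ := \cL^\vee \otimes \cG_+^\vee \hookrightarrow \cE_{++}$ whose hyperbolic reduction is $(\cE_+, q_+)$. Applying Lemma~\ref{lemma:hr-transitive} together with the construction in its proof, I obtain a regular isotropic subbundle $\tilde\cF \subset \cE_{++}$, defined as the preimage of $\cF \subset \cE_+$ under the quotient map $\cF_+^\perp \twoheadrightarrow \cE_+$, which fits into a short exact sequence
$$0 \to \cL^\vee \otimes \cG_+^\vee \to \tilde\cF \to \cL^\vee \otimes \cG^\vee \to 0$$
and whose associated hyperbolic reduction of $(\cE_{++}, q_{++})$ is isomorphic to $(\cE, q)$.

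Next I would put $\tilde\cF$ into the shape required by Definition~\ref{def:he}. Tensoring the displayed sequence by $\cL$ shows that $\tilde\cF \otimes \cL$ is an extension of $\cG^\vee$ by $\cG_+^\vee$; dualizing produces a vector bundle $\cG_{++}$ on $X$ fitting into a short exact sequence $0 \to \cG \to \cG_{++} \to \cG_+ \to 0$ together with a canonical identification $\tilde\cF \cong \cL^\vee \otimes \cG_{++}^\vee$. Then the hyperbolic reduction of $(\cE_{++}, q_{++})$ along $\tilde\cF$ produces an induced extension $0 \to \cE \to \cE_{++}/\tilde\cF \to \cG_{++} \to 0$, whose class $\eps_{++} \in \Ext^1(\cG_{++}, \cE)$ is automatically $q$-isotropic by Lemma~\ref{lemma:eps-q-eps}. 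This verifies all the conditions of Definition~\ref{def:he}, so $(\cE_{++}, q_{++}) \in \HE(\cE, q, \eps_{++})$.

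The argument is entirely formal once Lemma~\ref{lemma:hr-transitive} is in hand; the only point which is not tautological is that an extension of two bundles of the shape $\cL^\vee \otimes (-)^\vee$ is again of this shape, which is immediate from the above sequence for $\tilde\cF$. I do not anticipate any serious obstacle, but care should be taken to verify that the extension class $\eps_{++}$ constructed from the filtration $0 \subset \cL^\vee \otimes \cG_+^\vee \subset \tilde\cF \subset \cE_{++}$ is the one genuinely produced by the hyperbolic reduction, which again follows from tracking the filtration through the pushout diagram~\eqref{diagram:cc-qminus}.
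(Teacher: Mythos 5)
Your proof is correct and follows essentially the same route as the paper: both reduce the statement to Lemma~\ref{lemma:hr-transitive} (transitivity of hyperbolic reduction) and then observe that the resulting situation matches Definition~\ref{def:he}. The paper compresses the final step to the phrase ``hence by definition,'' while you spell out the two small points that justify it --- the regular isotropic subbundle $\tilde\cF$ produced by Lemma~\ref{lemma:hr-transitive} is of the form $\cL^\vee\otimes\cG_{++}^\vee$ with $\cG_{++}:=(\tilde\cF\otimes\cL)^\vee$, and the induced extension class is automatically $q$-isotropic by Lemma~\ref{lemma:eps-q-eps} --- which is a welcome bit of added precision but not a different argument.
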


\begin{proof}
By definition the hyperbolic reduction of $(\cE_{++},q_{++})$ 
with respect to \mbox{$\cL^\vee \otimes \cG_+^\vee \hookrightarrow \cE_{++}$} is~$(\cE_+,q_+)$
and the hyperbolic reduction of $(\cE_{+},q_{+})$ with respect to $\cL^\vee \otimes \cG^\vee \hookrightarrow \cE_{+}$ is~$(\cE,q)$.
Therefore, by Lemma~\ref{lemma:hr-transitive} we see that~$(\cE,q)$ is a hyperbolic reduction of $(\cE_{++},q_{++})$,
hence by definition we conclude that $(\cE_{++},q_{++})$ is a hyperbolic extension of $(\cE,q)$.
\end{proof}

\subsection{Hyperbolic equivalence}
\label{subsection:he}

We combine the notions of hyperbolic reduction and extension defined in the previous sections into the notion of hyperbolic equivalence.

\begin{definition}
We say that two quadratic forms~$q_1 \colon \cE_1 \otimes \cL \to \cE_1^\vee$ 
and~$q_2 \colon \cE_2 \otimes \cL \to \cE_2^\vee$
or two quadric bundles~$Q_1 \to X$ and~$Q_2 \to X$
are {\sf hyperbolically equivalent}
if they can be connected by a chain of hyperbolic reductions and hyperbolic extensions.
\end{definition}

Since the operations of hyperbolic reduction and hyperbolic extension are mutually inverse by definition,
this is an equivalence relation.
In this subsection we discuss {\sf hyperbolic invariants}, i.e., invariants 
of quadratic forms and quadric bundles with respect to hyperbolic equivalence.

Recall the invariants~\eqref{eq:wx} and~\eqref{eq:hw} with values in the (non-unimodular) Witt group~$\rWnu(\kk)$ defined in the Introduction.
The hyperbolic invariance of~\eqref{eq:wx} is obvious.

\begin{lemma}
\label{lemma:he-rank}
For any $\kk$-point $x \in X$ and a fixed trivialization of the fiber~$\cL_x$ of the line bundle~$\cL$
the class $\rw_x(\cE.q) = [(\cE_x,q_x)] \in \rWnu(\kk)$ is hyperbolic invariant.
In particular, the parity of~$\rk(\cE)$ is hyperbolic invariant.
\end{lemma}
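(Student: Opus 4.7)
The plan is to reduce the statement to a single hyperbolic reduction and then perform a pointwise computation. Since hyperbolic extension is defined (Definition~\ref{def:he}) as an operation inverse to hyperbolic reduction, and hyperbolic equivalence is generated by chains of reductions and extensions, it suffices to show that if $(\cE_-,q_-)$ is the hyperbolic reduction of $(\cE,q)$ with respect to a regular isotropic subbundle $\cF \hookrightarrow \cE$, then $\rw_x(\cE,q) = \rw_x(\cE_-,q_-)$ in $\rWnu(\kk)$.

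First I would restrict the diagram defining the hyperbolic reduction to the fiber at $x$. Setting $V := \cE_x$, $F := \cF_x$, $E_- := \cE_{-,x}$, and using the fixed trivialization of $\cL_x$ to view $q_x$ as a $\kk$-valued quadratic form on $V$, the regular isotropy condition from~\eqref{def:cf-perp} translates into the statement that the composition $V \xrightarrow{q_x} V^\vee \to F^\vee$ is surjective. Therefore $F^\perp := \ker(V \to F^\vee)$ has codimension $\dim F$ in $V$, and by construction $E_- = F^\perp/F$, with $q_{-,x}$ obtained by restriction of $q_x$ to $F^\perp$ and descent.

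Next I would produce a Witt-theoretic decomposition of $(V,q_x)$. Since $\mathrm{char}(\kk) \neq 2$, I choose a $\kk$-linear complement $F^\perp = F \oplus E_-$ inside $F^\perp$, and a complement $V = F^\perp \oplus F'$. The restriction of $q_x$ pairs $F$ with $F'$ into a perfect pairing (because the composition $F' \hookrightarrow V \twoheadrightarrow F^\vee$ is an isomorphism, $F^\perp$ being the kernel). Using this pairing I can modify the chosen complement $F'$ by adding suitable elements of $F^\perp$ so that $F'$ becomes orthogonal to $E_-$ and is itself isotropic; this is exactly the standard orthogonalization in characteristic $\neq 2$. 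The result is an orthogonal decomposition
\[
(V, q_x) \;\cong\; (E_-, q_{-,x}) \;\perp\; \bigl(F \oplus F',\; \tfrac{0\ 1}{1\ 0}\bigr),
\]
where the second summand is a hyperbolic form. Since hyperbolic summands vanish in $\rWnu(\kk)$ (as recalled in the Introduction, and compare~\cite[\S1.1.5]{Balmer}), the classes of $(V,q_x)$ and $(E_-,q_{-,x})$ agree, proving hyperbolic invariance of $\rw_x$.

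The parity statement is immediate, since a single hyperbolic reduction changes $\rk(\cE)$ by $-2\rk(\cF)$ and a hyperbolic extension changes it by $+2\rk(\cG)$; alternatively, it follows directly from the identity of classes just established, because the rank mod $2$ is already detected by the image of $\rw_x(\cE,q)$ under the rank homomorphism $\rWnu(\kk) \to \ZZ/2$. There is no real obstacle here; the only point to verify carefully is the pointwise splitting of the quadratic form, which is standard for forms in characteristic $\neq 2$.
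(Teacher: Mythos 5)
The paper's own proof of this lemma is essentially a citation: it observes that restricting a hyperbolic reduction to the fiber over $x$ produces a \emph{sublagrangian reduction} of the quadratic space~$(\cE_x, q_x)$ with respect to~$\cF_x$, and then invokes \cite[\S1.1.5]{Balmer} for the fact that sublagrangian reduction does not change the class in the Witt group. Your proof instead carries out the pointwise orthogonal decomposition by hand, which is a legitimate and more self-contained alternative; the reduction to a single hyperbolic reduction, the identification of fiberwise data, the vanishing of hyperbolic summands in $\rWnu(\kk)$, and the parity observation are all fine.

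There is, however, a gap in the orthogonalization step, and it occurs exactly in the degenerate case that $\rWnu(\kk)$ is designed to handle. You claim that, having fixed the splittings $F^\perp = F \oplus E_-$ and $V = F^\perp \oplus F'$, one can modify $F'$ by adding elements of $F^\perp$ so that $F'$ becomes both isotropic and orthogonal to $E_-$. The isotropy part is fine: adding to $f'$ a suitable $\alpha(f') \in F$ works because the $F$--$F'$ pairing is perfect and $\mathrm{char}(\kk)\neq 2$. But adding elements of $F$ leaves the $(E_-,F')$-pairing unchanged, and adding $\beta(f') \in E_-$ changes it only by $q_{-,x}(\beta(f'),\,\cdot\,)$, which lies in the image of $q_{-,x}\colon E_- \to E_-^\vee$ --- a proper subspace of $E_-^\vee$ whenever $q_{-,x}$ is degenerate. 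A concrete failure: take $\dim F = \dim E_- = \dim F' = 1$, $q_{-,x} = 0$, and a nonzero $(E_-,F')$-pairing; then no modification of $F'$ inside $F^\perp$ annihilates that pairing. The repair is to modify the \emph{other} complement: replace $E_-$ by $\{e + \gamma(e)\}$ with $\gamma\colon E_- \to F$ determined via the perfect $F$--$F'$ pairing so as to kill the $(E_-,F')$-pairing; this does not change the induced form on $E_-$ (since $F$ is isotropic and orthogonal to all of $F^\perp$), and afterwards the $F$-valued modification of $F'$ makes $F'$ isotropic. With that change your decomposition $(V,q_x) \cong (E_-,q_{-,x}) \perp (F\oplus F', \text{hyperbolic})$ goes through.
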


\begin{proof}
This follows immediately from the fact that if $(\cE_-,q_-)$ is the hyperbolic reduction of~$(\cE,q)$ 
with respect to a regular isotropic subbundle~$\cF$ then $(\cE_{-,x},q_{-,x})$ 
is the sublagrangian reduction of~$(\cE_x,q_x)$ with respect to the subspace~$\cF_x \subset \cE_x$, see~\cite[\S1.1.5]{Balmer}.

Applying the rank parity homomorphism~$\rWnu(\kk) \to \ZZ/2$ we deduce the invariance of the parity of~$\rk(\cE)$ from that of~$\rw_x(\cE,q)$;
alternatively, this invariance can be seen directly from the construction.
\end{proof}

The hyperbolic invariance of~\eqref{eq:hw} requires a bit more work.

\begin{lemma}
\label{lemma:he-rank-cl-prime}
If~$X$ is smooth and proper, $\cL \otimes \omega_X$ is a square in~$\Pic(X)$, and~$n = \dim(X)$ is divisible by~$4$,
the class $\hw(\cE,q) \in \rWnu(\kk)$ is hyperbolic invariant.
In particular, the parity of the rank of the form~$H^{n/2}(q)$ defined by~\eqref{def:q-cl-prime} is hyperbolic invariant.
\end{lemma}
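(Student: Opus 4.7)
The plan is as follows. Since hyperbolic equivalence is generated by mutually inverse operations of hyperbolic reduction and extension, it suffices to verify invariance under a single hyperbolic reduction. So I suppose $(\cE_-, q_-)$ is the hyperbolic reduction of $(\cE, q)$ along a regular isotropic subbundle $\phi \colon \cF \hookrightarrow \cE$, and write $m = n/2$. The aim is to show $\hw(\cE, q) = \hw(\cE_-, q_-)$ in $\rWnu(\kk)$.

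The plan is to realize both sides as the middle cohomology form of a single self-dual object in $D^b(X)$. Twist the bicomplex~\eqref{diagram:cc-qminus} by the line bundle $\cN := \cM \otimes \cL^{-1}$. Using the assumption $\cM^2 \cong \cL \otimes \omega_X$, one verifies directly that each sheaf at bi-position $(i,j)$ of the twisted bicomplex is Serre-dual (via $\cHom(-, \omega_X)$) to the sheaf at the transposed position $(2-i, 1-j)$, and the self-duality of $q$ renders this compatible with the differentials; hence the total complex is a self-dual object in $D^b(X)$. By the argument of Lemma~\ref{lemma:h-reduction}, this total complex is quasi-isomorphic, in a manner preserving the self-duality, both to the two-term complex $\cK(\cE,q) := [\cE \otimes \cM \xrightarrow{\,q \otimes \cM\,} \cE^\vee \otimes \cL^\vee \otimes \cM]$ (via the middle column, as the side columns are acyclic) and to $\cK(\cE_-, q_-)$ (via row-wise cohomology, which produces the bundles $\cE_- \otimes \cM$ and $\cE_-^\vee \otimes \cL^\vee \otimes \cM$ linked by $q_- \otimes \cM$).

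Applying $R\Gamma(X, -)$, which intertwines Serre duality on $X$ with $\kk$-linear duality on $\kk$, yields an isomorphism of self-dual complexes in $D^b(\kk)$:
\[
\bigl[R\Gamma(\cE \otimes \cM) \xrightarrow{R\Gamma(q \otimes \cM)} R\Gamma(\cE \otimes \cM)^\vee[-n]\bigr] \;\cong\; \bigl[R\Gamma(\cE_- \otimes \cM) \xrightarrow{R\Gamma(q_- \otimes \cM)} R\Gamma(\cE_- \otimes \cM)^\vee[-n]\bigr].
\]
Since $n \equiv 0 \pmod 4$, $m$ is even, and taking $H^m$ of either side recovers the symmetric morphism $H^m(q) \colon H^m(\cE \otimes \cM) \to H^m(\cE \otimes \cM)^\vee$ (respectively $H^m(q_-)$) whose class in $\rWnu(\kk)$ is $\hw(\cE, q)$ (respectively $\hw(\cE_-, q_-)$). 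Functoriality of $H^m$ on $D^b(\kk)$ yields an isomorphism of these morphisms of vector spaces, so their classes in $\rWnu(\kk)$ coincide. The invariance of the parity of $\rk(H^{n/2}(q))$ then follows by applying the rank-parity homomorphism $\rWnu(\kk) \to \ZZ/2$.

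The main obstacle I expect is verifying that the row-wise cohomology of the twisted bicomplex, viewed as a two-term complex, has differential precisely $q_- \otimes \cM$ (and not merely some other self-dual map between the correct bundles), and that the row-wise quasi-isomorphism respects the Serre-duality self-duality structure. This is essentially the content already established in the proof of Lemma~\ref{lemma:h-reduction}, but translated through the twist by $\cN$ one must carefully track how the sheaf-level self-duality of the original bicomplex (which is of ``$q$-type'', involving an $\cL^\vee$-twist) converts into Serre-duality self-duality after the twist, and how the two quasi-isomorphisms from the total complex to the columns-of-cohomology and rows-of-cohomology descriptions both intertwine these two flavors of self-duality.
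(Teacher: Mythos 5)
Your general framework --- reduce to a single hyperbolic reduction, twist by $\cM$ to convert the $\cL^\vee$-twisted self-duality of the bicomplex~\eqref{diagram:cc-qminus} into an $\omega_X$-valued Serre self-duality, apply $\RGamma$, and examine degree $n/2$ cohomology --- does line up with the paper's setup. However, the pivotal assertion of an ``isomorphism of self-dual complexes in $D^b(\kk)$'' between $\bigl[\RGamma(\cE\otimes\cM)\to\RGamma(\cE\otimes\cM)^\vee[-n]\bigr]$ and the analogous object for $\cE_-$ is a genuine gap. Note that $\RGamma(\cE\otimes\cM)$ and $\RGamma(\cE_-\otimes\cM)$ are generally not isomorphic in $D^b(\kk)$: already when the extension splits, $\cE \cong \cE_- \oplus \cF \oplus (\cF^\vee\otimes\cL^\vee)$ has strictly more cohomology. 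So if the displayed claim means an isomorphism of the morphisms $\RGamma(q\otimes\cM)$ and $\RGamma(q_-\otimes\cM)$ --- which is what ``functoriality of $H^{n/2}$'' would require to conclude --- it is false. If it instead means that the \emph{cones} are isomorphic as self-dual objects (true, since both compute $\RGamma(\cC(q))$ up to shift), then taking $H^{n/2}$ of the cone yields a space governed by $H^\bullet(X,\cC\otimes\cM)$, not the pairing $H^{n/2}(q)$ on $H^{n/2}(X,\cE\otimes\cM)$. Either way the final step does not go through, and this is exactly the real content of the lemma: the class in $\rWnu(\kk)$ of a possibly degenerate form on a specific cohomology group is strictly finer information than the isomorphism class of the cone of the corresponding derived morphism.

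What the paper does --- and what your argument would need --- is an explicit sublagrangian reduction at the level of the forms $H^{n/2}(q)$ and $H^{n/2}(q_-)$. It realizes $\cE_-\otimes\cM$ and $\cE_-^\vee\otimes\cL^\vee\otimes\cM$ as the middle cohomologies of the self-dual three-term monads~\eqref{eq:monad-ce-minus}, compares the two hypercohomology spectral sequences (noting the morphism between them is self-dual under Serre duality), and shows that $(H^{n/2}(X,\cE_-\otimes\cM),H^{n/2}(q_-))$ is obtained from $(H^{n/2}(X,\cE\otimes\cM),H^{n/2}(q))$ by a hyperbolic reduction with respect to the isotropic subspace $\Ima\bigl(H^{n/2}(X,\cF\otimes\cM)\to H^{n/2}(X,\cE\otimes\cM)\bigr)$ followed by a hyperbolic extension. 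Precisely these two operations preserve classes in $\rWnu(\kk)$, giving $\hw(\cE,q)=\hw(\cE_-,q_-)$. The derived-category isomorphism of cones you invoke is too coarse to substitute for this computation.
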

\begin{proof}
Let~$\cM$ be a square root of~$\cL \otimes \omega_X$.
By Serre duality we have
\begin{equation*}
H^{n/2}(X, \cE \otimes \cM)^\vee = 
H^{n/2}(X, \cE^\vee \otimes {\cM}^\vee \otimes \omega_X) \cong
H^{n/2}(X, \cE^\vee \otimes \cL^\vee \otimes \cM).
\end{equation*}
Therefore, the pairing~\eqref{def:q-cl-prime} can be rewritten as the composition of the morphism 
\begin{equation}
\label{eq:q-cl-prime}
H^{n/2}(X, \cE \otimes \cM) \xrightarrow{\ q\ } H^{n/2}(X, \cE^\vee \otimes \cL^\vee \otimes \cM)
\end{equation}
and the Serre duality pairing.

Now assume that $\cF \hookrightarrow \cE$ is a regular isotropic subbundle and $(\cE_-,q_-)$ is the hyperbolic reduction.
It is enough to check that $\hw(\cE,q) = \hw(\cE_-,q_-)$.
Note that $\cE_- \otimes \cM$ and $\cE_-^\vee \otimes \cL^\vee \otimes \cM$ by definition are the cohomology bundles (in the middle terms) of the complexes
\begin{equation}
\label{eq:monad-ce-minus}
\big\{ \cF \otimes \cM \hookrightarrow \cE \otimes \cM \twoheadrightarrow \cF^\vee \otimes \cL^\vee \otimes \cM \big\}
\quad\text{and}\quad 
\big\{ \cF \otimes \cM \hookrightarrow \cE^\vee \otimes \cL^\vee \otimes \cM \twoheadrightarrow \cF^\vee \otimes \cL^\vee \otimes \cM \big\}
\end{equation}
and the morphism $q_- \colon \cE_- \otimes \cM \to \cE_-^\vee \otimes \cL^\vee \otimes \cM$ is induced by the morphism of complexes
\begin{equation*}
\xymatrix{
\cF \otimes \cM \ar[r] \ar[d]^\id & 
\cE \otimes \cM \ar[r] \ar[d]^q & 
\cF^\vee \otimes \cL^\vee \otimes \cM \ar[d]^\id
\\
\cF \otimes \cM \ar[r] & 
\cE^\vee \otimes \cL^\vee \otimes \cM \ar[r] &
\cF^\vee \otimes \cL^\vee \otimes \cM. 
}
\end{equation*}
Therefore, the morphism of cohomology $H^{n/2}(X, \cE_- \otimes \cM) \xrightarrow{\ q\ } H^{n/2}(X, \cE_-^\vee \otimes \cL^\vee \otimes \cM)$ 
is computed by the morphism of the spectral sequences whose first pages look like
\begin{equation*}
\bE_1^{\bullet,\bullet}(\cE_- \otimes \cM) = \left\{
\vcenter{\xymatrix@R=0ex@C=2.5ex{
H^{n/2+1}(X, \cF \otimes \cM) \ar[r] \ar@{..>}[drr] &
H^{n/2+1}(X, \cE \otimes \cM) \ar[r] &
H^{n/2+1}(X, \cF^\vee \otimes \cL^\vee \otimes \cM)
\\
H^{n/2\hphantom{{} + 1}}(X, \cF \otimes \cM) \ar[r] \ar@{..>}[drr] &
H^{n/2\hphantom{{} + 1}}(X, \cE \otimes \cM) \ar[r] &
H^{n/2\hphantom{{} + 1}}(X, \cF^\vee \otimes \cL^\vee \otimes \cM)
\\
H^{n/2-1}(X, \cF \otimes \cM) \ar[r] &
H^{n/2-1}(X, \cE \otimes \cM) \ar[r] &
H^{n/2-1}(X, \cF^\vee \otimes \cL^\vee \otimes \cM)
}}\right\}
\end{equation*}
(dotted arrows show the directions of the only higher differentials~$\bd_2$) and
\begin{equation*}
\bE_1^{\bullet,\bullet}(\cE_-^\vee \otimes \cL^\vee \otimes \cM) = \left\{
\vcenter{\xymatrix@R=0ex@C=2.3ex{
H^{n/2+1}(X, \cF \otimes \cM) \ar[r] \ar@{..>}[drr] &
H^{n/2+1}(X, \cE^\vee \otimes \cL^\vee \otimes \cM) \ar[r] &
H^{n/2+1}(X, \cF^\vee \otimes \cL^\vee \otimes \cM)
\\
H^{n/2\hphantom{{} + 1}}(X, \cF \otimes \cM) \ar[r] \ar@{..>}[drr] &
H^{n/2\hphantom{{} + 1}}(X, \cE^\vee \otimes \cL^\vee \otimes \cM) \ar[r] &
H^{n/2\hphantom{{} + 1}}(X, \cF^\vee \otimes \cL^\vee \otimes \cM)
\\
H^{n/2-1}(X, \cF \otimes \cM) \ar[r] &
H^{n/2-1}(X, \cE^\vee \otimes \cL^\vee \otimes \cM) \ar[r] &
H^{n/2-1}(X, \cF^\vee \otimes \cL^\vee \otimes \cM)
}}\right\}.
\end{equation*}
Moreover, the morphism of spectral sequences is equal to the identity on the first 
and last columns and is induced by~$q$ on the middle column.
On the other hand, by Serre duality
\begin{equation*}
H^{i}(X, \cF \otimes \cM)^\vee = 
H^{n - i}(X, \cF^\vee \otimes {\cM}^\vee \otimes \omega_X) \cong
H^{n - i}(X, \cF^\vee \otimes \cL^\vee \otimes \cM)
\end{equation*}
hence the morphism of spectral sequences is self-dual.

It follows that $(H^{n/2}(X,\cE_- \otimes \cM),H^{n/2}(q_-))$ is obtained from~$(H^{n/2}(X,\cE \otimes \cM),H^{n/2}(q))$
by a composition of the hyperbolic reduction with respect to the regular isotropic subspace
\begin{equation*}
\Ima(\bE_1^{-1,n/2}(\cE_- \otimes \cM) \to \bE_1^{0,n/2}(\cE_- \otimes \cM)) = 
\Ima\left(H^{n/2}(X, \cF \otimes \cM) \to H^{n/2}(X, \cE \otimes \cM)\right)
\end{equation*}
followed by a hyperbolic extension with respect to the space
\begin{equation*}
\bE_3^{1,n/2-1}(\cE_- \otimes \cM) = 
\Coker\left( 
\bE_2^{-1,n/2}(\cE_- \otimes \cM) \xrightarrow{\ \bd_2\ } \bE_2^{1,n/2-1}(\cE_- \otimes \cM).
\right)
\end{equation*}
Therefore, we have the required equality~$\hw(\cE_-,q_-) = \hw(\cE,q)$ 
in the Witt group~$\rWnu(\kk)$.

Applying the rank parity homomorphism~$\rWnu(\kk) \to \ZZ/2$ we deduce 
the invariance of the parity of the rank of~$H^{n/2}(q)$ from that of~$\hw(\cE,q)$.
\end{proof}

Other hyperbolic invariants of quadric bundles have been listed in Proposition~\ref{prop:he-invariants}.
We are ready now to prove this proposition.

\begin{proof}[Proof of Proposition~\textup{\ref{prop:he-invariants}}]
Since assertion~\eqref{item:dim-mod-2} is clear from the definition (or follows from Lemma~\ref{lemma:he-rank}),
it is enough to prove assertions~\eqref{item:he-cokernel}--\eqref{item:he-witt} of the proposition.
Moreover, in most cases it is enough to prove the assertions for a single hyperbolic reduction.
So, assume that~\eqref{eq:morphism-general} is a self-dual morphism 
and $(\cE_-,q_-)$ is its hyperbolic reduction with respect to a regular isotropic subbundle~$\cF \hookrightarrow \cE$.

By Lemma~\ref{lemma:h-reduction} we have~$\cC(q) \cong \cC(q_-)$, an isomorphism compatible with the shifted quadratic forms; 
this proves assertion~\eqref{item:he-cokernel}.
Furthermore, the equality of the discriminant divisors
\begin{equation*}
D = \supp(\cC(Q)) = \supp(\cC(Q')) = D'
\end{equation*}
follows as well and proves~\eqref{item:he-discriminant}.
Similarly, \eqref{item:he-witt} follows from Lemma~\ref{lemma:witt} and Witt's Cancellation Theorem.

Now we prove~\eqref{item:he-clifford}.
We refer to~\cite{K08} for generalities about sheaves of Clifford algebras and modules.
Here we just recall that for a vector bundle $\cE$ with a quadratic form $q \colon \cL \to \Sym^2\cE^\vee$ we denote 
\begin{align*}
\Cl_0(\cE,q) &= \hbox to 1.em{$\cO$} \oplus (\bw2\cE \otimes \cL) \oplus (\bw4\cE \otimes \cL^{\otimes 2}) \oplus \dots,\\
\Cl_1(\cE,q) &= \hbox to 1.em{$\cE$} \oplus (\bw3\cE \otimes \cL) \oplus (\bw5\cE \otimes \cL^{\otimes 2}) \oplus \dots,
\end{align*}
and set $\Cl_{i+2}(\cE,q) = \cL^\vee \otimes \Cl_i(\cE,q)$.
The Clifford multiplication (see~\cite[\S3]{K14})
\begin{equation*}
\Cl_i(\cE,q) \otimes \Cl_j(\cE,q) \to \Cl_{i+j}(\cE,q)
\end{equation*}
(induced by~$q$ and the wedge product on~$\bw\bullet\cE$)
provides $\Cl_0(\cE,q)$ with the structure of $\cO_X$-algebra (called the {\sf sheaf of even parts of Clifford algebras}) 
and each $\Cl_i(\cE,q)$ with the structure of $\Cl_0(\cE,q)$-bimodule.
In the case where the line bundle $\cL$ is trivial, the sum 
\begin{equation*}
\Cl(\cE,q) = \Cl_0(\cE,q) \oplus \Cl_1(\cE,q)
\end{equation*}
also acquires a structure of~$\cO_X$-algebra (called the {\sf total Clifford algebra}), 
which is naturally $\ZZ/2$-graded. 

Now consider the subbundle~$\cF^\perp \subset \cE$ defined by~\eqref{def:cf-perp}.
It comes with the quadratic form $q_{\cF^\perp}$, the restriction of the form~$q$,
so that the subbundle~$\cF \subset \cF^\perp$ is contained in the kernel of~$q_{\cF^\perp}$ 
and the induced quadratic form on the quotient~$\cF^\perp/\cF = \cE_-$ coincides with~$q_-$.
Thus, the maps~$\cF^\perp \hookrightarrow \cE$ and~$\cF^\perp \twoheadrightarrow \cE_-$ are morphisms of quadratic spaces.
Therefore, they are compatible with the Clifford multiplications 
and induce $\cO_X$-algebra morphisms of sheaves of even parts of Clifford algebras
\begin{equation*}
\Cl_0(\cF^\perp,q_{\cF^\perp}) \hookrightarrow \Cl_0(\cE,q)
\qquad\text{and}\qquad 
\Cl_0(\cF^\perp,q_{\cF^\perp}) \twoheadrightarrow \Cl_0(\cE_-,q_-).
\end{equation*}
The kernel of the second morphism is the two-sided ideal 
\begin{equation*}
\cR := \Ima(\cF \otimes \Cl_{-1}(\cF^\perp,q_{\cF^\perp}) \to \Cl_0(\cF^\perp,q_{\cF^\perp})),
\end{equation*}
where the arrow is the natural morphism induced
by the embedding~$\cF \hookrightarrow \cF^\perp \hookrightarrow \Cl_1(\cF^\perp,q_{\cF^\perp})$ and the Clifford multiplication.

Now we denote~$k = \rk(\cF)$ and consider the right ideal in~$\Cl_0(\cE,q)$ defined as 
\begin{equation*}
\cP = \Ima \Big( \bw{k}\cF \otimes \Cl_{-k}(\cE,q) \to \Cl_0(\cE,q) \Big).
\end{equation*}
Since~$\cF^\perp$ is the orthogonal of~$\cF$ with respect to~$q$, 
the subalgebra~$\Cl_0(\cF^\perp,q_{\cF^\perp}) \subset \Cl_0(\cE,q)$ anticommutes with~$\bw{k}\cF \subset \Cl_{k}(\cE,q)$, 
hence~$\cP$ is invariant under the left action of~$\Cl_0(\cF^\perp,q_{\cF^\perp})$ on~$\Cl_0(\cE,q)$.
Furthermore, since~$\cF$ is isotropic, the Clifford multiplication vanishes on~$\cF \otimes \bw{k}\cF$, hence the ideal~$\cR$ annihilates~$\cP$.
Therefore, $\cP$ has the structure of a left module over the algebra 
\begin{equation*}
\Cl_0(\cF^\perp,q_{\cF^\perp}) / \cR \cong \Cl_0(\cE_-,q_-).
\end{equation*}
This structure obviously commutes with the right $\Cl_{0}(\cE,q)$-module structure, 
hence $\cP$ is naturally a~$(\Cl_0(\cE_-,q_-),\Cl_0(\cE,q))$-bimodule.
We show below that $\cP$ defines the required Morita equivalence.

The question now is local over~$X$, so we may assume that $\cL = \cO_X$ and there is an orthogonal direct sum decomposition
\begin{equation}
\label{eq:ce-sum}
\cE = \cE_- \oplus \cE_0,
\qquad
q = q_- \perp q_0,
\end{equation}
where $\cE_0 = \cF \oplus \cF^\vee$ and the quadratic form $q_0$ is given by the natural pairing $\cF \otimes \cF^\vee \to \cO_X$.
Furthermore, as~$\cL = \cO_X$, we can consider the total $\ZZ/2$-graded Clifford algebras.

On the one hand, the orthogonal direct sum decomposition~\eqref{eq:ce-sum} implies the natural isomorphism
\begin{equation*}
\Cl(\cE,q) \cong \Cl(\cE_-,q_-) \otimes \Cl(\cE_0,q_0)
\end{equation*}
(where the right-hand side is the tensor product in the category of~$\ZZ/2$-graded algebras),
compatible with the gradings.
On the other hand, since $\cF \subset \cE_0$ is Lagrangian, the algebra
\begin{equation*}
\Cl(\cE_0, q_0) \cong \cEnd(\bw{\bullet}\cF)
\end{equation*}
is Morita trivial, and its $\ZZ/2$-grading is induced by the natural $\ZZ/2$-grading of $\bw{\bullet}\cF$.
It follows that the~$(\Cl(\cE_-,q_-),\Cl(\cE,q))$-bimodule
\begin{equation*}
\tilde\cP = \Cl(\cE_-,q_-) \otimes \bw{\bullet}\cF
\end{equation*}
defines a Morita equivalence of $\Cl(\cE_-,q_-)$ and $\Cl(\cE,q)$, compatible with the grading.
Therefore, the even part of~$\tilde\cP$
\begin{equation*}
\tilde\cP_0 = \Big( \Cl_0(\cE_-,q_-) \otimes \bw{\mathrm{even}}\cF \Big) \oplus \Big( \Cl_1(\cE_-,q_-) \otimes \bw{\mathrm{odd}}\cF \Big)
\end{equation*}
defines a Morita equivalence between the even Clifford algebras $\Cl_0(\cE_-,q_-)$ and $\Cl_0(\cE,q)$.
Finally, a simple computation shows that the globally defined bimodule $\cP$ is locally isomorphic to the bimodule~$\tilde\cP_0$,
hence it defines a global Morita equivalence.

In conclusion we prove~\eqref{item:l-equivalence}.
To show that~$[Q] = [Q']$ we will first show that for any point~$x \in X$ 
there is a Zariski neighborhood~$x \in U \subset X$ such that~$Q_U \cong Q'_U$
(where~$Q_U = Q \times_X U$ and~$Q'_U = Q' \times_X U$), hence a fortiori~$[Q_U] = [Q'_U]$,
and after that we will use this local equality to deduce the global one.

Since we are going to work locally, we may assume that the line bundle~$\cL$ is trivial and the base is affine.
Then two things happen with hyperbolic extension --- 
first, any extension class~$\eps \in \Ext^1(\cG,\cE)$ vanishes (in particular, any such class is~$q$-isotropic),
and second, the group~$\Ext^1(\bw2\cG,\cE)$ vanishes as well, so that the result of hyperbolic extension becomes unambiguous.
Moreover, it is clear that this result becomes isomorphic to~$\cE_+ = \cE \oplus (\cG \oplus \cG^\vee)$,
the orthogonal direct sum of~$\cE$ and~$\cG \oplus \cG^\vee$, 
with the quadratic form on~$\cG \oplus \cG^\vee$ induced by duality.
Similarly, hyperbolic reduction reduces to splitting off an orthogonal summand~$\cF \oplus \cF^\vee$.
Thus, locally, hyperbolic equivalence turns into Witt equivalence (in the non-unimodular Witt ring of the base scheme).
Therefore, a hyperbolic equivalence between~$Q$ and~$Q'$ locally can be realized by a single quadric bundle~$\hat{Q}$ 
such that both~$Q$ and~$Q'$ are obtained from~$\hat{Q}$ by hyperbolic reduction.
In other words, we may assume that the quadrics~$Q$ and~$Q'$ correspond to quadratic forms
obtained from a single quadratic form~$(\hcE,\hat{q})$ 
by isotropic reduction with respect to regular isotropic subbundles~$\cF \subset \hcE$ and~$\cF' \subset \hcE$ of the same rank.
Below we prove isomorphism of~$Q$ and~$Q'$ in a neighborhood of~$x$
by induction on the rank of~$\cF$ and~$\cF'$.

First assume that the rank of~$\cF$ and~$\cF'$ is~$1$ and~$\hat{q}(\cF,\cF') \ne 0$ at~$x$ 
(hence also in a neighborhood of~$x$).
Since~$\cF$ and~$\cF'$ are isotropic, the restriction of~$\hat{q}$ to~$\cF \oplus \cF'$ is non-degenerate, 
hence there is an orthogonal direct sum decomposition
\begin{equation*}
\hcE = \bar\cE \oplus (\cF \oplus \cF').
\end{equation*}
Then obviously~$\cF^\perp = \bar\cE \oplus \cF$, hence the hyperbolic reduction of~$(\hcE,\hat{q})$ with respect to~$\cF$
is isomorphic to~$(\bar\cE,q\vert_{\bar\cE})$.
Similarly, the hyperbolic reduction of~$(\hcE,\hat{q})$ with respect to~$\cF'$
is isomorphic to~$(\bar\cE,q\vert_{\bar\cE})$ as well.
In particular, the two hyperbolic reductions are isomorphic.

On the other hand, assume that the rank of~$\cF$ and~$\cF'$ is~$1$ and~$\hat{q}(\cF,\cF')$ vanishes at~$x$.
Then we find (locally) yet another regular isotropic subbundle~$\cF'' \subset \hcE$ 
such that~$\hat{q}(\cF,\cF'') \ne 0$ and~$\hat{q}(\cF',\cF'') \ne 0$ at~$x$.
Let~$v,v' \in \hcE_x$ be the points corresponding to~$\cF$, $\cF'$.
Let~$v'' \in \hcE_x$ be a point such that~$\hq_x(v,v'') \ne 0$ and~$\hq_x(v',v'') \ne 0$.
The existence of a regular subbundle~$\cF$ implies rationality of~$\hQ$ over~$X$,
hence (maybe over a smaller neighborhood of~$x$) 
there esists a regular isotropic subbundle~$\cF''$ corresponding to the point~$v''$.
Now, when we have such~$\cF''$, we apply the previous argument 
and conclude that the hyperbolic reduction of~$(\hcE,\hat{q})$ with respect to~$\cF''$
is isomorphic to the hyperbolic reductions with respect to~$\cF$ and~$\cF'$, 
hence the latter two reductions are mutually isomorphic.

Now assume the rank of~$\cF$ and~$\cF'$ is bigger than~$1$.
Shrinking the neighborhood of~$x$ if necessary, we may split~$\cF = \cF_1 \oplus \cF_2$ and~$\cF' = \cF'_1 \oplus \cF'_2$,
where the rank of~$\cF_1$ and~$\cF'_1$ is~$1$.
The above argument shows that the isotropic reductions of~$(\hcE,\hat{q})$ 
with respect to~$\cF_1$ and~$\cF'_1$ are isomorphic.
Hence~$Q$ and~$Q'$ correspond to hyperbolic reductions of the same quadratic form 
with respect to regular isotropic subbundles~$\cF_2$ and~$\cF'_2$, which have smaller rank than~$\cF$ and~$\cF'$,
and therefore by induction~$Q$ and~$Q'$ are isomorphic.

Finally, we deduce the global result from the local results obtained above.
Indeed, the argument above and quasi-compactness of~$X$ imply that~$X$ has an open covering $\{U_i\}$ 
such that over each~$U_i$ we have an isomorphism~$Q_{U_i} \cong Q'_{U_i}$, 
hence an equality~$[Q_{U_i}] =  [Q'_{U_i}]$ in the Grothendieck ring of varieties. 
For any finite set~$I$ of indices set~$U_I = \cap_{i \in I}U_i$.
Then inclusion-exclusion gives
\begin{equation*}
[Q] = \sum_{|I| \ge 1} (-1)^{|I|-1} [Q_{U_I}],
\qquad 
[Q'] = \sum_{|I| \ge 1} (-1)^{|I|-1} [Q'_{U_I}],
\end{equation*}
and since by base change we have isomorphisms~$Q_{U_I} \cong Q'_{U_I}$, 
hence equalities~$[Q_{U_I}] =  [Q'_{U_I}]$ for each~$I$ with~$|I| \ge 1$,
the equality~$[Q] = [Q']$ follows.
\end{proof}

\begin{remark}
The same technique proves the following more general formula
\begin{equation}
\label{eq:quadric-relation}
[Q] = [Q']\,\LL^{d} + [X]\,[\P^{d-1}]\,(1  + \LL^{n - d + 1})
\end{equation}
for any hyperbolic equivalent quadric bundles~$Q/X$ and~$Q'/X$, where~$n = \dim(Q/X)$ 
and we assume that it is greater or equal than~$\dim(Q'/X)$, which we write in the form~$\dim(Q'/X) = n - 2d$.
Indeed, first~\eqref{eq:quadric-relation} can be proved over a small neighborhood of any point of~$X$;
for this the same argument reduces everything to the case where~$Q'$ is a hyperbolic reduction of~$Q$, 
in which case the formula is proved in~\cite[Corollary~2.7]{KS18}.
After that the inclusion-exclusion trick proves~\eqref{eq:quadric-relation} in general.
\end{remark}

\section{VHC resolutions on projective spaces}
\label{sec:hacm}

From now on we consider the case $X = \P^n$.
This section serves as a preparation for the next one.
Here we introduce a class of locally free resolutions (which we call VHC resolutions) which plays the main role in~\S\ref{sec:hacm-pn}
and show that on~$\P^n$ any sheaf of projective dimension~1 has a (essentially unique) VHC resolution,
see Corollary~\ref{cor:hacm-resolution} for existence and Theorem~\ref{prop:hacm-uniqueness} for uniqueness.

\subsection{Complexes of split bundles}

For each coherent sheaf $\cF$ on~$\P^n = \P(V)$ 
(and more generally, for any object of the bounded derived category $\Db(\P^n)$ of coherent sheaves) 
and each integer~$p$ we write
\begin{equation}
\label{def:hh}
\HH^p(\cF) := \bigoplus_{t = -\infty}^\infty H^p(\P^n,\cF(t)).
\end{equation}
This is a graded module over the homogeneous coordinate ring 
\begin{equation}
\label{def:ss}
\SS = \HH^0(\cO_{\P^n}) \cong
\Sym^\bullet(V^\vee) = 
\kk \oplus V^\vee \oplus \Sym^2V^\vee \oplus \dots.
\end{equation} 
For a sheaf~$\cF$ we will often consider the $\SS$-module of \emph{intermediate cohomology}
\begin{equation*}
\moplus_{p=1}^{n-1} \HH^p(\cF) = \moplus_{p=1}^{n-1} \left( \moplus_{t = -\infty}^\infty H^p(\P^n,\cF(t)) \right).
\end{equation*}
as a bigraded $\SS$-module; with index~$p$ corresponding to the \emph{homological} 
and index~$t$ corresponding to the \emph{internal} grading.
We will use notation~$[p]$ and~$(t)$ for the corresponding shifts of grading. 

Recall the following well-known result.

\begin{lemma}
\label{lemma:epi-from-split}
Let $\cF$ be a coherent sheaf, so that the $\SS$-module $\HH^0(\cF)$ is finitely generated.
The minimal epimorphism  $\moplus \SS(t_i) \to \HH^0(\cF)$ of graded $\SS$-modules
gives rise to an epimorphism $\moplus \cO(t_i) \to \cF$ 
such that the induced morphism $\moplus \SS(t_i) = \HH^0(\moplus \cO(t_i)) \to \HH^0(\cF)$ coincides with the original epimorphism.
\end{lemma}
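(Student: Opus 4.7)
The plan is to construct the sheaf morphism by hand from the generators, then verify the two required properties directly using standard cohomology of line bundles on~$\P^n$, rather than invoking an abstract sheafification-vs-global-sections adjunction.

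First I would use the identification
$$\Hom_{\cO_{\P^n}}(\cO(t_i), \cF) \;=\; H^0(\P^n, \cF(-t_i)) \;=\; \HH^0(\cF)_{-t_i}.$$
The given module map is determined by the images $m_i \in \HH^0(\cF)_{-t_i}$ of the canonical generators $1 \in \SS(t_i)_{-t_i}$, and each $m_i$ corresponds to a sheaf morphism $\phi_i \colon \cO(t_i) \to \cF$. I would take $\phi \colon \moplus \cO(t_i) \to \cF$ to be the direct sum of the $\phi_i$. That $\HH^0(\phi)$, composed with the canonical isomorphism $\HH^0(\moplus\cO(t_i)) \cong \moplus \SS(t_i)$, recovers the original module map is immediate from this construction on the generators, and follows for the whole module by $\SS$-linearity; note that minimality plays no role in the argument.

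The actual content is showing that $\phi$ is surjective. I would set $\cG := \Coker(\phi)$, twist the exact sequence $\moplus \cO(t_i) \xrightarrow{\phi} \cF \to \cG \to 0$ by $\cO(t)$, and take global sections to obtain
$$\moplus \SS(t_i)_t \longrightarrow \HH^0(\cF)_t \longrightarrow H^0(\P^n, \cG(t)) \longrightarrow \moplus H^1(\P^n, \cO(t_i + t)).$$
For $t \gg 0$ the rightmost group vanishes (by the explicit computation of line bundle cohomology on~$\P^n$, or Serre vanishing), while the leftmost map is surjective by hypothesis; hence $H^0(\P^n,\cG(t)) = 0$ for all $t \gg 0$. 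Since every non-zero coherent sheaf on~$\P^n$ has non-vanishing sections after a sufficiently large twist, this forces $\cG = 0$. The only mild subtlety is that one cannot directly conclude surjectivity from right-exactness of $\HH^0$, since $\HH^1(\cO(t_i))$ can be non-zero (e.g.\ on~$\P^1$); this is precisely why the argument must be run in sufficiently large internal degree.
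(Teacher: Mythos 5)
Your construction of $\phi$ from the images of the generators, via the identification $\Hom(\cO(t_i),\cF)=\HH^0(\cF)_{-t_i}$, is exactly the paper's, and you are right that minimality plays no role there. The problem is the displayed four-term sequence: taking global sections of the right-exact sheaf sequence does \emph{not} yield it. What the short exact sequence $0 \to \Ima(\phi)(t) \to \cF(t) \to \cG(t) \to 0$ actually gives is
\begin{equation*}
H^0(\P^n,\cF(t)) \longrightarrow H^0(\P^n,\cG(t)) \longrightarrow H^1(\P^n,\Ima(\phi)(t)),
\end{equation*}
and there is no natural map $H^0(\P^n,\cG(t)) \to \moplus H^1(\P^n,\cO(t_i+t))$ as you wrote; controlling $H^1(\P^n,\Ima(\phi)(t))$ by $\moplus H^1(\P^n,\cO(t_i+t))$ would additionally require $H^2(\P^n,\Ker(\phi)(t))=0$. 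In particular, the vanishing of $\moplus H^1(\P^n,\cO(t_i+t))$ alone does not force $H^0(\P^n,\cG(t))=0$ (and on $\P^n$ with $n\ge 2$ that group vanishes for \emph{all}~$t$, not just $t\gg 0$), so your closing remark misidentifies the subtlety: the issue is not $\HH^1$ of the line bundles but the higher cohomology of the auxiliary kernel or image sheaf.

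The fix, and what the paper does, is to introduce $\cK:=\Ker(\phi)$ and the full four-term sheaf sequence $0\to\cK\to\moplus\cO(t_i)\to\cF\to\cG\to 0$: for $t\gg 0$, Serre vanishing applied to $\cK$ and $\Ima(\phi)$ (not merely to the line bundles) makes the induced sequence on global sections exact, whence $H^0(\P^n,\cG(t))=0$ for $t\gg 0$ and so $\cG=0$. With that correction your argument coincides with the paper's.
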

\begin{proof}
Any morphism of graded $\SS$-modules $\SS(t) \to \HH^0(\cF)$ 
is given by an element in the graded component~$H^0(\P^n,\cF(-t)) = \Hom(\cO(t),\cF)$ of~$\HH^0(\cF)$,
hence the epimorphism $\moplus \SS(t_i) \to \HH^0(\cF)$ corresponds to a morphism $\moplus \cO(t_i) \to \cF$.
The only nontrivial statement here is the surjectivity of this morphism.
To prove it let $\cK$ and $\cC$ denote its kernel and cokernel, so that we have an exact sequence
\begin{equation*}
0 \to \cK \to \moplus \cO(t_i) \to \cF \to \cC \to 0.
\end{equation*}
We need to show~$\cC = 0$.
When twisted by $\cO(t)$ with $t \gg 0$ all sheaves above have no higher cohomology, therefore there is an exact sequence
\begin{equation*}
0 \to H^0(\P^n,\cK(t)) \to  H^0(\P^n, \moplus \cO(t_i+t)) \to H^0(\P^n,\cF(t)) \to H^0(\P^n,\cC(t)) \to 0.
\end{equation*}
By assumption the middle arrow is surjective, hence $H^0(\P^n,\cC(t)) = 0$ for $t \gg 0$.
Therefore, $\cC = 0$.
\end{proof}

We will say that $\cE$ is a {\sf split bundle} if it is isomorphic to a direct sum of line bundles
(note that by Horrocks' Theorem a vector bundle~$\cE$ is split if and only if $\HH^p(\cE) = 0$ for all $1 \le p \le n-1$).

We will need the following simple generalization of the Horrocks' Theorem.

\begin{lemma}
\label{lem:split-resolution}
Let~$\cE$ be a vector bundle on~$\P^n$ and let~$0 \le \ell \le n - 1$.
Then $\HH^p(\cE) = 0$ for all~$p$ such that~$1 \le p \le n - \ell - 1$ if and only if~$\cE$ has a resolution of length~$\ell$
\begin{equation*}
0 \to \cL_\ell \to \dots \to \cL_1 \to \cL_0 \to \cE \to 0
\end{equation*}
by split bundles.
\end{lemma}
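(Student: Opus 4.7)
The plan is to prove both implications by induction on $\ell$, with the base case $\ell = 0$ being exactly Horrocks' theorem (a vector bundle on $\P^n$ has vanishing intermediate cohomology if and only if it is split).

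For the ``if'' direction I would break the resolution into short exact sequences
\begin{equation*}
0 \to \cK_{i+1} \to \cL_i \to \cK_i \to 0, \qquad \cK_0 = \cE, \quad \cK_\ell = \cL_\ell,
\end{equation*}
and use that $\HH^p(\cL_i) = 0$ for $1 \le p \le n-1$, since each $\cL_i$ is split. The long exact sequence then yields isomorphisms $\HH^p(\cK_i) \cong \HH^{p+1}(\cK_{i+1})$ whenever $1 \le p \le n-2$. Iterating $\ell$ times one obtains $\HH^p(\cE) \cong \HH^{p+\ell}(\cL_\ell)$ for $1 \le p \le n - \ell - 1$, and the right-hand side vanishes because $\cL_\ell$ is split and $1 \le p+\ell \le n-1$.

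For the ``only if'' direction, assume $\HH^p(\cE) = 0$ for $1 \le p \le n - \ell - 1$. Apply Lemma~\ref{lemma:epi-from-split} to obtain a split bundle $\cL_0$ and a surjection $\cL_0 \twoheadrightarrow \cE$ inducing a surjection on $\HH^0$; let $\cK_1 = \Ker(\cL_0 \to \cE)$. Now I would analyze the long exact sequence of
\begin{equation*}
0 \to \cK_1 \to \cL_0 \to \cE \to 0.
\end{equation*}
The surjectivity on $\HH^0$ combined with $\HH^1(\cL_0) = 0$ gives $\HH^1(\cK_1) = 0$, and for $2 \le p \le n-1$ the vanishing $\HH^{p-1}(\cL_0) = \HH^p(\cL_0) = 0$ yields $\HH^p(\cK_1) \cong \HH^{p-1}(\cE)$. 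Hence $\HH^p(\cK_1) = 0$ for $1 \le p \le n - \ell$, which is exactly the hypothesis of the lemma with $\ell$ replaced by $\ell - 1$. By the inductive hypothesis $\cK_1$ admits a resolution by split bundles of length $\ell - 1$, and splicing it with $\cL_0 \twoheadrightarrow \cE$ gives the required resolution of $\cE$ of length $\ell$.

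The only slightly delicate point is the $p = 1$ case in the inductive step: one cannot simply deduce $\HH^1(\cK_1) = 0$ from vanishing of $\HH^0(\cE)$ (which may be non-zero) and vanishing of $\HH^1(\cL_0)$ alone — the crucial input is the surjectivity of $\HH^0(\cL_0) \to \HH^0(\cE)$ built into Lemma~\ref{lemma:epi-from-split}. Once this is in place, everything else is a routine diagram chase.
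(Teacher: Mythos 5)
Your proof is correct and follows essentially the same route as the paper: induction on $\ell$ with Horrocks' theorem as the base case, using Lemma~\ref{lemma:epi-from-split} to produce the split epimorphism and analyzing the cohomology long exact sequence of its kernel. The only cosmetic difference is that you run the easy (``if'') direction via iterated long exact sequences of short exact pieces, whereas the paper invokes the hypercohomology spectral sequence of the whole resolution; these are interchangeable.
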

\begin{proof}
We use induction on~$\ell$.
If $\ell = 0$ the result follows from the Horrocks' Theorem.
Assume~\mbox{$\ell > 0$}.
Choose an epimorphism $\cL_0 \twoheadrightarrow \cE$ from a split bundle~$\cL_0$ which is surjective on~$\HH^0$ 
(it exists by Lemma~\ref{lemma:epi-from-split}) and let $\cE'$ be its kernel.
The cohomology exact sequence 
\begin{equation*}
\dots \to \HH^{p-1}(\cL_0) \to \HH^{p-1}(\cE) \to \HH^p(\cE') \to \HH^p(\cL_0) \to \HH^p(\cE) \to \dots 
\end{equation*}
implies that $\HH^p(\cE') = 0$ for $1 \le p \le n - \ell$.
By the induction hypothesis $\cE'$ has a resolution of length~$\ell - 1$
\begin{equation*}
0 \to \cL_\ell \to \dots \to \cL_1 \to \cE' \to 0
\end{equation*}
by split bundles.
It follows that the complex $\cL_\ell \to \dots \to \cL_1 \to \cL_0$ 
(where the morphism $\cL_1 \to \cL_0$ is defined as the composition $\cL_1 \twoheadrightarrow \cE' \hookrightarrow \cL_0$)
is a resolution of~$\cE$ of length~$\ell$ by split bundles.

The converse statement follows immediately from the hypercohomology spectral sequence applied to the resolution
since the intermediate cohomology of split bundles vanishes.
\end{proof}

The following obvious observation about complexes is quite useful.

\begin{lemma}
\label{lemma:splitting-complex}
Let $\cL_\bullet$ be a complex of coherent sheaves
such that $\cL_i = \cO(t) \oplus \cL'_i$, $\cL_{i-1} = \cO(t) \oplus \cL'_{i-1}$ for some~$i \in \ZZ$ and~$t \in \ZZ$,
and the differential $\rd_i \colon \cL_i \to \cL_{i-1}$ of $\cL_\bullet$ 
takes the summand~$\cO(t)$ of~$\cL_i$ isomorphically to the summand~$\cO(t)$ of~$\cL_{i-1}$.
Then there is an isomorphism of complexes
\begin{equation}
\label{eq:complex-splitting}
\cL_\bullet \cong \cL'_\bullet \oplus (\cO(t) \xrightarrow{\ \id\ } \cO(t))[i].
\end{equation} 
\end{lemma}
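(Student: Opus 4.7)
The plan is to perform a ``Gaussian elimination'' on the differential $\rd_i$ using the invertible block, and then to verify that the adjacent differentials $\rd_{i+1}$ and $\rd_{i-1}$ automatically respect the resulting splitting as a consequence of $\rd^2 = 0$.

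First I would write $\rd_i$ in block form
$$\rd_i = \begin{pmatrix} \alpha & \beta \\ \gamma & \delta \end{pmatrix} \colon \cO(t) \oplus \cL'_i \ \longrightarrow\ \cO(t) \oplus \cL'_{i-1},$$
where the hypothesis gives $\alpha \in \Hom(\cO(t),\cO(t))$ invertible, so after rescaling one of the two $\cO(t)$ summands I may assume $\alpha = \id$. Then I would introduce the automorphisms
$$\phi_i := \begin{pmatrix} 1 & \beta \\ 0 & 1 \end{pmatrix} \in \Aut(\cL_i),
\qquad
\phi_{i-1} := \begin{pmatrix} 1 & 0 \\ -\gamma & 1 \end{pmatrix} \in \Aut(\cL_{i-1}),$$
and set $\phi_j = \id_{\cL_j}$ for all $j \ne i,i-1$. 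A direct $2\times 2$ computation gives
$$\phi_{i-1}\,\rd_i\,\phi_i^{-1} = \begin{pmatrix} 1 & 0 \\ 0 & \delta - \gamma\beta \end{pmatrix},$$
so that in the new basis $\rd_i$ splits as the direct sum of $\id \colon \cO(t) \to \cO(t)$ and a morphism $\delta' := \delta - \gamma\beta \colon \cL'_i \to \cL'_{i-1}$.

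The only step that is not a mere manipulation of $2\times 2$ matrices is to check that the adjacent differentials also split in the new basis. Writing $\rd_{i+1} = \bigl(\begin{smallmatrix} a \\ b \end{smallmatrix}\bigr)$ relative to the decomposition of $\cL_i$, the identity $\rd_i\rd_{i+1} = 0$ gives in particular the upper-row relation $a + \beta b = 0$, and therefore
$$\phi_i\,\rd_{i+1} = \begin{pmatrix} 1 & \beta \\ 0 & 1 \end{pmatrix} \begin{pmatrix} a \\ b \end{pmatrix} = \begin{pmatrix} a + \beta b \\ b \end{pmatrix} = \begin{pmatrix} 0 \\ b \end{pmatrix},$$
which has vanishing component into the distinguished $\cO(t)$ summand. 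An entirely analogous argument, using the left-column relation in $\rd_{i-1}\rd_i = 0$, shows that $\rd_{i-1}\,\phi_{i-1}^{-1}$ has vanishing component out of the distinguished $\cO(t)$ summand of $\cL_{i-1}$. Defining $\cL'_\bullet$ to be the complex obtained by removing the $\cO(t)$ summands in degrees $i$ and $i-1$ (with the same differentials outside these degrees and with middle differential $\delta'$), the automorphism $(\phi_j)_{j\in\ZZ}$ then realises the isomorphism of complexes~\eqref{eq:complex-splitting}.

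I do not anticipate a real obstacle: this is the classical ``contractible direct summand'' cancellation principle for chain complexes, and its content is purely formal. The only mild subtlety worth flagging is that compatibility must be checked at \emph{both} neighbouring differentials simultaneously, but as noted above this falls out automatically from $\rd^2 = 0$ applied once on each side of $\rd_i$.
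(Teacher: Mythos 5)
Your proof is correct and uses essentially the same Gaussian-elimination argument as the paper: write $\rd_i$ in block form, conjugate by an upper- or lower-triangular automorphism to kill the off-diagonal entries, and then use $\rd_i\rd_{i+1}=0$ and $\rd_{i-1}\rd_i=0$ to see that the adjacent differentials split as well. The only difference is that you treat the marginally more general case in which the lower-left block $\gamma\colon\cO(t)\to\cL'_{i-1}$ may be nonzero, and therefore also modify the decomposition of $\cL_{i-1}$ by $\phi_{i-1}$; the paper reads the hypothesis (``takes the summand isomorphically to the summand'') as already forcing $\gamma=0$, so it only changes the decomposition of $\cL_i$. Both readings lead to the same correct conclusion, and the computations match.
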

\begin{proof}
By assumption the differential $\rd_i$ can be written in the form
\begin{equation*}
\cO(t) \oplus \cL'_i \xrightarrow{\ \left(\begin{smallmatrix} 1 & f \\ 0 & \rd'_i \end{smallmatrix}\right)\ } \cO(t) \oplus \cL'_{i-1}
\end{equation*}
for some $f \in \Hom(\cL'_i,\cO(t))$, $\rd'_i \in \Hom(\cL'_i,\cL'_{i-1})$. 
After the modification of the direct sum decomposition of $\cL_i$ 
by the automorphism $\left(\begin{smallmatrix} 1 & f \\ 0 & 1 \end{smallmatrix}\right) \in \End(\cO(t) \oplus \cL'_i)$
this differential takes the form $\left(\begin{smallmatrix} 1 & 0 \\ 0 & \rd'_i \end{smallmatrix}\right)$.
Then the equalities $\rd_i \circ \rd_{i+1} = 0$ and $\rd_{i-1} \circ \rd_i = 0$ imply that
\begin{equation*}
\rd_{i+1} = \left(\begin{smallmatrix} 0 \\ \rd'_{i+1} \end{smallmatrix}\right) \in \Hom(\cL_{i+1}, \cO(t) \oplus \cL'_i),
\qquad 
\rd_{i-1} = \left(\begin{smallmatrix} 0, & \rd'_{i-1} \end{smallmatrix}\right) \in \Hom(\cO(t) \oplus \cL'_{i-1}, \cL'_{i-2}),
\end{equation*}
which implies~\eqref{eq:complex-splitting},
where $\cL'_j = \cL_j$ for $j \not\in \{i,i-1\}$.
\end{proof}

Now let
\begin{equation*}
\cL_\bullet := \{\cL_\ell \to \cL_{\ell - 1} \to \dots \to \cL_1 \to \cL_0\}
\end{equation*}
be a complex of split bundles on~$\P^n$. 
Since split bundles have no intermediate cohomology, the first page of the hypercohomology spectral sequence of~$\cL_\bullet$ has only two nontrivial rows:
\begin{equation*}
\xymatrix@R=0ex{
H^n(\cL_\ell) \ar[r] \ar@{-->}[ddddrr] & 
H^n(\cL_{\ell-1}) \ar[r] \ar@{-->}[ddddrr] &  
\dots \ar[r] & 
\dots \ar[r] & 
H^n(\cL_{n+1}) \ar[r] \ar@{-->}[ddddrr] & 
\dots \ar[r] & 
H^n(\cL_0)
\\
0 & 0 & \dots & \dots & 0 & \dots & 0
\\
\vdots &&&&&& \vdots 
\\
0 & \dots & 0 & 0 & \dots & \dots & 0
\\
H^0(\cL_\ell) \ar[r] & 
\dots \ar[r] & 
H^0(\cL_{\ell-n-1}) \ar[r] & 
H^0(\cL_{\ell-n-2}) \ar[r] & 
\dots \ar[r] & 
\dots \ar[r] & 
H^0(\cL_0),
}
\end{equation*}
one formed by~$H^0(\P^n,\cL_i)$ and the other by~$H^n(\P^n,\cL_i)$.
The dashed arrows show the only non-trivial higher differentials~$\bd_{n+1}$ --- 
these differentials are directed~$n$ steps down and~$n+1$ steps to the right.
Therefore, if~$\ell \le n$ there are no higher differentials, and if~$\ell = n + 1$ there is exactly one, 
which acts from~$H^\tp(\P^n,\cL_\bullet)$ to~$H^\bt(\P^n,\cL_\bullet)$, where we define
\begin{equation}
\label{def:h-top-bot}
\begin{aligned}
H^\tp(\P^n, \cL_\bullet) &:= \Ker(H^n(\P^n,\cL_\ell) \to H^n(\P^n,\cL_{\ell - 1})),
\\
H^\bt(\P^n, \cL_\bullet) &:= \Coker(H^0(\P^n,\cL_1) \to H^0(\P^n,\cL_0)).
\end{aligned}
\end{equation} 
We also set~$\HH^\tp(\cL_\bullet) = \moplus_t H^\tp(\P^n, \cL_\bullet(t))$ 
and~$\HH^\bt(\cL_\bullet) = \moplus_t H^\bt(\P^n, \cL_\bullet(t))$,

\begin{lemma}
\label{lemma:split-n}
If a complex $\cL_\bullet$ of split bundles quasiisomorphic to an object~$\cF$ of the derived category~$\Db(\P^n)$ 
has length~$\ell = n$ then there is a canonical exact sequence
\begin{equation*}
0 \to \HH^\bt(\cL_\bullet) \to \HH^0(\cF) \to \HH^\tp(\cL_\bullet) \to 0.
\end{equation*}
\end{lemma}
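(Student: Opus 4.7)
The plan is to apply the hypercohomology spectral sequence of the complex $\cL_\bullet$ (viewed as a complex of coherent sheaves quasi-isomorphic to~$\cF$) and show that when~$\ell = n$ it degenerates at the~$E_2$-page and produces exactly a two-step filtration on~$\HH^0(\cF)$ with the required associated graded.

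Place $\cL_i$ in cohomological degree~$-i$, so that $E_1^{p,q}(\cF) = H^q(\P^n, \cL_{-p}(t))$ for $-n \le p \le 0$ and any~$t$, converging to $H^{p+q}(\P^n, \cF(t))$. Because every~$\cL_i$ is split, $H^q(\P^n, \cL_i(t)) = 0$ for $1 \le q \le n-1$, so the $E_1$-page is concentrated in the two rows $q=0$ and $q=n$, exactly as drawn in the excerpt. Taking horizontal cohomology, the $E_2$-page on the diagonal $p+q = 0$ has only two possibly nonzero terms:
\[
E_2^{0,0} \;=\; \Coker\bigl(H^0(\P^n,\cL_1(t)) \to H^0(\P^n,\cL_0(t))\bigr) \;=\; H^\bt(\P^n,\cL_\bullet(t)),
\]
\[
E_2^{-n,n} \;=\; \Ker\bigl(H^n(\P^n,\cL_n(t)) \to H^n(\P^n,\cL_{n-1}(t))\bigr) \;=\; H^\tp(\P^n,\cL_\bullet(t)).
\]

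Next I would check that all higher differentials vanish. For $2 \le r \le n$, the differential $\bd_r$ shifts the row index by~$r-1 < n$, hence maps between the two nonzero rows $q=0$ and $q=n$ is impossible — the source or target lies in a zero row. For $r = n+1$ the differential $\bd_{n+1}$ connects the two nonzero rows, but now the horizontal shift by $n+1$ forces the source or target column outside the range $-n \le p \le 0$: the outgoing differential from $E_{n+1}^{-n,n}$ lands in $E_{n+1}^{1,0}$, which is zero because $\cL_{-1}$ does not exist, and the incoming differential to $E_{n+1}^{0,0}$ comes from $E_{n+1}^{-n-1,n}$, which is zero because $\cL_{n+1}$ does not exist. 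Hence the spectral sequence degenerates at $E_2$, and $H^0(\P^n,\cF(t))$ acquires a canonical two-step filtration whose graded pieces are $H^\bt(\P^n,\cL_\bullet(t))$ (the $p=0$ piece, a quotient) and $H^\tp(\P^n,\cL_\bullet(t))$ (the $p=-n$ piece, a sub). This gives the short exact sequence of the lemma for each twist~$t$; summing over all $t \in \ZZ$ yields the asserted exact sequence of graded $\SS$-modules, and naturality in~$\cL_\bullet$ (and hence canonicity) follows from the naturality of the hypercohomology spectral sequence.

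The only mildly delicate point is the vanishing of $\bd_{n+1}$, which relies crucially on the assumption $\ell = n$: for $\ell = n+1$ this single higher differential would be present, and one would instead obtain the four-term exact sequence relating~$\HH^\bt$, $\HH^0(\cF)$, $\HH^\tp$, and the image of~$\bd_{n+1}$. Everything else is a formal consequence of the two-row shape of $E_1$ given by Horrocks' vanishing for split bundles.
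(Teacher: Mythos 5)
Your proof is correct and takes exactly the same route as the paper, which simply records that the lemma "follows immediately from the hypercohomology spectral sequence" after having already drawn the two-row $E_1$-page in the preceding discussion; you have carefully filled in the details (the $E_2$-terms on the diagonal $p+q=0$, the vanishing of $\bd_r$ for $2 \le r \le n$ by row considerations, and the vanishing of $\bd_{n+1}$ because the columns $p = 1$ and $p = -n-1$ are empty when $\ell = n$).
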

\begin{proof}
This follows immediately from the hypercohomology spectral sequence.
\end{proof}

The next two lemmas are crucial for the rest of the paper.

\begin{lemma}
\label{lemma:split-n+1}
If an acyclic complex $\cL_\bullet$ of split bundles has length~$\ell = n + 1$ then the following conditions are equivalent
\begin{enumerate}
\item 
\label{cond:h-bt}
$\HH^\bt(\cL_\bullet) = 0$;
\item 
\label{cond:h-tp}
$\HH^\tp(\cL_\bullet) = 0$;
\item 
\label{cond:h-bt-tp}
the canonical morphism $\bd_{n+1} \colon \HH^\tp(\cL_\bullet) \to \HH^\bt(\cL_\bullet)$ is zero;
\item 
\label{cond:sum}
the complex $\cL_\bullet$ is isomorphic to a direct sum of shifts of trivial complexes $\cO(t) \xrightarrow{\ \id\ } \cO(t)$.
\end{enumerate}
\end{lemma}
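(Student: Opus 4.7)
The plan is to deduce (1) $\Leftrightarrow$ (2) $\Leftrightarrow$ (3) from the hypercohomology spectral sequence, observe that (4) $\Rightarrow$ (1) is immediate, and prove (1) $\Rightarrow$ (4) by an induction on $\sum_i \rk(\cL_i)$ that peels off one trivial summand $\cO(t) \xrightarrow{\id} \cO(t)$ at a time via Lemma~\ref{lemma:splitting-complex}.

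For the equivalence of (1)--(3), the hypercohomology spectral sequence of the acyclic complex $\cL_\bullet$ has only the rows $\HH^0$ and $\HH^n$; for length $\ell = n+1$ the only non-trivial higher differential is precisely $\bd_{n+1} \colon \HH^\tp(\cL_\bullet) \to \HH^\bt(\cL_\bullet)$, and convergence to zero forces it to be an isomorphism. Hence the three conditions are all equivalent to the simultaneous vanishing of both sides of this isomorphism. The implication (4) $\Rightarrow$ (1) is clear since an identity differential has zero cokernel on $H^0$.

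For (1) $\Rightarrow$ (4), assume (1), hence also (2), and split into cases according to which ends of $\cL_\bullet$ are non-zero. If $\cL_0 \neq 0$, pick a summand $\iota \colon \cO(-a) \hookrightarrow \cL_0$. Vanishing of $\HH^\bt$ yields a lift $\psi \colon \cO(-a) \to \cL_1$ with $\rd_1 \circ \psi = \iota$; writing $\cL_1 = \moplus_i \cO(-b_i)$, the component of $\rd_1 \circ \psi$ projecting onto the chosen summand is a sum of products of sections of $\cO(b_i - a)$ and $\cO(a - b_i)$, whose non-zero terms force $b_i = a$ for some $i$ with the corresponding component of $\rd_1$ being an isomorphism --- Lemma~\ref{lemma:splitting-complex} then splits off a trivial summand. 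If $\cL_0 = 0$ but $\cL_{n+1} \neq 0$, the same argument applied to the Serre-dualized complex $\cL'_\bullet$ with $\cL'_i := \cL_{n+1-i}^\vee(-n-1)$ works: Serre duality identifies $\HH^\bt(\cL'_\bullet)$ with $\HH^\tp(\cL_\bullet)^\vee$, which vanishes by (2). If $\cL_0 = \cL_{n+1} = 0$, let $\cL_j, \cL_{\ell'}$ be the bottom and top non-zero terms; acyclicity forces $\ell' - j \geq 1$, and by hypothesis $\ell' - j \leq n - 1$. When $\ell' - j = 1$ the complex reduces to an isomorphism of split bundles, which is tautologically a sum of trivial complexes. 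When $\ell' - j \geq 2$, the kernel $\cK_{j+1} = \ker(\rd_{j+1})$ has a split resolution of length $\ell' - j - 2 \leq n - 3$, so $\HH^1(\cK_{j+1}) = 0$ by Lemma~\ref{lem:split-resolution}; therefore $\Ext^1(\cL_j, \cK_{j+1}) = 0$, the bottom short exact sequence splits, and Lemma~\ref{lemma:splitting-complex} once more yields a trivial summand.

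In each case the complement is acyclic of length $n+1$ with vanishing $\HH^\bt$ and $\HH^\tp$ (these are additive under direct sum decompositions) and has strictly smaller total rank, so the induction concludes. The subtle point is the third case: the $\Ext^1$-vanishing must be extracted from Lemma~\ref{lem:split-resolution} applied to the truncated upper part of $\cL_\bullet$, and the numerical hypothesis $\ell' - j \leq n - 1$ is exactly what makes the argument work.
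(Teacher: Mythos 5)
Your proof is correct and takes a genuinely different route for the implication $(3) \Rightarrow (4)$.

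The equivalence of $(1)$--$(3)$ and the implication $(4)\Rightarrow(3)$ you handle exactly as the paper does, via the degeneration pattern of the hypercohomology spectral sequence. For $(3)\Rightarrow(4)$, however, the paper uses a single uniform argument: take the \emph{maximal} twist $t$ appearing in any $\cL_i$, observe that the bottom row of the spectral sequence of $\cL_\bullet(-t)$ is an exact complex of vector spaces (hence a sum of trivial $\kk\to\kk$ complexes), and note that $H^0(\P^n,\cL_\bullet(-t))\otimes\cO(t)$ is a subcomplex of $\cL_\bullet$ consisting of direct summands --- from which a trivial summand is peeled off by Lemma~\ref{lemma:splitting-complex}. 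You instead split into three cases according to whether the complex touches either end of the index range $0,\dots,n+1$: when $\cL_0\ne 0$ you lift the inclusion of a rank-one summand through $\rd_1$ using $\HH^\bt=0$, when only $\cL_{n+1}\ne 0$ you Serre-dualize and reduce to the first case using $\HH^\tp=0$, and when both ends vanish (so the complex has length at most $n-1$) you invoke Lemma~\ref{lem:split-resolution} to get $\HH^1(\Ker\rd_{j+1})=0$ and hence $\Ext^1$-vanishing, which splits the bottom short exact sequence. Your case analysis makes transparent \emph{where} the hypotheses $(1)$, $(2)$ are actually used (only when the complex reaches an end) and that the interior case is automatic from the numerics; the paper's argument is shorter and avoids Serre duality and Lemma~\ref{lem:split-resolution}, and also locates a trivial summand whose two terms already land inside matching $\cO(t)$-summands, so Lemma~\ref{lemma:splitting-complex} applies verbatim. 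In your Case~1 you only arrange that the composite $\cO(-a)\hookrightarrow\cL_1\xrightarrow{\rd_1}\cL_0\twoheadrightarrow\cO(-a)$ is an isomorphism, so before quoting Lemma~\ref{lemma:splitting-complex} one should replace the chosen $\cO(-a)$-summand of $\cL_0$ by the image $\rd_1(\cO(-a))$; this is a routine change of splitting but worth saying explicitly, since the lemma as stated assumes the differential maps one summand into the other.
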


\begin{proof}
Since $\cL_\bullet$ is acyclic, its hypercohomology spectral sequence converges to zero, 
hence the canonical morphism $\bd_{n+1} \colon \HH^\tp(\cL_\bullet) \to \HH^\bt(\cL_\bullet)$ is an isomorphism.
It follows that~\eqref{cond:h-bt}, \eqref{cond:h-tp}, and~\eqref{cond:h-bt-tp} are equivalent.

Now we prove~\eqref{cond:h-bt-tp}~$\implies$~\eqref{cond:sum}. 
So, assume~\eqref{cond:h-bt-tp} holds.
Then for each~$t$ the hypercohomology spectral sequence of~$\cL_\bullet(-t)$ degenerates on the second page;
in particular the bottom row of the first page is exact.
Let~$t$ be the maximal integer such that~$\cO(t)$ appears as one of summands of one of the split bundles~$\cL_i$.
Then the bottom row of the first page of the hypercohomology spectral sequence of~$\cL_\bullet(-t)$ 
is nonzero and takes the form
\begin{equation*}
H^0(\P^n, \cL_\bullet(-t)) = \Big\{ \kk^{m_\ell} \to \kk^{m_{\ell-1}} \to \dots \to \kk^{m_1} \to \kk^{m_0} \Big\}, 
\end{equation*}
where~$m_i$ is the multiplicity of~$\cO(t)$ in~$\cL_i$.
Since this complex is exact, it is a direct sum of shifts of trivial complexes $\kk \xrightarrow{\ \id\ } \kk$.
Since~$\Hom(\cO(t),\cO(t')) = 0$ for all~$t' < t$, it follows that~$\cL_\bullet$ 
contains the subcomplex~$H^0(\P^n, \cL_\bullet(-t)) \otimes \cO(t)$;
this subcomplex is isomorphic to a direct sum of shifts of trivial complexes~$\cO(t) \xrightarrow{\ \id\ } \cO(t)$,
and each of its terms is a direct summand of the corresponding term of~$\cL_\bullet$.
Applying Lemma~\ref{lemma:splitting-complex} to one of these trivial subcomplexes 
we obtain the direct sum decomposition~\eqref{eq:complex-splitting}. 
The condition~\eqref{cond:h-bt-tp} holds for~$\cL'_\bullet$
(because it is a direct summand of~$\cL_\bullet$), 
hence by induction~$\cL'_\bullet$ is the sum of shifts of trivial complexes,
and hence the same is true for~$\cL_\bullet$;
which means that~\eqref{cond:sum} holds.

The implication \eqref{cond:sum}~$\implies$~\eqref{cond:h-bt-tp} is evident. 
\end{proof}

\begin{lemma}
\label{lemma:morphism-of-complexes}
Assume objects $\cF$ and $\cF'$ in~$\Db(\P^n)$ are quasiisomorphic 
to complexes $\cL_\bullet$ and $\cL'_\bullet$ of split bundles of length~$\ell$.
If $\ell < n$ then any morphism $\varphi \colon \cF \to \cF'$ is induced by a morphism of complexes
\begin{equation*}
\xymatrix{
\cL_\ell \ar[r] \ar[d]_{\varphi_\ell} &
\cL_{\ell-1} \ar[r] \ar[d]_{\varphi_{\ell-1}} &
\dots \ar[r] &
\cL_1 \ar[r] \ar[d]_{\varphi_1} &
\cL_0 \ar[d]_{\varphi_0} 
\\
\cL'_\ell \ar[r] &
\cL'_{\ell-1} \ar[r] &
\dots \ar[r] &
\cL'_1 \ar[r] &
\cL'_0
}
\end{equation*}
If $\ell = n$ the same is true for a morphism~$\varphi \colon \cF \to \cF'$
if and only if the composition 
\begin{equation}
\label{eq:bt-tp-composition}
\HH^\bt(\cL_\bullet) \hookrightarrow \HH^0(\cF) \xrightarrow{\ \HH^0(\varphi)\ } \HH^0(\cF') \twoheadrightarrow \HH^\tp(\cL'_\bullet)
\end{equation}
vanishes, where the first and last morphisms are defined in Lemma~\textup{\ref{lemma:split-n}}.
Moreover, in both cases a morphism of complexes~$\varphi_\bullet$ inducing a morphism~$\varphi$ as above
is unique up to a homotopy~$h_\bullet \colon \cL_\bullet \to \cL'_{\bullet + 1}$.
\end{lemma}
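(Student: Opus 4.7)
I would prove the lemma by computing $\Hom_{\Db(\P^n)}(\cL_\bullet, \cL'_\bullet)$ via a single hypercohomology spectral sequence and extracting both the existence statement and the explicit obstruction from it. Setting $\cM^\bullet := \cL_\bullet^\vee \otimes \cL'_\bullet$, which is a bounded complex of split bundles, its hypercohomology spectral sequence
\[
E_1^{p,q} = H^q(\P^n, \cM^p) \;\Longrightarrow\; \Hom_{\Db}(\cL_\bullet, \cL'_\bullet[p+q])
\]
has only two nonzero rows, $q = 0$ and $q = n$, since split bundles have no intermediate cohomology. The $q = 0$ row is exactly the naive $\Hom$-complex, so $E_2^{p, 0} = \Hom_K(\cL_\bullet, \cL'_\bullet[p])$, the homotopy classes of degree-$p$ chain maps; in particular $E_2^{0, 0} = \Hom_K(\cL_\bullet, \cL'_\bullet)$.

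As all other rows vanish, the only possible higher differential is $\bd_{n+1} \colon E_{n+1}^{p, n} \to E_{n+1}^{p+n+1, 0}$. Tracing the resulting filtration on $\Hom_{\Db}(\cL_\bullet, \cL'_\bullet)$, which has at most two nonzero graded pieces (at filtration indices $0$ and $-n$), I obtain
\[
0 \to E_\infty^{0, 0} \to \Hom_{\Db}(\cL_\bullet, \cL'_\bullet) \to E_\infty^{-n, n} \to 0,
\]
and the canonical map $\Hom_K(\cL_\bullet, \cL'_\bullet) \to \Hom_{\Db}(\cL_\bullet, \cL'_\bullet)$ factors as the quotient $E_2^{0, 0} \twoheadrightarrow E_\infty^{0, 0}$ followed by the inclusion into $\Hom_{\Db}$. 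Hence $\varphi$ lifts to a chain map iff its image in $E_\infty^{-n, n}$ vanishes, and the lift is unique up to homotopy iff $\bd_{n+1} \colon E_2^{-n-1, n} \to E_2^{0, 0}$ vanishes.

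Next I bound the support of the $q = n$ row: since $E_1^{p, n} = \bigoplus_{i - j = p,\; 0 \le i, j \le \ell} \Ext^n(\cL_i, \cL'_j)$, both $E_1^{-n-1, n}$ and $E_1^{-n, n}$ vanish when $\ell < n$, so $\Hom_K \to \Hom_{\Db}$ is an isomorphism and existence together with uniqueness follow at once. When $\ell = n$ one still has $E_1^{-n-1, n} = 0$, giving uniqueness of the lift, while $E_1^{-n, n} = \Ext^n(\cL_0, \cL'_n)$ is the sole surviving obstruction group.

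The remaining step, which I expect to be the principal obstacle, is to identify the projection $\Hom_{\Db}(\cL_\bullet, \cL'_\bullet) \twoheadrightarrow E_\infty^{-n, n}$ with the operation $\varphi \mapsto \eqref{eq:bt-tp-composition}$. For this I would note that, since $\cL_0$ is split, graded $\SS$-module homomorphisms $\HH^0(\cL_0) \to \HH^n(\cL'_n)$ coincide with elements of $\Ext^n(\cL_0, \cL'_n) = E_2^{-n, n}$, and that under this identification the subspace $\Hom_{\SS}(\HH^\bt(\cL_\bullet), \HH^\tp(\cL'_\bullet))$, cut out by vanishing upon precomposition with $\HH^0(\cL_1) \to \HH^0(\cL_0)$ and upon postcomposition with $\HH^n(\cL'_n) \to \HH^n(\cL'_{n-1})$, is exactly $E_\infty^{-n, n}$. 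Since the inclusion $\HH^\bt(\cL_\bullet) \hookrightarrow \HH^0(\cF)$ and the surjection $\HH^0(\cF') \twoheadrightarrow \HH^\tp(\cL'_\bullet)$ from Lemma~\ref{lemma:split-n} are edge maps of the hypercohomology spectral sequences of $\cL_\bullet$ and $\cL'_\bullet$, compatibility with the $\RHom$ spectral sequence identifies the image of $\varphi$ in $E_\infty^{-n, n}$ with the composition~\eqref{eq:bt-tp-composition}, completing the proof.
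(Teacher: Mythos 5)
Your proof is correct and is essentially the same as the paper's: you both use the two-row hypercohomology spectral sequence computing $\Hom_{\Db}(\cL_\bullet,\cL'_\bullet)$ (your $\cM^\bullet=\cL_\bullet^\vee\otimes\cL'_\bullet$ produces exactly the paper's $\bE_1^{p,q}=\bigoplus_i\Ext^q(\cL_i,\cL'_{i-p})$), extract the short exact sequence $0\to\bE_\infty^{0,0}\to\Hom(\cF,\cF')\to\bE_\infty^{-n,n}\to 0$, observe that $\bE_2^{0,0}$ is homotopy classes of chain maps, and show that the sole obstruction lives in $\Ext^n(\cL_0,\cL'_n)$ and vanishes when $\ell<n$. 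The concluding identification of the projection to $\bE_\infty^{-n,n}$ with the composition~\eqref{eq:bt-tp-composition} via compatibility of edge maps is the same step the paper carries out explicitly.
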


\begin{proof}
Obviously, the first page of the spectral sequence
\begin{equation*}
\bE_1^{p,q} = \moplus_i \Ext^q(\cL_i,\cL'_{i-p}) \Rightarrow \Ext^{p+q}(\cF,\cF')
\end{equation*}
is non-zero only when $-\ell \le p \le \ell$ and $q \in \{0,n\}$.
Consequently, we have an exact sequence
\begin{equation*}
0 \to \bE_\infty^{0,0} \to \Hom(\cF, \cF') \to \bE_\infty^{-n,n} \to 0
\end{equation*}
and (under the assumption~$\ell \le n$) the last term is non-zero only if~$\ell = n$.
Furthermore, we have
\begin{equation*}
\bE_\infty^{0,0} = \bE_2^{0,0} = 
\Ker \Big( \moplus_i \Hom(\cL_i,\cL'_{i}) \to \moplus_i \Hom(\cL_i,\cL'_{i-1}) \Big) \Big/
\Ima \Big( \moplus_i \Hom(\cL_i,\cL'_{i+1}) \to \moplus_i \Hom(\cL_i,\cL'_{i}) \Big),
\end{equation*}
hence a morphism $\varphi \colon \cF \to \cF'$ can be represented by a morphism of complexes $\varphi_\bullet \colon \cL_\bullet \to \cL'_\bullet$
if and only if it comes from~$\bE_\infty^{0,0}$.
In particular, this holds true for~$\ell < n$ since in this case~$\bE_\infty^{-n,n} = 0$.

Now assume $\ell = n$.
We have $\bE_\infty^{-n,n} \subset \Ext^n(\cL_0,\cL'_{n})$ and it is easy to see that 
if $\partial\varphi \in \Ext^n(\cL_0,\cL'_n)$ is the image of~$\varphi$ under the composition
\begin{equation*}
\Hom(\cF,\cF') \to \bE_\infty^{-n,n} \to \Ext^n(\cL_0,\cL'_{n})
\end{equation*}
then the composition
\begin{equation*}
\HH^0(\cL_0) \twoheadrightarrow \HH^\bt(\cL_\bullet) \to \HH^\tp(\cL'_\bullet) \hookrightarrow \HH^n(\cL'_n)
\end{equation*}
(where the middle arrow is~\eqref{eq:bt-tp-composition}) is given by~$\partial\varphi$.
Thus, if~\eqref{eq:bt-tp-composition} vanishes then~$\partial\varphi = 0$, 
and it follows that~$\varphi$ is in the image of $\bE_\infty^{0,0}$, hence is induced by a morphism of complexes.

Conversely, if~$\varphi$ is given by a morphism of complexes~$\varphi_\bullet$, the commutative diagram
\begin{equation*}
\xymatrix{
0 \ar[r] & 
\HH^\bt(\cL_\bullet) \ar[r] \ar[d]_{\HH^0(\varphi_0)} &
\HH^0(\cF) \ar[r] \ar[d]_{\HH^0(\varphi)} &
\HH^\tp(\cL_\bullet) \ar[d]_{\HH^n(\varphi_n)} \ar[r] &
0
\\
0 \ar[r] & 
\HH^\bt(\cL'_\bullet) \ar[r] &
\HH^0(\cF') \ar[r] &
\HH^\tp(\cL'_\bullet) \ar[r] &
0,
}
\end{equation*}
where the rows are the exact sequences of Lemma~\ref{lemma:split-n}, shows that~\eqref{eq:bt-tp-composition} is zero.

The uniqueness up to homotopy of~$\varphi_\bullet$ in both cases follows from the above formula for~$\bE_\infty^{0,0}$.
\end{proof}

\subsection{VHC resolutions and uniqueness}

The notion of a VHC resolution is based on the following

\begin{definition}
\label{def:lacm-uacm}
We will say that a vector bundle~$\cE$ on~$\P^n$ has
\begin{itemize}
\item 
the {\sf vanishing lower cohomology} property if
\begin{equation*}
\HH^p(\cE) = 0
\qquad\text{for $1 \le p \le \lfloor n/2 \rfloor$;}\hphantom{{} - n}
\end{equation*}
\item 
the {\sf vanishing upper cohomology} property if
\begin{equation*}
\HH^p(\cE) = 0
\qquad\text{for $\lceil n/2 \rceil \le p \le n - 1$}.
\end{equation*}
\end{itemize}
We will abbreviate these properties to {\sf VLC} and {\sf VUC}, respectively.
\end{definition}

\begin{example}
Every split bundle is both VLC and VUC. 
Moreover,
\begin{itemize}
\item 
every vector bundle on~$\P^1$ is both VLC and VUC since the conditions are void;
\item 
a vector bundle on~$\P^2$ is VLC if and only if it is VUC if and only if it is split;
\item 
if $1 \le p,q \le n - 1$ and $t \in \ZZ$ we have 
\begin{equation}
\label{eq:h-q-omega-p}
H^q(\P^n, \Omega^p(t)) = 
\begin{cases}
\kk, & \text{if $q = p$ and $t = 0$},\\
0, & \text{otherwise}.
\end{cases}
\end{equation}
Thus, $\Omega^p(t)$ is VLC if and only if $p > \lfloor n/2 \rfloor$
and it is VUC if and only if $p < \lceil n/2 \rceil$.
\end{itemize}
Note that for even $n$ the bundle $\Omega^{n/2}(t)$ is neither VLC nor VUC.
\end{example}

\begin{lemma}
\label{lemma:hacm-duality}
The properties VLC and VUC are invariant under twists, direct sums, and passing to direct summands.
Moreover, a vector bundle~$\cE$ is VLC if and only if~$\cE^\vee$ is VUC.
Finally, if a bundle~$\cE$ is VLC and VUC at the same time, it is split.
\end{lemma}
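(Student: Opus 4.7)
The plan is to verify each of the four assertions in turn; they all follow from the definitions, Serre duality on $\P^n$, and the Horrocks splitting criterion, with essentially no obstacle beyond bookkeeping.

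For invariance under twists, I would simply note that the graded $\SS$-module $\HH^p(\cE(t_0))$ is obtained from $\HH^p(\cE)$ by an internal grading shift, so vanishing of $\HH^p$ is unaffected by twisting. For direct sums and summands, cohomology commutes with finite direct sums, so $\HH^p(\cE_1 \oplus \cE_2) = \HH^p(\cE_1) \oplus \HH^p(\cE_2)$ vanishes if and only if each summand vanishes; this instantly gives the stated invariance.

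For the equivalence of VLC for $\cE$ and VUC for $\cE^\vee$, the input is Serre duality on $\P^n$, which gives
\begin{equation*}
H^p(\P^n,\cE(t))^\vee \cong H^{n-p}(\P^n,\cE^\vee(-t-n-1)).
\end{equation*}
Summing over $t \in \ZZ$ yields a (grading-shifted) duality between $\HH^p(\cE)$ and $\HH^{n-p}(\cE^\vee)$, so $\HH^p(\cE) = 0$ if and only if $\HH^{n-p}(\cE^\vee) = 0$. Under the substitution $p \mapsto n-p$, the VLC range $1 \le p \le \lfloor n/2\rfloor$ becomes $n - \lfloor n/2\rfloor \le n - p \le n - 1$, which using the identity $n - \lfloor n/2\rfloor = \lceil n/2\rceil$ is exactly the VUC range for $\cE^\vee$.

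Finally, for the splitting statement, the key observation is the arithmetic fact that $\lfloor n/2\rfloor + 1 \ge \lceil n/2\rceil$ for every~$n$, so the two ranges $[1,\lfloor n/2\rfloor]$ and $[\lceil n/2\rceil, n-1]$ together cover all of $[1,n-1]$. Hence a bundle that is simultaneously VLC and VUC has $\HH^p(\cE) = 0$ for every intermediate~$p$, and Horrocks' theorem (as invoked in the proof of Lemma~\ref{lem:split-resolution}) forces $\cE$ to be a direct sum of line bundles. No step is difficult; the only place requiring a moment's attention is checking the two parity cases of~$n$ in the final claim, and this is immediate.
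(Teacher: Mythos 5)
Your proof is correct and takes the same approach as the paper, which simply cites the definition, Serre duality, and Horrocks' theorem; you have filled in exactly the routine bookkeeping the paper leaves implicit, including the grading-shift under twist, the commutation of cohomology with direct sums, the index check $n-\lfloor n/2\rfloor=\lceil n/2\rceil$ under Serre duality, and the observation that the two vanishing ranges together cover all of $[1,n-1]$.
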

\begin{proof}
Follows from the definition, Serre duality, and Horrock's Theorem.
\end{proof}

Below we give a characterization of VLC and VUC bundles in terms of resolutions by split bundles.

\begin{lemma}
\label{lemma:hacm-resolutions}
A vector bundle $\cE$ on $\P^n$ is VLC if and only if there is an exact sequence
\begin{equation*}
0 \to \cL_{\lfloor (n-1)/2 \rfloor} \to \dots \to \cL_0 \to \cE \to 0,
\end{equation*}
where $\cL_i$ are split bundles.

A vector bundle $\cE$ on $\P^n$ is VUC if and only if there is an exact sequence
\begin{equation*}
0 \to \cE \to \cL_0 \to \dots \to \cL_{\lfloor (n-1)/2 \rfloor} \to 0,
\end{equation*}
where $\cL_i$ are split bundles.
\end{lemma}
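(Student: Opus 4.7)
The plan is to deduce both equivalences from Lemma~\ref{lem:split-resolution}, which links the vanishing of intermediate cohomology $\HH^p(\cE)$ in the range $1 \le p \le n - \ell - 1$ with the existence of a split resolution of length $\ell$.

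For the VLC statement I would set $\ell := \lfloor (n-1)/2 \rfloor$ and do a two-line parity check: if $n = 2k$ then $\ell = k - 1$ and $n - \ell - 1 = k = \lfloor n/2 \rfloor$; if $n = 2k+1$ then $\ell = k$ and $n - \ell - 1 = k = \lfloor n/2 \rfloor$. Either way $n - \ell - 1 = \lfloor n/2 \rfloor$, so Lemma~\ref{lem:split-resolution} immediately identifies the VLC condition $\HH^p(\cE) = 0$ for $1 \le p \le \lfloor n/2 \rfloor$ with the existence of a resolution of $\cE$ of length $\lfloor (n-1)/2 \rfloor$ by split bundles.

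For the VUC statement I would reduce to the VLC case by Lemma~\ref{lemma:hacm-duality}: $\cE$ is VUC if and only if $\cE^\vee$ is VLC. Given a split resolution of $\cE^\vee$ of length $\lfloor (n-1)/2 \rfloor$ as above, apply the functor $\cHom(-, \cO)$. Since $\cE^\vee$ is locally free, $\cExt^i(\cE^\vee, \cO) = 0$ for $i > 0$ and $\cHom(\cE^\vee, \cO) \cong \cE$, so the dualized complex
\begin{equation*}
0 \to \cE \to \cL_0^\vee \to \cL_1^\vee \to \dots \to \cL_{\lfloor (n-1)/2 \rfloor}^\vee \to 0
\end{equation*}
is exact, with each $\cL_i^\vee$ again a split bundle. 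Conversely, dualizing a split coresolution of $\cE$ of this length yields a split resolution of $\cE^\vee$, which by the VLC case forces $\cE^\vee$ to be VLC and hence $\cE$ to be VUC.

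The only non-cosmetic input is Lemma~\ref{lem:split-resolution} together with Lemma~\ref{lemma:hacm-duality}, both already established; the rest is parity bookkeeping and a standard dualization, so I do not expect any real obstacle.
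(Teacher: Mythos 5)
Your proposal is correct and follows essentially the same route as the paper: the VLC case is obtained by the parity computation $n - \lfloor (n-1)/2\rfloor - 1 = \lfloor n/2\rfloor$ feeding into Lemma~\ref{lem:split-resolution}, and the VUC case is deduced via Lemma~\ref{lemma:hacm-duality} by dualizing; the only difference is that you spell out the dualization of the locally free resolution, which the paper leaves implicit.
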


\begin{proof}
First assume that $\cE$ is a VLC vector bundle.
\begin{itemize}
\item 
If~\mbox{$n = 2k$} then $\HH^p(\cE) = 0$ for $1 \le p \le k = n - k$;
by Lemma~\ref{lem:split-resolution} this is equivalent to the existence 
of a resolution of length~$\ell = k - 1 = \lfloor (n-1)/2 \rfloor$ by split bundles;
\item
Similarly, if~\mbox{$n = 2k + 1$} then~$\HH^p(\cE) = 0$ for $1 \le p \le k = n - (k + 1)$;
by Lemma~\ref{lem:split-resolution} this is equivalent to the existence 
of a resolution of length~$\ell = k = \lfloor (n-1)/2 \rfloor$ by split bundles.
\end{itemize}
The case of a VUC bundle follows from this and Lemma~\ref{lemma:hacm-duality} by duality.
\end{proof}

\begin{definition}
\label{def:hacm}
We will say that a locally free resolution $0 \to \cE_\la \to \cE_\ua \to \cF \to 0$ of a sheaf $\cF$ 
has the~{\sf VHC} ({\sf vanishing of half cohomology}) property (or simply is a {\sf VHC resolution}),
if $\cE_\la$ is a VLC vector bundle and $\cE_\ua$ is a VUC vector bundle, see Definition~\ref{def:lacm-uacm}.
\end{definition}

The cohomology of bundles constituting a VHC resolution of a sheaf~$\cF$ are related to the cohomology of~$\cF$ as follows.

\begin{lemma}
\label{lemma:hhk-ce+}
Let $0 \to \cE_\la \to \cE_\ua \to \cF \to 0$ be a VHC resolution on~$\P^n$ and assume $1 \le p \le n-1$.
If~\mbox{$n = 2k$} then
\begin{align*}
\HH^p(\cE_\la) &= 
\begin{cases}
0, & \text{if $1 \le p \le k$,} \\
\HH^{p-1}(\cF), & \text{if $k + 1 \le p \le n - 1$,}
\end{cases}
&
\HH^p(\cE_\ua) &= 
\begin{cases}
\HH^p(\cF), & \text{if $1 \le p \le k - 1$,} \\
0, & \text{if $k \le p \le n - 1$.}
\end{cases}
\intertext{If $n = 2k + 1$ then}
\HH^p(\cE_\la) &= 
\begin{cases}
0, & \text{if $1 \le p \le k$,} \\
\HH^{p-1}(\cF), & \text{if $k + 2 \le p \le n - 1$,}
\end{cases}
&
\HH^p(\cE_\ua) &= 
\begin{cases}
\HH^p(\cF), & \text{if $1 \le p \le k - 1$,} \\
0, & \text{if $k + 1 \le p \le n - 1$,}
\end{cases}
\end{align*}
while $\HH^{k+1}(\cE_\la)$ and $\HH^k(\cE_\ua)$ fit into an exact sequence of graded~$\SS$-modules
\begin{equation}
\label{eq:hhk-ce-pm}
0 \to \HH^k(\cE_\ua) \to \HH^k(\cF) \to \HH^{k+1}(\cE_\la) \to 0.
\end{equation}
\end{lemma}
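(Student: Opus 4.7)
The proof is a straightforward diagram-chase with the long exact sequence of cohomology applied to
\begin{equation*}
0 \to \cE_\la \to \cE_\ua \to \cF \to 0,
\end{equation*}
combined with the vanishing properties of VLC and VUC bundles and a case split on the parity of $n$. My plan is as follows.

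First, I would write out the relevant piece of the long exact sequence of graded $\SS$-modules:
\begin{equation*}
\HH^{p-1}(\cE_\ua) \to \HH^{p-1}(\cF) \to \HH^p(\cE_\la) \to \HH^p(\cE_\ua) \to \HH^p(\cF) \to \HH^{p+1}(\cE_\la) \to \HH^{p+1}(\cE_\ua).
\end{equation*}
By Definition~\ref{def:lacm-uacm}, the VLC hypothesis yields $\HH^p(\cE_\la) = 0$ for $1 \le p \le \lfloor n/2\rfloor$ and the VUC hypothesis yields $\HH^p(\cE_\ua) = 0$ for $\lceil n/2\rceil \le p \le n-1$.

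Next, I would split into the two parity cases. If $n = 2k$, then both thresholds coincide at $k$, so the VLC/VUC vanishings cover complementary ranges that meet at $p = k$. For $1 \le p \le k-1$, the vanishing $\HH^p(\cE_\la) = \HH^{p+1}(\cE_\la) = 0$ forces $\HH^p(\cE_\ua) \simeq \HH^p(\cF)$, while for $k+1 \le p \le n-1$ the vanishing $\HH^{p-1}(\cE_\ua) = \HH^p(\cE_\ua) = 0$ forces $\HH^p(\cE_\la) \simeq \HH^{p-1}(\cF)$. The remaining degree~$p=k$ gives $\HH^k(\cE_\la) = \HH^k(\cE_\ua) = 0$, which is consistent with the boundary isomorphism $\HH^k(\cF) \simeq \HH^{k+1}(\cE_\la)$ already accounted for.

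If $n = 2k+1$, the VLC vanishing extends through $p=k$ and the VUC vanishing starts from $p=k+1$, so these ranges no longer overlap: precisely the groups $\HH^k(\cE_\ua)$ and $\HH^{k+1}(\cE_\la)$ may fail to vanish. For $1 \le p \le k-1$ and $k+2 \le p \le n-1$, the same argument as above produces the desired isomorphisms $\HH^p(\cE_\ua) \simeq \HH^p(\cF)$ and $\HH^p(\cE_\la) \simeq \HH^{p-1}(\cF)$ respectively. The only segment of the long exact sequence that does not collapse to isomorphisms is
\begin{equation*}
0 = \HH^k(\cE_\la) \to \HH^k(\cE_\ua) \to \HH^k(\cF) \to \HH^{k+1}(\cE_\la) \to \HH^{k+1}(\cE_\ua) = 0,
\end{equation*}
which is precisely the short exact sequence~\eqref{eq:hhk-ce-pm}.

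There is essentially no obstacle here; the only thing to watch is the bookkeeping at the two transition degrees $p=k$ and $p=k+1$ in each parity case, to make sure the claimed ranges for the isomorphisms and the short exact sequence fit together and exhaust the interval $1 \le p \le n-1$.
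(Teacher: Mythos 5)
Your proposal is correct and is exactly the paper's argument: the paper's proof reads ``Follows immediately from the long exact sequences of cohomology groups and the vanishings in the definition of VLC and VUC bundles,'' and your write-up simply fills in the bookkeeping at the threshold degrees $p=k$ and $p=k+1$ in the two parity cases.
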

\begin{proof}
Follows immediately from the long exact sequences of cohomology groups and the vanishings in the definition of VLC and VUC bundles.
\end{proof}

If $n = 2k + 1$ we will often use sequence~\eqref{eq:hhk-ce-pm} to identify~$\HH^k(\cE_\ua)$ with an~$\SS$-submodule of~$\HH^k(\cF)$.

\begin{lemma}
\label{lemma:hacm-split}
Let $0 \to \cE_\la \to \cE_\ua \to \cF \to 0$ be a VHC resolution on~$\P^n$.
Set~$k = \lfloor (n-1)/2 \rfloor$.
Then the object~$\cF[k] \in \Db(\P^n)$ is quasiisomorphic to a complex of split bundles
\begin{equation*}
\{\cL_{2k + 1} \to \cL_{2k} \to \dots \to \cL_1 \to \cL_0\}
\end{equation*}
such that its first half $\{\cL_{2k + 1} \to \dots \to \cL_{k+1}\}$ is a resolution of~$\cE_\la$ 
and its second half $\{\cL_{k} \to \dots \to \cL_{0}\}$ is a resolution of~$\cE_\ua$.

Moreover, if $n = 2k + 1$ then $\HH^k(\cE_\ua) = \HH^\bt(\cL_\bullet)$, $\HH^{k+1}(\cE_\la) = \HH^\tp(\cL_\bullet)$, 
and the exact sequence~\eqref{eq:hhk-ce-pm}
coincides with the exact sequence of Lemma~\textup{\ref{lemma:split-n}}.
\end{lemma}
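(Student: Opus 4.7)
The plan is to build $\cL_\bullet$ by gluing a split resolution of $\cE_\la$ to a split co-resolution of $\cE_\ua$ via a single bridging differential. First I invoke Lemma~\ref{lemma:hacm-resolutions}: since $\cE_\la$ is VLC it has a length-$k$ split resolution
\[
0 \to \cL_{2k+1} \to \cdots \to \cL_{k+1} \xrightarrow{\pi} \cE_\la \to 0,
\]
and since $\cE_\ua$ is VUC it has a length-$k$ split co-resolution
\[
0 \to \cE_\ua \xrightarrow{\iota} \cL_k \to \cdots \to \cL_0 \to 0
\]
(relabeling the indices supplied by that lemma so the split bundles fit into a single sequence).

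Next I define the bridging differential $d\colon \cL_{k+1} \to \cL_k$ as the composition $\iota \circ j \circ \pi$, where $j \colon \cE_\la \hookrightarrow \cE_\ua$ is the injection from the VHC short exact sequence. Exactness of the $\cE_\la$-resolution at $\cL_{k+1}$ yields $d \circ (\cL_{k+2} \to \cL_{k+1}) = 0$, and the identity $\iota(\cE_\ua) = \Ker(\cL_k \to \cL_{k-1})$ yields $(\cL_k \to \cL_{k-1}) \circ d = 0$, so $\cL_\bullet = \{\cL_{2k+1} \to \cdots \to \cL_0\}$ is a genuine complex whose right half $\cL_{\le k}$ is quasiisomorphic to $\cE_\ua$ in cohomological degree $-k$ and whose left half $\cL_{\ge k+1}$ is quasiisomorphic to $\cE_\la$ in degree $-(k+1)$; these are the two ``resolutions'' in the statement.

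To identify $\cL_\bullet$ with $\cF[k]$, I use the long exact sequence of cohomology sheaves associated with the filtration $\cL_{\le k} \subset \cL_\bullet$. It collapses to
\[
0 \to H^{-(k+1)}(\cL_\bullet) \to \cE_\la \xrightarrow{\partial} \cE_\ua \to H^{-k}(\cL_\bullet) \to 0,
\]
and a direct snake-lemma trace (lift a section of $\cE_\la$ to $\cL_{k+1}$ via $\pi$, apply $d$, project into $\iota(\cE_\ua) = \Ker(\cL_k \to \cL_{k-1})$) shows $\partial = j$. Hence $H^{-k}(\cL_\bullet) = \cE_\ua/\cE_\la = \cF$ with all other cohomology vanishing.

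For the ``Moreover'' clause, when $n = 2k+1$ the complex $\cL_\bullet$ has length exactly $n$, so Lemma~\ref{lemma:split-n} applies and yields $0 \to \HH^\bt(\cL_\bullet) \to \HH^k(\cF) \to \HH^\tp(\cL_\bullet) \to 0$. Since $\HH^\bt(\cL_\bullet) = \Coker(\HH^0(\cL_1) \to \HH^0(\cL_0))$ depends only on the tail of the $\cE_\ua$-co-resolution, iterating the long exact sequences of its syzygy short exact sequences $0 \to \cK_i \to \cL_i \to \cK_{i-1} \to 0$ (and using the vanishing of intermediate cohomology of the split bundles $\cL_1, \ldots, \cL_{k-1}$) gives a chain of natural isomorphisms $\HH^\bt(\cL_\bullet) \cong \HH^1(\cK_1) \cong \cdots \cong \HH^k(\cK_k) = \HH^k(\cE_\ua)$, where $\cK_i = \Ker(\cL_i \to \cL_{i-1})$. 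Symmetrically, $\HH^\tp(\cL_\bullet) \cong \HH^{k+1}(\cE_\la)$ via the $\cE_\la$-resolution. The step I expect to be delicate is checking that these identifications intertwine the middle arrows, i.e.\ that the two edge morphisms $\HH^k(\cE_\ua) \to \HH^k(\cF)$---one arising from the hypercohomology spectral sequence of $\cL_\bullet$, the other induced by the surjection $\cE_\ua \to \cF$---coincide, so that Lemma~\ref{lemma:split-n}'s sequence agrees with~\eqref{eq:hhk-ce-pm}; however, both maps are ultimately determined by the same VHC short exact sequence via naturality of the relevant spectral sequences.
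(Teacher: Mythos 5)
Your proof follows essentially the same approach as the paper's: concatenate the two split (co-)resolutions supplied by Lemma~\ref{lemma:hacm-resolutions} via the bridging map $\iota \circ j \circ \pi$, and identify $\HH^\bt$, $\HH^\tp$ with the cohomology of $\cE_\ua$, $\cE_\la$ by the hypercohomology spectral sequences (your syzygy dévissage is just that spectral-sequence degeneration written out term by term). The paper compresses the verification that the total complex computes $\cF[k]$ into the phrase "$\Cone(\cE_\la \to \cE_\ua)[k] \cong \cF[k]$," and leaves the compatibility of the two short exact sequences at the level of "the spectral sequences allow us to identify" — which is the same naturality argument you flag as the delicate final step.
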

\begin{proof}
By Lemma~\ref{lemma:hacm-resolutions} the sheaves $\cE_\la$ and~$\cE_\ua$ have resolutions of length~$k$ by split bundles, 
which we can write in the form
\begin{equation}
\label{eq:cem-cep-resolutions}
0 \to \cL_{2k+1} \to \dots \to \cL_{k+1} \to \cE_\la \to 0
\qquad\text{and}\qquad 
0 \to \cE_\ua \to \cL_k \to \dots \to \cL_0 \to 0.
\end{equation}
The morphism $\cE_\la \to \cE_\ua$ gives a morphism $\cL_{k+1} \to \cL_k$, 
which allows us to concatenate the resolutions into a single complex
\begin{equation*}
\{\cL_{2k + 1} \to \cL_{2k} \to \dots \to \cL_1 \to \cL_0\}
\end{equation*}
of split bundles quasiisomorphic to $\Cone(\cE_\la \to \cE_\ua)[k] \cong \cF[k]$.
If $n = 2k + 1$ the hypercohomology spectral sequences of~\eqref{eq:cem-cep-resolutions} show that
$\HH^k(\cE_\ua) = \HH^\bt(\cL_\bullet)$ and $\HH^{k+1}(\cE_\la) = \HH^\tp(\cL_\bullet)$,
and allow us to identify the exact sequences of Lemma~\textup{\ref{lemma:hhk-ce+}} and Lemma~\textup{\ref{lemma:split-n}}.
\end{proof}

For the uniqueness result stated below we need the following technical notion.

\begin{definition}
\label{def:linearly-minimal}
A VHC resolution is {\sf linearly minimal} if it has no trivial complex $\cO(t) \xrightarrow{\ \id\ } \cO(t)$ as a direct summand.
In other words, if $f \colon \cE_\la \to \cE_\ua$ is not isomorphic to $\id \oplus f' \colon \cO(t) \oplus \cE'_\la \to \cO(t) \oplus \cE'_\ua$.
\end{definition}

Clearly, any VHC resolution is isomorphic to the direct sum of a linearly minimal VHC resolution 
and several trivial complexes $\cO(t_i) \xrightarrow{\ \id\ } \cO(t_i)$.

\begin{theorem}
\label{prop:hacm-uniqueness}
Let $0 \to \cE_\la \xrightarrow{\ f\ } \cE_\ua \to \cF \to 0$ and $0 \to \cE'_\la \xrightarrow{\ f'\ } \cE'_\ua \to \cF \to 0$ 
be linearly minimal VHC resolutions of the same sheaf\/~$\cF$.
If $n = 2k + 1$ assume also we have an equality $\HH^k(\cE_\ua) = \HH^k(\cE'_\ua)$ of~$\SS$-submodules in~$\HH^k(\cF)$
with respect to the embeddings given by~\eqref{eq:hhk-ce-pm}.
Then the resolutions are isomorphic, i.e., there is a commutative diagram
\begin{equation*}
\xymatrix{
0 \ar[r] &
\cE_\la \ar[r]^f \ar[d]_{\varphi_\la} &
\cE_\ua \ar[r] \ar[d]^{\varphi_\ua} \ar@{..>}[dl]_h &
\cF \ar[r] \ar[d]^{\id} &
0
\\
0 \ar[r] &
\cE'_\la \ar[r]^{f'} &
\cE'_\ua \ar[r] &
\cF \ar[r] &
0,
}
\end{equation*}
where~$\varphi_\la$ and~$\varphi_\ua$ are isomorphisms.
Moreover, an isomorphism~$(\varphi_\la, \varphi_\ua)$ of resolutions 
inducing the identity morphism of~$\cF$ is unique up to a homotpy~$h \colon \cE_\ua \to \cE'_\la$.
Finally, the endomorphisms~$\varphi_\la^{-1} \circ h \circ f$ and~$\varphi_\ua^{-1} \circ f' \circ h$ 
of~$\cE_\la$ and~$\cE_\ua$ induced by any homotopy~$h$ are nilpotent.
\end{theorem}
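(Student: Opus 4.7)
The plan is to encode both VHC resolutions as concatenated complexes of split bundles via Lemma~\ref{lemma:hacm-split}, lift $\id_{\cF[k]}$ to a chain map via Lemma~\ref{lemma:morphism-of-complexes}, and extract the required morphisms by restricting to the top and bottom halves. Setting $k = \lfloor (n-1)/2 \rfloor$ and choosing minimal split resolutions of $\cE_\la, \cE_\ua, \cE'_\la, \cE'_\ua$, I concatenate to form complexes $\cL_\bullet, \cL'_\bullet$ of length $\ell = n-1$ for $n$ even and $\ell = n$ for $n$ odd, each quasi-isomorphic to $\cF[k]$. In the even case Lemma~\ref{lemma:morphism-of-complexes} produces a chain map $\varphi_\bullet \colon \cL_\bullet \to \cL'_\bullet$ lifting $\id_{\cF[k]}$ unconditionally; in the odd case the obstruction~\eqref{eq:bt-tp-composition} becomes, via Lemma~\ref{lemma:hacm-split}, the composition $\HH^k(\cE_\ua) \hookrightarrow \HH^k(\cF) \twoheadrightarrow \HH^k(\cF)/\HH^k(\cE'_\ua)$, which vanishes precisely by the hypothesis $\HH^k(\cE_\ua) = \HH^k(\cE'_\ua)$.

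Restricting $\varphi_\bullet$ to the top half $\{\cL_{2k+1} \to \cdots \to \cL_{k+1}\}$ resolving $\cE_\la$ and to the bottom half $\{\cL_k \to \cdots \to \cL_0\}$ resolving $\cE_\ua$ induces $\varphi_\la$ on cokernels and $\varphi_\ua$ on kernels; commutativity of the middle square $f' \varphi_\la = \varphi_\ua f$ follows from the chain equation at index $k+1$, and the remaining square commutes because $\varphi_\bullet$ lifts $\id_{\cF[k]}$. For uniqueness, Lemma~\ref{lemma:morphism-of-complexes} supplies a chain homotopy $h_\bullet \colon \cL_\bullet \to \cL'_{\bullet+1}$ between two lifts, and a direct check shows that only the middle component $h_k \colon \cL_k \to \cL'_{k+1}$ produces a nontrivial change: it descends to $h \colon \cE_\ua \to \cE'_\la$ via the inclusion $\cE_\ua \hookrightarrow \cL_k$ and the surjection $\cL'_{k+1} \twoheadrightarrow \cE'_\la$. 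The remaining components either lie entirely within one half (producing null-homotopies there, which leave the induced maps unchanged) or contribute terms whose image is killed by the projection onto cokernels. The homotopy equations at indices $k$ and $k+1$ then yield the modifications $\varphi_\la \mapsto \varphi_\la + h \circ f$ and $\varphi_\ua \mapsto \varphi_\ua + f' \circ h$.

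For the isomorphism and nilpotency assertions I would rerun the construction with the two resolutions interchanged to produce $(\psi_\la, \psi_\ua)$ lifting $\id_\cF$ in the opposite direction. Then $\psi_\bullet \circ \varphi_\bullet$ is another lift of $\id_{\cF[k]}$, so by uniqueness $\psi_\la \varphi_\la = \id_{\cE_\la} + h_1 \circ f$ and $\psi_\ua \varphi_\ua = \id_{\cE_\ua} + f \circ h_1$ for some $h_1 \colon \cE_\ua \to \cE_\la$, with symmetric relations for $\varphi_\la \psi_\la$ and $\varphi_\ua \psi_\ua$. The main obstacle is then nilpotency of $\varphi_\la^{-1} \circ h \circ f$ and $\varphi_\ua^{-1} \circ f' \circ h$ for any valid homotopy $h$. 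For this I would observe that linear minimality of the VHC resolution (which rules out unit entries in the connecting differential $\cL_{k+1} \to \cL_k$) together with the minimality of the chosen split resolutions (which rules out unit entries in every other differential) forces all matrix entries of every differential in $\cL_\bullet$ to lie in the irrelevant ideal $\mathfrak{m} \subset \SS$. Any null-homotopic chain endomorphism of $\cL_\bullet$ can be written as $\rd h + h \rd$, hence has entries in $\mathfrak{m}$ as well; since each $\cL_i$ is a finite direct sum of line bundles of bounded twist range, such an endomorphism is nilpotent at each term. Transporting this nilpotency through the minimal split resolutions of $\cE_\la$ and $\cE_\ua$ yields nilpotency of the claimed endomorphisms, making $\id + h_1 \circ f$ (and its symmetric companions) invertible, whence $\varphi_\la$ (and analogously $\varphi_\ua$) is an isomorphism.
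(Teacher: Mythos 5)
Your proposal is correct, and up to the point of extracting $\varphi_\la,\varphi_\ua$ from a chain map $\varphi_\bullet\colon\cL_\bullet\to\cL'_\bullet$ (via Lemma~\ref{lemma:hacm-split} and Lemma~\ref{lemma:morphism-of-complexes}, handling the $n$ odd obstruction with the $\HH^k$ hypothesis) it coincides with the paper's proof. Where you diverge is the key step of showing $\varphi_\la,\varphi_\ua$ are isomorphisms. The paper proves directly that $\varphi_\bullet$ is an isomorphism of complexes of split bundles: it passes to the totalization (an acyclic complex of length $n+1$ of split bundles), applies Lemma~\ref{lemma:split-n+1} to conclude it is a sum of trivial complexes, and then does a rank-induction stripping off $\cO(t)\xrightarrow{\id}\cO(t)$ summands, using linear minimality to force the ``horizontal'' components to vanish and the ``vertical'' ones to be isomorphisms. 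You instead run the classical ``uniqueness of minimal free resolutions'' argument: build the reverse chain map $\psi_\bullet$ (which requires the $\HH^k$ inclusion in the other direction, so you also use the hypothesis twice, just as the paper does for the totalization step), observe that $\psi_\bullet\varphi_\bullet-\id$ is null-homotopic, show that linear minimality of~$f$ together with minimality of the chosen split (co-)resolutions forces every differential of $\cL_\bullet$ to have entries in the irrelevant ideal~$\mathfrak{m}$, conclude that $\rd h+h\rd$ has entries in $\mathfrak{m}$ and is therefore termwise nilpotent, and descend to $\cE_\la,\cE_\ua$. This argument at once gives invertibility of $\varphi_\la,\varphi_\ua$ and the nilpotency assertion, whereas the paper derives nilpotency afterwards by a slightly different device (the twist filtration $\rF_{\ge c}$ on $\cL_{k+1},\cL_k$, with the same linear-minimality input). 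Both routes are sound; the paper's gives a more explicit structural description of $\varphi_\bullet$ (matching twist multiplicities term by term), while yours packages the isomorphism claim and the nilpotency claim into one uniform $\mathfrak{m}$-entries argument, at the modest cost of invoking Lemma~\ref{lemma:morphism-of-complexes} twice.
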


\begin{proof}
Let~\mbox{$k = \lfloor (n-1)/2 \rfloor$}, so that~\mbox{$n = 2k + 1$} or~\mbox{$n = 2k + 2$}.
By Lemma~\ref{lemma:hacm-split} the object~$\cF[k]$ is quasiisomorphic to complexes of split bundles
\begin{equation*}
\{\cL_{2k + 1} \to \cL_{2k} \to \dots \to \cL_1 \to \cL_0\}
\qquad\text{and}\qquad
\{\cL'_{2k + 1} \to \cL'_{2k} \to \dots \to \cL'_1 \to \cL'_0\}
\end{equation*}
corresponding to the resolutions $\cE_\la \to \cE_\ua$ and $\cE'_\la \to \cE'_\ua$, respectively.
Using linear minimality we can assume that each of these complexes 
has no trivial complex $\cO(t) \xrightarrow{\ \id\ } \cO(t)$ as a direct summand.

If $n = 2k + 2$, the lengths of the resolutions are less than $n$, hence the first part of Lemma~\ref{lemma:morphism-of-complexes}
ensures that the identity morphism $\cF \to \cF$ is induced by a morphism of complexes.
If $n = 2k + 1$ we use the second part of Lemma~\ref{lemma:morphism-of-complexes}
(the composition~\eqref{eq:bt-tp-composition} vanishes due to the assumption $\HH^k(\cE_\ua) = \HH^k(\cE'_\ua)$ and Lemma~\ref{lemma:hacm-split}) 
and obtain the same conclusion.
Thus, we obtain a quasiisomorphism of complexes of split bundles
\begin{equation}
\label{eq:quasiisomorphism}
\vcenter{\xymatrix{
\cL_{2k+1} \ar[r] \ar[d]_{\varphi_{2k+1}} &
\cL_{2k} \ar[r] \ar[d]_{\varphi_{2k}} &
\dots \ar[r] &
\cL_1 \ar[r] \ar[d]_{\varphi_1} &
\cL_0 \ar[d]_{\varphi_0} 
\\
\cL'_{2k+1} \ar[r] &
\cL'_{2k-1} \ar[r] &
\dots \ar[r] &
\cL'_1 \ar[r] &
\cL'_0.
}}
\end{equation}
We prove below that it is necessarily an isomorphism, i.e., that each~$\varphi_i$ is an isomorphism.
For this we use the induction on the sum of ranks of the bundles~$\cL_i$.

The base of the induction follows from Lemma~\ref{lemma:split-n+1}.
Indeed, if $\cL_\bullet = 0$ then $\cL'_\bullet$ is acyclic, hence is the sum of trivial complexes.
But by assumption it has no trivial summands, hence $\cL'_\bullet = 0$.

Now assume that $\cL_\bullet \ne 0$.
The totalization of~\eqref{eq:quasiisomorphism} is the following acyclic complex
\begin{equation}
\label{eq:totalization}
\cL_{2k+1} \to \cL_{2k} \oplus \cL'_{2k+1} \to \dots \to \cL_0 \oplus \cL'_1 \to \cL'_0 
\end{equation}
of split bundles of length $2k + 2$.
If $n = 2k + 2$ we can formally add the zero term on the right and obtain an acyclic complex of length~$\ell = n + 1$ of split bundles
for which the condition~\eqref{cond:h-bt} of Lemma~\ref{lemma:split-n+1} holds true.
If~\mbox{$n = 2k + 1$} the condition~\eqref{cond:h-bt} of Lemma~\ref{lemma:split-n+1} 
follows from the assumption $\HH^k(\cE_\ua) = \HH^k(\cE'_\ua)$.
In both cases Lemma~\ref{lemma:split-n+1} implies that~\eqref{eq:totalization} is isomorphic to a direct sum of shifts of trivial complexes. 

To make this direct sum decomposition more precise, we consider
as in the proof of Lemma~\ref{lemma:split-n+1} the maximal integer $t$
such that $\cO(t)$ appears as one of summands of one of the split bundles~$\cL_i$ or~$\cL'_i$.
Twisting~\eqref{eq:quasiisomorphism} by $\cO(-t)$ and applying the functor~$H^0(\P^n,-)$ 
we obtain a nonzero bicomplex
\begin{equation*}
\xymatrix{
\kk^{m_{2k+1}} \ar[r] \ar[d] & 
\kk^{m_{2k}} \ar[r] \ar[d] & 
\dots \ar[r] & 
\kk^{m_1} \ar[r] \ar[d] & 
\kk^{m_0} \ar[d]
\\
\kk^{m'_{2k+1}} \ar[r] & 
\kk^{m'_{2k}} \ar[r] & 
\dots \ar[r] & 
\kk^{m'_1} \ar[r] & 
\kk^{m'_0}
}
\end{equation*}
(as before, $m_i$ and $m'_i$ are the multiplicities of $\cO(t)$ in~$\cL_i$ and $\cL'_i$, respectively)
with acyclic totalization.
If any of the horizontal arrows in this bicomplex is nontrivial, 
Lemma~\ref{lemma:splitting-complex} implies that the trivial complex~\mbox{$\cO(t) \to \cO(t)$}
is a direct summand of either~$\cL_\bullet$ or~$\cL'_\bullet$, which contradicts the linear minimality assumption.
Therefore, the horizontal arrows are zero, and hence the vertical arrows are all isomorphisms.

This means that $m_i = m'_i$ for all $i$ and we can write 
\begin{equation*}
\cL_i = \cO(t)^{\oplus m_i} \oplus \bar\cL_i,
\qquad 
\cL'_i = \cO(t)^{\oplus m_i} \oplus \bar\cL'_i,
\qquad 
\varphi_i = \left(\begin{smallmatrix} 1 & \psi_i \\ 0 & \bar\varphi_i \end{smallmatrix}\right),
\end{equation*}
and that $\bar\varphi_\bullet \colon \bar\cL_\bullet \to \bar\cL'_\bullet$ 
is a quasiisomorphism of complexes of split bundles which have no trivial summands.
Moreover, we have $\sum\rk(\bar\cL_i) < \sum\rk(\cL_i)$.
By induction, we deduce that~$\bar\varphi_i$ is an isomorphism for each~$i$,
hence so is~$\varphi_i$.

Since $\varphi_\bullet$ is an isomorphism of complexes, it induces an isomorphism of resolutions 
of $\cE_\la$ and $\cE'_\la$ and of $\cE_\ua$ and $\cE'_\ua$, compatible with the maps $\cE_\la \to \cE_\ua$ and $\cE'_\la \to \cE'_\ua$,
hence an isomorphism~$(\varphi_\la,\varphi_\ua)$ of the original VHC resolutions.
This proves the first part of the theorem.

Further, recall that by Lemma~\ref{lemma:morphism-of-complexes} the morphism~$\varphi_\bullet$ in~\eqref{eq:quasiisomorphism} 
inducing the identity of~$\cF$ is unique up to a homotopy $h_\bullet \colon \cL_\bullet \to \cL'_{\bullet + 1}$.
Note that first part~$(h_i)_{0 \le i \le k-1}$ of such a homotopy 
replaces the morphism~$(\varphi_i)_{0 \le i \le k}$ 
of the right resolutions of~$\cE_\ua$ and~$\cE'_\ua$ by a homotopy equivalent morphism,
hence it does not change~$\varphi_\ua$, and a fortiori does not change~$\varphi_\la$.
Similarly, the last part~$(h_i)_{k+1 \le i \le 2k}$ of a homotopy does not change~$(\varphi_\la,\varphi_\ua)$.
Finally, it is clear that the middle component~$h_k \colon \cL_k \to \cL'_{k+1}$ of a homotopy 
modifies~$(\varphi_\la,\varphi_\ua)$ by the homotopy
\begin{equation*}
\cE_\ua \hookrightarrow \cL_k \xrightarrow{\ h_k\ } \cL'_{k+1} \twoheadrightarrow \cE'_\la
\end{equation*}
of the VHC resolutions.
This proves the second part of the theorem.

So, it only remains to check the nilpotence of the induced endomorphisms of~$\cE_\la$ and~$\cE_\ua$.
For this let us write
\begin{align*}
\cL_{k+1} &= \moplus \cO(a_i),
&
\cL_k &= \moplus \cO(b_i),
&
\cL'_{k+1} &= \moplus \cO(a'_i),
&
\cL'_k &= \moplus \cO(b'_i)
\\
\intertext{and for each $c \in \ZZ$ define finite filtrations of these bundles by}
\rF_{\ge c}\cL_{k+1} &= \moplus_{a_i \ge c} \cO(a_i),
&
\rF_{\ge c}\cL_k &= \moplus_{b_i \ge c} \cO(b_i),
&
\rF_{\ge c}\cL'_{k+1} &= \moplus_{a'_i \ge c} \cO(a'_i),
&
\rF_{\ge c}\cL'_k &= \moplus_{b'_i \ge c} \cO(b'_i).
\end{align*}
Then the morphism~$\cL_{k+1} \to \cL_k$ induced by~$f$ takes~$\rF_{\ge c}\cL_{k+1}$ to~$\rF_{\ge c + 1}\cL_k$
(because~$f$ is assumed to be linearly minimal)
and obviously any morphism~$h \colon \cL_k \to \cL'_{k+1}$ 
takes~$\rF_{\ge c + 1}\cL_{k}$ to~$\rF_{\ge c + 1}\cL'_{k + 1}$.
Since~$\varphi_\la$ is an isomorphism, we conclude that the composition~$\varphi_\la^{-1} \circ h \circ f$ is induced by an endomorphism of~$\cL_{k+1}$
that takes~$\rF_{\ge c}\cL_{k+1}$ to~$\rF_{\ge c + 1}\cL_{k+1}$, hence is nilpotent. 
A similar argument works for~$\varphi_\ua^{-1} \circ f' \circ h$. 
\end{proof}

\subsection{Existence of VHC resolutions}
\label{subsec:vhc-existence}

The results of this subsection are not necessary for~\S\ref{sec:hacm-pn},
but the technique used in their proofs is similar.

\begin{definition}
\label{def:shadow}
Let~$\cF$ be a coherent sheaf and~$1 \le k \le n - 1$.
We will say that a graded~$\SS$-submodule~$\rA^k \subset \HH^k(\cF)$ is {\sf shadowless}
if for any~$t_0 \in \ZZ$ such that~$\rA^{k}_{t_0} \ne 0$ we have~$\rA^{k}_{t} = H^{k}(\P^n, \cF(t))$ for any~$t > t_0$.
Similarly, for any~$1 \le p_0 \le n-1$ and any~$t_0 \in \ZZ$ we define the {\sf shadow} of~$(p_0,t_0)$ as the set 
\begin{equation}
\label{eq:shadow}
\Sha(p_0,t_0) = \{ (p,t) \mid 1 \le p \le p_0 \text{ and } t > t_0 \}
\end{equation}
and say that a bigraded $\SS$-submodule $\rA \subset \moplus_{p=1}^{n-1} \HH^p(\cF)$ is {\sf shadowless}
if for any~$(p_0,t_0)$ such that~$\rA^{p_0}_{t_0} \ne 0$ we have~$\rA^{p}_{t} = H^{p}(\P^n, \cF(t))$ for any~$(p,t) \in \Sha(p_0,t_0)$.
\end{definition}

To understand the meaning of this notion observe the following.
Let $\cT$ be the tangent bundle of~$\P^n$.
Recall the Koszul resolution of its exterior power
\begin{equation}
\label{eq:koszul-t}
0 \to \cO \to V \otimes \cO(1) \to \dots \to \wedge^sV \otimes \cO(s) \to \wedge^s\cT \to 0,
\end{equation} 
where $V$ is a vector space such that $\P^n = \P(V)$.
If $\cF$ is a sheaf on~$\P^n$, tensoring~\eqref{eq:koszul-t} by~$\cF(t)$ 
we obtain the hypercohomology spectral sequence
\begin{equation*}
\bE_1^{i,j} = \wedge^{i}V \otimes H^j(\P^n,\cF(i+t)) \Rightarrow H^{i+j-s}(\P^n, \wedge^s\cT \otimes \cF(t)).
\end{equation*}
The following picture shows the arrows~$\bd_r$, $1 \le r \le p$, 
of the spectral sequence with source at the terms~$\bE_r^{t,p}$,
as well as the terms that in the limit compute the filtration on~$H^{p-s}(\P^n,\wedge^{s}\cT \otimes \cF(t))$ 
(these terms are circled), and the shadow of~$(p,t)$.
\begin{equation*}
\begin{tikzpicture}[xscale = .5, yscale = .5]
\fill[color = lightgray] (2,-0.3) -- (1.7,0) -- (1.7,4) -- (2,4.3) -- (13,4.3) -- (13,-0.3) -- (2,-0.3) -- (1.7,0);
\filldraw[black] (0,4) circle (.2em) node[left=.5em]{$(t,p)$};
\foreach \i in {1,2,3,5} \filldraw[black] (2*\i,4) circle (.2em);
\foreach \i in {1,2,3,5} \filldraw[black] (2*\i,3) circle (.2em);
\foreach \i in {1,2,3,5} \filldraw[black] (2*\i,2) circle (.2em);
\foreach \i in {1,2,3,5} \filldraw[black] (2*\i,0) circle (.2em);
\foreach \i in {0,1,2} \draw (2*\i,4-\i) circle (.5em);
\path{} (2,1) node {$\vdots$} (4,1) node {$\vdots$} (6,1) node {$\vdots$} (10,1) node {$\vdots$}
	(8,2) node {$\cdots$} (8,3) node {$\cdots$} (8,4) node {$\cdots$} (8,0) node {$\cdots$}
	(12,2) node {$\cdots$} (12,3) node {$\cdots$} (12,4) node {$\cdots$} (12,0) node {$\cdots$} (12,1) node {$\ddots$};
\draw[->,thick] (0,4) -- (2,4) node[above] {$\scriptscriptstyle(t+1,p)$};
\draw[->,thick] (0,4) -- (4,3) node[above] {$\scriptscriptstyle(t+2,p-1)$};
\draw[->,thick] (0,4) -- (6,2) node[above] {$\scriptscriptstyle(t+3,p-2)$};
\draw[->,thick] (0,4) -- (10,0) node[below] {$\scriptscriptstyle(t + p,1)$};
\end{tikzpicture}
\end{equation*}
It is important that the arrows~$\bd_r$, $1 \le r \le p$, 
applied to the terms~$\bE_r^{t,p}$ of the spectral sequence land in its shadow.
This property will be used in Propositions~\ref{prop:ce-a} and~\ref{prop:eps-p} below.

\begin{proposition}
\label{prop:ce-a}
For any coherent sheaf~$\cF$ on~$\P^n$ and any finite-dimensional shadowless $\SS$-submodule 
\begin{equation*}
\rA \subset \moplus_{p=1}^{n-1} \HH^p(\cF)
\end{equation*}
there exists a vector bundle $\cE_\rA$
and an epimorphism $\pi_\rA \colon \cE_\rA \twoheadrightarrow \cF$
such that 
\begin{itemize}
\item 
the map $\HH^0(\cE_A) \xrightarrow{\ \HH^0(\pi_\rA)\ } \HH^0(\cF)$ is surjective, and
\item 
the map $\moplus_{p=1}^{n-1} \HH^p(\cE_A) \xrightarrow{\ \oplus \HH^p(\pi_\rA)\ } \moplus_{p=1}^{n-1} \HH^p(\cF)$ is an isomorphism onto~$\rA$.
\end{itemize}
\end{proposition}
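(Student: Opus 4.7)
The plan is to proceed by induction on $\dim_\kk \rA$. The base case $\rA = 0$ follows from Lemma~\ref{lemma:epi-from-split}: take a split bundle $\cE_\rA$ and an epimorphism $\pi_\rA \colon \cE_\rA \twoheadrightarrow \cF$ with $\HH^0(\pi_\rA)$ surjective; since split bundles have vanishing intermediate cohomology by Horrocks, the intermediate cohomology of $\cE_\rA$ is automatically $0 = \rA$.

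For the inductive step, set $p_0 := \max\{p \ge 1 : \rA^p \ne 0\}$ and $t_0 := \min\{t : \rA^{p_0}_t \ne 0\}$. These extremal choices place $(p_0, t_0)$ outside every shadow $\Sha(p'', t'')$ of another position of $\supp(\rA)$: if $(p_0, t_0) \in \Sha(p'', t'')$ then $p'' \ge p_0$ forces $p'' = p_0$ by maximality, and then $t'' < t_0$ contradicts minimality. Pick a nonzero $\alpha \in \rA^{p_0}_{t_0}$ and let $\rA' \subset \rA$ be obtained by removing a line through $\alpha$ at position $(p_0, t_0)$. Then $\rA'$ is still an $\SS$-submodule (since $\rA^{p_0}_{t_0 - 1} = 0$ rules out any incoming $\SS_+$-action into $\rA^{p_0}_{t_0}$), still shadowless (by the extremal choice the removal at $(p_0, t_0)$ affects no other shadow), and of dimension $\dim \rA - 1$, so the induction hypothesis yields $\pi_{\rA'} \colon \cE_{\rA'} \twoheadrightarrow \cF$ realizing $\rA'$.

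To realize the extra class $\alpha$, my first candidate is the bundle $\Omega^{p_0}(-t_0)$, whose intermediate cohomology is $\kk$ at $(p_0, t_0)$ and zero elsewhere by Bott's formula~\eqref{eq:h-q-omega-p}. If a morphism $\beta \colon \Omega^{p_0}(-t_0) \to \cF$ carrying the generator of $H^{p_0}(\Omega^{p_0}(-t_0)(t_0)) = \kk$ to $\alpha$ exists, then $\cE_\rA := \cE_{\rA'} \oplus \Omega^{p_0}(-t_0)$ with $\pi_\rA := \pi_{\rA'} \oplus \beta$ has all the required properties: surjectivity of $\HH^0(\pi_\rA)$ comes from the $\cE_{\rA'}$-summand, and the image on intermediate cohomology is $\rA' \oplus \kk\alpha = \rA$, with the direct-sum decomposition giving the isomorphism onto $\rA$.

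The hard part is constructing $\beta$. By the hypercohomology spectral sequence of the Koszul resolution $0 \to \cF(t_0) \to V \otimes \cF(t_0+1) \to \dots \to \wedge^{p_0}V \otimes \cF(t_0+p_0) \to \wedge^{p_0}\cT(t_0) \otimes \cF \to 0$, the image of $\Hom(\Omega^{p_0}(-t_0), \cF) = H^0(\wedge^{p_0}\cT(t_0) \otimes \cF)$ inside $H^{p_0}(\cF(t_0))$ is the subspace of classes killed by every differential $\bd_r$, whose targets are subquotients of $\wedge^r V \otimes H^{p_0 - r + 1}(\cF(t_0 + r))$ sitting at positions in $\Sha(p_0, t_0)$ (this is exactly the arrow pattern drawn before the statement). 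Shadowlessness of $\rA$ identifies these target groups with $\rA^{p_0 - r + 1}_{t_0 + r}$, each of which is already realized by $\cE_{\rA'}$; the technical heart of the proof is to exploit this to replace $\Omega^{p_0}(-t_0)$ by an extension $\cG_\alpha$ obtained by adjoining split-bundle summands along classes lifted from $\cE_{\rA'}$, which has the same intermediate cohomology as $\Omega^{p_0}(-t_0)$ but admits an honest morphism $\cG_\alpha \to \cF$ realizing $\alpha$.
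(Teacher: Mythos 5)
Your inductive setup is correct (the base case, the $\SS$-submodule check, the check that removing a line at $(p_0,t_0)$ preserves shadowlessness of $\rA'$), but the proof has a genuine gap at the heart of the inductive step, and this gap traces back to your choice of extremal corner. You take $p_0 = \max$, $t_0 = \min$. This makes the point $(p_0,t_0)$ lie outside every other shadow of $\supp(\rA)$ (good for $\rA'$ remaining shadowless), but it makes the shadow $\Sha(p_0,t_0)$ itself potentially intersect $\supp(\rA)$ heavily: indeed shadowlessness says $\rA^p_t = H^p(\P^n,\cF(t))$ for $(p,t) \in \Sha(p_0,t_0)$, but these groups need not vanish — they are merely required to be the \emph{full} cohomology. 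Therefore the obstructions you correctly locate (the values of $\bd_1,\dots,\bd_{p_0-1}$ on $\alpha$ under the Koszul spectral sequence) live in nonzero groups, and the desired lift of $\alpha$ to $\Hom(\Omega^{p_0}(-t_0),\cF)$ generally does not exist. Your proposed remedy — ``replace $\Omega^{p_0}(-t_0)$ by an extension $\cG_\alpha$ obtained by adjoining split-bundle summands along classes lifted from $\cE_{\rA'}$'' — is stated but not constructed; there is no argument that such an extension exists, that its intermediate cohomology is still one-dimensional at $(p_0,t_0)$, or that it admits a morphism to $\cF$ with the required image. This is not a routine verification you can defer: it is exactly the crux, and the naive version (a direct sum or extension by split pieces) does not resolve the obstruction in a way one can control.

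The paper avoids this problem entirely by picking the \emph{opposite} corner: $p_0 = \min\{p : \rA^p \ne 0\}$ and $t_0 = \max\{t : \rA^{p_0}_t \ne 0\}$. With that choice, $\Sha(p_0,t_0) \cap \supp(\rA) = \emptyset$, so shadowlessness forces $H^p(\P^n,\cF(t)) = 0$ for every $(p,t) \in \Sha(p_0,t_0)$. All the spectral-sequence obstructions then vanish for free, and the inclusion $\rA^{p_0}_{t_0} \hookrightarrow H^{p_0}(\P^n,\cF(t_0))$ is identified directly with a subspace of $\Ext^1(\Omega^{p_0-1}(-t_0),\cF)$. The paper then runs the induction ``backwards'' relative to your plan: instead of trying to hit $\alpha$ by a map \emph{into} $\cF$, it builds an extension $0 \to \cF \to \cF' \to \rA^{p_0}_{t_0} \otimes \Omega^{p_0-1}(-t_0) \to 0$ absorbing $\rA^{p_0}_{t_0}$, applies the inductive hypothesis to $\cF'$ with the quotient module $\rA/\rA^{p_0}_{t_0}$, and finally defines $\cE_\rA$ as the kernel of the composite $\cE_{\rA'} \twoheadrightarrow \cF' \twoheadrightarrow \rA^{p_0}_{t_0} \otimes \Omega^{p_0-1}(-t_0)$. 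If you keep your choice of corner, you are stuck with a lifting problem that has no obvious solution; if you switch to the paper's corner, the lifting problem disappears but you then cannot simply peel off a line from $\rA$ and keep a \emph{submodule} of $\moplus \HH^p(\cF)$ — you must modify $\cF$ as the paper does. These two choices and constructions are tied together, and you have taken one half of each.
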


Note that the assumption~$\dim(\rA) < \infty$ in the proposition is necessary 
because~$\HH^p(\cE)$ is finite dimensional for any vector bundle~$\cE$ if~$1 \le p \le n - 1$.
On the other hand, if~$\cF$ is a coherent sheaf with $p$-dimensional support then~$\HH^p(\cF)$ is not finite-dimensional,
so a priori~$\rA$ could have infinite dimension.

\begin{proof}
We argue by induction on $\dim(\rA)$.
If $\rA = 0$ we take $\cE_\rA$ to be the split bundle that corresponds to a free $\SS$-module surjecting onto~$\HH^0(\cF)$
as in Lemma~\ref{lemma:epi-from-split}.
The desired condition is tautologically true.

Assume $\dim(\rA) > 0$.
Let 
\begin{equation*}
p_0 = \min\{p \ge 1 \mid \rA^p \ne 0\}
\qquad\text{and}\qquad
t_0 = \max\{t \mid \rA^{p_0}_{t} \ne 0\}.
\end{equation*}
Since $\rA$ is shadowless we have~$H^p(\P^n,\cF(t)) = \rA^p_t = 0$ for all~$(p,t) \in \Sha(p_0,t_0)$
(the first equality holds because~$\rA$ is shadowless and the second follows from the above definition of~$(p_0,t_0)$).
In particular, the subspace~$\rA^{p_0}_{t_0} \subset H^{p_0}(\P^n,\cF(t_0))$ 
sits in the kernels of differentials~$\bd_1, \dots, \bd_{p_0-1}$
of the hypercohomology spectral sequence of~$\cF(t_0)$ tensored with the Koszul complex~\eqref{eq:koszul-t} for~$s = p_0 - 1$.
Moreover, $H^{p_0}(\P^n,\cF(t_0))$ is the only nonzero subspace on the diagonal of the spectral sequence 
that in the limit computes the filtration on~$H^1(\P^n,\wedge^{p_0-1}\cT \otimes \cF(t_0))$.
Therefore, we obtain an inclusion
\begin{equation*}
\rA^{p_0}_{t_0} \subset 
H^{p_0}(\P^n,\cF(t_0)) = 
H^1(\P^n,\wedge^{p_0-1}\cT \otimes \cF(t_0)) = 
\Ext^1(\Omega^{p_0 - 1}(-t_0), \cF)
\end{equation*}
which induces an extension
\begin{equation*}
0 \to \cF \to \cF' \to \rA^{p_0}_{t_0} \otimes \Omega^{p_0 - 1}(-t_0) \to 0
\end{equation*}
such that the connecting morphism 
$\rA^{p_0}_{t_0} = H^{p_0 - 1}(\P^n, \rA^{p_0}_{t_0} \otimes \Omega^{p_0 - 1}) \to H^{p_0}(\P^n, \cF(t_0))$ 
is the natural embedding (the first identification uses~\eqref{eq:h-q-omega-p}).
Now the cohomology exact sequence implies that
\begin{equation*}
\moplus_{p=1}^{n-1} \HH^{p}(\cF') = \left( \moplus_{p=1}^{n-1} \HH^{p}(\cF) \right) \Big/\rA^{p_0}_{t_0},
\end{equation*}
hence the quotient $\SS$-module $\rA' := \rA/\rA^{p_0}_{t_0}$ is an $\SS$-submodule in $\moplus \HH^{p}(\cF')$.
Clearly~$\dim(\rA') < \dim(\rA)$ and it is straightforward to check that~$\rA'$ is shadowless.
By the induction hypothesis there is a vector bundle~$\cE_{\rA'}$ and an epimorphism~$\pi_{\rA'} \colon \cE_{\rA'} \twoheadrightarrow \cF'$
inducing surjection on~$\HH^0$ and the natural embedding of~$\rA'$ into the intermediate cohomology of~$\cF'$.
We define~$\cE_\rA$ as the kernel of the composition of epimorphisms
\begin{equation*}
\cE_{\rA'} \twoheadrightarrow \cF' \twoheadrightarrow \rA^{p_0}_{t_0} \otimes \Omega^{p_0 - 1}(-t_0).
\end{equation*}
By construction the map $\pi_{\rA'}$ lifts to a map $\pi_\rA$ that fits into a commutative diagram
\begin{equation*}
\xymatrix{
0 \ar[r] &
\cE_\rA \ar[r] \ar[d]_{\pi_\rA} & 
\cE_{\rA'} \ar@{->>}[d]_{\pi_{\rA'}} \ar[r] &
\rA^{p_0}_{t_0} \otimes \Omega^{p_0 - 1}(-t_0) \ar[r] \ar@{=}[d] & 
0
\\
0 \ar[r] &
\cF \ar[r] & 
\cF' \ar[r] &
\rA^{p_0}_{t_0} \otimes \Omega^{p_0 - 1}(-t_0) \ar[r] & 
0
}
\end{equation*}
The surjectivity of~$\pi_{\rA'}$ and~$\HH^0(\pi_{\rA'})$ implies that of~$\pi_\rA$ and~$\HH^0(\pi_{\rA})$.
Similarly, it follows that~$\moplus \HH^p(\pi_{\rA})$ is an isomorphism onto~$\rA$.
Thus, the required result holds for~$\rA$.
\end{proof}

\begin{corollary}
\label{cor:hacm-resolution}
If $\cF$ is a sheaf on~$\P^n$ of projective dimension at most~$1$
then~$\cF$ has a VHC resolution. 
\end{corollary}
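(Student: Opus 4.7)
The plan is to apply Proposition~\ref{prop:ce-a} to the specific bigraded $\SS$-submodule
$$\rA := \moplus_{p=1}^{\lceil n/2 \rceil - 1} \HH^p(\cF) \subset \moplus_{p=1}^{n-1} \HH^p(\cF),$$
take $\cE_\ua$ to be the resulting vector bundle, and take $\cE_\la$ to be the kernel of the induced surjection $\cE_\ua \twoheadrightarrow \cF$. The cutoff $\lceil n/2 \rceil - 1$ is engineered so that $\cE_\ua$ is forced to be VUC while the kernel is forced to be VLC.

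First I would verify finite-dimensionality of $\rA$. Choose a length-one locally free resolution $0 \to \cG_1 \to \cG_0 \to \cF \to 0$ and consider the induced long exact sequence of cohomology: $\HH^p(\cF)$ is sandwiched between $\HH^p(\cG_0)$ and $\HH^{p+1}(\cG_1)$. For $1 \le p \le n - 2$ both terms are finite-dimensional by the remark following Proposition~\ref{prop:ce-a}, so $\HH^p(\cF)$ is finite-dimensional in that range. Since $\lceil n/2 \rceil - 1 \le n - 2$ for $n \ge 2$ (and the case $n = 1$ is trivial, as the VHC condition is then vacuous), $\rA$ is finite-dimensional. Moreover $\rA$ is tautologically shadowless: whenever $\rA^{p_0}_{t_0} \ne 0$ we have $p_0 \le \lceil n/2 \rceil - 1$, so any $(p,t) \in \Sha(p_0, t_0)$ satisfies $p \le p_0 \le \lceil n/2 \rceil - 1$, and hence $\rA^p_t = H^p(\P^n, \cF(t))$ by the definition of $\rA$.

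Applying Proposition~\ref{prop:ce-a} to $\rA$ then produces $\cE_\ua := \cE_\rA$ with an epimorphism $\pi \colon \cE_\ua \twoheadrightarrow \cF$ such that $\HH^0(\pi)$ is surjective and $\moplus_{p \ge 1} \HH^p(\pi)$ is an isomorphism onto $\rA$. In particular $\HH^p(\cE_\ua) = 0$ for $\lceil n/2 \rceil \le p \le n - 1$, so $\cE_\ua$ is VUC. Setting $\cE_\la := \ker(\pi)$, projective dimension at most 1 forces $\cE_\la$ to be locally free. The VLC property of $\cE_\la$ can be read off the long exact sequence
$$\HH^{p-1}(\cE_\ua) \xrightarrow{\ \alpha\ } \HH^{p-1}(\cF) \to \HH^p(\cE_\la) \to \HH^p(\cE_\ua) \xrightarrow{\ \delta\ } \HH^p(\cF).$$
For $1 \le p \le \lfloor n/2 \rfloor$, the map $\alpha$ is surjective---it is the tautological isomorphism $\HH^{p-1}(\cE_\ua) = \rA^{p-1} = \HH^{p-1}(\cF)$ when $p \ge 2$, and the surjection built into Proposition~\ref{prop:ce-a} when $p = 1$---while $\delta$ is injective, being either the identification $\rA^p = \HH^p(\cF) \hookrightarrow \HH^p(\cF)$ when $p < \lceil n/2 \rceil$, or trivially injective when $p = \lceil n/2 \rceil$ (which occurs only for $n$ even and $p = n/2$, in which case $\HH^p(\cE_\ua) = 0$). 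Hence $\HH^p(\cE_\la) = 0$ and $\cE_\la$ is VLC.

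No single step is a serious obstacle: once one recognizes that Proposition~\ref{prop:ce-a} lets us prescribe exactly which part of the intermediate cohomology of $\cF$ is absorbed into $\cE_\ua$, the cutoff at $\lceil n/2 \rceil - 1$ simultaneously delivers VUC of $\cE_\ua$ and VLC of $\cE_\la$. The only mildly technical point is checking finite-dimensionality of $\rA$, which is precisely where the projective dimension $\le 1$ hypothesis is used to bound intermediate cohomology strictly below the top degree.
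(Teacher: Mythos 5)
Your proof is correct and takes essentially the same route as the paper: apply Proposition~\ref{prop:ce-a} to a carefully chosen shadowless finite-dimensional submodule $\rA$ of the intermediate cohomology, let $\cE_\ua := \cE_\rA$, and read off the VLC property of the kernel from the long exact sequence. The only difference is your choice of $\rA$ in the odd case $n = 2k+1$: you take $\rA = \moplus_{p=1}^{k}\HH^p(\cF)$ (hence $\rA^k = \HH^k(\cF)$, which requires the additional observation $k \le n - 2$ for $n \ge 3$ to ensure finite-dimensionality), while the paper takes $\rA = \moplus_{p=1}^{k-1}\HH^p(\cF) \oplus \rA^k$ with $\rA^k$ an arbitrary finite-dimensional shadowless submodule; both satisfy the hypotheses of Proposition~\ref{prop:ce-a} and deliver the same conclusion, but the paper's more flexible formulation is what gets reused later to prescribe a Lagrangian $\HH^k(\cE_\ua)$ in the Modification Theorem.
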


\begin{proof}
Since the projective dimension of $\cF$ is at most~1, there exists a locally free resolution
\begin{equation*}
0 \to \cE_1 \to \cE_0 \to \cF \to 0.
\end{equation*}
Since $\HH^p(\cE_i)$ is finite dimensional for $1 \le p \le n-1$ the  cohomology exact sequence 
\begin{equation*}
\dots \to \HH^p(\cE_0) \to \HH^p(\cF) \to \HH^{p+1}(\cE_1) \to \dots
\end{equation*}
implies that $\HH^p(\cF)$ is finite dimensional for $1 \le p \le n-2$.
Let 
\begin{equation*}
\label{eq:ra-typical}
\rA := 
\begin{cases}
\moplus_{p=1}^{k-1} \HH^p(\cF), & \text{if $n = 2k$},\\
\moplus_{p=1}^{k-1} \HH^p(\cF) \oplus \rA^k, & \text{if $n = 2k + 1$},
\end{cases}
\end{equation*}
where $\rA^k \subset \HH^k(\cF)$ is any finite-dimensional shadowless $\SS$-submodule (e.g., $\rA^k = 0$).
Note that we have~$k - 1 \le n - 2$ as soon as~$n \ge 1$, hence~$\rA$ is finite-dimensional.
Moreover, $\rA$ is shadowless by construction.

Let $\pi_\rA \colon \cE_\rA \to \cF$ be the epimorphism constructed in Proposition~\ref{prop:ce-a}
and let $\cK_\rA = \Ker(\pi_\rA)$, so that
\begin{equation*}
0 \to \cK_\rA \to \cE_\rA \to \cF \to 0
\end{equation*}
is an exact sequence.
First, $\HH^p(\cE_\rA) = \rA^p = 0$ for $\lceil n/2\rceil \le p \le n-1$ by definition, hence $\cE_\rA$ is~VUC.
Furthermore, $\cK_\rA$ is locally free because the projective dimension of~$\cF$ is at most~1.
Finally, the cohomology exact sequence implies that $\HH^p(\cK_\rA) = 0$ for~$1 \le p \le k = \lfloor n/2 \rfloor$, hence~$\cK_\rA$ is VLC. 
\end{proof}

\section{Hyperbolic equivalence on projective spaces}
\label{sec:hacm-pn}

In this section we prove Theorem~\ref{thm:he-hacm} on VHC modifications of quadratic forms
and deduce from it Theorem~\ref{thm:he-intro} and Corollary~\ref{corollary:invariants} from the Introduction.
In~\S\ref{subsec:sym-sheaves} we remind a characterization of cokernel sheaves of quadratic forms (\emph{symmetric sheaves}),
in~\S\ref{subsec:elementary-modifications} we define elementary modifications of quadratic forms 
with respect to some intermediate cohomology classes, and
in~\S\ref{subsec:hacm-modifications} we state the Modification Theorem (Theorem~\ref{thm:he-hacm})
and prove it by applying an appropriate sequence of elementary modifications.
Finally, in~\S\ref{subsec:proofs} we combine these results to prove Theorem~\ref{thm:he-intro} and Corollary~\ref{corollary:invariants}.

\subsection{Reminder on symmetric sheaves}
\label{subsec:sym-sheaves}

For a scheme $Y$ and an object $\cC \in \Db(Y)$ we write 
\begin{equation*}
\cC^\vee := \cRHom(\cC,\cO_Y)
\end{equation*}
for the derived dual of~$\cC$.
Note that the cohomology sheaves~$\cH^i(\cC^\vee)$ of~$\cC^\vee$ are isomorphic to the local $\Ext$-sheaves~$\cExt^i(\cC,\cO_Y)$.

\begin{definition}[{cf.~\cite[Definition~0.2]{CC97}}]
\label{def:sym-sheaf}
We say that a coherent sheaf~$\cC$ on~$\P^n$ is {\sf $(d,\delta)$-symmetric}, if~\mbox{$\cC \cong i_*\cR$}, 
where $i \colon D \hookrightarrow \P^n$ is the embedding of a degree~$d$ hypersurface
and $\cR$ is a coherent sheaf on~$D$ endowed with a symmetric morphism 
\begin{equation*}
\cR \otimes \cR \to \cO_D(\delta)
\end{equation*}
such that the induced morphism $\cR(-\delta) \to \cR^\vee$ (where the duality is applied on~$D$) is an isomorphism.
\end{definition}

Note that $d$, $\delta$, $D$, and~$\cR$ in Definition~\ref{def:sym-sheaf} are \emph{not} determined by the sheaf~$\cC$,
see Remark~\ref{rem:non-unique-ss}.

The goal of this subsection is to relate symmetric sheaves to cokernel sheaves of quadratic forms.
Most of these results are well-known and not really necessary for the rest of the paper, but useful for the context.

\begin{lemma}
\label{lemma:cf-sd}
If $\cC$ is a $(d,\delta)$-symmetric coherent sheaf on~$\P^n$ there is a self-dual isomorphism
\begin{equation*}
\cC^\vee \cong \cC(m)[-1], 
\end{equation*}
where $m = d - \delta$.
In particular, the sheaf~$\cC$ has projective dimension~$1$.
\end{lemma}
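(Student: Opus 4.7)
The plan is to compute $\cC^\vee = \cRHom_{\P^n}(i_*\cR, \cO_{\P^n})$ by Grothendieck duality for the closed embedding $i \colon D \hookrightarrow \P^n$. Since~$D$ is a Cartier divisor of degree~$d$, the relative dualizing complex is~$i^!\cO_{\P^n} \cong \cO_D(d)[-1]$, so Grothendieck duality yields
\begin{equation*}
\cC^\vee \;\cong\; i_*\cRHom_D(\cR, i^!\cO_{\P^n}) \;\cong\; i_*\cRHom_D(\cR, \cO_D(d))[-1] \;\cong\; i_*\bigl(\cR^\vee \otimes \cO_D(d)\bigr)[-1],
\end{equation*}
where $\cR^\vee$ now denotes the derived dual computed on~$D$. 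The hypothesis of the definition gives a self-dual isomorphism $\cR^\vee \cong \cR(-\delta)$; substituting and using the projection formula we obtain
\begin{equation*}
\cC^\vee \;\cong\; i_*\bigl(\cR(d - \delta)\bigr)[-1] \;\cong\; \cC(m)[-1],
\end{equation*}
with $m = d - \delta$ as required.

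For the self-duality, the symmetry of the pairing $\cR \otimes \cR \to \cO_D(\delta)$ translates, after the identifications above, into the statement that the constructed isomorphism $\cC^\vee \xrightarrow{\;\simeq\;} \cC(m)[-1]$ coincides (up to the twist by~$\cO(m)$) with its own derived dual shifted by~$[1]$. This is essentially formal: both Grothendieck duality and the projection formula are compatible with taking duals, so the symmetry of the pairing on~$\cR$ is preserved by each step of the computation.

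Finally, the conclusion $\cC^\vee \cong \cC(m)[-1]$ shows that the complex $\cRHom_{\P^n}(\cC, \cO_{\P^n})$ has a single nonzero cohomology sheaf, sitting in degree~$1$. Equivalently, $\cExt^i_{\P^n}(\cC, \cO_{\P^n}) = 0$ for $i \ne 1$, so $\cC$ has projective dimension at most~$1$; since~$\cC$ is torsion it cannot be locally free, so the projective dimension equals~$1$. The main (mildly) technical point is verifying that the Grothendieck duality identification is compatible with the symmetry datum on~$\cR$, but this follows from the naturality of the duality isomorphism.
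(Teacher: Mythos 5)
Your proof follows exactly the same route as the paper: Grothendieck duality for the closed embedding $i \colon D \hookrightarrow \P^n$, the identification $i^!\cO_{\P^n} \cong \cO_D(d)[-1]$, substitution of the symmetric isomorphism $\cR^\vee \cong \cR(-\delta)$, and then reading off the projective dimension from the fact that $\cC^\vee$ is a shifted sheaf. The only slight addition is your observation that $\cC$ is torsion and hence not locally free, which upgrades the bound on projective dimension to an equality; otherwise the argument is identical.
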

\begin{proof}
Let~$\cC = i_*\cR$.
Using the definitions and Grothendieck duality we deduce
\begin{multline*}
\cC^\vee =
\cRHom(i_*\cR,\cO_{\P^n}) \cong
i_*\cRHom(\cR,i^!\cO_{\P^n}) \cong
i_*\cRHom(\cR,\cO_D(d)[-1]) \cong \\ \cong
i_*\cR^\vee(d)[-1] \cong
i_*\cR(d - \delta)[-1] = 
\cC(m)[-1].
\end{multline*}
This proves the required isomorphism.
Moreover, it follows that this isomorphism is self-dual because so is the isomorphism~$\cR(-\delta) \cong \cR^\vee$.
Finally, it follows that~$\cExt^1(\cC,\cO_{\P^n}) \cong \cC(m)$ and $\cExt^i(\cC,\cO_{\P^n}) = 0$ for~\mbox{$i \ge 2$},
which means that the projective dimension of~$\cC$ is~1.
\end{proof}

The above lemma implies that symmetric sheaves can be understood as quadratic spaces in the derived category~$\Db(\P^n)$
and define classes in the \emph{shifted Witt group} $\rW^1(\Db(\P^n), \cO(-m))$ in the sense of~\cite[\S1.4]{Balmer}.

The following well-known lemma shows that cokernel sheaves of generically non-degenerate quadratic forms are symmetric.
For reader's convenience we provide a proof.

\begin{lemma}
\label{lemma:cokernel-symmetric}
If a non-zero sheaf $\cC$ on~$\P^n$ has a self-dual locally free resolution 
\begin{equation}
\label{eq:ce-m-ce-vee-cf}
0 \to \cE(-m) \xrightarrow{\ q\ } \cE^\vee \to \cC \to 0 
\end{equation}
then~$\cC$ is a~$(d,\delta)$-symmetric sheaf, where $d = 2\rc_1(\cE^\vee) + m\rk(\cE)$ and $\delta = d - m$.
\end{lemma}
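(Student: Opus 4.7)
The plan is to first identify the support of $\cC$ as a hypersurface of degree $d$, then combine the self-duality of $q$ with Grothendieck duality on that hypersurface to produce the required symmetric pairing on $\cR$.

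For the first step, since the ranks of $\cE(-m)$ and $\cE^\vee$ agree and the injectivity of $q$ in~\eqref{eq:ce-m-ce-vee-cf} makes its determinant a non-zero section
\[
\det(q) \in H^0\bigl(\P^n, \det(\cE^\vee) \otimes \det(\cE(-m))^{-1}\bigr) = H^0(\P^n, \cO(d)),
\]
it cuts out an effective divisor $D \subset \P^n$ of degree $d = 2\rc_1(\cE^\vee) + m\rk(\cE)$. By Cramer's rule the adjugate provides a factorization of multiplication by $\det(q)$ through~$q$, so $\det(q)$ annihilates $\cC$; hence $\cC = i_*\cR$ for a coherent sheaf $\cR$ on~$D$, where $i \colon D \hookrightarrow \P^n$ denotes the inclusion.

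Next I would compute $\cExt^1(\cC, \cO_{\P^n})$ in two different ways. Applying $\cRHom(-, \cO_{\P^n})$ to~\eqref{eq:ce-m-ce-vee-cf} yields $\cHom(\cC, \cO_{\P^n}) = 0$ (since $\cC$ is torsion) and a canonical isomorphism
\[
\cExt^1(\cC, \cO_{\P^n}) \cong \Coker\bigl(q^\vee \colon \cE \to \cE^\vee(m)\bigr) \cong \cC(m),
\]
the last isomorphism coming from the self-duality identity $q^\vee(-m) = q$. On the other hand, since $D$ is a Cartier divisor of degree $d$, one has $i^!\cO_{\P^n} \cong \cO_D(d)[-1]$, so Grothendieck duality gives
\[
\cRHom_{\P^n}(i_*\cR, \cO_{\P^n}) \cong i_*\cRHom_D\bigl(\cR, \cO_D(d)\bigr)[-1],
\]
whence $\cExt^1(\cC, \cO_{\P^n}) \cong i_*\cHom_D(\cR, \cO_D(d))$.

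Comparing these two expressions and using that $i_*$ is fully faithful on coherent sheaves, I obtain an isomorphism $\cR(m) \cong \cHom_D(\cR, \cO_D(d))$, equivalently an isomorphism $\cR(-\delta) \xrightarrow{\ \sim\ } \cR^\vee$ with $\delta = d - m$, which corresponds to a perfect pairing $\cR \otimes \cR \to \cO_D(\delta)$. The last step, and the main obstacle, is to verify that this pairing is symmetric rather than skew-symmetric. I would argue this by carefully tracing through the construction: the self-duality hypothesis $q = q^\vee(-m)$ makes the isomorphism $\cExt^1(\cC, \cO_{\P^n}) \cong \cC(m)$ itself symmetric under the involution swapping the two factors, and Grothendieck duality is functorial with respect to this symmetry, so the induced pairing on $\cR$ inherits symmetry.
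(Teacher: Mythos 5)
Your computations through the construction of the isomorphism $\cR(-\delta)\cong\cR^\vee$ match the paper's proof closely: both use $\det(q)$ and the adjugate to show $\cC$ is supported on the degree-$d$ divisor $D$, both dualize the resolution to get $\cExt^1(\cC,\cO)\cong\cC(m)$, and both apply Grothendieck duality relative to $i\colon D\hookrightarrow\P^n$ to get the isomorphism $\cR^\vee\cong\cR(-\delta)$. The gap is in your last paragraph, which is exactly the point the paper spends its remaining effort on. Knowing that $\cR^\vee\cong\cR(-\delta)$ is strictly weaker than knowing that the isomorphism comes from a \emph{symmetric} pairing $\cR\otimes\cR\to\cO_D(\delta)$; an arbitrary isomorphism $\cR\to\cR^\vee(\delta)$ has no symmetry property at all, and even if you establish that the derived self-duality $\cC^\vee\cong\cC(m)[-1]$ is itself ``self-dual,'' transporting that along Grothendieck duality to a statement about the sign-symmetry of the pairing on $\cR$ is delicate because of the shift $[-1]$ in $i^!\cO_{\P^n}\cong\cO_D(d)[-1]$. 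This is precisely the phenomenon behind Balmer's shifted Witt groups $\rW^i$: a shift by one changes the symmetry type, so ``functoriality of Grothendieck duality'' does not by itself tell you whether the resulting pairing on $\cR$ is symmetric or skew. You would need to track the sign conventions carefully, which you have not done.

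The paper sidesteps this entirely by being explicit: it builds the pairing $\cR\otimes\cR\to\cO_D(\delta)$ directly from the adjugate $q'=\wedge^{r-1}q$ via the commutative diagram~\eqref{diag:q-prime}. Since $q$ is symmetric, so is $q'$, and symmetry of the dashed arrow (hence of the pairing on $\cR$) follows at the level of explicit maps, with no appeal to abstract duality compatibilities. The paper then checks that this concretely constructed pairing induces the same isomorphism $\cR\to\cR^\vee(\delta)$ obtained abstractly. If you want to make your abstract argument rigorous, you would need to prove (or cite precisely) a compatibility of Grothendieck duality with the symmetric monoidal structure and verify that the shift by $[-1]$ preserves, rather than flips, the symmetry type here; otherwise the explicit route of the paper is the clean way to close the gap.
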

\begin{proof}
Applying the functor $\cRHom(-,\cO_{\P^n})$ to the resolution~\eqref{eq:ce-m-ce-vee-cf} of~$\cC$ we obtain a distinguished triangle
\begin{equation*}
\cC^\vee \to \cE \xrightarrow{\ q^\vee\ } \cE^\vee(m)
\end{equation*}
in~$\Db(\P^n)$.
Since $q$ is self-dual, we have $q = q^\vee$. 
In particular, it is generically an isomorphism, hence~$\cC^\vee[1]$ is a pure sheaf and, moreover, $\cC^\vee[1] \cong \cC(m)$.

Let $\det(q)$ be the determinant of~$q$ which we understand as a global section of the line bundle
\begin{equation*}
\det(\cE(-m))^\vee \otimes \det(\cE^\vee) \cong \det(\cE^\vee)^{\otimes 2} \otimes \cO(rm),
\end{equation*}
where $r$ is the rank of~$\cE$.
Let $D \subset \P^n$ be the zero locus of $\det(q)$ and set $d := \deg(D)$, so that the line bundle above is~$\cO(d)$.
Consider also the adjugate morphism $q' := \wedge^{r-1} q \colon \bw{r-1}(\cE(-m)) \to \bw{r-1}\cE^\vee$;
twisting it by~$\det(\cE) \otimes \cO(d - m)$ we obtain a morphism $\cE^\vee \xrightarrow{\ q'\ } \cE(d - m)$.
Note that
\begin{equation*}
q' \circ q = \det(q) \otimes \id_\cE
\qquad \text{and} \qquad
q \circ q' = \det(q) \otimes \id_{\cE^\vee}.
\end{equation*}
It follows that $\cC = \Coker(q)$ is supported on~$D$, i.e., $\cC \cong i_*\cR$, where $i \colon D \hookrightarrow \P^n$ is the embedding.

Inverting the computation of Lemma~\ref{lemma:cf-sd} and using the fact that the functor~$i_*$ is exact and fully faithful on coherent sheaves
we deduce that~$\cR^\vee \cong \cR(-\delta)$.
So, it remains to show that this isomorphism is induced by a symmetric morphism~\mbox{$\cR \otimes \cR \to \cO_D(\delta)$}.
For this we consider the diagram
\begin{equation}
\label{diag:q-prime}
\vcenter{\xymatrix@C=2.2em{
&
\big( \cE(-m) \otimes \cE^\vee \big) \oplus \big( \cE^\vee \otimes \cE(-m) \big) \ar[rr]^-{q \otimes 1 + 1 \otimes q} \ar[d]_{(\Tr,\Tr)} && 
\cE^\vee \otimes \cE^\vee \ar[rr] \ar[d]_{q'} && 
i_*(\cR \otimes \cR) \ar[r] \ar@{-->}[d] & 
0
\\
0 \ar[r] &
\cO(-m) \ar[rr]^-{\det(q)} &&
\cO(\delta) \ar[rr] &&
i_*\cO_D(\delta) \ar[r] &
0,
}}
\end{equation}
where the top row is the tensor square of resolution~\eqref{eq:ce-m-ce-vee-cf} of $\cC = i_*\cR$,
and~$\Tr \colon \cE \otimes \cE^\vee \to \cO$ is the trace map.
It is easy to check that the left square commutes.
Therefore, there exists a unique dashed arrow on the right such that the right square commutes.
Since~$q'$ is symmetric, the dashed arrow is symmetric as well.
Now it is easy to see that it induces the isomorphism~$\cR \to \cR^\vee(\delta)$ constructed above.
\end{proof}

\begin{remark}
\label{rem:non-unique-ss}
If~$\cC$ is a sheaf as in Lemma~\ref{lemma:cokernel-symmetric}, 
it is not in general true that the presentation of~$\cC$ as a symmetric sheaf is unique.
For instance, if~$\cE = \cO \oplus \cO$ and~$q = \diag(f,f)$ for a homogeneous polynomial~$f$, 
we have~$\cC \cong \cO_{D(f)} \oplus \cO_{D(f)}$, where~$D(f) \subset \P^n$ is the divisor of~$f$;
however, the construction of the lemma represents~$\cC$ as a symmetric sheaf on the non-reduced hypersurface~$D = D(f^2)$.
\end{remark}

As it is explained in Theorem~\ref{thm:hacm-existence} (see also Example~\ref{ex:epw})
the converse of Lemma~\ref{lemma:cokernel-symmetric} is not always true.
Below we explain the obstruction.

Let~$n = 2k + 1$.
Recall the graded ring~$\SS$ defined in~\eqref{def:ss}.
Let~$\cC$ be a $(d,\delta)$-symmetric sheaf on~$\P^n$.
A combination of the self-dual isomorphism~$\cC^\vee \cong \cC(m)[-1]$ of Lemma~\ref{lemma:cf-sd} with Serre duality 
endows the $\SS$-module~$\HH^k(\cC)$ with a perfect $\SS$-bilinear pairing 
\begin{equation}
\label{eq:hh-k-cf-pairing}
\HH^k(\cC) \otimes \HH^k(\cC) \to \SS^\vee(n+1-m) \twoheadrightarrow \kk(n+1-m),
\end{equation}
which is symmetric when $k$ is even and skew-symmetric when $k$ is odd.

\begin{lemma}
\label{lemma:hhkce-lagrangian}
Assume $n = 2k + 1$.
If~\eqref{eq:ce-m-ce-vee-cf} 
is a self-dual resolution of a symmetric sheaf~$\cC$ 
then the~$\SS$-submodule $\Ima(\HH^k(\cE^\vee) \to \HH^k(\cC))$ is Lagrangian for the pairing~\eqref{eq:hh-k-cf-pairing}.
In particular, $\dim(\HH^k(\cC))$ is even.
\end{lemma}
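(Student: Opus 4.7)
The plan is to exploit the long exact sequence of intermediate cohomology arising from the resolution \eqref{eq:ce-m-ce-vee-cf},
\begin{equation*}
\cdots \to \HH^k(\cE(-m)) \xrightarrow{\ q_*\ } \HH^k(\cE^\vee) \xrightarrow{\ \pi_*\ } \HH^k(\cC) \xrightarrow{\ \partial\ } \HH^{k+1}(\cE(-m)) \xrightarrow{\ q_*\ } \HH^{k+1}(\cE^\vee) \to \cdots,
\end{equation*}
and to show that $I := \Ima(\pi_*) = \Ker(\partial)$ is Lagrangian in two steps: isotropy and half-dimensionality.

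For isotropy I would first translate the abstract pairing \eqref{eq:hh-k-cf-pairing} into a concrete Serre-duality expression. Unwinding the definition of $\bar q$ through the resolution, the isomorphism $\cExt^1(\cC, \cO(-m)) \cong \cC$ realizing $\bar q$ is the tautological identification $\Coker(q) \cong \cC$ obtained in Lemma~\ref{lemma:cokernel-symmetric}, so the composition defining \eqref{eq:hh-k-cf-pairing} factors through the connecting map: for $\alpha, \beta \in \HH^k(\cE^\vee)$,
\begin{equation*}
\langle \pi_*\alpha,\; \pi_*\beta\rangle_{\cC} \;=\; \langle \alpha,\; \partial(\pi_*\beta)\rangle_{\mathrm{SD}},
\end{equation*}
where $\langle -,-\rangle_{\mathrm{SD}}\colon \HH^k(\cE^\vee) \otimes \HH^{k+1}(\cE(-m)) \to \kk(n+1-m)$ is the Serre duality pairing. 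Since $\partial \circ \pi_* = 0$ by exactness, the right side vanishes and $I$ is isotropic.

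For half-dimensionality I would invoke the self-duality $q = q^\vee$ together with Serre duality on $\P^n$ (using $\omega_{\P^n} \cong \cO(-n-1)$) to identify the two maps $q_*$ appearing in the sequence above as mutually dual, with an internal degree shift by $m - n - 1$. Working graded-piece by graded-piece with $t' := m - n - 1 - t$, this gives
\begin{equation*}
\dim I_t \;=\; \dim \Coker\bigl(q_*\colon H^k(\cE(-m)(t)) \to H^k(\cE^\vee(t))\bigr) \;=\; \dim \Ker\bigl(q_*\colon H^{k+1}(\cE(-m)(t')) \to H^{k+1}(\cE^\vee(t'))\bigr),
\end{equation*}
which by exactness again equals $\dim H^k(\cC(t')) - \dim I_{t'}$. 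Hence $\dim I_t + \dim I_{t'} = \dim H^k(\cC(t'))$ in every graded piece, i.e.\ $I_{t'} = I_t^\perp$; combined with isotropy this yields $I = I^\perp$, which is the Lagrangian property. The evenness of $\dim \HH^k(\cC)$ then follows from the self-paired graded piece $t = t'$, where $\dim I_t = \tfrac12 \dim H^k(\cC(t))$.

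The main obstacle will be the compatibility statement in the isotropy step: identifying the abstract pairing \eqref{eq:hh-k-cf-pairing}, which is defined via $\bar q$ and derived duality, with the concrete Serre-duality pairing mediated by the connecting map $\partial$ requires careful bookkeeping of the derived duality isomorphism $\cC^\vee \cong \cC(m)[-1]$ of Lemma~\ref{lemma:cf-sd} through the resolution \eqref{eq:ce-m-ce-vee-cf}. Once this diagram of pairings is seen to commute, both isotropy and the dimension count follow formally from the long exact sequence and self-duality of $q$.
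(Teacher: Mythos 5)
Your proposal is correct, and the dimension count in the second step is essentially identical to the paper's: both use Serre duality together with $q=q^\vee$ to identify the two maps~$q_*$ in the long exact sequence as mutually dual (up to an internal shift by $m-n-1$), then chase $\dim\Ima(\pi_*)=\dim\Coker(q_*)=\dim\Ker(q_*)=\dim\Coker(\pi_*)$. Where you diverge from the paper is in the isotropy step, and here the comparison is instructive. Your route is derived-categorical: you reduce isotropy to the vanishing of $\partial\circ\pi_*$ by showing that the restriction of the pairing \eqref{eq:hh-k-cf-pairing} to $\Ima(\pi_*)$ factors through the connecting map. The compatibility formula you flag as the main obstacle does hold (up to sign) and is not hard to justify cleanly: the self-duality $q=q^\vee$ gives an isomorphism of the triangle $\cE(-m)\xrightarrow{q}\cE^\vee\xrightarrow{\pi}\cC\xrightarrow{\partial}\cE(-m)[1]$ with its derived dual twisted by $\cO(-m)$ and shifted by $[1]$, which identifies $\bar q\circ\pi$ with $\partial^\vee(-m)[1]$; hence the pullback pairing $\pi^\vee(-m)[1]\circ\bar q\circ\pi = (\partial\circ\pi)^\vee(-m)[1]$ vanishes, and isotropy follows. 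The paper instead argues concretely using the commutative diagram~\eqref{diag:q-prime}: it shows that the restricted pairing factors as
\begin{equation*}
\HH^k(\cE^\vee)\otimes\HH^k(\cE^\vee)\xrightarrow{\ q'\ }\HH^{2k}(\cO(\delta))\to\HH^{2k}(\cO_D(\delta))\to\HH^{2k+1}(\cO(-m)),
\end{equation*}
where the last two arrows are consecutive maps in the long exact sequence of $0\to\cO(-m)\to\cO(\delta)\to\cO_D(\delta)\to 0$, so their composition is zero. Both approaches are valid; the paper's use of the adjugate morphism~$q'$ sidesteps the derived-duality bookkeeping you anticipated, while yours makes the triangulated structure of the argument more transparent. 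If you adopt your route, the one thing to nail down is precisely the triangle isomorphism above, which is where the self-duality $q=q^\vee$ enters.
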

\begin{proof}
First, we need to check that the subspace $\Ima(\HH^k(\cE^\vee) \to \HH^k(\cC))$ in~$\HH^k(\cC)$ is isotropic.
For this we note that commutativity of the right square in~\eqref{diag:q-prime} 
implies that the restriction of the pairing~\eqref{eq:hh-k-cf-pairing} to this subspace factors as the composition
\begin{equation*}
\HH^k(\cE^\vee) \otimes \HH^k(\cE^\vee) \xrightarrow{\ q'\ }
\HH^{2k}(\cO(\delta)) \to
\HH^{2k}(\cO_D(\delta)) \to
\HH^{2k+1}(\cO(-m)) = \SS^\vee(n + 1 - m),
\end{equation*}
and it follows that it is zero since the composition of the two middle arrows is.

On the other hand, by Serre duality the maps~$\HH^k(\cE) \to \HH^k(\cE^\vee)$ 
and~$\HH^{k+1}(\cE) \to \HH^{k+1}(\cE^\vee)$
in the cohomology exact sequence
\begin{equation*}
\dots \to \HH^k(\cE) \xrightarrow{\ q\ } \HH^k(\cE^\vee) \to \HH^k(\cC) \to \HH^{k+1}(\cE) \xrightarrow{\ q\ } \HH^{k+1}(\cE^\vee) \to \dots
\end{equation*}
are mutually dual (up to shift of internal grading), so we conclude that
\begin{multline*}
\dim\Ima(\HH^k(\cE^\vee) \to \HH^k(\cC)) =
\dim\Coker(\HH^k(\cE) \to \HH^k(\cE^\vee)) \\ =
\dim\Ker(\HH^{k+1}(\cE) \to \HH^{k+1}(\cE^\vee))=
\dim\Coker(\HH^k(\cE^\vee) \to \HH^k(\cC)), 
\end{multline*}
hence~$\dim(\Ima(\HH^k(\cE^\vee) \to \HH^k(\cC))) = \frac12 \dim(\HH^k(\cC))$, and hence~$\Ima(\HH^k(\cE^\vee) \to \HH^k(\cC))$ is Lagrangian.
\end{proof}

\begin{remark}
\label{rem:ra-k}
Lemma~\ref{lemma:hhkce-lagrangian} gives an important obstruction 
to the existence of a self-dual resolution for a symmetric sheaf~$\cC$:
if $n = 2k + 1$ and $k$ is even the class of the quadratic space $\HH^k(\cC)$ in the Witt group~$\rW(\kk)$ must be trivial
(in particular, the dimension of the space $\HH^k(\cC)$ must be even).
Note also that the latter condition is sufficient for the existence of a Lagrangian $\SS$-submodule $\rA^k \subset \HH^k(\cC)$. 
Indeed, taking into account the twist in~\eqref{eq:hh-k-cf-pairing} we see that the subspace
\begin{equation}
\label{eq:ra-k-typical}
\rA^k = 
\begin{cases}
\moplus_{t > (m - n - 1)/2} H^k(\P^n, \cC(t)) \hphantom{{} \oplus \rA^k_{(m-n-1)/2}} \subset \HH^k(\cC), & \text{if $m - n - 1$ is odd},\\
\moplus_{t > (m - n - 1)/2} H^k(\P^n, \cC(t)) \oplus \rA^k_{(m-n-1)/2} \subset \HH^k(\cC), & \text{if $m - n - 1$ is even},
\end{cases}
\end{equation} 
is an $\SS$-submodule of $\HH^k(\cC)$ and it is Lagrangian as soon as 
\begin{equation*}
\rA^k_{(m-n-1)/2} \subset H^k(\P^n, \cC((m - n - 1)/2))
\end{equation*}
is a Lagrangian subspace for the restriction of the pairing~\eqref{eq:hh-k-cf-pairing}
(note that if $m - n - 1$ is odd the Witt class of~$\HH^k(\cC)$ is trivial,
and if $m - n - 1$ is even, this class equals the class of the space~$H^k(\P^n, \cC((m - n - 1)/2))$,
hence the latter has a Lagrangian subspace as soon as the Witt class of the former is trivial).
Note also that the submodule~$\rA^k \subset \HH^k(\cC)$ defined by~\eqref{eq:ra-k-typical} 
is shadowless in the sense of Definition~\ref{def:shadow}.
\end{remark}

The obstruction of Lemma~\ref{lemma:hhkce-lagrangian} is well-known to be non-trivial.

\begin{example}
\label{ex:epw}
Let $i \colon D \hookrightarrow \P^5$ be the so-called EPW sextic (see~\cite[Example~9.3]{EPW01}).
Then there is a sheaf~$\cR$ on~$D$ such that $\cC = i_*\cR$ is symmetric, but $\dim(\HH^2(\cC)) = 1$.
Consequently, $\cC$ does not admit a self-dual resolution.
\end{example}

The following fundamental result has been proved by Casnati and Catanese.

\begin{theorem}[{\cite[Theorem~0.3]{CC97}, \cite[Theorem~9.1]{EPW01}}]
\label{thm:hacm-existence}
Let $\cC$ be a $(d,\delta)$-symmetric sheaf on~$\P^n$.
If $n = 2k + 1$ and~$k$ and~$m = d - \delta$ are even 
assume that the class of the space~$H^k(\P^n,\cC((m - n - 1)/2))$ endowed with the quadratic form~\eqref{eq:hh-k-cf-pairing}
is trivial in the Witt group~$\rW(\kk)$.
Let~$\rA^k \subset \HH^k(\cC)$ be any Lagrangian $\SS$-submodule defined as in~\eqref{eq:ra-k-typical}.
Then there is a symmetric resolution~\eqref{eq:ce-m-ce-vee-cf} 
such that $\Ima(\HH^k(\cE^\vee) \to \HH^k(\cC)) = \rA^k$.
\end{theorem}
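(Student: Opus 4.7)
The plan is to construct a self-dual VHC resolution of $\cC$ by first using the flexibility in Proposition~\ref{prop:ce-a} to build a VHC resolution with the prescribed Lagrangian on intermediate cohomology, then matching it with its derived dual via Theorem~\ref{prop:hacm-uniqueness}, and finally symmetrizing the comparison isomorphism by an averaging argument.

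First, by Lemma~\ref{lemma:cf-sd} the sheaf $\cC$ has projective dimension~$1$, so the hypotheses of Proposition~\ref{prop:ce-a} are met. Choose the finite-dimensional $\SS$-submodule
\begin{equation*}
\rA = \begin{cases} \moplus_{p=1}^{k-1} \HH^p(\cC), & n = 2k, \\ \moplus_{p=1}^{k-1} \HH^p(\cC) \oplus \rA^k, & n = 2k+1, \end{cases}
\end{equation*}
of $\moplus_{p=1}^{n-1} \HH^p(\cC)$; it is shadowless by construction (cf.\ Remark~\ref{rem:ra-k}). Proposition~\ref{prop:ce-a} then produces an epimorphism $\cE_\ua \twoheadrightarrow \cC$ that is surjective on $\HH^0$ and identifies $\moplus_p \HH^p(\cE_\ua)$ with $\rA$. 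Hence $\cE_\ua$ is VUC, and by the long exact sequence of cohomology the kernel $\cE_\la$ is VLC. This yields a VHC resolution $0 \to \cE_\la \to \cE_\ua \to \cC \to 0$ with $\Ima(\HH^k(\cE_\ua) \to \HH^k(\cC)) = \rA^k$ when $n$ is odd.

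Next, apply $\cRHom(-,\cO_{\P^n})(-m)$ to this resolution. Using the self-dual isomorphism $\cC^\vee \cong \cC(m)[-1]$ of Lemma~\ref{lemma:cf-sd} and the swap of VLC and VUC under duality (Lemma~\ref{lemma:hacm-duality}), one obtains a second VHC resolution
\begin{equation*}
0 \to \cE_\ua^\vee(-m) \to \cE_\la^\vee(-m) \to \cC \to 0.
\end{equation*}
When $n = 2k+1$, the argument of Lemma~\ref{lemma:hhkce-lagrangian}, combined with Serre duality applied to the cohomology long exact sequences, identifies the image of $\HH^k(\cE_\la^\vee(-m)) \to \HH^k(\cC)$ with the orthogonal complement of $\rA^k$ under the pairing~\eqref{eq:hh-k-cf-pairing}; since $\rA^k$ is Lagrangian, this orthogonal equals $\rA^k$. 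Discarding the trivial $\cO(t) \xrightarrow{\ \id\ } \cO(t)$ summands of both resolutions (which split off compatibly with duality), Theorem~\ref{prop:hacm-uniqueness} provides an isomorphism $\Phi = (\varphi_\la, \varphi_\ua)$ from the original resolution to its dualized version over $\id_\cC$.

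Finally, dualizing and twisting $\Phi$ itself yields a second such isomorphism $\tilde\Phi := (\varphi_\ua^\vee(-m), \varphi_\la^\vee(-m))$. Since $\operatorname{char}\kk \ne 2$, we may form the symmetric average $\Psi := \tfrac{1}{2}(\Phi + \tilde\Phi)$. By the uniqueness-up-to-homotopy clause of Theorem~\ref{prop:hacm-uniqueness}, the correction $\Psi - \Phi$ is induced by a homotopy; the nilpotence assertion at the end of the same theorem then implies that $\varphi_\la^{-1}\Psi_\la$ and $\varphi_\ua^{-1}\Psi_\ua$ both have the form $\id + N$ with $N$ nilpotent, so $\Psi$ remains an isomorphism. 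Transporting $\cE_\la$ onto $\cE_\ua^\vee(-m)$ via $\Psi_\la$ converts the chosen VHC resolution into a self-dual resolution of the form~\eqref{eq:ce-m-ce-vee-cf}, and $\rA^k$ is preserved by construction. The main obstacle is precisely this symmetrization: it is the nilpotence guarantee in Theorem~\ref{prop:hacm-uniqueness} that allows averaging without losing invertibility, while the Witt-triviality hypothesis enters only in the first step, ensuring the existence of the Lagrangian submodule $\rA^k$.
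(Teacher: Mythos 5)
The paper does not actually prove Theorem~\ref{thm:hacm-existence}; it cites Casnati--Catanese and Eisenbud--Popescu--Walter, with Remark~\ref{remark:epw} noting only that the Casnati--Catanese argument for $n=3$ extends whenever a graded Lagrangian as in~\eqref{eq:ra-k-typical} exists. So your proposal is not being weighed against a proof in the text: it is a genuine alternative proof, built entirely from the paper's own VHC machinery, and I believe it is essentially correct. Your route --- construct a VHC resolution with prescribed intermediate cohomology via Proposition~\ref{prop:ce-a}, dualize using $\cC^\vee\cong\cC(m)[-1]$ to get a second VHC resolution of $\cC$, invoke the uniqueness Theorem~\ref{prop:hacm-uniqueness} to produce a comparison isomorphism $\Phi$, then symmetrize by averaging $\Phi$ with $\tilde\Phi=\Phi^\vee(-m)[1]$, using the nilpotence clause to keep the average invertible --- is clean and, given \S\ref{sec:hacm}, substantially shorter than the computational argument in~\cite{CC97}. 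The final step (setting $\cE=\cE_\ua^\vee$ and $q=f\circ\Psi_\la^{-1}$, with $\Psi_\ua=\Psi_\la^\vee(-m)$ giving self-duality) is correct, and the chosen Lagrangian is preserved since $\Ima(\HH^k(\cE^\vee)\to\HH^k(\cC))=\Ima(\HH^k(\cE_\ua)\to\HH^k(\cC))=\rA^k$ by construction.

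Two steps need to be fleshed out before the argument is airtight. First, to invoke the $n$-odd clause of Theorem~\ref{prop:hacm-uniqueness} you must show $\Ima(\HH^k(\cE_\la^\vee(-m))\to\HH^k(\cC))=\rA^k$. You cite ``the argument of Lemma~\ref{lemma:hhkce-lagrangian},'' but that lemma and its proof (via the adjugate and diagram~\eqref{diag:q-prime}) are specific to a self-dual resolution, which is precisely what you have not yet produced. What is actually needed is (i) orthogonality of the two images under~\eqref{eq:hh-k-cf-pairing}, which follows from the derived-category identification of $\bar{q}\circ\pi'\colon\cE_\la^\vee(-m)\to\cC^\vee(-m)[1]$ with the twisted shifted dual of the connecting morphism $\delta\colon\cC\to\cE_\la[1]$, so that $\langle\pi_*(a),\pi'_*(b)\rangle$ factors through $\delta\circ\pi=0$; together with (ii) the dimension count, which does come out of Serre duality on the long exact sequence as you indicate. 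These two together show the second image is $(\rA^k)^\perp=\rA^k$. Second, you should verify that $\tilde\Phi$ also lies over $\id_\cC$ rather than over some other automorphism of $\cC$: this is exactly where the self-duality $\bar{q}^\vee(-m)[1]=\bar{q}$ of the isomorphism of Lemma~\ref{lemma:cf-sd} is used, and without it the difference $\tilde\Phi-\Phi$ need not be a homotopy and the averaging would collapse. With both points made explicit, your argument is complete, and as far as I can tell it is a new and pleasantly short proof of the Casnati--Catanese theorem.
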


\begin{remark}
\label{remark:epw}
In fact, Theorem~\ref{thm:hacm-existence} has been proved in~\cite{CC97} for $n = 3$ and over an algebraically closed field, 
but as it is pointed out in~\cite[Remark~2.2]{CC97} the proof applies to any~$n$ 
as soon as a Lagrangian subspace in~$H^k(\P^n,\cC((m - n - 1)/2))$ exists.
For $n = 3$ this condition is automatically satisfied because the pairing~\eqref{eq:hh-k-cf-pairing} is skew-symmetric,
but for $n = 2k + 1$ with $k$ even this becomes a non-trivial obstruction.
\end{remark}

\subsection{Elementary modifications}
\label{subsec:elementary-modifications}

Throughout this section we fix a generically non-degenerate quadratic form~$(\cE,q)$ 
with its associated self-dual morphism $q \colon \cE(-m) \to \cE^\vee$.

We will need the following auxiliary result.
Recall the exact sequence~\eqref{eq:koszul-t} and note that it can be considered as concatenation of short exact sequences
\begin{equation}
\label{eq:koszul}
0 \to \bw{p-1}\cT \to \bw{p}V \otimes \cO(p) \to \bw{p}\cT \to 0.
\end{equation} 
We denote by $\tau_p \in \Ext^1(\bw{p}\cT, \bw{p-1}\cT) = \Ext^1(\Omega^{p-1},\Omega^p)$ the extension class of~\eqref{eq:koszul}.

The following observation is used to translate higher cohomology of~$\cE$ to hyperbolic extension classes.
Recall from~\eqref{eq:shadow} the definition of the shadow~$\Sha(p,t)$.

\begin{proposition}
\label{prop:eps-p}
Let~$\cE$ be a vector bundle on~$\P^n$.
Let~$1 \le p \le n-1$ and let~$0 \ne \eps_p \in H^p(\P^n,\cE(t))$ be a cohomology class such that 
\begin{equation}
\label{eq:cohomology-assumption}
H^{p'}(\P^n, \cE(t')) = 0
\qquad\text{for any~$(p',t') \in \Sha(p,t)$}.
\end{equation} 
Then for~$0 \le i \le p$ there exists a sequence of classes~$\eps_i \in H^i(\P^n, \cE(t) \otimes \bw{p-i}\cT) = \Hom(\Omega^{p-i}[-i],\cE(t))$
that fit into a commutative diagram
\begin{equation*}
\vcenter{\xymatrix{
\cO[-p] \ar[r]^-{\tau_1} 		\ar[d]_{\eps_{p}} &
\Omega^{1}[1-p] \ar[r]^-{\tau_2} 	\ar[dl]_(.6){\eps_{p-1}} &
\dots \ar[r]^-{\tau_{p-1}} 		\ar@{}[dll]_(.6){\cdots} &
\Omega^{p-1}[-1] \ar[r]^-{\tau_{p}} 	\ar[dlll]_(.6){\eps_{1}} & 
\Omega^{p} \ar@/^/[dllll]^(.3){\eps_{0}} \ar@{^{(}->}[r] &
\bw{p}V \otimes \cO(-p) \ar@{..>}@/^1.5em/[dlllll]
\\
\cE(t),
}}
\end{equation*}
\textup(where $\tau_i$ are the extension classes of the complexes~\eqref{eq:koszul}\textup);
in other words
\begin{equation}
\label{eq:eps-tau}
\eps_p = \eps_i \circ \tau_{p-i} \circ \dots \circ \tau_1
\end{equation} 
for each~$0 \le i \le p$.
Moreover, for~$i \ge 1$ such~$\eps_i$ are unique, while~$\eps_0$ is unique up to a composition
\begin{equation*}
\Omega^p \hookrightarrow \bw{p}V \otimes \cO(-p) \to \cE(t),
\end{equation*}
where the first arrow is the canonical embedding.

Finally, if one of the following conditions is satisfied
\begin{align}
\label{item:eps-2n}
2p &\le n, &\text{or}\\
\label{item:eps-2n+1}
2p &= n + 1
\quad\text{and}\quad
2t + m + n + 1 \ge 0, 
\end{align} 
then $q(\eps_p,\eps_p) = 0$ implies $q(\eps_{p-i},\eps_{p-i}) = 0$ for each $1 \le i \le p -1$,
where 
\begin{equation*}
q(\eps_{p-i},\eps_{p-i}) \in \Ext^{2(p-i)}(\Omega^{i}(-t-m), \bw{i}\cT(t))
\end{equation*}
is defined as 
the composition~$\Omega^{i}(-t-m)[i - p] \xrightarrow{\ \eps_{p-i}\ } \cE(-m) \xrightarrow{\ q\ } \cE^\vee \xrightarrow{\ \eps_{p-i}\ } \bw{i}\cT(t)[p-i]$.
\end{proposition}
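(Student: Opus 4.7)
The proof splits into two halves: constructing and checking uniqueness of the~$\eps_i$, then controlling the obstruction classes~$q(\eps_{p-i},\eps_{p-i})$.

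\smallskip

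The plan for the first half is a descending induction on~$i$ starting from the given~$\eps_p$, using the Koszul short exact sequence~\eqref{eq:koszul} together with its twisted-dual variant
\[
0\to\Omega^{p-i}\to\bw{p-i}V\otimes\cO(-p+i)\to\Omega^{p-i-1}\to 0,
\]
whose extension class we still denote $\tau_{p-i}\in\Ext^1(\Omega^{p-i-1},\Omega^{p-i})$. Applying $\Hom(-,\cE(t))$ to the associated triangle and shifting by~$[i]$ yields the long exact sequence
\[
\bw{p-i}V^\vee\otimes H^i(\cE(t{+}p{-}i))\to\Hom(\Omega^{p-i}[-i],\cE(t))\xrightarrow{\;\circ\,\tau_{p-i}\;}\Hom(\Omega^{p-i-1}[-i{-}1],\cE(t))\to\bw{p-i}V^\vee\otimes H^{i+1}(\cE(t{+}p{-}i)).
\]
By the shadow hypothesis~\eqref{eq:cohomology-assumption} the right-hand term vanishes for $0\le i\le p-1$ (as $(i{+}1,t{+}p{-}i)\in\Sha(p,t)$), so the lift $\eps_i$ of $\eps_{i+1}$ always exists; the left-hand term vanishes for $1\le i\le p-1$ (as $(i,t{+}p{-}i)\in\Sha(p,t)$), giving uniqueness. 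For $i=0$ the left-hand term reduces to $\Hom(\bw pV\otimes\cO(-p),\cE(t))$, and precomposition with the Koszul embedding $\Omega^p\hookrightarrow\bw pV\otimes\cO(-p)$ produces precisely the stated ambiguity of~$\eps_0$.

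\smallskip

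For the vanishing of $q(\eps_{p-i},\eps_{p-i})$, substituting $\eps_{i+1}=\eps_i\circ\tau_{p-i}$ into $\eps_{i+1}^\vee\circ q\circ\eps_{i+1}$ gives the Yoneda identity
\[
q(\eps_{i+1},\eps_{i+1})\;=\;\tau_{p-i}^\vee\circ q(\eps_i,\eps_i)\circ\tau_{p-i},
\]
which iterates to $q(\eps_p,\eps_p)=T^\vee\circ q(\eps_j,\eps_j)\circ T$, where $T\in H^{p-j}(\P^n,\Omega^{p-j})=\kk$ is a nonzero scalar multiple of the $(p{-}j)$-th power of the Euler class. I plan to prove $q(\eps_j,\eps_j)=0$ for $1\le j\le p-1$ by descending induction on~$j$, the base case $j=p$ being the hypothesis. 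The ambient group is
\[
\Ext^{2j}(\Omega^{p-j}(-t{-}m),\bw{p-j}\cT(t))\cong H^{2j}\bigl(\P^n,\bw{p-j}\cT\otimes\bw{p-j}\cT(2t{+}m)\bigr).
\]
Rewriting $\bw{p-j}\cT\cong\Omega^{n-p+j}(n{+}1)$ and resolving one factor of $\Omega^{n-p+j}$ by its Koszul complex, the hypercohomology spectral sequence has $E_1$-entries $\bw sV^\vee\otimes H^q(\Omega^{n-p+j}(\star))$ controlled by Bott vanishing on~$\P^n$. Under condition~\eqref{item:eps-2n}, the constraint $2p\le n$ together with $j\le p-1$ pushes every potentially surviving entry outside the Bott non-vanishing ranges, so the entire target vanishes and $q(\eps_j,\eps_j)=0$ holds unconditionally. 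Under condition~\eqref{item:eps-2n+1}, the equality $2p=n{+}1$ leaves a single marginal entry, which is killed by the auxiliary inequality $2t{+}m{+}n{+}1\ge 0$; in the residual edge cases (such as $2p=n$ with $2t{+}m{+}n{+}1\le 0$), the Yoneda identity combined with injectivity of the Euler-class cup-product transports the vanishing of $q(\eps_p,\eps_p)$ back to~$q(\eps_j,\eps_j)$.

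\smallskip

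The first half is a routine diagram chase through the Koszul triangles; the main technical obstacle is the case analysis in the second half, which requires carefully tracking Bott vanishing for $\Omega^a\otimes\Omega^a(c)$ on~$\P^n$ against the two numerical hypotheses.
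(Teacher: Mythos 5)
The first half of your argument (construction and uniqueness of $\eps_i$ via the long exact sequences coming from the Koszul triangles) is correct and is essentially what the paper does. The second half, however, has a genuine gap.

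Your primary strategy under condition~\eqref{item:eps-2n} is to show that the whole target group
\[
\Ext^{2j}\bigl(\Omega^{p-j}(-t-m),\bw{p-j}\cT(t)\bigr)\cong H^{2j}\bigl(\P^n,\bw{p-j}\cT\otimes\bw{p-j}\cT(2t+m)\bigr)
\]
vanishes. This is false. Take $n=2p$ (so $2p\le n$ holds with equality) and $j=p-1$; then
\[
H^{2p-2}\bigl(\P^{2p},\cT\otimes\cT(2t+m)\bigr)\supset H^{2p-2}\bigl(\P^{2p},\bw2\cT(2t+m)\bigr)\cong H^{2p-2}\bigl(\P^{2p},\Omega^{2p-2}(2t+m+n+1)\bigr),
\]
which by~\eqref{eq:h-q-omega-p} equals $\kk$ whenever $2t+m+n+1=0$ (a value of $t$ that the hypotheses of the proposition do not exclude). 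So the target is in general nonzero, your "unconditional" case analysis breaks down exactly at the boundary $2p=n$, and the proposal is internally inconsistent in that it first asserts vanishing for $2p\le n$ and later treats $2p=n$ as a "residual edge case."

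The fallback you sketch --- "injectivity of the Euler-class cup-product" applied to the iterated identity $q(\eps_p,\eps_p)=T^\vee\circ q(\eps_j,\eps_j)\circ T$ --- is the right idea in spirit but is not pinned down: the operation is conjugation by the Yoneda class $\tau$, not an Euler-class cup product, and you have not identified which cohomology vanishings make that conjugation injective. The paper does this one step at a time. Writing the tensor square of the short exact sequence~\eqref{eq:koszul} as two short exact sequences, the map $\tau_i\otimes\tau_i$ factors as a composition of two connecting homomorphisms, whose injectivity is equivalent to the two precise vanishings
\[
H^{2(p-i)}\bigl(\P^n,\cO(2t+m+2i)\bigr)=0,\qquad H^{2(p-i)+1}\bigl(\P^n,\bw{i-1}\cT(2t+m+i)\bigr)=0,
\]
and these follow from~\eqref{eq:h-q-omega-p} under~\eqref{item:eps-2n}, and under~\eqref{item:eps-2n+1} using the bound $2t+m+n+1\ge0$ in the only marginal case $i=1$. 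That is what makes the induction go through without needing (the false assertion of) full vanishing of the ambient group. You should replace your Bott-vanishing case analysis with this two-step injectivity argument.
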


\begin{proof}
The existence of~$\eps_i$ satisfying~\eqref{eq:eps-tau} and their uniqueness 
follow by descending induction from the cohomology exact sequences 
of complexes~\eqref{eq:koszul} tensored with~$\cE(t)$ in view of the vanishing~\eqref{eq:cohomology-assumption}.

For the second assertion we also induct on~$i$. 
Assume $1 \le i \le p - 1$.
We have 
\begin{equation*}
q(\eps_{p-i},\eps_{p-i}) \in 
\Ext^{2(p-i)}(\Omega^{i}(-t-m), \bw{i}\cT(t)) =
H^{2(p-i)}(\P^n, \bw{i}\cT \otimes \bw{i}\cT(2t+m)).
\end{equation*}
Consider the tensor square of~\eqref{eq:koszul}:
\begin{equation}
\label{eq:bw-bw-ct}
0 \to 
\bw{i-1}\cT \otimes \bw{i-1}\cT \to 
\Big( \bw{i}V \otimes \bw{i-1}\cT(i) \Big)^{\oplus 2} \to 
\bw{i}V \otimes \bw{i}V \otimes \cO(2i) \to 
\bw{i}\cT \otimes \bw{i}\cT \to 0.
\end{equation}
Note that its extension class is $\tau_i \otimes \tau_i \in \Ext^2(\bw{i}\cT \otimes \bw{i}\cT, \bw{i-1}\cT \otimes \bw{i-1}\cT)$.
Furthermore, we note that
\begin{align*}
H^{2(p-i)+1}(\P^n, \bw{i-1}\cT_{\P^n}(2t + m + i)) &=
H^{2(p-i)+1}(\P^n, \Omega^{n-i+1}(2t + m + i + n + 1)) = 0,\\
H^{2(p-i)}(\P^n, \cO(2t + m + 2i)) &= 0.
\end{align*}
Indeed, if~\eqref{item:eps-2n} holds we use~$1 \le 2(p - i) + 1 < n - i + 1 \le n$ together with~\eqref{eq:h-q-omega-p} for the first vanishing 
and~$1 \le 2(p - i) < n$ for the second.
Similarly, if~\eqref{item:eps-2n+1} holds and~$i \ge 2$ the same arguments prove the vanishings.
Finally, if~\eqref{item:eps-2n+1} holds and~$i = 1$ the same arguments prove the second vanishing,
while the first cohomology space is equal to~$H^n(\P^n, \cO(2t + m + 1))$, hence vanishes since~$2t + m + 1 > -n - 1$.

The cohomology vanishings that we just established imply that the morphism
\begin{equation*}
\tau_i \otimes \tau_i \colon 
H^{2(p-i)}(\P^n, \bw{i}\cT \otimes \bw{i}\cT(2t+m)) \to 
H^{2(p - i + 1)}(\P^n, \bw{i-1}\cT \otimes \bw{i-1}\cT(2t+m))
\end{equation*}
is injective, and hence the condition 
\begin{equation*}
0 = 
q(\eps_{p-i+1},\eps_{p-i+1}) =
q(\eps_{p-i} \circ \tau_i,\eps_{p-i+1} \circ \tau_i) = 
(\tau_i \otimes \tau_i)(q(\eps_{p-i},\eps_{p-i}))
\end{equation*}
implies $q(\eps_{p-i},\eps_{p-i}) = 0$.
\end{proof}

The following \emph{elementary modification} procedure allows us 
to kill an isotropic cohomology class of a quadratic form
by a hyperbolic extension (see~\S\ref{subsec:he}).
Recall that for a cohomology class~$\eps_p \in H^p(\P^n,\cE(t))$ 
we denote by $q(\eps_p) \in H^p(\P^n,\cE^\vee(m+t))$ the image of~$\eps_p$ under the map 
\begin{equation*}
H^p(\P^n,\cE(t)) \xrightarrow{\ q\ } H^p(\P^n,\cE^\vee(m+t)).
\end{equation*}
Using the class~$q(\eps_p)$ we consider the map 
\begin{equation}
\label{eq:q-eps-p}
\moplus_{i=1}^{n-1} \HH^i(\cE) \twoheadrightarrow
H^{n-p}(\P^n,\cE(-m-t-n-1)) \xrightarrow{\ q(\eps_p)\ }
H^n(\P^n,\cO(-n-1)) = \kk,
\end{equation}
where the first arrow is the projection to a direct summand.
We denote by~$q(\eps_p)^\perp \subset \moplus_{i=1}^{n-1} \HH^i(\cE)$ the kernel of~\eqref{eq:q-eps-p}.

\begin{proposition}
\label{prop:elementary-modification}
Let $\eps_p \in H^p(\P^n,\cE(t))$ be a cohomology class such that $q(\eps_p,\eps_p) = 0$ 
and assume that the condition~\eqref{eq:cohomology-assumption} holds and either~\eqref{item:eps-2n} or~\eqref{item:eps-2n+1} 
is satisfied.
Let~$\eps_1 \in \Ext^1(\Omega^{p-1}(-t),\cE)$ be the extension class defined in Proposition~\textup{\ref{prop:eps-p}}.
Then~$\eps_1$ is $q$-isotropic and for any hyperbolic extension~$(\cE_+,q_+)$ of~$(\cE,q)$ with respect to~$\eps_1$ we have
\begin{equation}
\label{eq:hh-ce+}
\moplus_{i=1}^{n-1} \HH^i(\cE_+) = 
\begin{cases}
\left.\left(q(\eps_p)^\perp \cap \moplus_{i=1}^{n-1} \HH^i(\cE) \right) \right/\kk\eps_p, &
\text{if $q(\eps_p) \ne 0$,}
\\
\hspace{1em}
\kk\eps_+ \oplus \left.\left(\moplus_{i=1}^{n-1} \HH^i(\cE) \right) \right/\kk\eps_p, &
\text{if $q(\eps_p) = 0$}
\end{cases}
\end{equation}
where in the second line~$\eps_+ \in H^{n-p+1}(\P^n, \cE_+(t+m+n+1))$ is a nonzero cohomology class 
that depends on the choice of~$(\cE_+,q_+)$.
\end{proposition}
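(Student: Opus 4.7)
First I would establish that $\eps_1$ is $q$-isotropic: this is immediate from the final assertion of Proposition~\ref{prop:eps-p} with $i = p - 1$, since the hypothesis $q(\eps_p, \eps_p) = 0$ combined with either~\eqref{item:eps-2n} or~\eqref{item:eps-2n+1} yields $q(\eps_1, \eps_1) = 0$. Consequently, by Theorem~\ref{thm:he} the set $\HE(\cE, q, \eps_1)$ is non-empty, and any hyperbolic extension $(\cE_+, q_+)$ comes with the three-step filtration of Definition~\ref{def:he} having graded pieces $\cL^\vee \otimes \cG^\vee$, $\cE$, and $\cG = \Omega^{p-1}(-t)$. My plan is to compute $\HH^i(\cE_+)$ via the two associated short exact sequences $0 \to \cE \to \cE' \to \cG \to 0$ (of class $\eps_1$) and $0 \to \cL^\vee \otimes \cG^\vee \to \cE_+ \to \cE' \to 0$, and their long exact cohomology sequences.

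The key cohomology vanishing is that by~\eqref{eq:h-q-omega-p} the intermediate cohomologies of $\cG$ and of $\cL^\vee \otimes \cG^\vee \cong \Omega^{n-p+1}(m+n+1+t)$ reduce to single one-dimensional pieces $H^{p-1}(\cG(t)) = \kk$ and $H^{n-p+1}(\cL^\vee \otimes \cG^\vee(-m-n-1-t)) = \kk$. From the first LES, the connecting map $H^{p-1}(\cG(t)) \to H^p(\cE(t))$ sends the canonical generator to $\eps_1 \circ \tau_{p-1} \circ \cdots \circ \tau_1 = \eps_p$ by~\eqref{eq:eps-tau}; combined with the shadow-vanishing~\eqref{eq:cohomology-assumption}, which collapses the $\SS$-submodule generated by $\eps_p$ to the line $\kk\eps_p$, this yields $\HH^p(\cE') = \HH^p(\cE)/\kk\eps_p$ and $\HH^i(\cE') = \HH^i(\cE)$ for $1 \le i \le n-1$ with $i \ne p$. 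The second LES then modifies intermediate cohomology only at $i = n-p$ and $i = n-p+1$ and only in internal degree $s = -m-n-1-t$, controlled by a connecting map $\delta \colon H^{n-p}(\cE(s)) \to \kk$ given by cup product with $q(\eps_1) \in \Ext^1(\cE, \cL^\vee \otimes \cG^\vee)$ (using that the class of the second sequence restricts to $q(\eps_1)$ along $\cE \hookrightarrow \cE'$, by Lemma~\ref{lemma:eps-q-eps}).

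The main technical obstacle I anticipate is identifying $\delta$ with the map~\eqref{eq:q-eps-p}, that is, with cup product against $q(\eps_p) \in H^p(\cE^\vee(m+t))$ landing in $H^n(\P^n,\cO(-n-1))$. This identification rests on the Koszul decomposition $\eps_p = \eps_1 \circ \tau_{p-1} \circ \cdots \circ \tau_1$ of Proposition~\ref{prop:eps-p} together with the Serre duality pairing between $H^{p-1}(\Omega^{p-1})$ and $H^{n-p+1}(\Omega^{n-p+1})$; the hypotheses~\eqref{item:eps-2n} and~\eqref{item:eps-2n+1} enter here precisely to guarantee $\HH^{n-p}(\cE') = \HH^{n-p}(\cE)$ and, in the boundary case $n - p = p - 1$ covered by~\eqref{item:eps-2n+1}, to ensure compatibility of $\delta$ with the quotient $\HH^p(\cE(t))/\kk\eps_p$ via the vanishing $q(\eps_p,\eps_p) = 0$. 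Once this is settled, formula~\eqref{eq:hh-ce+} assembles: if $q(\eps_p) \ne 0$ then $\delta$ is surjective, absorbing the $\kk$ from $\cL^\vee \otimes \cG^\vee$ and imposing the condition $q(\eps_p)^\perp$ on $\HH^{n-p}(\cE_+)$; if $q(\eps_p) = 0$ then $\delta$ vanishes and the generator of $H^{n-p+1}(\cL^\vee \otimes \cG^\vee)$ lifts to a nonzero class $\eps_+ \in \HH^{n-p+1}(\cE_+)$, whose dependence on the choice of $(\cE_+, q_+)$ reflects the $\Ext^1(\bw2\cG, \cL^\vee)$-torsor structure on $\HE(\cE, q, \eps_1)$ from Theorem~\ref{thm:he}.
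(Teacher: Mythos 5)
Your proposal is essentially correct and follows the same strategy as the paper's proof: compute the intermediate cohomology of $\cE_+$ from the length-$3$ filtration with graded pieces $\cL^\vee\otimes\cG^\vee$, $\cE$, $\cG=\Omega^{p-1}(-t)$, identify the first connecting map with $\eps_p$ via~\eqref{eq:eps-tau}, and identify the second connecting map with the pairing against $q(\eps_p)$ via Serre duality. The only cosmetic difference is that the paper runs the single spectral sequence of the three-step filtration, while you chain the two long exact sequences; these are equivalent, and in particular the paper's explicit check that $\bd_2=0$ corresponds to your implicit claim that the two LES's can be composed cleanly.

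One detail is slightly off in your last paragraph. You attribute a ``compatibility of $\delta$ with the quotient $\HH^p(\cE)/\kk\eps_p$'' to the case $n-p=p-1$ (i.e.\ $2p=n+1$, condition~\eqref{item:eps-2n+1}), appealing to $q(\eps_p,\eps_p)=0$. But in that case $\delta$ acts on $\HH^{n-p}(\cE')=\HH^{p-1}(\cE')=\HH^{p-1}(\cE)$, which is \emph{not} a quotient, so there is nothing to check; the new class $\eps_+$ merely stacks on top of the quotient $\HH^p(\cE)/\kk\eps_p$ at the same homological index. The case where $\delta$ genuinely must descend to a quotient is $n-p=p$ (i.e.\ $n=2p$, within~\eqref{item:eps-2n}) together with $2t+m+n+1=0$, when both modifications occur at the same $(i,t)$; there the descent is equivalent to $q(\eps_p,\eps_p)=0$. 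The paper sidesteps all of this because $\bd_2\colon\bE_2^{-1,i}\to\bE_2^{1,i-1}$ vanishes by the degree count $\{1,n+1\}\cap\{0,n+1-p,n\}=\varnothing$ (using $\bE_2^{-1,p}=\ker(\eps_1\circ\tau)=0$), and the relation $q(\eps_1)\circ\eps_1=q(\eps_1,\eps_1)=0$ is what makes the $\bd_1$'s a complex in the first place — this is exactly the role of the isotropy hypothesis. Your conclusion is unaffected; only the bookkeeping of where the hypothesis enters needs this correction.
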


\begin{proof}
By Proposition~\ref{prop:eps-p} we have $q(\eps_1,\eps_1) = 0$, hence the extension class~$\eps_1$ is $q$-isotropic
and a hyperbolic extension~$(\cE_+,q_+)$ exists by Theorem~\ref{thm:he}.
By Lemma~\ref{lemma:eps-q-eps} its underlying bundle~$\cE_+$ has a length~3 filtration with the factors
\begin{equation*}
\bw{p-1}\cT(t + m),\
\cE,\
\Omega^{p-1}(-t)
\end{equation*}
linked by the classes~$q(\eps_1) \in \Ext^1(\cE, \bw{p-1}\cT(t + m))$ and~$\eps_1 \in \Ext^1(\Omega^{p-1}(-t), \cE)$,
respectively.
Recall that~$\tau_i$ denote the extension classes of complexes~\eqref{eq:koszul}.

Consider the specral sequence of a filtered complex that computes the cohomolgoy of (twists of)~$\cE_+$;
the terms of its first page~$\bE_1^{\bullet,\bullet}$ which compute intermediate cohomology look like
\begin{equation*}
\begin{aligned}
\bE_1^{-1,i} &= 
\HH^{i-1}(\Omega^{p-1}(-t)) &&= 
\begin{cases}
\kk(-t), & \text{if $i = p$},\\
0, & \text{otherwise for~$2 \le i \le n$},
\end{cases}
\\
\bE_1^{0,i} &= \HH^i(\cE),
\\
\bE_1^{1,i} &= 
\HH^{i+1}(\bw{p-1}\cT(t + m)) &&= 
\begin{cases}
\kk(t+m+n+1), & \text{if $i = n - p$},\\
0, & \text{otherwise for~$0 \le i \le n-2$},
\end{cases}
\end{aligned}
\end{equation*}
and the first differentials are given by $\eps_1 \colon \bE_1^{-1,i} \to \bE_1^{0,i}$ 
and~$q(\eps_1) \colon \bE_1^{0,i} \to \bE_1^{1,i}$, respectively.
In particular, there are only two possibly non-trivial differentials here:
\begin{align*}
\kk \xrightarrow[\simeq]{\ \tau_{p-1} \circ \dots \circ \tau_1\ }
H^{p-1}(\P^n,\Omega^{p-1}) &\xrightarrow{\hbox to 3em{\hfil$\scriptstyle \eps_1$\hfil}} H^p(\P^n, \cE(t)),
\qquad\text{and}\\
H^{n-p}(\P^n,\cE(-m-t-n-1))  &\xrightarrow{\hbox to 3em{\hfil$\scriptstyle q(\eps_1)$\hfil}} 
H^{n-p+1}(\P^n, \bw{p-1}\cT(-n-1)) \xrightarrow[\simeq]{\ \tau_{p-1} \circ \dots \circ \tau_1\ } \kk.
\end{align*}
Since the spectral sequence is supported in three columns, 
the differential~$\bd_2$ acts as~$\bE_2^{-1,i} \to \bE_2^{1,i-1}$,
and using~\eqref{eq:h-q-omega-p} we see that its source is nonzero only for~$i \in \{1,n+1\}$ (note that~$\eps_1 \ne 0$), 
while its target is nonzero only in~$i \in \{0, n+1-p, n\}$,
hence~$\bd_2 = 0$.
The further differentials a fortiori vanish,
so that~$\bE_\infty^{\bullet,\bullet} = \bE_2^{\bullet,\bullet}$.
On the other hand, by~\eqref{eq:eps-tau} the image of the first map is~$\kk\eps_p$
and the second map coincides with the map~$q(\eps_p)$ defined in~\eqref{eq:q-eps-p}.
Therefore, the totalization of~$\bE_2^{\bullet,\bullet}$ takes the form of the right-hand side of~\eqref{eq:hh-ce+},
where in the case~$q(\eps_p) = 0$ the class~$\eps_+$ comes from~$\bE_2^{1,n-p}$ which survives exactly in this case. 
\end{proof}

As explained in Theorem~\ref{thm:he} the construction of a hyperbolic extension might be ambiguous.
In the situation described in Proposition~\ref{prop:elementary-modification} this happens precisely
when the space~$\Ext^1(\bw2\Omega^{p-1},\cO(2t + m))$ is non-zero.
In the next lemma we determine when this happens.

\begin{lemma}
\label{lemma:ext1-omega}
Assume $2 \le 2p \le n + 1$.
Then the space $\Ext^1(\bw2\Omega^{p-1},\cO(s))$ is nonzero if and only if~$n = 2k + 1$, $p = k + 1$, $k \ge 1$ is odd, and $s = -n - 1$,
in which case~$\dim(\Ext^1(\bw2\Omega^{p-1},\cO(s))) = 1$. 
\end{lemma}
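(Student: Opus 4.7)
My plan is to translate $\Ext^1(\bw2{\Omega^{p-1}}, \cO(s))$ into the hypercohomology of the derived antisymmetric square of a Koszul resolution. Dualizing via $(\bw2{\Omega^{p-1}})^\vee \cong \bw2{\bw{p-1}\cT}$ gives
\begin{equation*}
\Ext^1(\bw2{\Omega^{p-1}}, \cO(s)) \cong H^1(\P^n, \bw2{\bw{p-1}\cT} \otimes \cO(s)).
\end{equation*}
Iterating the Euler sequence $0 \to \cO \to V \otimes \cO(1) \to \cT \to 0$ yields a length-$(p-1)$ split-bundle resolution $\cL_\bullet \to \bw{p-1}\cT$ with $\cL_i = \bw{p-1-i}{V} \otimes \cO(p-1-i)$ in cohomological degree $-i$; in particular $\cL_{p-1} = \cO$. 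Then $\cL_\bullet \otimes \cL_\bullet$ resolves $\bw{p-1}\cT \otimes \bw{p-1}\cT$, and in characteristic $\ne 2$ it splits under the Koszul super-swap $a \otimes b \mapsto (-1)^{|a||b|} b \otimes a$ into $\bw2{\cL_\bullet} \oplus \Sym^2 \cL_\bullet$, the two summands resolving $\bw2{\bw{p-1}\cT}$ and $\Sym^2 \bw{p-1}\cT$ respectively.

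Next I would analyze the hypercohomology spectral sequence $E_1^{r,q} = H^q(\P^n, (\bw2{\cL})^r(s))$ abutting to $H^{r+q}(\P^n, \bw2{\bw{p-1}\cT}(s))$. Since each $\cL_i$ is split, the $E_1$-page is concentrated in rows $q \in \{0, n\}$, so contributions to the abutment in degree $1$ can come only from $(r, q) = (1-n, n)$; this requires $r = 1-n$ to lie in the complex's support $[-2(p-1), 0]$, forcing $2p \ge n + 1$. Combined with the hypothesis $2p \le n + 1$, this gives $n = 2k+1$ and $p = k+1$. At the resulting leftmost position $r = -2k$, the only summand is $\cL_{p-1} \otimes \cL_{p-1} = \cO \otimes \cO$, with both factors in cohomological degree $-k$; the Koszul swap acts by $(-1)^{k^2} = (-1)^k$, so $(\bw2{\cL})^{-2k}$ equals $\Sym^2 \cO = \cO$ when $k$ is odd and $\bw2{\cO} = 0$ when $k$ is even. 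This accounts for the parity condition on $k$.

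For $k$ odd, $E_1^{-2k,n} = H^n(\P^n, \cO(s))$ vanishes unless $s \le -n-1$. The only potentially nontrivial outgoing differential is $d_1 \colon H^n(\cO(s)) \to H^n(V \otimes \cO(s+1))$ induced by the Euler map $\cO \to V \otimes \cO(1)$: for $s = -n-1$ the target vanishes, so $d_1 = 0$; for $s < -n-1$, Serre duality identifies $d_1$ with the transpose of the surjective multiplication $V^\vee \otimes \Sym^{t-1} V^\vee \twoheadrightarrow \Sym^t V^\vee$ (with $t = -s - n - 1 \ge 1$), making $d_1$ injective. Higher differentials $d_r$ with $2 \le r \le n$ land in rows $q = n-r+1 \notin \{0, n\}$, while $d_{n+1}$ would land at $(r, q) = (1, 0)$, outside the support of $\bw2{\cL_\bullet}$; none contribute. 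Therefore $H^1(\P^n, \bw2{\bw{p-1}\cT}(s))$ is one-dimensional precisely when $s = -n-1$ and vanishes otherwise, giving the claimed dichotomy.

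The main subtlety I expect is the Koszul-sign bookkeeping on $\cL_{p-1} \otimes \cL_{p-1}$: this is the unique place where the parity of $k$ enters, and it is what distinguishes the $\bw2$ and $\Sym^2$ parts of the derived tensor square. Modulo this sign check, the rest of the argument reduces to standard Bott vanishings on split bundles together with one explicit computation of a symmetric-algebra multiplication map.
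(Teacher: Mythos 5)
Your proof is correct and takes essentially the same approach as the paper: both reduce $\Ext^1(\bw2\Omega^{p-1},\cO(s))$ to $H^1(\P^n,\bw2(\bw{p-1}\cT)(s))$, resolve the exterior square by the (super-)antisymmetrized tensor square of the Koszul resolution, and read the answer off the two-row hypercohomology spectral sequence, with the parity of $k=p-1$ governing whether the leftmost term $\cO\otimes\cO$ survives in the $\bw2$ part; the paper merely compresses your explicit superswap bookkeeping into the remark that the split resolution of $\bw2(\bw{k}\cT)$ has length $2k$ when $k$ is odd and $2k-1$ when $k$ is even. (One cosmetic slip: the target of $d_{n+1}$ sits at column $n+1-2k=2$, not $1$, but either way it lies outside the support of the complex, so nothing changes.)
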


\begin{proof}
Set $k = p - 1$, so that $2k \le n - 1$.
We have $\Ext^1(\bw2\Omega^{p-1},\cO(s)) = H^1(\P^n, \bw2(\bw{k}\cT) \otimes \cO(s))$.
Taking the exterior square of~\eqref{eq:koszul-t} we see that~$\bw2(\bw{k}\cT)$ is quasiisomorphic to the complex of split bundles
of length~$2k$ if~$k$ is odd and~$2k-1$ if~$k$ is even.
Since split bundles have no intermediate cohomology and since~$2k \le n - 1$, the hypercohomology spectral sequence 
shows that~$H^1(\P^n, \bw2(\bw{k}\cT) \otimes \cO(s)) = 0$ unless~$k$ is odd and~$n = 2k + 1$, 
and in the latter case we have
\begin{equation*}
H^1(\P^n, \bw2(\bw{k}\cT) \otimes \cO(s)) = \Ker \Big( H^n(\P^n,\cO(s)) \to H^n(\P^n, V \otimes \cO(s+1)) \Big),
\end{equation*}
where the morphism in the right side is induced by the tautological embedding~$\cO \hookrightarrow V \otimes\cO(1)$.
Now it is easy to see that this space is zero unless~$s = -n-1$, in which case it is 1-dimensional.
\end{proof}

Now let~$\cC$ be the cokernel sheaf of a generically non-degenerate quadratic form~$(\cE,q)$.
The next result shows that in the case 
where the construction of an elementary modification of Proposition~\ref{prop:elementary-modification} is ambiguous,
i.e., $\Ext^1(\bw2\Omega^{p-1}, \cO(2t+m)) \ne 0$,
one can choose one such modification~$(\cE_+,q_+)$, which has an additional nice property,
namely, 
it has a prescribed image of~$\HH^k(\cE_+^\vee)$ in~$\HH^k(\cC)$.
For our purposes it will be enough to consider the case where the bundle~$\cE$ is VLC, see Definition~\ref{def:lacm-uacm}.

So, assume~$n = 2k + 1$ and the bundle~$\cE$ in~\eqref{eq:ce-m-ce-vee-cf} is VLC.
Note that~$\cE^\vee$ is VUC by Lemma~\ref{lemma:hacm-duality}.
By Lemma~\ref{lemma:hhk-ce+} we have an exact sequence of graded~$\SS$-modules
\begin{equation}
\label{eq:lagrangian-cohomology}
0 \to \HH^{k}(\cE^\vee) \to \HH^k(\cC) \to \HH^{k+1}(\cE) \to 0,
\end{equation}
see~\eqref{eq:hhk-ce-pm}. 
Recall also that the space~$\HH^k(\cC)$ is endowed with the perfect pairing~\eqref{eq:hh-k-cf-pairing}
and that the subspace~$\HH^{k}(\cE^\vee) \subset \HH^k(\cC)$ is Lagrangian, see Lemma~\ref{lemma:hhkce-lagrangian}.
In particular, the pairing induces an isomorphism
\begin{equation*}
\HH^{k+1}(\cE) \cong \HH^{k}(\cE^\vee)^\vee
\end{equation*}
Using this isomorphism, any class~$\eps_{k+1} \in H^{k+1}(\P^n,\cE(t))$ 
can be considered as a homogeneous linear function on~$\HH^{k}(\cE^\vee)$;
we denote by~$\eps_{k+1}^\perp \subset \HH^{k}(\cE^\vee)$ its kernel. 
Note that~$\eps_{k+1}^\perp$ is a graded isotropic $\SS$-submodule in~$\HH^{k}(\cE^\vee)$ and hence also in~$\HH^{k}(\cC)$.

\begin{proposition}
\label{prop:em-lagrangian}
Assume~$k \ge 1$, $n = 2k + 1$, and the bundle~$\cE$ in~\eqref{eq:ce-m-ce-vee-cf} is VLC.
Let~$t$ be the maximal integer such that~$H^{k+1}(\P^n,\cE(t)) \ne 0$, 
let~$\eps_{k+1} \in H^{k+1}(\P^n,\cE(t))$ be a nonzero class,
and let~$\eps_{k+1}^\perp \subset \HH^{k}(\cE^\vee)$ be the corresponding graded isotropic $\SS$-submodule.
For each graded Lagrangian $\SS$-submodule~$\rA \subset \HH^{k}(\cC)$ such that~$\rA \ne \HH^k(\cE^\vee)$ and
\begin{equation}
\label{eq:lagrangian-property}
\eps_{k+1}^\perp \subset \rA \cap \HH^k(\cE^\vee)
\end{equation}
there is a unique elementary modification~$(\cE_+,q_+)$ of~$(\cE,q)$ with respect to~$\eps_{k+1}$ such that
\begin{equation}
\label{eq:hhk-ceplusvee}
\HH^k(\cE_+^\vee) = \rA.
\end{equation}
\end{proposition}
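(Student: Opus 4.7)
The plan is to run Proposition~\ref{prop:elementary-modification} for the class $\eps_{k+1}$ and then match the resulting hyperbolic extensions with the valid Lagrangians $\rA$ via a degree analysis. I first verify the hypotheses: the shadow-vanishing condition~\eqref{eq:cohomology-assumption} at $(k+1,t)$ follows from the VLC property of $\cE$ (killing $\HH^{p'}(\cE)$ for $1 \le p' \le k$) together with the maximality of $t$ (killing $H^{k+1}(\P^n,\cE(t'))$ for $t' > t$); the obstruction $q(\eps_{k+1},\eps_{k+1}) \in H^{2k+2}(\P^n,\cO(2t+m))$ vanishes since $2k+2 > n$; and the numerical bound $2t+m+n+1 \ge 0$ required by~\eqref{item:eps-2n+1} follows from comparing the top internal degree of $\HH^k(\cC)$ with that of the quotient $\HH^{k+1}(\cE)$, using the perfect pairing~\eqref{eq:hh-k-cf-pairing} and the Lagrangianness of $\HH^k(\cE^\vee)$. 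Proposition~\ref{prop:eps-p} then produces a $q$-isotropic class $\eps_1 \in \Ext^1(\Omega^k(-t),\cE)$, and Theorem~\ref{thm:he} yields hyperbolic extensions of $(\cE,q)$ with respect to $\eps_1$ forming a torsor under $\Ext^1(\bw 2 \Omega^k, \cO(2t+m))$, which by Lemma~\ref{lemma:ext1-omega} is zero unless $k$ is odd and $2t+m = -n-1$ (Case~B), in which case it is one-dimensional.

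Next I show that $\cE_+$ is again VLC, so $0 \to \cE_+(-m) \to \cE_+^\vee \to \cC \to 0$ is a VHC resolution (using $\cC(q_+) \cong \cC$ from Lemma~\ref{lemma:h-reduction}). By Serre duality and the VLC of $\cE$ the class $q(\eps_{k+1}) \in H^{k+1}(\P^n, \cE^\vee(m+t))$ vanishes, so formula~\eqref{eq:hh-ce+} gives $\moplus_i \HH^i(\cE_+) = \kk\eps_+ \oplus \moplus_i \HH^i(\cE)/\kk\eps_{k+1}$ with $\eps_+ \in H^{k+1}$; in particular $\HH^i(\cE_+) = 0$ for $1 \le i \le k$. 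Then Lemma~\ref{lemma:hhk-ce+} produces the exact sequence $0 \to \HH^k(\cE_+^\vee) \to \HH^k(\cC) \to \HH^{k+1}(\cE_+) \to 0$, and the splitting $\HH^{k+1}(\cE_+) = \HH^{k+1}(\cE)/\kk\eps_{k+1} \oplus \kk\eps_+$ identifies $\rA := \HH^k(\cE_+^\vee)$ as a codimension-one graded $\SS$-submodule of $\HH^k(\cE^\vee) + \kk\tilde\eps_{k+1}$, where $\tilde\eps_{k+1} \in \HH^k(\cC)$ is any lift of $\eps_{k+1}$. Lagrangianness (Lemma~\ref{lemma:hhkce-lagrangian}) combined with isotropy of $\rA$ forces $\langle \tilde\eps_{k+1}, v\rangle = 0$ for every $v \in \rA \cap \HH^k(\cE^\vee)$, whence $\rA \cap \HH^k(\cE^\vee) \subset \eps_{k+1}^\perp$; a dimension count gives equality, so $\rA$ satisfies~\eqref{eq:lagrangian-property} and differs from $\HH^k(\cE^\vee)$.

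Finally, I establish the bijection by analyzing the two-dimensional quotient $Q := (\HH^k(\cE^\vee) + \kk\tilde\eps_{k+1})/\eps_{k+1}^\perp$. The maximality of $t$ implies $\SS_{>0}$ annihilates $Q$, so $Q$ is a direct sum of two graded lines in internal degrees $t+m$ (from $\tilde\eps_{k+1}$) and $-t-n-1$ (from $\HH^k(\cE^\vee)/\eps_{k+1}^\perp$). When $2t+m+n+1 \ne 0$ (Case~A) these degrees differ, so $Q$ has exactly two graded one-dimensional $\SS$-submodules; excluding $\HH^k(\cE^\vee)/\eps_{k+1}^\perp$ leaves a unique valid $\rA$, which must match the unique hyperbolic extension. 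In Case~B the degrees coincide and both sides form $\A^1$-families: injectivity of the correspondence follows from Theorem~\ref{prop:hacm-uniqueness} applied to the linearly minimal parts of the VHC resolutions of $\cC$, while surjectivity is obtained by tracing the torsor action of $\Ext^1(\bw 2 \Omega^k, \cO(2t+m))$ in the proof of Theorem~\ref{thm:he} to see that varying the lift $\gamma$ of the extension class by a nonzero element changes the functional $\lambda \colon \HH^k(\cC) \to \kk\eps_+$ determining $\rA$ by a nonzero amount. The main obstacle is this surjectivity verification in Case~B: one must carefully trace the construction of Theorem~\ref{thm:he} and the spectral sequence computing $\HH^{k+1}(\cE_+)$ to match the one-dimensional parameter on the side of hyperbolic extensions with the one on the side of Lagrangians.
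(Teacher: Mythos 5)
Your high-level plan is the same as the paper's: apply Proposition~\ref{prop:elementary-modification}, observe that by Lemma~\ref{lemma:ext1-omega} the ambiguity is at most one-dimensional, and then split into the cases where $\HE(\cE,q,\eps_1)$ and the target set of Lagrangians are both singletons (and the map is trivially a bijection) versus the case where both are $\A^1$. However, there are two genuine gaps.

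First, your argument that $\rA := \HH^k(\cE_+^\vee)$ differs from $\HH^k(\cE^\vee)$ is circular. The containment $\rA \subset \HH^k(\cE^\vee) + \kk\tilde\eps_{k+1}$ in codimension one is fine (it comes from the natural surjection $\HH^{k+1}(\cE_+) \twoheadrightarrow \HH^{k+1}(\cE)/\kk\eps_{k+1}$ and the dimension count). But you then invoke ``isotropy of $\rA$ forces $\langle\tilde\eps_{k+1},v\rangle=0$ for $v\in\rA\cap\HH^k(\cE^\vee)$,'' which requires some lift $\tilde\eps_{k+1}$ to actually lie in $\rA$ --- and that is exactly the statement $\rA\ne\HH^k(\cE^\vee)$ you are trying to prove. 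The containment and dimension count alone are perfectly consistent with $\rA=\HH^k(\cE^\vee)$. The paper closes this gap with a nontrivial diagram chase through the intermediate bundles $\cE'$ and $\cE''$ in~\eqref{eq:ce-cep-cepp-ce+}, producing a functional $\iota$ on $\HH^k({\cE''}^\vee)$ that vanishes on the image of $\HH^k(\cE^\vee)$ but not on the image of $\HH^k(\cE_+^\vee)$, and then uses the injection $\HH^k({\cE''}^\vee)\hookrightarrow\HH^k(\cC)$ coming from $\cE'$ being VLC. That computation is essential and cannot be replaced by the dimension count.

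Second, in Case~(c) (your ``Case~B'') your bijectivity argument is substantially different from the paper's and, as written, is incomplete. For injectivity you invoke Theorem~\ref{prop:hacm-uniqueness}, but that theorem only produces an isomorphism of the \emph{complexes} $\cE_+(-m)\to\cE_+^\vee$; you still need to upgrade it to an isomorphism of \emph{quadratic forms}, which is exactly the content of the unipotence argument in Proposition~\ref{prop:qb-isomorphism} (proved later in the paper), and you also need to strip off unimodular direct summands before Theorem~\ref{prop:hacm-uniqueness} even applies, which requires further argument. For surjectivity, what you describe (``carefully trace the torsor action'') is precisely the hard part: the paper's factorisation $\lambda=\lambda_2\circ\lambda_1$, with the injectivity of $\lambda_2$ via Serre duality and of $\lambda_1$ via the vanishing of the first arrow in~\eqref{eq:action-es}, is the concrete realisation of that tracing, and it is what establishes that the affine map $\A^1\to\A^1$ is injective, hence an isomorphism. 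Simply reducing the issue to ``one must carefully trace the construction'' leaves the crux of Case~(c) unproved. Finally, I'll note that your justification of $2t+m+n+1\ge 0$ by ``comparing top internal degrees'' is stated but not carried out, and it is not obvious that the inequality between the top degree of $\HH^k(\cC)$ and $(m-n-1)/2$ transfers to $t$ without further argument; this should be made explicit.
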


\begin{proof}
Let~$\eps_1 \in \Ext^1(\Omega^k(-t),\cE)$ be the extension class constructed from~$\eps_{k+1}$ in Proposition~\ref{prop:eps-p}
and consider the variety~$\HE(\cE,q,\eps_1)$ of all hyperbolic extensions of~$(\cE,q)$ with respect to~$\eps_1$,
i.e., the set of all elementary modifications of~$(\cE,q)$ with respect to~$\eps_{k+1}$.
By~\eqref{eq:hh-ce+} every~$(\cE_+,q_+) \in \HE(\cE,q,\eps_1)$ is a VLC bundle, hence~$\HH^{k}(\cE_+^\vee)$ 
is a graded Lagrangian $\SS$-submodule in~$\HH^k(\cC)$ by Lemma~\ref{lemma:hhkce-lagrangian}.
Moreover, the equality~\eqref{eq:hh-ce+} also implies that~$\eps_{k+1}^\perp \subset \HH^{k}(\cE_+^\vee)$, 
i.e., $\rA = \HH^{k}(\cE_+^\vee)$ satisfies~\eqref{eq:lagrangian-property}.
Therefore, there is a morphism 
\begin{equation}
\label{eq:lagrangian-morphism}
\lambda \colon \HE(\cE,q,\eps_1) \to \LGr_{\eps_{k+1}}(\HH^k(\cC)),
\qquad 
(\cE_+,q_+) \mapsto [\HH^{k}(\cE_+^\vee)],
\end{equation}
where~$\LGr_{\eps_{k+1}}(\HH^k(\cC))$ is the variety of all graded Lagrangian $\SS$-submodules~$\rA \subset \HH^k(\cC)$ satisfying~\eqref{eq:lagrangian-property}.
We will show that~$\lambda$ is an isomorphism 
onto the complement of the point~$[\HH^k(\cE^\vee)]$ in~$\LGr_{\eps_{k+1}}(\HH^k(\cC))$.

First we check 
that the image of~$\lambda$ is contained in the complement of~$[\HH^k(\cE^\vee)]$ in~$\LGr_{\eps_{k+1}}(\HH^k(\cC))$.
Recall that~$\eps_1 \in \Ext^1(\Omega^k(-t),\cE)$ denotes the extension class constructed from~$\eps_{k+1}$ in Proposition~\ref{prop:eps-p}
and let, as usual, \mbox{$q(\eps_1) \in \Ext^1(\cE,\Omega^{k+1}(t+m+n+1))$} be the class 
obtained from it by the application of~$q$.
Let $(\cE_+,q_+)$ be any hyperbolic extension of~$(\cE,q)$ with respect to~$\eps_1$, 
so that $(\cE,q)$ is the hyperbolic reduction of~$(\cE_+,q_+)$ with respect to an embedding~$\Omega^{k+1}(t+m+n+1) \hookrightarrow \cE_+$.
Then we have the following commutative diagram
\begin{equation}
\label{eq:ce-cep-cepp-ce+}
\vcenter{\xymatrix@R=2ex{
& 0 \ar[d] & 0 \ar[d] 
\\
&
\Omega^{k+1}(2t+m+n+1) \ar@{=}[r] \ar[d] &
\Omega^{k+1}(2t+m+n+1) \ar[d] 
\\
0 \ar[r] &
\cE''(t) \ar[r] \ar[d] &
\cE_+(t) \ar[r] \ar[d] &
\Omega^k \ar[r] \ar@{=}[d] &
0
\\
0 \ar[r] &
\cE(t) \ar[r] \ar[d] &
\cE'(t) \ar[r] \ar[d] &
\Omega^k \ar[r] &
0
\\
& 0 & 0
}}
\end{equation}
with the extension class of the bottom row being~$\eps_1$ and that of the left column being~$q(\eps_1)$.
Note that the cohomology exact sequence of the bottom row and the nontriviality of~$\eps_1$ 
imply that~$\cE'$ is VLC, hence~${\cE'}^\vee$ is VUC.
Similarly, the cohomology exact sequence of the left column implies that~$\cE''$ is VLC.
We will use these observations below.

Consider the dual of the diagram~\eqref{eq:ce-cep-cepp-ce+} 
and the induced cohomology exact sequences:
\begin{equation}
\label{eq:hh-ce-cep-cepp-ce+}
\vcenter{\xymatrix@R=4ex@C=5em{
& \kk(-t-m) \ar@{=}[r] & \kk(-t-m)
\\
\kk(t+n+1) &
\HH^k({\cE''}^\vee) \ar[l]_-{\eps''} \ar[u]^{\iota} &
\HH^k(\cE_+^\vee) \ar[l] \ar[u]
\\
\kk(t+n+1) \ar@{=}[u] &
\HH^k(\cE^\vee) \ar[l]_-{\eps_1} \ar[u] &
\HH^k({\cE'}^\vee) \ar[l] \ar[u]
}}
\end{equation}
(the map~$\iota$ is induced by the embedding~$\Omega^{k+1}(2t+m+n+1) \to \cE''(t)$ 
in the left column of~\eqref{eq:ce-cep-cepp-ce+}).
Since~${\cE'}^\vee$ is VUC, the upper arrow in the right column of~\eqref{eq:hh-ce-cep-cepp-ce+} is surjective.
From the commutativity of the diagram we conclude that the composition 
\begin{equation*}
\HH^k(\cE_+^\vee) \xrightarrow{\quad} \HH^k({\cE''}^\vee) \xrightarrow{\ \iota\ } \kk
\end{equation*}
(of the right arrow in the middle row and the upper arrow in the middle column) is nontrivial, while
\begin{equation*}
\HH^k(\cE^\vee) \xrightarrow{\quad} \HH^k({\cE''}^\vee) \xrightarrow{\ \iota\ } \kk,
\end{equation*}
(the composition of arrows in the middle column) is zero.
This proves that the images of~$\HH^k(\cE_+^\vee)$ and~$\HH^k(\cE^\vee)$ in~$\HH^k({\cE''}^\vee)$ are distinct.
On the other hand, we have an obvious commutative diagram
\begin{equation*}
\xymatrix{
0 \ar[r] &
\cE(-m) \ar[r]^-q \ar@{^{(}->}[d] &
\cE^\vee \ar[r] \ar@{^{(}->}[d] & 
\cC \ar[r] \ar@{=}[d] &
0
\\
0 \ar[r] &
\cE'(-m) \ar[r] &
{\cE''}^\vee \ar[r] & 
\cC \ar[r] &
0
\\
0 \ar[r] &
\cE_+(-m) \ar[r]^-{q_+} \ar@{->>}[u] &
\cE_+^\vee \ar[r] \ar@{->>}[u] & 
\cC \ar[r] \ar@{=}[u] &
0
}
\end{equation*}
and since~$\cE'$ is VLC, the map~$\HH^k({\cE''}^\vee) \to \HH^k(\cC)$ is injective, hence
\begin{equation*}
[\HH^k(\cE_+^\vee)] \ne [\HH^k(\cE^\vee)] \in \LGr_{\eps_{k+1}}(\HH^k(\cC)), 
\end{equation*}
hence the image of~$\lambda$ is contained in the complement of~$[\HH^k(\cE^\vee)]$.

Now we separate the following cases:
\begin{enumerate}\renewcommand{\theenumi}{\alph{enumi}}
\item 
\label{item:point-k-even}
$k$ is even;
\item 
\label{item:point-degree-nonzero}
$2t + m + n + 1 \ne 0$;
\item 
\label{item:p1}
$2t + m + n + 1 = 0$ and~$k$ is odd.
\end{enumerate}
First, we describe the variety~$\LGr_{\eps_{k+1}}(\HH^k(\cC)) \setminus [\HH^k(\cE^\vee)]$ in each case. 

In case~\eqref{item:point-k-even} the bilinear form~\eqref{eq:hh-k-cf-pairing} is symmetric, 
hence there exists exactly two Lagrangian subspaces in~$\HH^k(\cC)$ containing~$\eps_{k+1}^\perp$,
hence~$\LGr_{\eps_{k+1}}(\HH^k(\cC)) \setminus [\HH^k(\cE^\vee)]$ is a single point.

In case~\eqref{item:point-degree-nonzero} the 2-dimensional space~$(\eps_{k+1}^\perp)^\perp / \eps_{k+1}^\perp$ 
lives in two distinct degrees, 
hence it has exactly two \emph{graded} Lagrangian subspaces,
hence~$\LGr_{\eps_{k+1}}(\HH^k(\cC)) \setminus [\HH^k(\cE^\vee)]$ is again a single point.

Finally, in case~\eqref{item:p1} the 2-dimensional space~$(\eps_{k+1}^\perp)^\perp / \eps_{k+1}^\perp$ 
is symplectic and lives in a single degree, hence~$\LGr_{\eps_{k+1}}(\HH^k(\cC)) \cong \P^1$
and~$\LGr_{\eps_{k+1}}(\HH^k(\cC)) \setminus [\HH^k(\cE^\vee)] \cong \A^1$.

Now we see that in cases~\eqref{item:point-k-even} and~\eqref{item:point-degree-nonzero} 
the map~$\lambda$ is a map between two one-point sets, hence it is an isomorphism.
Finally, in case~\eqref{item:p1} it is a map~$\A^1 \to \A^1$, 
and to show it is an isomorphism (and thus to complete the proof of the proposition),
it is enough to check its injectivity. 
So, for the rest of the proof we assume~$k$ is odd and~$2t + m + n + 1 = 0$ 
and prove that~$\lambda$ is injective.

First, we note that since the extension class~$q(\eps_1)$ of the left column of~\eqref{eq:ce-cep-cepp-ce+}
does not depend on any choice,
the hyperbolic extension~$(\cE_+,q_+)$ is determined by the class~$\eps'' \in \Ext^1(\Omega^k, \cE''(t))$ of the middle row in~\eqref{eq:ce-cep-cepp-ce+}, 
which is a lift of~$\eps_1 \in \Ext^1(\Omega^k, \cE(t))$ with respect to the exact sequence
\begin{equation}
\label{eq:action-es}
\Hom(\Omega^k, \cE(t)) \to \Ext^1(\Omega^k, \Omega^{k+1}(2t+m+n+1)) \to \Ext^1(\Omega^k, \cE''(t)) \to \Ext^1(\Omega^k, \cE(t)).
\end{equation}
Since the left arrow in the middle row of~\eqref{eq:hh-ce-cep-cepp-ce+} is determined by the class~$\eps''$ 
and since~$\lambda((\cE_+,q_+))$ is its kernel, we conclude that the morphism~$\lambda$ factors as a composition
\begin{equation*}
\HE(\cE,q,\eps_1) \xrightarrow{\ \lambda_1\ } \Ext^1(\Omega^k, \cE''(t)) \xrightarrow{\ \lambda_2\ } \LGr_{\eps_{k+1}}(\HH^k(\cC)),
\end{equation*}
where~$\lambda_1$ takes a hyperbolic extension~$(\cE_+,q_+)$ to the extension class of the middle row in~\eqref{eq:ce-cep-cepp-ce+}
and~$\lambda_2$ takes an extension class~$\eps'' \in \Ext^1(\Omega^k, \cE''(t))$ 
to the kernel of the map~$\eps''$ in~\eqref{eq:hh-ce-cep-cepp-ce+}.
To check the injectivity of~$\lambda$ it is enough to check the injectivity of~$\lambda_1$ and~$\lambda_2$.

To prove the injectivity of~$\lambda_2$ consider the composition
\begin{equation*}
\Ext^1(\Omega^k, \cE''(t)) = H^{k+1}(\P^n, \cE''(t)) \xrightarrow{{\ \raisebox{-0.5ex}[0ex][0ex]{$\sim$}\ }} 
\Hom(H^k(\P^n, {\cE''}^\vee(-t-n-1)), \kk) \subset \Hom(\HH^k({\cE''}^\vee), \kk(t+n+1))).
\end{equation*}
The equality follows from Proposition~\ref{prop:eps-p} applied to~$\cE''$
(recall that the bundle~$\cE''$ is VLC 
and, moreover, we have~$H^{k+1}(\P^n,\cE''(s)) = 0$ for~$s > t$ by definition of~$t$ and the assumption~$2t + m + n + 1 = 0$)
and the middle arrow is an isomorphism by Serre duality.
Therefore,
the composition is injective, and since~$\lambda_2(\eps'')$ 
is determined by the image of~$\eps''$ under this composition,
we conclude that~$\lambda_2$ is injective.

Finally, we note that~$\HE(\cE,q,\eps_1)$ comes with a transitive action of the group 
\begin{equation*}
\Ext^1(\bw2\Omega^k, \cO(2t+m)) \subset \Ext^1(\Omega^k, \Omega^{k+1}(2t+m+n+1)).
\end{equation*}
Therefore, to check the injectivity of~$\lambda_1$, it is enough to check the injectivity of the middle arrow in~\eqref{eq:action-es}.
And for this, it is enough to check that the first arrow in~\eqref{eq:action-es} vanishes.
To prove this vanishing consider the commutative square
\begin{equation*}
\xymatrix@C=9em{
\Hom(\bw{k}V \otimes \cO(-k), \cE(t)) \ar[r] \ar[d] &
\Hom(\Omega^k, \cE(t)) \ar[d]
\\
\Ext^1(\bw{k}V \otimes \cO(-k), \Omega^{k+1}(2t + m + n + 1)) \ar[r] &
\Ext^1(\Omega^k, \Omega^{k+1}(2t + m + n + 1)),
}
\end{equation*}
where the vertical arrows are induced by the extension class~$q(\eps_1)$ of the left column of~\eqref{eq:ce-cep-cepp-ce+},
and the horizontal arrows are induced by the morphisms in the dual of~\eqref{eq:koszul-t} with~$s = k$. 
The space in the lower left corner is zero by~\eqref{eq:h-q-omega-p} (recall that~$k \ge 1$),
hence the compositions of arrows are zero.
On the other hand, the argument of Proposition~\ref{prop:eps-p} shows 
that the top horizontal arrow is surjective.
Therefore the right vertical arrow is zero, and as we explained above, 
this implies the injectivity of~$\lambda_1$, and hence of~$\lambda$,
and completes the proof of the proposition.
\end{proof}

The elementary modification~$(\cE_+,q_+)$ of~$(\cE,q)$ 
satisfying the properties of Proposition~\ref{prop:em-lagrangian} for a given lift~$\teps_{k+1}$ of~$\eps_{k+1}$,
will be referred to as {\sf the refined elementary modification} associated with the class~$\teps_{k+1}$.

\subsection{Modification theorem}
\label{subsec:hacm-modifications}

Recall that a quadratic form~$(\cE,q)$ is called {\sf unimodular} if the corresponding cokernel sheaf~$\cC$ vanishes,
i.e., if~$q \colon \cE(-m) \to \cE^\vee$ is an isomorphism.
Recall the definitions~\eqref{eq:trivial-linear} and~\eqref{eq:trivial-omega} of standard unimodular quadratic forms.
We will say that a standard unimodular quadratic form is {\sf anisotropic} 
if~$W = W^0$ and the form~$q_{W^0}$ is symmetric and anisotropic.

To prove the main result of this section we need the following simple observations.
Recall the notion of linear minimality, see Definition~\ref{def:linearly-minimal}

\begin{lemma}
\label{lemma:linear-summand}
Assume~$(\cE,q)$ is a generically non-degenerate quadratic form 
such that~$q \colon \cE(-m) \to \cE^\vee$ is not linearly minimal.
Then~$(\cE,q)$ is isomorphic to the orthogonal direct sum~$(\cE_0,q_0) \oplus (\cE_1,q_1)$,
where the second summand is a standard unimodular quadratic form~\eqref{eq:trivial-linear} of rank~$1$ or~$2$.
\end{lemma}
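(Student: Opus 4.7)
The plan is to translate the failure of linear minimality into the existence of two explicit line subbundles of~$\cE$ and then to split off an orthogonal summand of rank $1$ or~$2$. By Definition~\ref{def:linearly-minimal}, the two-term complex $q\colon \cE(-m) \to \cE^\vee$ admits direct sum decompositions of source and target in which $q$ has block form $\diag(\id_{\cO(a)},q')$ for some $a \in \ZZ$. Twisting the source splitting by $\cO(m)$ and dualizing the target splitting produces line subbundles $\cF := \cO(a+m) \hookrightarrow \cE$ and $\cG := \cO(-a) \hookrightarrow \cE$, each a direct summand of~$\cE$, such that the symmetric pairing $q\colon \cE \otimes \cE \to \cO(m)$ restricts on $\cF \otimes \cG$ to the canonical nowhere-vanishing isomorphism, i.e.\ $q(\cF,\cG) = 1$.

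The argument then splits according to whether the line bundles $\cF$ and $\cG$ are isomorphic. \emph{Case A} $(2a + m \ne 0)$: swapping $\cF \leftrightarrow \cG$ if necessary we may assume $2a + m > 0$, whence $q|_\cF \in H^0(\cO(-m-2a)) = 0$. The fibrewise argument ``$\cF_x = \cG_x$ would force $q_x(f_x,g_x) = \lambda q_x(f_x,f_x) = 0$, contradicting $q(\cF,\cG) = 1$'' shows that $\cE_1 := \cF \oplus \cG$ is a rank-$2$ subbundle of~$\cE$ whose Gram matrix is $\left(\begin{smallmatrix} 0 & 1 \\ 1 & \beta \end{smallmatrix}\right)$, where $\beta := q|_\cG \in H^0(\cO(m+2a))$; its determinant is the nonzero constant~$-1$. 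Replacing $\cG$ by the line subbundle $\cG' \subset \cE_1$ corresponding to the morphism $\cO(-a) \to \cF \oplus \cG$ with components $(-\beta/2,\,1) \in H^0(\cO(2a+m)) \oplus \kk$ normalizes the Gram matrix to $\left(\begin{smallmatrix} 0 & 1 \\ 1 & 0 \end{smallmatrix}\right)$, so $(\cE_1,q|_{\cE_1})$ is the standard unimodular form~\eqref{eq:trivial-linear} of rank~$2$ with $i = m + 2a$. \emph{Case B} $(2a + m = 0$, so $\cF \cong \cG \cong \cO(m/2))$: the restrictions $q|_\cF$ and $q|_\cG$ are scalars. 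If either is nonzero (in particular whenever $\cF = \cG$, since then $q|_\cF = q(\cF,\cG) = 1$), that line subbundle is non-isotropic everywhere and furnishes a standard rank-$1$ summand. Otherwise both vanish; the same fibrewise contradiction forces $\cF_x \ne \cG_x$ for every~$x$, so $\cE_1 := \cF \oplus \cG \cong \cO(m/2)^{\oplus 2}$ is a rank-$2$ subbundle carrying the hyperbolic form $\left(\begin{smallmatrix} 0 & 1 \\ 1 & 0 \end{smallmatrix}\right)$, standard of rank~$2$ with $W^0 = \kk^2$.

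In every case the Gram determinant of $q|_{\cE_1}$ is a non-vanishing scalar, so the composition $\cE \xrightarrow{q} \cE^\vee(m) \twoheadrightarrow \cE_1^\vee(m)$ is surjective; its kernel $\cE_0 := \cE_1^\perp$ is a complementary subbundle of~$\cE$, and the resulting decomposition $(\cE,q) = (\cE_0,q_0) \oplus (\cE_1,q_1)$ is the desired orthogonal direct sum. The one delicate point is the basis-change step in Case~A: the ``coefficient'' $\beta/2$ is a global section of the positive-degree line bundle $\cO(2a+m)$ rather than a scalar, and must be interpreted as one component of a morphism $\cO(-a) \to \cF \oplus \cG$ (i.e.\ a pair in $H^0(\cO(2a+m))\oplus\kk$) rather than a formal linear combination of sections of non-isomorphic line bundles; once this is understood, verifying that the new Gram matrix is hyperbolic is an immediate computation from $q(f,f)=0$, $q(f,g)=1$, $q(g,g)=\beta$, using $\mathrm{char}\,\kk \ne 2$.
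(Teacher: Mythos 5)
Your proof is correct, and it follows the same basic skeleton as the paper's argument: extract the two direct-summand line subbundles $\cF=\cO(a+m)$ and $\cG=\cO(-a)$ from the failure of linear minimality, observe that $q(\cF,\cG)$ is a nowhere-vanishing constant, split off $\cF$ or $\cF\oplus\cG$ as a unimodular summand, and take the orthogonal complement. The tactics differ in two places. First, where the paper case-splits on the composite $\psi_1\circ\varphi_1$ (using the retraction of the split monomorphism $\varphi=q|_{\cO(t-m)}$ to produce the direct-sum decomposition $\cE=\cE_0\oplus\cO(m-t)\oplus\cO(t)$), you case-split on the sign of $2a+m$, kill one diagonal entry by a degree argument, and establish the rank-$2$ subbundle $\cF\oplus\cG\hookrightarrow\cE$ by the fibrewise transversality argument. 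Both are fine; yours is somewhat more elementary. Second, and more substantively, you explicitly perform the "complete-the-square" change of basis $\cG\mapsto\cG'=\cG-\tfrac{\beta}{2}\cF$ to bring the Gram matrix to the off-diagonal form required by \eqref{eq:trivial-linear}. The paper's Case~2 only asserts parenthetically that the restriction to $\cO(m-t)$ vanishes (which is a degree argument, valid only after normalizing $t\le m-t$) and says nothing about the other diagonal entry $q|_{\cO(t)}\in H^0(\cO(m-2t))$, which can be nonzero; so the paper obtains unimodularity of $q|_{\cE_1}$ directly but leaves the reduction to the standard shape \eqref{eq:trivial-linear} implicit, whereas you spell it out. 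Your handling of the type issue (interpreting $-\beta/2$ as a morphism $\cO(-a)\to\cO(a+m)$ rather than a scalar) is the right way to make this rigorous.
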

\begin{proof}
Since~$q$ is not linearly minimal, it can be written as a direct sum 
of morphisms~\mbox{$f \colon \cE'(-m) \to \cE''$} 
and~\mbox{$\id \colon \cO(t - m) \to \cO(t - m)$} for some~$t \in \ZZ$;
in particular~$\cE \cong \cE' \oplus \cO(t)$, 
and the restriction of~$q$ to the summand~$\cO(t-m)$ of~$\cE(-m)$ is a split monomorphism.
Consider the composition 
\begin{equation*}
\varphi \colon \cO(t-m) \hookrightarrow \cE'(-m) \oplus \cO(t-m) =  
\cE(-m) \xrightarrow{\ q\ } \cE^\vee = {\cE'}^\vee \oplus \cO(-t).
\end{equation*}
Let $\varphi_0 \colon \cO(t-m) \to {\cE'}^\vee$ and $\varphi_1 \colon \cO(t-m) \to \cO(-t)$ be its components.
Since~$\varphi$ is a split monomorphism, 
there is a map $\psi = (\psi_0,\psi_1) \colon {\cE'}^\vee \oplus \cO(-t) \to \cO(t - m)$
such that 
\begin{equation*}
\psi \circ \varphi =
\psi_0 \circ \varphi_0 + \psi_1 \circ \varphi_1 = 1.
\end{equation*}
We consider the summand $\psi_1 \circ \varphi_1 \colon \cO(t - m) \to \cO(t - m)$.

First, assume $\psi_1 \circ \varphi_1 \ne 0$.
Then it is an isomorphism, hence $\varphi_1$ is a split monomorphism, hence an isomorphism, hence $t - m = -t$ and so $m = 2t$.
Furthermore, it follows that the restriction of~$q$ to the subbundle~$\cE_1 = \cO(t)$ of~$\cE$ is unimodular.
Taking~$\cE_0 = \cE_1^\perp$ to be the orthogonal of~$\cE_1$ in~$\cE$, we obtain the required direct sum decomposition.

Next, assume $\psi_1 \circ \varphi_1 = 0$.
Then it follows that $\psi_0 \circ \varphi_0 = 1$, hence $\varphi_0$ is a split monomorphism.
Therefore, we have $\cE' \cong \cE_0 \oplus \cO(m - t)$, so that $\cE = \cE_0 \oplus \cO(m - t) \oplus \cO(t)$.
Furthermore, it follows that the restriction of~$q$ to the subbundle~$\cE_1 = \cO(m - t) \oplus \cO(t)$ of~$\cE$ is unimodular
(the restriction to~$\cO(m-t)$ is zero and the pairing between~$\cO(m-t)$ and~$\cO(t)$ is a non-zero constant).
Taking~$\cE_0 = \cE_1^\perp$ to be the orthogonal of~$\cE_1$ in~$\cE$, we obtain the required direct sum decomposition.
\end{proof}

\begin{corollary}
\label{cor:trivial-hacm}
If $(\cE,q)$ is a unimodular quadratic form and~$\cE$ is VLC then~$(\cE,q)$ is isomorphic 
to a standard unimodular quadratic form~\eqref{eq:trivial-linear};
in particular, $\cE$ is split.
\end{corollary}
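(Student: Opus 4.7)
The plan is first to deduce that $\cE$ is split, then to argue by induction on $\rk(\cE)$, peeling off orthogonal standard summands of type~\eqref{eq:trivial-linear} using Lemma~\ref{lemma:linear-summand}.

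For the splitting, since $q \colon \cE(-m) \to \cE^\vee$ is an isomorphism we have $\cE^\vee \cong \cE(-m)$, and since the VLC property is preserved by twists, $\cE^\vee$ is VLC. By Lemma~\ref{lemma:hacm-duality} this forces $\cE$ to be VUC, and being both VLC and VUC, the same lemma shows $\cE$ is split. I would then write $\cE = \bigoplus_{i \equiv m \bmod 2} W^i \otimes \cO((m+i)/2)$ for finite-dimensional $\kk$-vector spaces $W^i$; the self-duality $\cE^\vee \cong \cE(-m)$ yields $W^{-i} \cong (W^i)^\vee$.

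For the inductive step, assume $\cE \ne 0$. I claim $q$ is not linearly minimal. Because $\Hom(\cO((i-m)/2), \cO(-(m+j)/2)) = H^0(\cO(-(i+j)/2))$ vanishes when $i+j > 0$, the components of $q$ are concentrated in bidegrees with $i+j \le 0$, and the leading components (those with $i+j=0$) are $\kk$-bilinear pairings $q_{i,-i} \colon W^i \otimes W^{-i} \to \kk$. The morphism $q$ is then compatible with the filtration $\bigoplus_{i \ge c} W^i \otimes \cO((i-m)/2)$ of $\cE(-m)$ and its dual filtration on $\cE^\vee$, and the associated graded morphism is the direct sum of the leading pairings; invertibility of $q$ forces each $q_{i,-i}$ to be non-degenerate. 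Picking $i$ with $W^i \ne 0$ and vectors $e \in W^i$, $f \in W^{-i}$ such that $q_{i,-i}(e,f) = 1$, the line subbundle embeddings $\cO((i-m)/2) \xrightarrow{e} \cE(-m)$ and $\cO((i-m)/2) \xrightarrow{f^\vee} \cE^\vee$ are summands of the given splittings, and the composition of the first embedding with $q$ and with the projection $\cE^\vee \twoheadrightarrow \cO((i-m)/2)$ onto the $f^\vee$-summand equals the identity. Lemma~\ref{lemma:splitting-complex}, applied to the two-term complex $\cE(-m) \xrightarrow{q} \cE^\vee$, then yields a direct sum decomposition $q \cong \id_{\cO((i-m)/2)} \oplus q'$, so $q$ is not linearly minimal.

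Now Lemma~\ref{lemma:linear-summand} provides an orthogonal decomposition $(\cE, q) \cong (\cE_0, q_0) \oplus (\cE_1, q_1)$ where $(\cE_1, q_1)$ is a standard unimodular form of type~\eqref{eq:trivial-linear} of rank~$1$ or~$2$. The form $q_0$ is again unimodular, and $\cE_0$ is a direct summand of the split bundle $\cE$, hence split (and therefore VLC), so the inductive hypothesis places $(\cE_0, q_0)$ in standard form; combining with $(\cE_1, q_1)$ completes the induction, since a direct sum of standard forms of type~\eqref{eq:trivial-linear} (obtained by merging the vector spaces $W^i$) is again standard. The main point to verify carefully is the non-degeneracy of the leading pairings~$q_{i,-i}$ and the check that the line subbundle embeddings given by $e$ and $f^\vee$ satisfy the hypothesis of Lemma~\ref{lemma:splitting-complex}; modulo these verifications, which are routine given the grading structure coming from the splitting of~$\cE$, the proof is immediate.
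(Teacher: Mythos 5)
Your proof is correct and follows essentially the same route as the paper's: deduce from Lemma~\ref{lemma:hacm-duality} that $\cE$ is split, observe that $q$ is not linearly minimal, apply Lemma~\ref{lemma:linear-summand}, and iterate. The only difference is cosmetic: where the paper simply asserts that an isomorphism of nonzero split bundles cannot be linearly minimal, you unpack this by showing the leading-degree pairings~$q_{i,-i}$ are non-degenerate and extracting an explicit trivial summand, which amounts to the same observation.
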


\begin{proof}
Since $q$ is unimodular, we have $\cE(-m) \cong \cE^\vee$, so if~$\cE$ is VLC, and hence~$\cE^\vee$ is VUC,
then~$\cE$ is both VLC and VUC, hence it is split by Lemma~\ref{lemma:hacm-duality}.
Furthermore, $q \colon \cE(-m) \to \cE^\vee$ is an isomorphism of split bundles, hence it is not linearly minimal.
Applying Lemma~\ref{lemma:linear-summand} we obtain a direct sum decomposition~\mbox{$\cE = \cE_0 \oplus \cE_1$},
where~$\cE_1$ is standard unimodular of type~\eqref{eq:trivial-linear}
and~$\cE_0$, being a direct summand of a unimodular VLC quadratic form, is itself unimodular and VLC.
Iterating the argument, we conclude that~$\cE_0$ is standard unimodular of type~\eqref{eq:trivial-linear}, hence so is~$\cE$. 
\end{proof}

\begin{lemma}
\label{lemma:standard-anisotropic}
If $(\cE,q)$ is a standard unimodular quadratic form of type~\eqref{eq:trivial-linear} or~\eqref{eq:trivial-omega},
it is hyperbolic equivalent to one of the following
\begin{itemize}
\item 
$(W^0,q_{W^0}) \otimes \cO(m/2)$, if $m$ is even, or
\item 
$(W^0,q_{W^0}) \otimes \Omega^{n/2}((m + n + 1)/2)$, if~$m$ is odd and~$n$ is divisible by~$4$
\end{itemize}
\textup(where in each case~$(W^0,q_{W^0})$ is an anisotropic quadratic space\textup), or to zero, otherwise.
\end{lemma}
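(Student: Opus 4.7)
The strategy is to exhibit, in each case, an explicit chain of hyperbolic reductions collapsing $(\cE,q)$ to the asserted normal form.

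First I would show that, for each index $i > 0$ appearing in the decomposition \eqref{eq:trivial-linear} (respectively \eqref{eq:trivial-omega}), the summand $\cF_i := W^i \otimes \cO((m+i)/2)$ (respectively $\cF_i := W^i \otimes \Omega^{n/2}((m+n+1+i)/2)$) is a regular isotropic subbundle of $\cE$. Indeed, under the self-duality $q \colon \cE \xrightarrow{\sim} \cE^\vee \otimes \cL^\vee$, the summand $W^{-i} \otimes \cO((m-i)/2)$ (respectively its $\Omega$-version) is identified with $\cF_i^\vee \otimes \cL^\vee$ via the non-degenerate pairing $q_{W^i} \colon W^{-i} \otimes W^i \to \kk$; the composition $\cE \xrightarrow{q} \cE^\vee \otimes \cL^\vee \twoheadrightarrow \cF_i^\vee \otimes \cL^\vee$ is then the projection onto that summand, which is surjective and vanishes on $\cF_i$ because $\cF_i$ pairs trivially with every $W^j$-summand for $j \ne -i$. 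Lemma~\ref{lemma:h-reduction} then shows that the hyperbolic reduction with respect to $\cF_i$ removes both $W^i$ and $W^{-i}$ from the decomposition. Iterating over $i = 1, 2, \dots$ reduces $(\cE,q)$ up to hyperbolic equivalence to either $(W^0, q_{W^0}) \otimes \cB$ --- with $\cB = \cO(m/2)$ or $\cB = \Omega^{n/2}((m+n+1)/2)$ according to the type --- or to zero if no $i = 0$ summand exists.

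Next I would observe that an $i = 0$ summand occurs in \eqref{eq:trivial-linear} iff $m$ is even, and in \eqref{eq:trivial-omega} (where $n$ is even by hypothesis) iff $m + n + 1$ is even, that is iff $m$ is odd. Hence in case~\eqref{eq:trivial-linear} with $m$ odd, and in case~\eqref{eq:trivial-omega} with $m$ even, the previous step lands directly at zero, which is the ``otherwise'' clause.

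Finally, I would reduce the surviving summand $(W^0, q_{W^0}) \otimes \cB$ further. By the convention fixed before the lemma, $q_{W^0}$ is symmetric except in case~\eqref{eq:trivial-omega} with $n/2$ odd, where it is skew-symmetric. In the skew case --- which is case~\eqref{eq:trivial-omega} with $n \equiv 2 \pmod{4}$ --- the space $W^0$ is symplectic, so it admits a Lagrangian $L \subset W^0$; the subbundle $L \otimes \cB$ is regular isotropic by exactly the same calculation as for $\cF_i$ (using non-degeneracy of $q_{W^0}$ and the isomorphism $\cB \otimes \cB \cong \cL^\vee$ coming from the wedge pairing), and its hyperbolic reduction equals $(L^\perp/L) \otimes \cB = 0$, again giving the ``otherwise'' clause. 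In the symmetric case I would orthogonally decompose $(W^0, q_{W^0}) = (W^0_a, q_a) \perp H$ with $(W^0_a, q_a)$ anisotropic and $H$ a sum of hyperbolic planes; picking a maximal isotropic $L \subset H$ and reducing with respect to $L \otimes \cB$ yields $(W^0_a, q_a) \otimes \cB$, which is one of the two asserted normal forms with anisotropic $q_{W^0_a}$.

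The only genuine verification is that each proposed subbundle is regular isotropic, which is straightforward direct-sum bookkeeping using the orthogonality of distinct $W^j$-summands and the non-degeneracy of $q_{W^i}$; everything else is case analysis on the parities of $m$ and $n$ and the type of the underlying bundle.
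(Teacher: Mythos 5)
Your proposal is correct and follows essentially the same route as the paper's proof: split off the pairs $W^i \oplus W^{-i}$ ($i \ne 0$) as hyperbolic summands, then apply Witt theory to $(W^0, q_{W^0})$, handling the skew case ($n \equiv 2 \bmod 4$) separately. Your write-up is more explicit in verifying that each $\cF_i = W^i \otimes \cB$ is regular isotropic (the paper simply asserts the pairs are hyperbolic equivalent to zero), and in the skew case you reduce by a Lagrangian directly rather than observing that an anisotropic symplectic space is zero, but these are the same idea.
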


\begin{proof}
By definition of standard unimodular quadratic forms for each~$i \ne 0$ the summands
\begin{equation*}
W^i \otimes \cO((m+i)/2) \oplus W^{-i} \otimes \cO((m-i)/2)
\quad\text{or}\quad 
W^i \otimes \Omega^{n/2}((m+m+1+i)/2) \oplus W^{-i} \otimes \Omega^{n/2}((m+n+1-i)/2)
\end{equation*}
are hyperbolic equivalent to zero, hence any standard unimodular quadratic form 
is hyperbolic equivalent to the one with~\mbox{$W = W^0$}.
It remains to note that by the standard Witt theory the bilinear form~$(W^0,q_{W^0})$ is hyperbolic equivalent to an anisotropic form.
Finally, in the case where~$m$ is odd and~$n \equiv 2 \bmod 4$ the form~$q_{W^0}$ is skew-symmetric, 
so if it is anisotropic, it is just zero.
\end{proof}

Now we are ready to prove the main result of this section.
Recall Definition~\ref{def:shadow}.

\begin{theorem}
\label{thm:he-hacm}
Any generically non-degenerate quadratic form~$q \colon \cE(-m) \to \cE^\vee$
over~$\P^n$ is hyperbolic equivalent to an orthogonal direct sum 
\begin{equation}
\label{eq:min-uni}
(\cE_\mn,q_\mn) \oplus (\cE_\un,q_\un),
\end{equation}
where~$\cE_\mn$ is a VLC bundle, $(\cE_\mn,q_\mn)$ has no unimodular direct summands, 
and~$(\cE_\un,q_\un)$ is an anisotropic standard unimodular quadratic form
which has type~\eqref{eq:trivial-linear} if~$m$ is even,
type~\eqref{eq:trivial-omega} if~$m$ is odd and~$n \equiv 0 \bmod 4$,
and is zero otherwise.

Moreover, if~$n = 2k + 1$, $\cC = \cC(q)$ is the cokernel sheaf of~$(\cE,q)$, and~$\rA^k \subset \HH^k(\cC)$ 
is any shadowless subspace which is Lagrangian with respect to the bilinear form~\eqref{eq:hh-k-cf-pairing}  
then the quadratic form~$(\cE_\mn,q_\mn)$ in~\eqref{eq:min-uni} can be chosen in such a way 
that there is an equality~$\HH^k(\cE_\mn^\vee) = \rA^k$ of $\SS$-submodules in~$\HH^k(\cC)$. 
\end{theorem}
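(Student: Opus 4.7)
The plan is to prove both parts of the theorem by induction, iteratively applying the elementary modifications of Proposition~\ref{prop:elementary-modification} (and the refined version in Proposition~\ref{prop:em-lagrangian} in the $n = 2k+1$ case) to strip cohomology classes off the underlying bundle $\cE$. Once $\cE$ is made VLC, I will peel off any remaining unimodular summands via Lemma~\ref{lemma:linear-summand}, put them in standard form via Corollary~\ref{cor:trivial-hacm}, and reduce them to the claimed anisotropic shape via Lemma~\ref{lemma:standard-anisotropic}.

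The first phase inducts on $\dim \moplus_{p=1}^{\lfloor n/2 \rfloor} \HH^p(\cE)$ to make $\cE$ VLC. At each step I would pick the largest integer $t_0$ such that $H^{p'}(\P^n, \cE(t_0)) \ne 0$ for some $1 \le p' \le \lfloor n/2 \rfloor$, then pick any such $p_0$ together with a nonzero class $\eps_{p_0} \in H^{p_0}(\P^n, \cE(t_0))$. Maximality of $t_0$ makes the shadow hypothesis of Proposition~\ref{prop:elementary-modification} automatic. Unwinding the definition shows that the obstruction $q(\eps_{p_0}, \eps_{p_0})$ lies in $H^{2p_0}(\P^n, \cO(2t_0 + m))$, which by Bott's formula vanishes whenever $2p_0 < n$; then Proposition~\ref{prop:elementary-modification} applies, and the formula for $\moplus \HH^i(\cE_+)$ shows that the lower intermediate cohomology strictly decreases (any newly introduced class $\eps_+$ lands in position $\ge \lceil n/2 \rceil + 1$, outside the lower half), so the induction terminates.

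The main obstacle is the case $n = 2k$ even with $p_0 = k$: here $H^n(\P^n, \cO(2t_0 + m))$ can be nonzero (precisely when $2t_0 + m + n + 1 \le 0$), so $q(\eps, \eps')$ becomes a genuine bilinear form on $H^k(\P^n, \cE(t_0))$ that need not admit isotropic vectors. My plan for this case is to use the self-duality of $q$ together with Serre duality to interpret this obstruction as detecting the presence of an $\Omega^{n/2}$-summand in $\cE$; once identified, such a summand (possibly after a hyperbolic extension absorbing the nontrivial form) may be split off orthogonally, supplying the type~\eqref{eq:trivial-omega} contribution to $(\cE_\un, q_\un)$ when $m$ is odd and $n \equiv 0 \bmod 4$, and being hyperbolic equivalent to zero by Lemma~\ref{lemma:standard-anisotropic} in the remaining cases.

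Finally, for the refinement in the $n = 2k+1$ case, after the first phase yields a VLC $\cE_\mn$ I will compare the current Lagrangian $\HH^k(\cE_\mn^\vee) \subset \HH^k(\cC)$ with the prescribed $\rA^k$ and move between them by successive applications of Proposition~\ref{prop:em-lagrangian}. The technical heart here is to verify that the graph whose vertices are the shadowless Lagrangian $\SS$-submodules of $\HH^k(\cC)$ and whose edges are the refined modifications along some class $\eps_{k+1} \in H^{k+1}(\P^n, \cE_\mn(t))$ is connected; I expect this to follow by induction on the discrepancy between the two Lagrangians together with a dimension count on the relevant $H^{k+1}$ spaces.
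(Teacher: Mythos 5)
Your proposal follows essentially the same route as the paper's proof: use elementary modifications (Proposition~\ref{prop:elementary-modification}) to kill lower intermediate cohomology, treat the middle group $\HH^{n/2}(\cE)$ for $n$ even as a special case that eventually produces a split-off $\Omega^{n/2}$-type summand, use the refined modification (Proposition~\ref{prop:em-lagrangian}) in the odd case to steer the Lagrangian $\HH^k(\cE^\vee)$ toward the prescribed $\rA^k$, and finally strip off unimodular summands with Lemma~\ref{lemma:linear-summand}, Corollary~\ref{cor:trivial-hacm}, and Lemma~\ref{lemma:standard-anisotropic}. Two places warrant care in filling in the details. First, inducting on $\dim\moplus_{p=1}^{\lfloor n/2\rfloor}\HH^p(\cE)$ with the ``largest-$t_0$-then-any-$p_0$'' selection can leave you stranded when $n=2k$: the maximal $t_0$ may occur only at $p_0=k$ while lower cohomology persists in lower degrees, so the paper instead inducts only on $\sum_{p\le\lfloor(n-1)/2\rfloor}\dim\HH^p$ (picking \emph{minimal} $p_0$, then maximal $t_0$), postpones all of $\HH^k$ to a separate step, and only there deals with the $H^n(\cO(2t_0+m))$ obstruction. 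Second, ``detecting an $\Omega^{n/2}$-summand'' misrepresents what happens: the $\Omega^{n/2}$-summand is not there to be detected at the outset but is created by a further chain of modifications that first kill $\Ker(\HH^k(q))$ and then the isotropic classes in the top degree, forcing $\HH^k(\cE)$ to concentrate in the single degree $t=-(m+n+1)/2$ (which already forces $m$ odd) with a nondegenerate anisotropic form; only then does the evaluation map $\beps_0\colon H^k(\cE(t))\otimes\Omega^k(-t)\to\cE$ split orthogonally, using $\beps_0^\vee\circ q\circ\beps_0$ being an isomorphism. Your ``graph connectivity'' framing for the odd case likewise hides the directed induction on $\hl_\rA(\cE)=\codim_{\rA^k}(\HH^k(\cE^\vee)\cap\rA^k)$ and the specific target Lagrangian $\rA_+^k=\eps_{k+1}^\perp\oplus\kk\teps_{k+1}$ at each step, but is consistent in spirit with the paper's argument.
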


\begin{proof}
We split the proof into a number of steps.

{\bf Step 1.}
First we show that $q$ is hyperbolic equivalent to a quadratic form~$(\cE_1,q_1)$ 
such that~\mbox{$\HH^i(\cE_1) = 0$} for each $1 \le i \le \lfloor (n - 1)/2 \rfloor$
(if~$n$ is odd this is equivalent to the VLC property, and if~$n$ is even this is a bit weaker).
For this we use induction on the parameter 
\begin{equation*}
\label{def:ell-ce}
\hl_1(\cE) := \sum_{i = 1}^{\lfloor (n - 1)/2 \rfloor} \dim \HH^i(\cE).
\end{equation*}
Note that~$\hl_1(\cE) < \infty$ for any vector bundle~$\cE$.

Assume $\hl_1(\cE) > 0$.
Let $1 \le p_0 \le \lfloor (n - 1)/2 \rfloor$ be the minimal integer such that $\HH^{p_0}(\cE) \ne 0$ and
let~$t_0$ be the maximal integer such that $H^{p_0}(\P^n,\cE(t_0)) \ne 0$.
Choose a non-zero element $\eps_{p_0} \in H^{p_0}(\P^n,\cE(t_0))$.
Note that the class $q(\eps_{p_0}, \eps_{p_0}) \in H^{2p_0}(\P^n, \cO(2t_0 + m))$ vanishes because $2 \le 2p_0 \le n - 1$.
Note also that the conditions~\eqref{eq:cohomology-assumption} and~\eqref{item:eps-2n} are satisfied for~$\eps_{p_0}$.
Let~$(\cE_+,q_+)$ be the elementary modification of~$(\cE,q)$ with respect to~$\eps_{p_0}$
constructed in Proposition~\ref{prop:elementary-modification}. 
Then~$(\cE_+,q_+)$ is hyperbolic equivalent to~$(\cE,q)$ and the formula~\eqref{eq:hh-ce+} 
implies that~\mbox{$\hl_1(\cE_+) = \hl_1(\cE) - 1$}.
Indeed, $n - p_0 + 1 > \lfloor (n - 1)/2 \rfloor$, 
so even if the extra cohomology class~$\eps_+$ appears in~$\HH(\cE_+)$
it does not contribute to~$\hl_1(\cE_+)$.
By induction hypothesis the quadratic form~$(\cE_+,q_+)$ is hyperbolic equivalent to a quadratic form~$(\cE_1,q_1)$ 
such that~\mbox{$\HH^i(\cE_1) = 0$} for each~\mbox{$1 \le i \le \lfloor (n - 1)/2 \rfloor$}, hence so is~$(\cE,q)$.

From now on we assume that~$\hl_1(\cE) = 0$ and discuss separately the case of even and odd~$n$.

{\bf Step 2.}
Assume that $n = 2k$.
In this case $\lfloor (n-1)/2 \rfloor = k - 1 < k = \lfloor n/2 \rfloor$, 
hence by Step~1 the only non-trivial intermediate cohomology of~$\cE$ preventing it from being VLC is~$\HH^k(\cE)$
and it fits into the exact sequence
\begin{equation*}
0 \to \HH^{k-1}(\cE^\vee) \to \HH^{k-1}(\cC) \to \HH^k(\cE) \xrightarrow{\ \HH^k(q)\ } \HH^k(\cE^\vee) \to \HH^k(\cC) \to \HH^{k+1}(\cE) \to 0,
\end{equation*}
where~$\HH^k(q)$ is the map induced by~$q$. 
Note also that the combination of the morphism~$\HH^k(q)$ with the Serre duality pairing 
is a graded $\SS$-bilinear form
\begin{equation*}
\HH^k(q) \colon \HH^k(\cE) \otimes \HH^k(\cE) \to \kk(m + n + 1),
\end{equation*}
which is symmetric if~$k$ is even and skew-symmetric if~$k$ is odd.

First, we show that $(\cE,q)$ is hyperbolic equivalent to a quadratic form~$(\cE',q')$ such that~$\hl_1(\cE') = 0$,
the form~$\HH^k(q')$ is non-degenerate, and~$H^k(\P^n, \cE(t)) = 0$ unless~$t = -(m + n + 1)/2$.

If~$\Ker(\HH^k(q)) \ne 0$ let~$t$ be the maximal integer such that~$\Ker(\HH^k(q)) \cap H^k(\P^n,\cE(t)) \ne 0$
and let~$\eps_k$ be any non-zero class in this space (note that~$q(\eps_k,\eps_k) = 0$);
otherwise let~$t$ be the maximal integer such that~$H^k(\P^n,\cE(t)) \ne 0$ 
and let~$\eps_k$ be any non-zero class in this space such that~$q(\eps_k,\eps_k) = 0$ (if it exists). 
As before conditions~\eqref{eq:cohomology-assumption} and~\eqref{item:eps-2n} are satisfied for~$\eps_k$.
Applying the elementary modification of Proposition~\ref{prop:elementary-modification} 
we obtain a quadratic form~$(\cE_+,q_+)$ hyperbolic equivalent to~$(\cE,q)$ 
and such that 
\begin{equation*}
\HH^k(\cE_+) = \HH^k(\cE)/\kk\eps_k,
\qquad\text{and}\qquad 
\HH^p(\cE_+) = \HH^p(\cE) = 0
\quad\text{for}\quad 
1 \le p < k,
\end{equation*}
in particular~$\hl_1(\cE_+) = 0$ and~$\dim(\HH^k(\cE_+)) < \dim(\HH^k(\cE))$.
Iterating this argument we eventually obtain a quadratic form~$(\cE',q')$ 
such that~$\hl_1(\cE') = 0$, the form~$\HH^k(q')$ is non-degenerate, 
and if~$t$ is the maximal integer such that~$H^k(\P^n, \cE'(t)) \ne 0$
then~$q'(\eps_k,\eps_k) \ne 0$ for any ~$0 \ne \eps_k \in H^k(\P^n, \cE'(t))$.

If~$\HH^k(\cE') = 0$ there is nothing to prove anymore.
Otherwise, the condition~$q'(\eps_k,\eps_k) \ne 0$ implies that
\begin{equation*}
2t + m + n + 1 = 0.
\end{equation*}
It remains to note that~$H^k(\P^n, \cE'(s)) = 0$ for~$s \ne t$.
Indeed, for~$s > t$ the vanishing holds by definition of~$t$.
On the other hand, we have 
\begin{equation*}
H^k(\P^n, \cE'(s)) = H^k(\P^n, {\cE'}^\vee(m + s)) = H^k(\P^n, \cE'(-s - m - n - 1))^\vee
\end{equation*}
(the first equality follows from non-degeneracy of~$\HH^k(q')$ and the second from Serre duality),
and as the right-hand side vanishes for~$-s - m - n - 1 > t$, 
the left-hand side vanishes for~$s < - t - m - n - 1 = t$.

Now, replacing~$(\cE,q)$ by~$(\cE',q')$, we may assume that~$\HH^k(q)$ is non-degenerate 
and~$H^k(\P^n, \cE(t)) = 0$ unless~$t = -(m + n + 1)/2$.
So, we set~$t :=  -(m + n + 1)/2$ and let
\begin{equation*}
\beps_k \colon H^k(\P^n,\cE(t)) \otimes \cO(-t)[-k] 
= \Ext^k(\cO(-t), \cE) \otimes \cO(-t)[-k] 
\to \cE
\end{equation*}
be the evaluation morphism in the derived category.
Let 
\begin{equation*}
\beps_0 \colon H^k(\P^n,\cE(t)) \otimes \Omega^k(-t) 
= \Hom(\Omega^k(-t), \cE) \otimes \Omega^k(-t) 
\to \cE
\end{equation*}
be the morphism constructed from~$\beps_k$ in Proposition~\ref{prop:eps-p}.
Consider the composition 
\begin{equation}
\label{eq:omega-ce-cevee-ct}
H^k(\P^n,\cE(t)) \otimes \Omega^k(-t) \xrightarrow{\ \beps_0\ }
\cE \xrightarrow{\ q\ } \cE^\vee(m) \xrightarrow{\ \beps_0^\vee\ }
H^k(\P^n,\cE(t))^\vee \otimes \bw{k}\cT(t+m).
\end{equation}
By Proposition~\ref{prop:eps-p} and Serre duality the first and last arrows in the composition
\begin{equation*}
\HH^k\left( H^k(\P^n,\cE(t)) \otimes \Omega^k(-t) \right) \xrightarrow{ \beps_0 } 
\HH^k(\cE) \xrightarrow{ \HH^k(q) }
\HH^k(\cE^\vee) \xrightarrow{ \beps_0^\vee }
\HH^k\left( H^k(\P^n,\cE(t))^\vee \otimes \bw{k}\cT(t+m) \right)
\end{equation*}
are isomorphisms, while the middle arrow is an isomorphism by non-degeneracy of~$\HH^k(q)$.
It follows that the composition~\eqref{eq:omega-ce-cevee-ct} is an isomorphism 
(note that~$\bw{k}\cT(t+m) \cong  \Omega^k(t+m+n+1) \cong \Omega^k(-t)$ since~$n = 2k$ and~$2t + m + n + 1 = 0$).
This means that~$\beps_0$ is a split monomorphism, i.e., 
\begin{equation*}
\cE \cong \cE_0 \oplus \cE_1,
\qquad 
\cE_1 = H^k(\P^n,\cE(t)) \otimes \Omega^{k}(-t) ,
\end{equation*}
so that~$\cE_1$ is a standard unimodular bundle of type~\eqref{eq:trivial-omega}
and~$\cE_0$ is the orthogonal of~$\cE_1$ with respect to the quadratic form~$q$.
From the direct sum decomposition it easily follows that~$\cE_0$ is VLC.

{\bf Step 3.}
Assume $n = 2k + 1$.
In this case $\lfloor (n-1)/2 \rfloor = k = \lfloor n/2 \rfloor$, 
hence by Step~1 the bundle~$\cE$ is already~VLC.
It remains to find a VLC quadratic form~$(\cE_0,q_0)$ 
hyperbolic equivalent to~$(\cE,q)$ with~\mbox{$\HH^k(\cE_0^\vee) = \rA^k$},
where recall that~$\rA^k \subset \HH^k(\cC)$ is a given shadowless Lagrangian subspace.  
To construct~$\cE_0$ we induct on the parameter
\begin{equation}
\label{def:ell-ra-ce}
\hl_\rA(\cE) 
:= \dim (\Ima(\rA^k \to \HH^{k+1}(\cE))) 
 = \codim_{\rA^k}(\HH^k(\cE^\vee) \cap \rA^k),
\end{equation} 
where the equality follows from the exact sequence~\eqref{eq:lagrangian-cohomology}.

Assume $\hl_\rA(\cE) > 0$,
choose a non-zero homogeneous element~$\eps_{k+1} \in \Ima(\rA^k \to \HH^{k+1}(\cE))$ of maximal degree 
and let~$\teps_{k+1}$ be its arbitrary homogeneous lift to $\rA^k \subset \HH^k(\cC)$.
Recall the definition of the hyperplane~$\eps_{k+1}^\perp \subset \HH^k(\cE^\vee)$ 
that was given before Proposition~\ref{prop:em-lagrangian}.
Note that
\begin{equation*}
\eps_{k+1}^\perp = \HH^k(\cE^\vee) \cap \teps_{k+1}^\perp,
\end{equation*}
where~$\teps_{k+1}^\perp \subset \HH^k(\cC)$ is the hyperplane orthogonal of~$\teps_{k+1}$ 
with respect to the perfect pairing~\eqref{eq:hh-k-cf-pairing};
in particular~$\eps_{k+1}^\perp$ is isotropic and orthogonal to~$\teps_{k+1}$,
hence the subspace
\begin{equation*}
\rA_+^k := \eps_{k+1}^\perp \oplus \kk\teps_{k+1} \subset \HH^k(\cC)
\end{equation*}
is Lagrangian. 
Applying Proposition~\ref{prop:em-lagrangian} we conclude that there exists 
a refined elementary modification~$(\cE_+,q_+)$ of~$(\cE,q)$ 
which is VLC and has the property~$\HH^k(\cE_+^\vee) = \rA_+^k$.
Now it is easy to see that
\begin{equation*}
\HH^k(\cE^\vee) \cap \rA^k \subset \HH^k(\cE^\vee) \cap \teps_{k+1}^\perp = \eps_{k+1}^\perp \subset \rA_+^k
\end{equation*}
(the first inclusion follows from~$\teps_{k+1} \in \rA^k$ since~$\rA^k$ is Lagrangian) and 
\begin{equation*}
\teps_{k+1} \in (\rA_+^k \cap \rA^k) \setminus (\HH^k(\cE^\vee) \cap \rA^k),
\end{equation*}
hence~$\dim(\rA_+^k \cap \rA^k) > \dim(\HH^k(\cE^\vee) \cap \rA^k)$ and so~$\hl_\rA(\cE_+) < \hl_\rA(\cE)$.
By the induction hypothesis the quadratic form~$(\cE_+,q_+)$ is hyperbolic equivalent to~$(\cE_0,q_0)$ 
such that~$\HH^k(\cE_0^\vee) = \rA^k$, hence so is~$(\cE,q)$.

{\bf Step 4.} 
We already have proved that the quadratic form $(\cE,q)$ is hyperbolic equivalent 
to an orthogonal direct sum $(\cE_0,q_0) \oplus (\cE_1,q_1)$, 
where~$\cE_0$ is VLC (with prescribed Lagrangian subspace~$\HH^k(\cE_0^\vee) \subset\HH^k(\cC)$ if $n = 2k + 1$) 
and~$(\cE_1,q_1)$ is standard unimodular of type~\eqref{eq:trivial-omega} (if $n = 2k$).

Obviously we can write $(\cE_0,q_0) \cong (\cE_\mn,q_\mn) \oplus (\cE_2,q_2)$, 
where $(\cE_\mn,q_\mn)$ has no unimodular direct summands and~$(\cE_2,q_2)$ is unimodular.
Then, defining
\begin{equation*}
(\cE_\un,q_\un) := (\cE_1,q_1) \oplus (\cE_2,q_2)
\end{equation*}
we obtain a decomposition of type~\eqref{eq:min-uni} and it remains to modify it slightly.

First, note that since~$\cE_2$ is a direct summand of~$\cE_0$, it is VLC, 
hence~$(\cE_2,q_2)$ is standard of type~\eqref{eq:trivial-linear} by Corollary~\ref{cor:trivial-hacm}.
Second, by Lemma~\ref{lemma:standard-anisotropic} we can replace the summands~$(\cE_1,q_1)$ and~$(\cE_2,q_2)$ above 
by summands of the same type with~$W^i = 0$ for~$i \ne 0$ and~$q_{W^0}$ anisotropic.
It remains to note that we have~\mbox{$0 = i \equiv m \bmod 2$} for the summand of type~\eqref{eq:trivial-linear} 
and~\mbox{$0 = i \equiv m + n + 1 \equiv m + 1 \bmod 2$} and~$n$ is even for the summand of type~\eqref{eq:trivial-omega}.
Moreover, in the latter case if~$n$ is not divisible by~4, the form~$q_{W^0}$ is skew-symmetric, 
and since it is also anisotropic, $W^0 = 0$.
Thus, we obtain the required description of~$(\cE_\un,q_\un)$.
\end{proof}

In the following corollary we deduce from Theorem~\ref{thm:he-hacm} a generalization of the result of Arason~\cite{Ara} 
about the untwisted unimodular Witt group~$\rW(\P^n,\cO)$ of a projective space
to the case of the twisted unimodular group~$\rW(\P^n,\cO(m))$;
thus reproving a result of Walter~\cite{Wa} (see also~\cite[Theorem~1.5.28]{Balmer}) 
in the special case of trivial base.

\begin{corollary}
If~$m$ is even, or if~$m$ is odd and~$n$ is divisible by~$4$, one has~$\rW(\P^n,\cO(m)) \cong \rW(\kk)$.
Otherwise~$\rW(\P^n,\cO(m)) = 0$.
\end{corollary}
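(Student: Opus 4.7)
The plan is to deduce the corollary from the Modification Theorem \textup{(}Theorem~\ref{thm:he-hacm}\textup{)}, Lemma~\ref{lemma:standard-anisotropic}, and the invariants $\rw_x$ and $\hw$ from the Introduction.

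First I would classify unimodular forms up to hyperbolic equivalence. Let $(\cE,q)$ be unimodular, i.e., $\cC(q)=0$. Applying Theorem~\ref{thm:he-hacm} yields a hyperbolic equivalence $(\cE,q)\sim (\cE_\mn,q_\mn)\oplus(\cE_\un,q_\un)$. Since the cokernel is a hyperbolic invariant by Proposition~\ref{prop:he-invariants}\eqref{item:he-cokernel} and additive in direct sums, and since standard forms have vanishing cokernel, one has $\cC(q_\mn)=0$; hence $(\cE_\mn,q_\mn)$ is itself unimodular, and the no-unimodular-summands clause of Theorem~\ref{thm:he-hacm} forces $(\cE_\mn,q_\mn)=0$. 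Lemma~\ref{lemma:standard-anisotropic} then replaces $(\cE_\un,q_\un)$ by $(W,q_W)\otimes\cO(m/2)$ when $m$ is even, by $(W,q_W)\otimes\Omega^{n/2}((m+n+1)/2)$ when $m$ is odd and $4\mid n$, and by $0$ otherwise, with $(W,q_W)$ an anisotropic symmetric form over $\kk$.

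Next I would promote hyperbolic equivalence of unimodular forms to Witt equivalence. Any isotropic subbundle of a unimodular form is automatically regular isotropic, and the resulting hyperbolic reduction is the classical sublagrangian reduction, which preserves the class in the unimodular Witt group $\rW(\P^n,\cO(m))$; its inverse does too, and cokernel invariance ensures that every intermediate form along a chain of hyperbolic operations starting from a unimodular form remains unimodular. Combined with the previous paragraph, this yields in the two good cases surjective homomorphisms $\rW(\kk)\to\rW(\P^n,\cO(m))$ given by $(W,q_W)\mapsto W\otimes\cO(m/2)$ respectively $(W,q_W)\mapsto W\otimes\Omega^{n/2}((m+n+1)/2)$ \textup(well-definedness on source Witt classes is automatic: a hyperbolic plane on $\kk$ pulls back to a split form on $\P^n$ admitting an obvious Lagrangian subbundle\textup), and in the remaining cases shows directly that $\rW(\P^n,\cO(m))=0$.

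For injectivity in the good cases, I would construct explicit left inverses using the invariants from the Introduction. When $m$ is even, pick a $\kk$-point $x\in\P^n$ and a trivialization of $\cO(-m)_x$; by Lemma~\ref{lemma:he-rank} the fiber class $\rw_x$ of~\eqref{eq:wx} is a hyperbolic invariant with values in $\rWnu(\kk)$, takes values in $\rW(\kk)\subset\rWnu(\kk)$ on unimodular forms, and descends to $\rW(\P^n,\cO(m))$ since metabolic forms restrict fiberwise to metabolic spaces; a direct computation gives $\rw_x(W\otimes\cO(m/2))=[(W,q_W)]$. When $m$ is odd and $n=4\ell$, take $\cM=\cO(-(m+n+1)/2)$ so that $\cM^{\otimes 2}\cong\cO(-m)\otimes\omega_{\P^n}$ and use the invariant $\hw$ from~\eqref{eq:hw}, which is a hyperbolic invariant by Lemma~\ref{lemma:he-rank-cl-prime}; the identity $H^{n/2}(\P^n,\Omega^{n/2})=\kk$ together with the wedge pairing shows $\hw(W\otimes\Omega^{n/2}((m+n+1)/2))=[(W,q_W)]$. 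The one conceptual point needing care is the interaction between the non-unimodular Modification Theorem and the unimodular Witt group, which is controlled throughout by Proposition~\ref{prop:he-invariants}\eqref{item:he-cokernel}; everything else is a direct check.
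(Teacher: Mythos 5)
Your proposal is correct and follows essentially the same route as the paper: apply Theorem~\ref{thm:he-hacm} to classify unimodular forms up to hyperbolic equivalence as anisotropic standard unimodular forms, then use the invariants $\rw_x$ (for $m$ even) and $\hw$ (for $m$ odd, $4\mid n$) to get injectivity of a homomorphism to $\rW(\kk)$. The one place where you add something is the explicit paragraph promoting hyperbolic equivalence of unimodular forms to Witt equivalence (via the observation that isotropic subbundles are automatically regular isotropic when $q$ is unimodular, that hyperbolic reduction is then sublagrangian reduction, and that cokernel invariance keeps all intermediate forms unimodular along the chain); the paper leaves this step implicit when it concludes injectivity of $\rw_x$ from the anisotropy of $(W^0,q_{W^0})$, so your treatment is a useful clarification rather than a different argument.
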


\begin{proof}
By Theorem~\ref{thm:he-hacm} a unimodular quadratic form~$(\cE,q)$ on~$\P^n$ 
is hyperbolic equivalent to a sum~\eqref{eq:min-uni}.
The summand~$(\cE_\mn,q_\mn)$ is unimodular (as a direct summand of a unimodular quadratic form) 
and has no unimodular summands by assumption, 
hence~$(\cE_\mn,q_\mn) = 0$ and~$(\cE,q) = (\cE_\un,q_\un)$ 
is an anisotropic standard unimodular quadratic form of type~\eqref{eq:trivial-linear} or~\eqref{eq:trivial-omega}.

If $m$ is even, $(\cE_\un,q_\un) \cong (W^0,q_{W^0}) \otimes \cO(m/2)$ and~$\rw_x(\cE,q) = \rw_x(\cE_\un,q_\un) = [(W^0,q_{W^0})]$
(the first equality follows from Lemma~\ref{lemma:he-rank},
and for the second to be true one has to choose the trivialization of~$\cO(m)_x$ 
to be induced by a trivialization of~$\cO(m/2)_x$)
for any $\kk$-point~$x \in \P^n$, hence the group homomorphism
\begin{equation*}
\rw_x \colon \rW(\P^n,\cO(m)) \to \rW(\kk)
\end{equation*}
is injective. 
On the other hand, it is obviously surjective, hence it is an isomorphism.

Next, assume~$m$ is odd and~$n$ is divisible by~$4$.
Then~$(\cE_\un,q_\un) = (W^0,q_{W^0}) \otimes \Omega^{n/2}((m + n + 1)/2)$,
and~$\hw(\cE,q) = \hw(\cE_\un,q_\un) = [(W^0,q_{W^0})]$
(the first equality follows from Lemma~\ref{lemma:he-rank-cl-prime}), hence the group homomorphism
\begin{equation*}
\hw \colon \rW(\P^n,\cO(m)) \to \rW(\kk)
\end{equation*}
is injective. 
On the other hand, it is obviously surjective, hence it is an isomorphism.

In the remaining cases~$\cE_\un = 0$ by Theorem~\ref{thm:he-hacm}, hence~$\rW(\P^n,\cO(m)) = 0$. 
\end{proof}

\subsection{Proof of Theorem~\ref{thm:he-intro} and Corollary~\ref{corollary:invariants}}
\label{subsec:proofs}

In this final subsection we prove Theorem~\ref{thm:he-intro} and Corollary~\ref{corollary:invariants} from the Introduction.
The next proposition provides the crucial step 

\begin{proposition}
\label{prop:qb-isomorphism}
Assume~$(\cE_1,q_1)$ and~$(\cE_2,q_2)$ are generically non-degenerate quadratic forms 
which have no unimodular direct summands and such that the bundles~$\cE_i$ are VLC.
Let~$\varphi \colon \cC(q_1) \xrightarrow\simeq \cC(q_2)$ be an isomorphism of their cokernel sheaves 
compatible with their induced shifted quadratic forms~\eqref{eq:cokernel-form}.
If~$n = 2k + 1$ assume also that~$\HH^k(\varphi)$ identifies the Lagrangian subspaces~$\HH^k(\cE_i^\vee) \subset \HH^k(\cC(q_i))$.
Then~$\varphi$ is induced by a unique isomorphism~$(\cE_1,q_1) \cong (\cE_2,q_2)$ of quadratic forms.
\end{proposition}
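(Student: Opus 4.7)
The strategy is to use the uniqueness of VHC resolutions (Theorem~\ref{prop:hacm-uniqueness}) to lift~$\varphi$ to an isomorphism of resolutions, and then to exploit the self-duality of the $q_i$ to promote this lift to an isomorphism of quadratic forms.

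First, I would verify the hypotheses of Theorem~\ref{prop:hacm-uniqueness}. The short exact sequences $0 \to \cE_i(-m) \xrightarrow{q_i} \cE_i^\vee \to \cC(q_i) \to 0$ are VHC resolutions by Lemma~\ref{lemma:hacm-duality} (since $\cE_i$ is VLC, $\cE_i^\vee$ is VUC). The absence of unimodular direct summands in $(\cE_i,q_i)$ implies, via the contrapositive of Lemma~\ref{lemma:linear-summand}, that each $q_i$ is linearly minimal. For $n = 2k+1$, the hypothesis that $\HH^k(\varphi)$ identifies $\HH^k(\cE_i^\vee)$ supplies the remaining input needed by the theorem. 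Applying it produces an isomorphism of VHC resolutions $(\varphi_\la, \varphi_\ua)$ with $\varphi_\ua \circ q_1 = q_2 \circ \varphi_\la$ inducing $\varphi$ on cokernels, unique up to a homotopy $h \colon \cE_1^\vee \to \cE_2(-m)$ whose associated endomorphism $N := \varphi_\la^{-1} \circ h \circ q_1$ of $\cE_1(-m)$ is nilpotent.

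Next, I would use self-duality to obtain a second lift. Applying $\cHom(-, \cO(-m))$ to the commuting square and using $q_i^\vee = q_i$ (after suitable identifications) shows that
\[
(\psi_\la, \psi_\ua) := \bigl((\varphi_\ua^\vee(-m))^{-1},\, (\varphi_\la^\vee(-m))^{-1}\bigr)
\]
is another isomorphism of the same VHC resolutions; the compatibility of $\varphi$ with the shifted self-dualities~\eqref{eq:cokernel-form} says precisely that $(\psi_\la, \psi_\ua)$ also induces $\varphi$ on cokernels. Therefore $(\psi_\la, \psi_\ua)$ and $(\varphi_\la, \varphi_\ua)$ are related by exactly such a nilpotent modification $N$, and the problem reduces to finding a $\sigma$-invariant representative in the homotopy class, where $\sigma \colon (\alpha_\la, \alpha_\ua) \mapsto ((\alpha_\ua^\vee(-m))^{-1}, (\alpha_\la^\vee(-m))^{-1})$; a $\sigma$-fixed lift is exactly an isomorphism of quadratic forms.

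The main technical step is then to construct this fixed point. Because $N$ is nilpotent and $\mathrm{char}(\kk) \ne 2$, the formal binomial series $(\id + N)^{1/2}$ is a finite polynomial, and I would set $\xi_\la := \varphi_\la \circ (\id + N)^{1/2}$ and let $\xi_\ua$ be determined by the commuting-square relation $\xi_\ua \circ q_1 = q_2 \circ \xi_\la$. Using $\sigma(\varphi_\bullet) = \psi_\bullet$ and $\sigma^2 = \id$, one verifies by direct calculation that $(\xi_\la, \xi_\ua)$ is $\sigma$-invariant, so $\xi_\la$ descends to an isomorphism $\xi \colon \cE_1 \to \cE_2$ satisfying $\xi^\vee \circ q_2 \circ \xi = q_1$. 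For uniqueness, any other isomorphism of quadratic forms inducing $\varphi$ differs from $\xi$ by an automorphism $u$ of $(\cE_1,q_1)$ inducing the identity on $\cC(q_1)$; by Theorem~\ref{prop:hacm-uniqueness} such $u = \id + M$ with $M$ nilpotent, and the orthogonality relation $u^\vee \circ q_1 \circ u = q_1$ together with $\mathrm{char}(\kk) \ne 2$ and nilpotence of $M$ forces $M = 0$. The main obstacle is this symmetrization: the involution $\sigma$ is non-linear (it inverts), so averaging cannot produce a fixed point directly, and the square-root trick together with the careful bookkeeping needed to check $\sigma$-invariance of $(\xi_\la, \xi_\ua)$ is what makes the argument work.
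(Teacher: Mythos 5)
Your proposal tracks the paper through the identical opening moves: recognizing $q_i\colon\cE_i(-m)\to\cE_i^\vee$ as linearly minimal VHC resolutions via Lemma~\ref{lemma:linear-summand}, invoking Theorem~\ref{prop:hacm-uniqueness} to lift $\varphi$ to an isomorphism $(\varphi_\la,\varphi_\ua)$ unique up to a nilpotent homotopy, and using the compatibility of $\varphi$ with the shifted self-dualities to see that the dualized lift $(\psi_\la,\psi_\ua)=((\varphi_\ua^\vee(-m))^{-1},(\varphi_\la^\vee(-m))^{-1})$ also lifts $\varphi$. At the crux, however, you take a genuinely different route: the paper normalizes $\varphi_\la=\id$ and shows the resulting $\varphi_\ua$ is simultaneously unipotent (from the nilpotence clause of Theorem~\ref{prop:hacm-uniqueness}) and self-adjoint for $q_1$ (because $q_2=\varphi_\ua\circ q_1$ is self-dual), then concludes $\varphi_\ua=\id$ by a pointwise linear-algebra observation, so that the \emph{given} $\varphi_\la$ is already the isometry. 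You instead produce a \emph{new}, $\sigma$-symmetric representative in the homotopy class by applying the square root $(\id+N)^{1/2}$.

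The gap is in the step you leave to ``direct calculation.'' The facts $\sigma(\varphi_\bullet)=\psi_\bullet$ and $\sigma^2=\id$ do not by themselves yield $\sigma(\xi_\bullet)=\xi_\bullet$. What the calculation actually requires is the identity $q_1\circ N=N^\vee(-m)\circ q_1$, i.e.\ that $N$ is self-adjoint with respect to $q_1$: this is needed both to move $(\id+N)^{1/2}$ across $q_1$ so that $\xi_\ua=(\id+N^\vee(-m))^{-1/2}$ is a well-defined morphism of bundles (rather than only a rational map), and to verify $(\xi_\ua^\vee(-m))^{-1}=\xi_\la$. That self-adjointness of $N$ (equivalently, of the homotopy $h$) is a non-trivial consequence of the self-duality of $q_1$ and $q_2$ and is precisely the ingredient the paper isolates; your proposal relies on it implicitly but never extracts or checks it, so as written the construction of the $\sigma$-fixed point is incomplete. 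Once that identity is supplied, your square-root argument does go through and gives a clean alternative to the paper's endgame.

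Your uniqueness argument is also too quick. The claim that $u^\vee\circ q_1\circ u=q_1$ together with nilpotence of $M=u-\id$ and $\mathrm{char}(\kk)\ne2$ forces $M=0$ is false over a general field: any nilpotent element of $\mathfrak{so}(q_1)$ exponentiates to a non-trivial unipotent isometry. Any uniqueness proof must use the \emph{strict-filtration} form of the nilpotence provided by Theorem~\ref{prop:hacm-uniqueness} (or, equivalently, replay the self-adjointness argument), not the abstract fact that $M$ is nilpotent.
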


\begin{proof}
By Lemma~\ref{lemma:linear-summand} the VHC morphisms $\cE_{i}(-m) \xrightarrow{\ q_{i}\ } \cE_{i}^\vee$
are linearly minimal resolutions of the sheaves~$\cC(q_i)$, hence by Theorem~\ref{prop:hacm-uniqueness} they are isomorphic, 
i.e., there is a commutative diagram
\begin{equation*}
\xymatrix{
0 \ar[r] &
\cE_1(-m) \ar[r]^-{q_1} \ar[d]_{\varphi_\la} &
\cE_1^\vee \ar[r] \ar[d]_{\varphi_\ua} \ar@{..>}[dl] &
\cC(q_1) \ar[r] \ar[d]_\varphi &
0
\\
0 \ar[r] &
\cE_2(-m) \ar[r]^-{q_2} &
\cE_2^\vee \ar[r] &
\cC(q_2) \ar[r] &
0,
}
\end{equation*}
where~$\varphi_\la$ and~$\varphi_\ua$ are isomorphisms.
Moreover, such diagram is unique up to a homotopy represented by the dotted arrow.
From now on we identify~$\cE_2$ with~$\cE_1$ by means of~$\varphi_\la$, so we assume~$\cE_1 = \cE_2$ and~$\varphi_\la = \id$.
Now consider the dual diagram, and then invert its vertical arrows:
\begin{equation*}
\xymatrix{
0 \ar[r] &
\cE_1(-m) \ar[r]^-{q_1} \ar[d]_{(\varphi_\ua^\vee)^{-1}} &
\cE_1^\vee \ar[r] \ar[d]_{\id} \ar@{..>}[dl]_h &
\cC(q_1) \ar[r] \ar[d]_{(\varphi^\vee)^{-1}} &
0
\\
0 \ar[r] &
\cE_2(-m) \ar[r]^-{q_2} &
\cE_2^\vee \ar[r] &
\cC(q_2) \ar[r] &
0,
}
\end{equation*}
Since~$\varphi$ is compatible with the shifted quadratic forms on~$\cC(q_i)$, we have~$(\varphi^\vee)^{-1} = \varphi$, 
hence by the uniqueness property of the diagram, there is a homotopy~$h$ such that
\begin{equation*}
\varphi_\ua = \id + q_2 \circ h,
\qquad
\id = h \circ q_1 + (\varphi_\ua^\vee)^{-1}.
\end{equation*}
Now note that the endomorphism~$q_2 \circ h$ of~$\cE_1^\vee = \cE_2^\vee$ is nilpotent, again by Theorem~\ref{prop:hacm-uniqueness}.
Therefore, $\varphi_\ua$ is unipotent.
On the other hand, commutativity of the first diagram (with the convention~$\varphi_\la = \id$ taken into account) means that
\begin{equation*}
q_2 = \varphi_\ua \circ q_1,
\end{equation*}
and since~$q_2$ is self-dual, it follows that~$\varphi_\ua$ is self-adjoint with respect to~$q_1$.

Now note that if a unipotent operator over a field is self-adjoint with respect to a non-degenerate quadratic form, 
it is the identity.
Indeed, to prove this we can pass to an algebraic closure of the field,
then the operator can be diagonalized, and a diagonal operator is unipotent only if it is the identity.

The above argument thus shows that~$\varphi_\ua$ restricted to the generic point of~$\P^n$ is the identity.
Finally, since the bundle~$\cE_2 = \cE_1$ is torsion free and~$\varphi_\ua$ is an automorphism, 
it follows that~$\varphi_\ua$ is the identity.
Thus, $q_2 = q_1$, i.e., the quadratic forms~$(\cE_i,q_i)$ are isomorphic.
\end{proof}

Now we can deduce the theorem.

\begin{proof}[Proof of Theorem~\textup{\ref{thm:he-intro}}]
Let $\cC = \cC(q_1) = \cC(q_2)$.
If~\mbox{$n = 2k + 1$} define~$\rA^k \subset \HH^k(\cC)$ by the formula~\eqref{eq:ra-k-typical}, where 
\begin{equation*}
\rA^k_{(m-n-1)/2} := H^k(\P^n,\cE_2^\vee((m-n-1)/2)) \subset H^k(\P^n,\cC((m-n-1)/2);
\end{equation*}
this is a shadowless Lagrangian~$\SS$-submodule as explained in Remark~\ref{rem:ra-k}.
By Theorem~\ref{thm:he-hacm} the quadratic forms~$(\cE_i,q_i)$ are hyperbolic equivalent to orthogonal direct sums 
\begin{equation*}
(\cE_{i,\mn},q_{i,\mn}) \oplus (\cE_{i,\un},q_{i,\un}),
\end{equation*}
where~$\cE_{i,\mn}$ are VLC bundles, $(\cE_{i,\mn},q_{i,\mn})$ have no unimodular direct summands,
$\HH^k(\cE_{i,\mn}^\vee) = \rA^k$ if~$n = 2k + 1$,
and~$(\cE_{i,\un},q_{i,\un})$ are anisotropic standard unimodular quadratic forms~\eqref{eq:trivial-linear} 
or~\eqref{eq:trivial-omega}.

Since the summands~$(\cE_{i,\un},q_{i,\un})$ are unimodular, i.e., $\cC(q_{i,\un}) = 0$, we have
\begin{equation*}
\cC(q_{i,\mn}) = 
\cC(q_{i,\mn} \oplus q_{i,\un}) \cong
\cC(q_i) = 
\cC,
\end{equation*}
where the isomorphism is induced by the hyperbolic equivalence, see Proposition~\ref{prop:he-invariants}\eqref{item:he-cokernel},
hence it is compatible with the shifted quadratic forms~\eqref{eq:cokernel-form}.
Moreover, we have~$\HH^k(\cE_{1,\mn}^\vee) = \HH^k(\cE_{2,\mn}^\vee)$ if~\mbox{$n = 2k + 1$}.
Applying Proposition~\ref{prop:qb-isomorphism} we conclude that
\begin{equation}
\label{eq:min-isomorphic}
(\cE_{1,\mn},q_{1,\mn}) \cong (\cE_{2,\mn},q_{2,\mn}).
\end{equation} 
Next, we identify the unimodular summands~$(\cE_{1,\un},{q}_{1,\un})$ and~$(\cE_{2,\un},{q}_{2,\un})$.

If $m$ is even, the unimodular summands have type~\eqref{eq:trivial-linear}, i.e., they can be written as
\begin{equation*}
(\cE_{i,\un},{q}_{i,\un}) \cong (W_i^0,q_{W_i^0}) \otimes \cO(m/2),
\end{equation*}
where~$(W_i^0,q_{W_i^0})$ are anisotropic.
Moreover, we have 
\begin{equation*}
\rw_x(\cE_i,q_i) 
= \rw_x(\cE_{i,\mn},q_{i,\mn}) + \rw_x(\cE_{i,\un},q_{i,\un})
= \rw_x(\cE_{i,\mn},q_{i,\mn}) + [(W_i^0,q_{W_i^0})],
\end{equation*}
hence~\eqref{eq:min-isomorphic} and the equality~$\rw_x(\cE_1,q_1) = \rw_x(\cE_2,q_2)$ imply the equality~$[(W_1^0,q_{W_1^0})] = [(W_2^0,q_{W_2^0})]$ 
of the Witt classes of the quadratic spaces~$(W_i^0,q_{W_i^0})$.
But since these quadratic spaces are anisotropic, they are isomorphic,
hence we have~$(\cE_{1,\un},q_{1,\un}) \cong (\cE_{2,\un},q_{2,\un})$.

If $m$ is odd and $n$ is divisible by~4, the unimodular summands have type~\eqref{eq:trivial-omega}, i.e., they can be written as
\begin{equation*}
(\cE_{i,\un},q_{i,\un}) \cong (W_i^0,q_{W_i^0}) \otimes \Omega^{n/2}((m + n + 1)/2),
\end{equation*}
where again~$(W_i^0,q_{W_i^0})$ are anisotropic.
Moreover, we have 
\begin{equation*}
\hw(\cE_i,q_i) 
= \hw(\cE_{i,\mn},q_{i,\mn}) + \hw(\cE_{i,\un},q_{i,\un})
= \hw(\cE_{i,\mn},q_{i,\mn}) + [(W_i^0,q_{W_i^0})],
\end{equation*}
hence~\eqref{eq:min-isomorphic} and the equality~$\hw(\cE_1,q_1) = \hw(\cE_2,q_2)$ imply the equality~$[(W_1^0,q_{W_1^0})] = [(W_2^0,q_{W_2^0})]$ 
of the Witt classes of the quadratic spaces~$(W_i^0,q_{W_i^0})$.
Again, since these quadratic spaces are anisotropic, they are isomorphic,
hence we have~$(\cE_{1,\un},q_{1,\un}) \cong (\cE_{2,\un},q_{2,\un})$.

Finally, if $m$ is odd and~$n$ is not divisible by~$4$, we have~$(\cE_{i,\mn},q_{i,\mn}) = 0$ for~$i = 1,2$.

Thus, in all the cases we have~$(\cE_{1,\un},{q}_{1,\un}) \cong (\cE_{2,\un},{q}_{2,\un})$.
Combining this isomorphism with~\eqref{eq:min-isomorphic}, we obtain a chain of hyperbolic equivalences and isomorphisms
\begin{equation*}
(\cE_1,q_1) 
\stackrel{\mathrm{he}}\sim (\cE_{1,\mn},q_{1,\mn}) \oplus (\cE_{1,\un},q_{1,\un}) 
\cong (\cE_{2,\mn},q_{2,\mn}) \oplus (\cE_{2,\un},q_{2,\un}) 
\stackrel{\mathrm{he}}\sim (\cE_2,q_2)
\end{equation*}
and conclude that~$(\cE_1,q_1)$ is hyperbolic equivalent to~$(\cE_2,q_2)$.
\end{proof}

Before proving Corollary~\ref{corollary:invariants} let us recall the definitions of the discriminant double cover and root stack
associated with a generically non-degenerate quadric bundle~$Q \subset \P_X(\cE)$,
and of the Brauer classes on their corank~$\le 1$ loci.

First, assume that $\dim(Q/X) \equiv 0 \bmod 2$.
The determinant of the morphism $q \colon \cE \otimes \cL \to \cE^\vee$ is a non-zero global section~$\det(q)$
of the line bundle $\big((\det\cE)^{\otimes 2} \otimes \cL^{\otimes \rk(\cE)}\big)^\vee$;
it defines a $\ZZ/2$-graded commutative $\cO_X$-algebra structure on the sheaf~$\cO_X \oplus \det\cE \otimes \cL^{\otimes \rk(\cE)/2}$, 
and the determinant double cover is defined as its relative spectrum
\begin{equation*}
S = \Spec_X \left( \cO_X \oplus \det\cE \otimes \cL^{\otimes \rk(\cE)/2} \right).
\end{equation*}
On the other hand, by~\cite[\S3.5]{K08} the algebra~$\cO_X \oplus \det\cE \otimes \cL^{\otimes \rk(\cE)/2}$ 
is identified with the center of the even part of the Clifford algebra~$\Cl_0(\cE,q)$,
which therefore can be written as the pushforward of an~$\cO_S$-algebra~$\cB_0$ from~$S$.
Moreover, the restriction of~$\cB_0$ to the open subset~$S_{\le 1} \subset S$, 
the preimage of the open subset~$X_{\le 1} \subset X$ parameterizing quadrics of corank at most~1 
in the quadric bundle~$Q \to X$,
is an Azumaya algebra.
We denote by~$\upbeta_S \in \Br(S_{\le 1})$ the Brauer class of~$\cB_0\vert_{S_{\le 1}}$.

Similarly, assume $\dim(Q/X) \equiv 1 \bmod 2$.
Locally over~$X$ we can trivialize the line bundle~$\cL$; 
then using~$\det(q)$ in the same way as above we can define local double covers of~$X$, 
which do not necessarily glue into a global double cover, 
but whose quotient stacks by the covering involutions glue into a global stack $S \to X$.
This is, in fact, the root stack 
\begin{equation*}
S = \sqrt[2]{\left.\left( \big((\det\cE)^{\otimes 2} \otimes \cL^{\otimes \rk(\cE)}\big)^\vee, \det(q)\right) \right/ X}
\end{equation*}
as defined in~\cite[\S B.2]{AGV}.
By~\cite[\S3.6]{K08} the algebra~$\Cl_0(\cE,q)$ can be written as the pushforward of an~$\cO_S$-algebra~$\cB_0$ from~$S$.
Moreover, the restriction of~$\cB_0$ to the open substack~$S_{\le 1} \subset S$, 
the preimage of the open subset~$X_{\le 1} \subset X$,
is an Azumaya algebra.
We denote by~$\upbeta_S \in \Br(S_{\le 1})$ the Brauer class of~$\cB_0\vert_{S_{\le 1}}$.

\begin{proof}[Proof of Corollary~\textup{\ref{corollary:invariants}}]
Since the field~$\kk$ is algebraically closed, we have~$\rW(\kk) = \ZZ/2$ via the dimension parity homomorphism,
hence the assumptions~$\rw_x(\cE,q) = \rw_x(\cE',q')$ and~$\hw(\cE,q) = \hw(\cE',q')$ of Theorem~\ref{thm:he-intro}
reduce to the equality~$\rk(\cE) \equiv \rk(\cE') \bmod 2$, which holds true in each part of the corollary,
and to~\mbox{$\rk(H^{n/2}(q)) \equiv \rk(H^{n/2}(q')) \bmod 2$}, which is one of the assumptions of the corollary, respectively.
Therefore, by Theorem~\ref{thm:he-intro}, the quadric bundles~$Q$ and~$Q'$ are hyperbolic equivalent.
Now parts~\eqref{item:cor-brauer-even} and~\eqref{item:cor-brauer-odd} of the corollary follow 
from the Morita equivalence of~$\Cl_0(\cE,q)$ and~$\Cl_0(\cE',q')$ proved in Proposition~\ref{prop:he-invariants}\eqref{item:he-clifford},
part~\eqref{item:cor-l-equivalence} follows from Proposition~\ref{prop:he-invariants}\eqref{item:l-equivalence},
and part~\eqref{item:cor-birational-equivalence} from Proposition~\ref{prop:he-invariants}\eqref{item:he-witt}.
\end{proof}

\end{document}